\newtheorem{theorem}{Theorem}[subsection]
\numberwithin{equation}{theorem}
\newtheorem{lemma}[theorem]{Lemma}
\newtheorem{cor}[theorem]{Corollary}
\newtheorem{prop}[theorem]{Proposition}
\theoremstyle{definition}
\newtheorem{defn}[theorem]{Definition}
\newtheorem{remark}[theorem]{Remark}
\newtheorem{hypothesis}[theorem]{Hypothesis}
\newtheorem{notation}[theorem]{Notation}
\newtheorem{construction}[theorem]{Construction}
\def\AAA{\mathbb{A}}
\def\CC{\mathbb{C}}
\def\DD{\mathbb{D}}
\def\FF{\mathbb{F}}
\def\PP{\mathbb{P}}
\def\QQ{\mathbb{Q}}
\def\RR{\mathbb{R}}
\def\ZZ{\mathbb{Z}}
\newcommand{\calC}{\mathcal{C}}
\newcommand{\calE}{\mathcal{E}}
\newcommand{\calF}{\mathcal{F}}
\newcommand{\calG}{\mathcal{G}}
\newcommand{\calO}{\mathcal{O}}
\newcommand{\gothm}{\mathfrak{m}}
\newcommand{\gotho}{\mathfrak{o}}
\newcommand{\del}{\partial}
\def\alg{\mathrm{alg}}
\def\dual{\vee}
\def\be{\mathbf{e}}
\def\bv{\mathbf{v}}
\def\cEnd{\mathit{End}}
\DeclareMathOperator{\Aut}{Aut}
\DeclareMathOperator{\Frac}{Frac}
\DeclareMathOperator{\Gal}{Gal}
\DeclareMathOperator{\GL}{GL}
\DeclareMathOperator{\height}{height}
\DeclareMathOperator{\perf}{perf}
\DeclareMathOperator{\rank}{rank}
\DeclareMathOperator{\ratrank}{ratrank}
\DeclareMathOperator{\semis}{ss}
\DeclareMathOperator{\sep}{sep}
\DeclareMathOperator{\Spec}{Spec}
\DeclareMathOperator{\Spf}{Spf}
\DeclareMathOperator{\trdefect}{trdefect}
\DeclareMathOperator{\trdeg}{trdeg}
\begin{document}

\title{Semistable reduction for overconvergent $F$-isocrystals, IV:
Local semistable reduction at nonmonomial valuations}
\author{Kiran S. Kedlaya \\ Department of Mathematics, 
Room 2-165 \\ Massachusetts
Institute of Technology \\ 77 Massachusetts Avenue \\
Cambridge, MA 02139 \\
\texttt{kedlaya@mit.edu}}
\date{July 22, 2010}

\maketitle

\begin{abstract}
We complete our proof that given an overconvergent $F$-isocrystal
on a variety over a field of positive characteristic, one can pull back
along a suitable generically finite cover to obtain an isocrystal
which extends, with logarithmic singularities and nilpotent residues,
to some complete variety.
We also establish an analogue for $F$-isocrystals overconvergent inside
a partial compactification.
By previous results, this reduces to solving a local problem
in a neighborhood of a valuation of 
height 1 and residual transcendence degree 0.
We do this by studying the variation of some numerical invariants
attached to $p$-adic differential modules, analogous to the irregularity
of a complex meromorphic connection. This allows for an induction on
the transcendence defect of the valuation, i.e., the discrepancy between
the dimension of the variety and the rational rank of the valuation.
\end{abstract}

\tableofcontents

\section{Introduction}

This paper is the fourth and last of a series, preceded by 
\cite{kedlaya-part1, kedlaya-part2, kedlaya-part3}.
The goal of the series is to prove a 
``semistable reduction'' theorem for overconvergent $F$-isocrystals,
a class of $p$-adic analytic objects associated to schemes of finite type
over a field of characteristic $p>0$.
Such a theorem is expected\footnote{This 
expectation has been 
confirmed by Caro and Tsuzuki \cite{caro-tsuzuki}, who use the 
semistable reduction
theorem to show that Caro's category
of overholonomic arithmetic $\mathcal{D}^\dagger$-modules both
contains the overconvergent $F$-isocrystals and is stable under
standard cohomological operations. See also \cite{caro}.} 
to have consequences for the theory of rigid
cohomology, in which overconvergent $F$-isocrystals play the role of
coefficient objects.

In \cite{kedlaya-part1}, it was shown that the problem of extending
an overconvergent isocrystal on a variety $X$ to a log-isocrystal
on a larger variety $\overline{X}$ is governed by the triviality of a sort of
local monodromy along components of the complement of $X$.
In \cite{kedlaya-part2}, it was shown that the problem can be localized on 
the space of valuations on the function field of the given variety,
and that it suffices to work in the neighborhood of a valuation which is
minimal (height 1, residual transcendence degree 0).
In \cite{kedlaya-part3}, this resulting local problem was solved at monomial
valuations (minimal Abhyankar valuations).
In this paper, we solve the local problem at nonmonomial minimal valuations;
this completes the proof of semistable reduction for overconvergent
$F$-isocrystals, and also yields an analogous theorem for partially
overconvergent $F$-isocrystals.

The introductions of \cite{kedlaya-part1, kedlaya-part2, kedlaya-part3}
provide context
for what is proved in this paper; we will not repeat that context here.
Instead, we devote
the remainder of this introduction to an overview of the results
specific to this paper, and a survey of the structure of the
various chapters of the paper.

\subsection{Local problems}

The problem of (global) semistable reduction is to show that
an overconvergent $F$-isocrystal on a nonproper $k$-variety
can be extended to a log-$F$-isocrystal on a proper $k$-variety
after pulling back along a suitable generically finite cover. 
The results of \cite{kedlaya-part1, kedlaya-part2} imply that this
problem can be localized within
the Riemann-Zariski space of valuations of the function field of the
variety, and that it suffices to consider neighborhoods of valuations
which are minimal (of height 1 and residual transcendence degree 0).

In \cite{kedlaya-part3}, this problem was solved for monomial (Abhyankar)
minimal valuations using a generalization of
the $p$-adic local monodromy theorem to so-called ``fake annuli''.
However, already for infinitely singular valuations on surfaces this 
approach is unsuitable, because of 
of the lack of convenient lifts from characteristic $p$ to characteristic $0$
in neighborhoods of such valuations.

\subsection{Valuation-theoretic induction}

Our strategy for proving local semistable reduction at nonmonomial minimal
valuations is to proceed by
induction on the transcendence defect 
(the discrepancy in Abhyankar's inequality)
of the valuation, using the monomial case as a base. 
Namely, given an irreducible variety $X$ and a minimal
nonmonomial valuation $v$ on the function field $k(X)$, after altering
(in the sense of de Jong) we can construct a dominant morphism
$X \to X^0$ of relative dimension 1, such that the restriction $v^0$ of $v$
to $k(X^0)$ has the same rational rank as $v$. Consequently, the 
transcendence defect of 
$v^0$ is one less than that of $v$. Moreover, we can construct a path in
valuation space terminating at $v$ and otherwise passing only through 
valuations of lower transcendence defect. Using this path, we can
reduce local semistable reduction at $v$ to local semistable reduction
at a valuation of lower transcendence defect; this  uses an analysis
of variation of differential invariants, described below.

Before continuing, we should add some remarks about this 
valuation-theoretic induction argument. While it seems quite natural,
we did not find any analogous argument in the literature before
preparing this paper. However, after this paper was largely completed,
we learned of Temkin's proof of local uniformization for any function field
in positive characteristic after a purely inseparable extension
\cite{temkin}, in which a very similar inductive argument is used.
We subsequently used the inductive method to describe the formal
structure of flat meromorphic connections on complex algebraic
and analytic varieties \cite{kedlaya-goodformal2}.
We suspect that there are additional problems susceptible to this 
strategy,
e.g., in the valuation-theoretic study of plurisubharmonic singularities
\cite{boucksom-favre-jonsson}.

\subsection{Variation of differential invariants}

Assuming local semistable reduction for all valuations of 
transcendence defect less than that
of $v$, we can then establish local semistable reduction at $v$
by analyzing the variation of a certain quantity associated to the
derivation in the fibral direction. This is similar to the analysis of the
differential Swan conductor introduced in \cite{kedlaya-swan1},
except that there one considers all derivations. Measuring only in one 
direction 
creates some initial difficulties, but ultimately allows to make
the analysis at a deep enough level to avoid getting entangled in
valuation-theoretic complications.

To simplify the analysis, we split the discussion into two parts.
We first consider a highly abstracted situation, using rings
with one power series variable over a field admitting a family of distinct
norms. Here we obtain a partial analogue of the usual
$p$-adic local monodromy theorem, using detailed results
from \cite{kedlaya-course} concerning
ordinary $p$-adic differential equations. We then introduce the
additional geometry, which allows us to obtain a closer analogue of the
monodromy theorem, and to carry out the induction on transcendence defect.

\subsection{Structure of the paper}

We conclude this introduction with a summary of the structure of the
paper.

In Section~\ref{sec:prelim}, we install some preliminary definitions
in valuation theory, give a brief discussion of the Berkovich unit disc,
state the local and global semistable reduction theorems,
and formulate a notion of the (semisimplified) local monodromy representation
associated to an $F$-isocrystal.

In Section~\ref{sec:diffmod}, we recall a number of facts from the theory
of differential modules on a one-dimensional disc or annulus. These are mostly
taken from the book \cite{kedlaya-course}, which is based on
a course given by the author in the fall of 2007.

In Section~\ref{sec:relative}, we make the abstracted relative analysis
described above, concerning
differential modules over a ring with one power series
variable and a coefficient field equipped with a family of norms.

In Section~\ref{sec:corank}, we specialize the relative analysis to the
case coming from an $F$-isocrystal, and use it to prove local semistable
reduction by induction on transcendence defect.

In the Appendix, we record some corrections to
\cite{kedlaya-part1} and \cite{kedlaya-part2}, and reflect upon the
necessity of the discreteness hypothesis on the coefficient field.

\setcounter{theorem}{0}
\begin{notation}
We retain the basic notations of \cite{kedlaya-part1, kedlaya-part2, kedlaya-part3}. 
In particular,
$k$ will always denote a field of characteristic $p>0$, $K$ will denote a complete discretely valued
field of characteristic zero with residue field $k$, equipped with an
endomorphism $\sigma_K$ lifting the $q$-power Frobenius for some power
$q$ of $p$, $\gotho_K$ will denote the ring of integers of $K$,
and $\gothm_K$ will denote the maximal ideal of $K$.
\end{notation}

\subsection*{Acknowledgments}
Thanks to Atsushi Shiho for finding and helping to correct the error in 
\cite{kedlaya-part1} reported in the appendix.
Thanks also to the referee for detailed comments,
particularly on Proposition~\ref{P:identically zero}
and on \S~\ref{sec:corank} as a whole,
and for pointing out the error in \cite{kedlaya-part2} reported in the
appendix.
The author was supported by 
NSF CAREER grant DMS-0545904, a Sloan Research Fellowship,
and the NEC Research Support Fund.

\section{Preliminaries}
\label{sec:prelim}

In this section, we introduce some valuation theory, then
recall the local approach to proving semistable reduction.
This culminates
with the statement of the global semistable reduction theorem
we will be proving.

\subsection{Valuations}

In order to introduce local analogues of the problem of semistable reduction,
we recall some terminology concerning valuations.

\begin{hypothesis}
Throughout this subsection, let
$F$ be a finitely generated field extension of $k$, and let
$v: F^* \to \Gamma$ be a surjective Krull valuation over $k$
(i.e., with $k^* \subseteq \ker(v)$). Let $\gotho_v$ and $\kappa_v$
denote the valuation ring and residue field of $v$.
\end{hypothesis}

These definitions are standard; they were first recalled in this series
in \cite[\S 2]{kedlaya-part2}.
\begin{defn}
The \emph{Riemann-Zariski space} $S_{F/k}$ 
is the set consisting of the equivalence classes of
Krull valuations on $F$ over $k$. We topologize $S_{F/k}$
with the 
\emph{patch topology} (or \emph{Zariski-Hausdorff topology}), in which bases
are given by sets of the form
\[
\{
w \in S_{F/k}: w(x_1) \geq 0, \dots, w(x_m) \geq 0; \quad w(y_1) > 0,
\dots, w(y_n) > 0
\}
\]
for $x_1, \dots, x_m, y_1, \dots, y_n \in F$.
If $E/F$ is an arbitrary field extension, then the restriction map
$S_{E/k} \to S_{F/k}$ is continuous and surjective.
\end{defn}

\begin{defn}
The \emph{height} of $v$, denoted $\height(v)$,
is the rank of the totally ordered group $\Gamma$,
i.e., the number of isolated subgroups of $\Gamma$.
The height is 0 only for $v$ trivial; for $v$ nontrivial, the height is 1
if and only if $\Gamma$ is isomorphic as an ordered
group to a subgroup of $\RR$. We will regularly make such an identification
implicitly, even though it is only well-defined up to a rescaling.
The \emph{rational rank} of $v$,
denoted $\ratrank(v)$, is the
rational rank of $\Gamma$, i.e., the dimension of $\Gamma \otimes_{\ZZ} \QQ$
as a vector space over $\QQ$. This is always greater than or equal to
the height of $v$.
\end{defn}

The following is a consequence of
Abhyankar's inequality \cite[Theorem~2.5.2]{kedlaya-part2},
or an older result of Zariski
\cite[Appendix~2, Corollary, p.\ 334]{zariski-samuel}.
\begin{prop}
We have
\[
\ratrank(v) + \trdeg(\kappa_v/k) \leq \trdeg(F/k),
\]
where $\trdeg(A/B)$ denote the transcendence degree of the field extension
$A$ of $B$. In particular, both quantities on the left are finite.
Moreover, if equality holds, then the group $\Gamma$ is finitely generated.
\end{prop}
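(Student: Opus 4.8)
The plan is to prove Abhyankar's inequality $\ratrank(v) + \trdeg(\kappa_v/k) \leq \trdeg(F/k)$ by exhibiting, from independent data on the valuation side, a set of elements of $F$ that must be algebraically independent over $k$; the finiteness statements then follow immediately, and the equality case will require a more careful bookkeeping argument. First I would pick elements $x_1, \dots, x_r \in F^*$ whose values $v(x_1), \dots, v(x_r) \in \Gamma$ are $\ZZ$-linearly independent (so $r = \ratrank(v)$, using that $\ratrank(v)$ is finite only a posteriori — one takes $r$ arbitrary at the outset and bounds it at the end), and elements $y_1, \dots, y_s \in \gotho_v$ whose images $\overline{y_1}, \dots, \overline{y_s} \in \kappa_v$ are algebraically independent over $k$. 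The core claim is that $x_1, \dots, x_r, y_1, \dots, y_s$ are then algebraically independent over $k$ in $F$, which forces $r + s \leq \trdeg(F/k)$; taking suprema over both families gives the inequality and simultaneously shows $\ratrank(v), \trdeg(\kappa_v/k) < \infty$.

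The heart of the argument is the algebraic independence claim, and this is the step I expect to be the main obstacle. Suppose $P(x_1, \dots, x_r, y_1, \dots, y_s) = 0$ for some nonzero polynomial $P$ over $k$. Write $P = \sum_{\alpha} c_\alpha(y_1, \dots, y_s)\, x_1^{\alpha_1} \cdots x_r^{\alpha_r}$ as a polynomial in the $x_i$ with coefficients $c_\alpha \in k[y_1, \dots, y_s]$. For each multi-index $\alpha$ appearing with $c_\alpha \neq 0$, the monomial $x_1^{\alpha_1} \cdots x_r^{\alpha_r}$ has value $\sum_i \alpha_i v(x_i)$, and since $c_\alpha$ is a polynomial in elements of $\gotho_v$ it lies in $\gotho_v$ (so has value $\geq 0$); moreover by the algebraic independence of the $\overline{y_j}$ over $k$, $c_\alpha$ maps to a nonzero element of $\kappa_v$, hence $v(c_\alpha) = 0$. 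Therefore $v(c_\alpha x_1^{\alpha_1}\cdots x_r^{\alpha_r}) = \sum_i \alpha_i v(x_i)$, and by the $\ZZ$-linear independence of the $v(x_i)$ these values are \emph{pairwise distinct} for distinct $\alpha$. The ultrametric inequality for Krull valuations then forces the value of the sum to equal the minimum of these distinct values, which is finite — contradicting $P = 0$. This establishes the claim.

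Finally, for the equality statement: if $\ratrank(v) + \trdeg(\kappa_v/k) = \trdeg(F/k)$, choose the families $x_i$ and $y_j$ as above realizing both quantities, so that $x_1, \dots, x_r, y_1, \dots, y_s$ form a transcendence basis of $F$ over $k$; then $F$ is a finite extension of $k(x_1, \dots, x_r, y_1, \dots, y_s)$. I would argue that $\Gamma$ is generated as a group by $v(x_1), \dots, v(x_r)$ together with the torsion coming from the finite extension. Concretely: first observe that on the subfield $F_0 = k(x_1,\dots,x_r)$ the valuation has value group exactly $\ZZ v(x_1) + \cdots + \ZZ v(x_r)$ — any rational function in the $x_i$ has value equal to that of its ``leading monomial'' by the same distinctness-of-values argument, using that the coefficients lie in $k^* \subseteq \ker v$. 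Then a finite extension multiplies the value group only by a finite index (the ramification index, bounded by the degree of the extension — here one uses that $\Gamma$ injects into its divisible hull and the standard fact that $[v(F^*) : v(F_0^*)] \leq [F : F_0] < \infty$). Hence $\Gamma$ is finitely generated. The subtlety to watch is that a priori one does not know $\Gamma$ has height $1$, so ``finite index'' must be understood for ordered abelian groups of arbitrary height; but the inequality $[v(F^*):v(F_0^*)] \leq [F:F_0]$ holds for Krull valuations in general, so the argument goes through.
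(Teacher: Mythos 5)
The paper does not supply a proof of this proposition; it simply cites Abhyankar's inequality as established earlier in the series (\cite[Theorem~2.5.2]{kedlaya-part2}) and points to \cite[Appendix~2]{zariski-samuel}. So your direct proof cannot be compared to the paper's argument. Your proof of the inequality itself is the standard one: if $v(x_1),\dots,v(x_r)$ are $\ZZ$-linearly independent in $\Gamma$ and $\overline{y_1},\dots,\overline{y_s}$ are algebraically independent in $\kappa_v$ over $k$, then writing a purported relation $P(x,y)=0$ as $\sum_\alpha c_\alpha(y)\,x^\alpha$, each nonzero $c_\alpha(y)$ is a unit of $\gotho_v$ (by injectivity of $k[y_1,\dots,y_s]\to\kappa_v$), so the summands $c_\alpha(y)\,x^\alpha$ have pairwise distinct values; the ultrametric inequality then makes $v(P(x,y))$ finite, a contradiction. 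This is correct and is essentially the classical proof.

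However, there is a genuine slip in your equality-case argument. You set $F_0 = k(x_1,\dots,x_r)$ and then invoke $[v(F^*) : v(F_0^*)] \leq [F:F_0] < \infty$. But $F$ is \emph{not} a finite extension of $F_0$: it has transcendence degree $s = \trdeg(\kappa_v/k)$ over $F_0$, so $[F:F_0]=\infty$ whenever $s>0$, and the cited fundamental inequality $e \leq [F:E]$ only applies to finite extensions $F/E$. The fix is to take instead $F_0' = k(x_1,\dots,x_r,y_1,\dots,y_s)$, which you correctly observed is a subfield over which $F$ is finite. One must then verify that $v(F_0'^*) = \ZZ v(x_1)+\cdots+\ZZ v(x_r)$; this follows by the same ``leading monomial'' computation you used for the independence claim, since adjoining the $y_j$ contributes only units of $\gotho_v$ as coefficients and hence does not enlarge the value group. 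With $F_0'$ in place of $F_0$, the bound $[v(F^*):v(F_0'^*)]\leq [F:F_0']<\infty$ is valid, $v(F_0'^*)\cong\ZZ^r$ is finitely generated, and a group containing a finitely generated subgroup of finite index is itself finitely generated, completing the argument.
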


This definition was introduced in \cite[Definition~4.3.2]{kedlaya-part2}.
\begin{defn}
We say $v$ is \emph{minimal} if $\height(v) = 1$ and $\trdeg(\kappa_v/k) = 0$.
\end{defn}

This definition is not quite standard; see 
Remark~\ref{R:corank} below.
\begin{defn} \label{D:corank}
The \emph{transcendence defect} of $v$ is the difference
\[
\trdefect(v) = \trdeg(F/k) - \ratrank(v) - \trdeg(\kappa_v/k),
\]
which is nonnegative by Abhyankar's inequality. 
We say $v$ is an 
\emph{Abhyankar valuation} if $\trdefect(v) = 0$; if $v$ is both minimal
and Abhyankar, we say that $v$ is \emph{monomial}.
\end{defn}

\begin{remark} \label{R:corank}
While the notion of transcendence defect is quite natural given
Abhyankar's inequality, it does not seem to have a generally
accepted name. 
For instance, we used the term \emph{corank} in early drafts of this
paper, while the term \emph{defect rank} was used in early drafts of
Temkin's paper on inseparable local uniformization \cite{temkin}.
Our present terminology is based on the fact that
those valuations for which Abhyankar's inequality becomes
an equality, while commonly known as \emph{Abhyankar valuations}
in the context of algebraic geometry, are often referred to as
\emph{valuations without transcendence defect} in the context
of the model theory of valued fields. For instance, this 
terminology occurs in Kuhlmann's
generalization of the Grauert-Remmert stability theorem
\cite{kuhlmann}.
The reference to defect also makes a link with other numerical 
quantities that measure pathological behavior of extensions of valued fields,
such as the classical \emph{henselian defect}
of Ostrowski, and the \emph{vector space defect}
of Green, Matignon, and Pop \cite{gmp}.
\end{remark}

For proofs of the following remarks, see \cite[Chapter~6]{ribenboim}.
\begin{remark} \label{R:extension}
For any extension $F'$ of $F$, there exists at least one extension
$v'$ of $v$ to a Krull valuation on $F'$
\cite[Definition~2.2.8]{kedlaya-part2}.
In case $F'$ is an algebraic extension of $F$,
so that $\trdeg(F'/k) = \trdeg(F/k)$, the extension preserves
numerical invariants in the following sense.
Since $\Gamma_{v'}$ is contained in the divisible closure of $\Gamma_v$,
we have $\height(v') = \height(v)$ and
$\ratrank(v') = \ratrank(v)$. Since $\kappa_{v'}$ is
algebraic over $\kappa_v$, we have $\trdeg(\kappa_{v'}/k) 
= \trdeg(\kappa_v/k)$. Putting these together,
we deduce $\trdefect(v') = \trdefect(v)$.
\end{remark}

\subsection{The Berkovich closed unit disc}

Since our strategy for proving local semistable reduction will be
induction on transcendence defect, 
we will need a method for comparing a valuation
on a field with the restriction to a subfield, in case the transcendence
defects differ
by 1. For this, it is most convenient to use Berkovich's 
notion of the closed unit disc over
a complete (but not necessarily algebraically closed) nonarchimedean field.

\begin{hypothesis}
Throughout this 
subsection, let $F$ be a field complete for a nonarchimedean norm $|\cdot|_F$,
corresponding to a real valuation $v_F$. Let $\CC$ denote the completed
algebraic closure of $F$.
Let $\Aut(\CC/F)$ denote the group of 
\emph{continuous} automorphisms of $\CC$ over $F$.
\end{hypothesis}

\begin{defn}
The Berkovich closed unit disc $\DD = \DD_F$ consists of the 
multiplicative seminorms $|\cdot|$ on $F[x]$ which are compatible with the
given norm on $F$ and bounded above by
the 1-Gauss norm. 
In particular, the 1-Gauss norm itself is a point 
$\alpha_\DD$ of $\DD$,
called the \emph{Gauss point}.
(We will often use Greek letters to refer to points of $\DD$.
When we want to emphasize the fact that $\alpha \in \DD$ represents
a function $F[x] \to \RR$, we notate it as $|\cdot|_\alpha$.)
The \emph{weak topology} (or \emph{Gel'fand topology})
on $\DD$ is the weakest topology 
under which evaluation at any element of $F[x]$ gives a continuous
function $\DD \to [0,+\infty)$.
\end{defn}

\begin{lemma} \label{L:base change1}
For any complete extension $F'$ of $F$, the restriction
map $\DD_{F'} \to \DD_F$ is surjective.
Moreover, $\DD_\CC \to \DD_F$ induces a bijection
$\DD_\CC / \Aut(\CC/F) \to \DD_F$.
\end{lemma}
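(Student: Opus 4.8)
The plan is to deduce both assertions from one foundational fact --- the Berkovich spectrum $\mathcal{M}(A)$ of a nonzero commutative Banach ring $A$ is nonempty --- applied to completed tensor products of complete valued fields over the relevant base field, which are automatically nonzero since their identity elements have norm $1$. I will use throughout that a multiplicative seminorm $\alpha$ on $F[x]$ bounded by the Gauss norm has a completed residue field $\mathcal{H}(\alpha)$, a complete extension of $F$ in which the image $t$ of $x$ satisfies $|t|\le 1$ and which recovers $\alpha$ by evaluation at $t$, and that an injective homomorphism of valued fields which does not increase norms is automatically isometric (apply the bound to an element and to its inverse). For the surjectivity of $\DD_{F'}\to\DD_F$: given $\alpha\in\DD_F$, the algebra $\mathcal{H}(\alpha)\widehat{\otimes}_F F'$ is a nonzero Banach $F'$-algebra, so it has a point whose completed residue field $M$ receives $F$-algebra homomorphisms from both $\mathcal{H}(\alpha)$ and $F'$; these are isometric by the remark above, so $M$ is a complete extension of $F'$, and the image of $x$ in $M$ determines a point $\beta\in\DD_{F'}$ whose restriction to $F[x]$, factoring through the isometric copy of $\mathcal{H}(\alpha)$, equals $\alpha$.

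For the bijection, surjectivity of $\DD_\CC\to\DD_F$ is the case $F'=\CC$, and the map factors through $\DD_\CC/\Aut(\CC/F)$ because every $\sigma\in\Aut(\CC/F)$ fixes $F[x]$ and so leaves restrictions to $\DD_F$ unchanged. For injectivity, suppose $\beta_1,\beta_2\in\DD_\CC$ both restrict to $\alpha\in\DD_F$; I must produce $\sigma\in\Aut(\CC/F)$ carrying $\beta_1$ to $\beta_2$. Each $\mathcal{H}(\beta_i)$ contains isometric copies of $\CC$ (over $F$) and of $\mathcal{H}(\alpha)$, compatibly with the images of $x$, and the image of $\CC[x]$ is dense in it. I then form a point of the nonzero Banach $\mathcal{H}(\alpha)$-algebra $\mathcal{H}(\beta_1)\widehat{\otimes}_{\mathcal{H}(\alpha)}\mathcal{H}(\beta_2)$, with completed residue field $N$; this provides isometric embeddings $\mathcal{H}(\beta_1)\hookrightarrow N\hookleftarrow\mathcal{H}(\beta_2)$ that agree on $\mathcal{H}(\alpha)$, hence on the image of $x$. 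The crucial observation is that any two algebraically closed algebraic extensions of $F$ contained in $N$ coincide: the minimal polynomial over $F$ of an element of one splits in both, so by unique factorization in $N[X]$ that element lies in both. Hence the two images of $\overline{F}\subset\CC$ in $N$ agree, so do their closures --- the two images of $\CC$ --- and so do the two images of $x$. Therefore $\mathcal{H}(\beta_1)$ and $\mathcal{H}(\beta_2)$ have the same image in $N$, and the resulting isometric isomorphism between them carries the copy of $\CC$ in the source onto the copy in the target and the image of $x$ to the image of $x$; its restriction to $\CC$ is an automorphism $\sigma$ of $\CC$ over $F$, continuous because it is isometric, and unwinding the identifications shows $\sigma$ carries $\beta_1$ to $\beta_2$.

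The single nontrivial external ingredient is the nonemptiness of $\mathcal{M}(A)$ for a nonzero commutative Banach ring $A$; everything else --- automatic injectivity and isometry of homomorphisms of valued fields, closedness of a complete subfield of a complete field, uniqueness of the algebraically closed algebraic extension of $F$ inside a given overfield, and the bookkeeping of the evaluation formulas --- is routine. Conceptually, the heart of the proof is the reformulation of ``two lifts of $\alpha$ along $\DD_\CC\to\DD_F$'' as ``two copies of $\CC$ inside one overfield $N$'', after which the canonicity of the algebraic closure of $F$ inside $N$ collapses the two copies and yields the automorphism. The step I expect to be most delicate is the last one: confirming, once the two copies of $\CC$ (and of $x$) have been identified, that the automorphism $\sigma$ extracted really does send $\beta_1$ to $\beta_2$ rather than to some twist of it --- that is, correctly tracking the ``evaluation at $x$'' description of each seminorm through all the completed residue fields --- together with the routine but unavoidable checks that each tensor product is nonzero and each induced field map is isometric.
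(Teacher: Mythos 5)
Your argument is correct and, as far as I can tell, reproduces the standard proof: the paper itself gives no argument, deferring to \cite[Corollary~1.3.6]{berkovich}, and that corollary is proved by exactly the completed-tensor-product device you use. The surjectivity step, the automatic isometry of bounded field embeddings (via $a$, $a^{-1}$), the uniqueness of the algebraic closure of $F$ inside the residue field $N$ (two algebraically closed algebraic extensions of $F$ in a domain must coincide, since a minimal polynomial has only $\deg$ many roots there), and the final bookkeeping showing $\sigma\cdot\beta_1=\beta_2$ all hold up.

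The one place where the justification is thinner than the claim deserves is the nonvanishing of the completed tensor products. The parenthetical ``automatically nonzero since their identity elements have norm $1$'' is not a proof: the tensor seminorm of $1\otimes 1$ is an infimum over representations, and showing it is positive is precisely the content of Gruson's theorem (or \cite[\S 2.1]{berkovich}) that the projective tensor seminorm of nonarchimedean Banach spaces over a complete field is a norm, so that $V\widehat{\otimes}_F W\ne 0$ whenever $V,W\ne 0$. You do flag this as one of the ``routine but unavoidable checks'' at the end, so I take it you know an external input is needed; I would just replace the parenthetical by an explicit citation rather than suggest it is automatic. With that caveat, the proof is complete and matches the source the paper cites.
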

\begin{proof}
See \cite[Corollary~1.3.6]{berkovich}.
\end{proof}

\begin{defn}
For $z \in \gotho_\CC$ and $r \in [0,1]$, the function $|\cdot|_{z,r}: 
F[x] \to [0, +\infty)$ given by taking $P(x)$ to the $r$-Gauss norm
of $P(x+z)$ is a seminorm; let $\alpha_{z,r}$ be the associated point of $\DD$.
In particular, the map $z \mapsto \alpha_{z,0}$ induces an injection
$\gotho_{\CC} / \Aut(\CC/F) \hookrightarrow \DD$.
Define the closed disc $D_{z,r} = \{z' \in \CC: |z'-z| \leq r\}$. 
\end{defn}

\begin{lemma} \label{L:disjoint discs}
Let $P \in F[x]$ be irreducible, and suppose $z_1 \in \gotho_\CC$ 
is a root of $P$.
Suppose $r_1 \in [0,1]$ and $z_2 \in \gotho_\CC$ are such that
$|z-z_2| > r_1$ for each root $z$ of $P$. Then 
\[
|P(z_2)| > |P|_{z_1,r_1}.
\]
\end{lemma}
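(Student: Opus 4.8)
The plan is to write both sides in terms of the roots of $P$ and compare factor by factor. Write $P(x) = c \prod_{i} (x - z_i)$ with the $z_i$ running over the roots of $P$ in $\gotho_\CC$ (counted with multiplicity), where $z_1$ is the distinguished root. On the right-hand side, the seminorm $|\cdot|_{z_1,r_1}$ is multiplicative on $F[x]$ and extends to $\CC[x]$ after base change (using Lemma~\ref{L:base change1} to see it comes from a point of $\DD_\CC$), so $|P|_{z_1,r_1} = |c| \prod_i |x - z_i|_{z_1,r_1}$. For a linear factor, $|x - z_i|_{z_1, r_1}$ is the $r_1$-Gauss norm of $(x + z_1) - z_i = x + (z_1 - z_i)$, which equals $\max\{r_1, |z_1 - z_i|\}$. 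On the left-hand side, $|P(z_2)| = |c| \prod_i |z_2 - z_i|$. So it suffices to show that for every root $z_i$,
\[
|z_2 - z_i| \;\geq\; \max\{r_1,\ |z_1 - z_i|\},
\]
with strict inequality for at least one $i$ — and in fact the hypothesis will give strict inequality in the first coordinate for every $i$.

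Here is the factor-by-factor comparison. The hypothesis says $|z_2 - z| > r_1$ for every root $z$ of $P$; applying this with $z = z_i$ gives $|z_2 - z_i| > r_1$. It remains to see $|z_2 - z_i| \geq |z_1 - z_i|$. If $|z_1 - z_i| \leq r_1$, this is immediate from $|z_2 - z_i| > r_1$. If $|z_1 - z_i| > r_1$, then since $|z_2 - z_1| > r_1$ (the hypothesis applied to the root $z_1$) we have $|z_2 - z_1| < |z_1 - z_i|$ is not forced, but the ultrametric inequality gives $|z_2 - z_i| = |(z_2 - z_1) + (z_1 - z_i)|$, and because $|z_1 - z_i| > r_1 \geq$ ... — more carefully: we know $|z_2 - z_1| > r_1$ and $|z_1 - z_i| > r_1$, but we need to compare them. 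Actually the clean way: if $|z_2 - z_1| \neq |z_1 - z_i|$ then $|z_2 - z_i| = \max\{|z_2 - z_1|, |z_1 - z_i|\} \geq |z_1 - z_i|$; if $|z_2 - z_1| = |z_1 - z_i|$, then $|z_2 - z_i| \leq |z_1 - z_i|$ is possible, so this case needs the hypothesis more carefully. Note $z_1$ itself is a root of $P$, so taking $z = z_1$ in the hypothesis gives $|z_2 - z_1| > r_1$; combined with $|z_2 - z_i| > r_1$ for all $i$, we get the strict inequality $|z_2 - z_i| > r_1 = \max\{r_1, |z_1 - z_i|\}$ outright whenever $|z_1 - z_i| \leq r_1$. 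When $|z_1 - z_i| > r_1$, one uses that $i$ ranges over a Galois orbit (or that $z_i$ and $z_1$ are conjugate roots of the same irreducible $P$) together with the fact that $z_1$ is among the roots, so $|z_1 - z_i| \le \max_j |z_2 - z_j|$ by a standard argument, forcing $|z_2 - z_i| \geq |z_1 - z_i|$.

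The main obstacle I anticipate is exactly this last case, where $|z_1 - z_i| > r_1$: here one cannot conclude purely formally from the ultrametric inequality, and must exploit that $P$ is \emph{irreducible}, so that the roots form a single orbit under $\Aut(\CC/F)$. The trick is to choose $\tau \in \Aut(\CC/F)$ with $\tau(z_1) = z_i$; applying $\tau$ (which is isometric) to the inequality $|z_2 - z_1| > r_1$ and to the other relations, and using $|\tau(z_2) - z_2| \le \max_j |z_2 - z_j|$ together with the hypothesis, one forces $|z_1 - z_i| = |\tau(z_1) - \tau(z_j)|$ for an appropriate $j$ to be dominated by $|z_2 - z_i|$. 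Once every factor satisfies $|z_2 - z_i| \geq \max\{r_1, |z_1 - z_i|\}$ with at least one strict (which holds since every factor is strict in the $r_1$ slot), multiplying over all $i$ and restoring the leading coefficient $|c|$ yields $|P(z_2)| > |P|_{z_1, r_1}$, as desired.
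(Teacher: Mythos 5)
Your opening moves match the paper's: factor $P$, and try to compare $|P(z_2)|$ with $|P|_{z_1,r_1}$ root by root. You also correctly identify the danger zone: when $|z_1-z_i|>r_1$, the desired factor inequality $|z_2-z_i|\geq|z_1-z_i|$ does not follow from the ultrametric inequality alone. Unfortunately, the fix you sketch does not close the gap, and in fact the factor-by-factor claim $|z_2-z_i|\geq\max\{r_1,|z_1-z_i|\}$ is \emph{false} in general for a fixed choice of distinguished root $z_1$. Concretely: take $P(x)=x^2-a$ over $\QQ_p$ ($p$ odd, $a$ a nonsquare unit), with roots $\pm\sqrt{a}$, and take $z_2$ with $|z_2-\sqrt a|=1/p$, $r_1=1/p^2$. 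The hypothesis $|z-z_2|>r_1$ holds for both roots. But if you choose $z_1=-\sqrt a$ and look at the factor $z_i=\sqrt a$, then $|z_2-z_i|=1/p$ while $\max\{r_1,|z_1-z_i|\}=1$, so the factor inequality fails. (The overall inequality of the lemma still holds: $|P(z_2)|=1/p>1/p^2=|P|_{z_1,r_1}$.) Your proposed patch via ``$|z_1-z_i|\le\max_j|z_2-z_j|$, forcing $|z_2-z_i|\ge|z_1-z_i|$'' is a non sequitur; and the bound $|\tau(z_2)-z_2|\le\max_j|z_2-z_j|$ you invoke has no reason to hold, since $z_2$ is not a root of $P$ and $\tau(z_2)$ bears no a priori relation to the $z_j$.

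The missing idea is to \emph{replace $z_1$ before you start comparing}: since $P$ is irreducible, every root is an $\Aut(\CC/F)$-conjugate of $z_1$, and the seminorm $|\cdot|_{z_1,r_1}$ on $F[x]$ is unchanged if $z_1$ is replaced by any conjugate. So you may assume at the outset that $z_1$ minimizes $|z-z_2|$ over roots $z$ of $P$. With that normalization, for every root $z$ one has $|z_2-z|\geq|z_2-z_1|$, and then the ultrametric gives $|z_1-z|\leq\max\{|z_1-z_2|,|z_2-z|\}=|z_2-z|$; combined with the hypothesis $|z_2-z|>r_1$ this gives $|z_2-z|=\max\{r_1,|z_2-z|\}\geq\max\{r_1,|z_1-z|\}$ for every $z$, with strict inequality at $z=z_1$. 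Multiplying over the roots finishes the proof. This single normalization is exactly what your argument is missing, and without it the factor-by-factor comparison is simply not true.
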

\begin{proof}
By applying an element of $\Aut(\CC/F)$, we may assume that the minimum of
$|z-z_2|$, for  $z$ running over roots of $P$, is achieved by $z = z_1$.
By hypothesis, we have $|z_2 - z| > r_1$. On the other hand,
since $|z_2 - z| \geq |z_2 - z_1|$, we have also $|z_2-z| =
\max\{|z_2-z|, |z_1 - z_2|\} \geq |z_1 - z|$.
If we factor $P(x) = c \prod_z (x - z)$,
we obtain
\begin{align*}
|P(z_2)| &= |c| \prod_z |z_2-z| \\
&= |c| \prod_z \max\{r_1, |z_2-z|\} \\
&> |c| \prod_z \max\{r_1, |z_1-z|\} = |P|_{z_1,r_1};
\end{align*}
here the strict inequality occurs because the inequality
$\max\{r_1, |z_2-z|\} \geq \max\{r_1, |z_1-z|\}$
is strict for $z = z_1$.
\end{proof}

\begin{defn}
For $\alpha, \beta \in \DD$, we say that \emph{$\alpha$ dominates
$\beta$}, denoted $\alpha \geq \beta$,
if for all $P \in F[x]$, we have $|P|_\alpha \geq |P|_\beta$.
The relation of dominance is transitive, and the
Gauss point $\alpha_\DD$ is maximal.
Dominance is also stable under base change in an appropriate sense; see
Lemma~\ref{L:base change2}
below. (This assertion is made without proof in 
\cite[Remark~4.2.3]{berkovich}.) If $F = \CC$, then 
$\alpha_{z,r} \geq \alpha_{z',r'}$ if and only if 
$D_{z',r'} \subseteq D_{z,r}$:
namely, 
$\alpha_{z,r} \geq \alpha_{z',r'}$ 
implies $r = |x-z|_{z,r} \geq |x-z|_{z',r'} = \max\{r', |z-z'|\}$.
\end{defn}

Here is Berkovich's classification of points of $\DD$; we will use it repeatedly
in what follows without explicit citation.
\begin{prop} \label{P:classify points}
Each element of $\DD$ is of exactly one of the following four types.
\begin{enumerate}
\item[(i)]
A point of the form $\alpha_{z,0}$ for some $z \in \gotho_\CC$.
\item[(ii)]
A point of the form $\alpha_{z,r}$ for some $z \in \gotho_\CC$
and $r \in (0,1] \cap |\CC^*|$.
\item[(iii)]
A point of the form $\alpha_{z,r}$ for some $z \in \gotho_\CC$
and $r \in (0,1] \setminus
 |\CC^*|$.
\item[(iv)]
The infimum of a sequence $\alpha_{z_i,r_i}$ in which the discs $D_{z_i,r_i}$
form a decreasing sequence
with empty intersection and positive limiting radius. (This type does not
occur if $\CC$ is spherically complete.)
\end{enumerate}
Moreover, the points which are minimal under domination in $\DD_\CC$
are precisely those of type (i) and (iv).
\end{prop}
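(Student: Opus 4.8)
The plan is to reduce to the case $F = \CC$ and then classify a point $\alpha \in \DD_\CC$ through the ultrametric geometry of the closed discs that dominate it. For the reduction, Lemma~\ref{L:base change1} shows that every point of $\DD_F$ is the restriction of a point of $\DD_\CC$, and the restriction to $F[x]$ of the seminorm $\alpha_{z,r}$ (formed over $\CC$) is again the point $\alpha_{z,r}$ of $\DD_F$; since $\Aut(\CC/F)$ acts by isometries (the norm on $\CC$ being the unique extension of that on the complete field $F$), preserving $|\CC^*|$ and sending $D_{z,r}$ to $D_{\tau(z),r}$, the four types descend to well-defined labels on $\DD_F \cong \DD_\CC/\Aut(\CC/F)$. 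So it suffices to work over $\CC$, which is also where the second assertion lives.

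So fix $\alpha \in \DD_\CC$. Since $\CC$ is algebraically closed, every $P \in \CC[x]$ factors as $c\prod_w (x-w)$, whence $|P|_\alpha = |c|\prod_w |x-w|_\alpha$; thus $\alpha$ is determined by the function $r_\alpha(w) := |x-w|_\alpha$, which moreover is forced to equal $|w|$ on $\CC \setminus \gotho_\CC$ by the Gauss bound. Let $\Sigma_\alpha$ be the set of closed discs $D_{z,r}$ with $z\in\gotho_\CC$, $r\in[0,1]$, and $\alpha \leq \alpha_{z,r}$. It contains $D_{0,1}$; it contains $D_{z,r_\alpha(z)}$ for every $z$ (expand $P$ in powers of $x-z$ and use multiplicativity); and it is totally ordered by inclusion, since two disjoint members $D_{z_1,r_1}, D_{z_2,r_2}$ would give the contradiction $|z_1-z_2| = |(x-z_2)-(x-z_1)|_\alpha \leq \max\{r_1,r_2\}$. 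Put $\rho = \inf\{r : D_{z,r}\in\Sigma_\alpha\}$. First I would dispose of the case where $\Sigma_\alpha$ has a least element $D_{z_0,s}$: there $s = r_\alpha(z_0)$ (else $D_{z_0,r_\alpha(z_0)}$ is strictly smaller), and a short ultrametric computation gives $r_\alpha(w) = \max\{s,|w-z_0|\}$ for all $w$, so $\alpha = \alpha_{z_0,s}$, which is type (i), (ii), or (iii) according as $s=0$, $s\in(0,1]\cap|\CC^*|$, or $s\in(0,1]\setminus|\CC^*|$.

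The substantive case is when $\Sigma_\alpha$ has no least element. Then I would pick a strictly decreasing cofinal sequence $D_{z_i,r_i}$ in $\Sigma_\alpha$ with $r_i = r_\alpha(z_i)\searrow\rho$ (shrinking each disc and passing to a subsequence if necessary), so the $D_{z_i,r_i}$ are nested, and argue: (a) $\bigcap_i D_{z_i,r_i} = \emptyset$ --- otherwise a point $w_0$ of the intersection would yield, by a limit computation as in (c), $\alpha \leq \alpha_{w_0,\rho}$, making $D_{w_0,\rho}$ a disc strictly inside every $D_{z_i,r_i}$ and hence a least element of $\Sigma_\alpha$; (b) consequently $\rho > 0$, since if $\rho = 0$ the sequence $(z_i)$ would be Cauchy and its limit, which exists by completeness of $\CC$, would lie in $\bigcap_i D_{z_i,r_i}$; (c) for each fixed $P$, each root $w$ of $P$ eventually escapes the shrinking discs, so $|x-w|_{z_i,r_i}$ --- hence $|P|_{z_i,r_i}$ --- is eventually constant and equal to $|P|_\alpha$, giving $\alpha = \lim_i \alpha_{z_i,r_i}$ in the weak topology and $\alpha = \inf_i \alpha_{z_i,r_i}$ under domination. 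This is type (iv). That the four types are mutually exclusive follows because $\rho$ and the attainment of $\rho$ are invariants of $\alpha$ separating the cases. I expect steps (a)--(c) --- the passage to the limit disc, and especially the bookkeeping of which roots of $P$ sit in which $D_{z_i,r_i}$ --- to be the main obstacle; it is also the only place completeness of $\CC$ is used.

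Finally I would read off the minimal points. If $\alpha = \alpha_{z,r}$ is of type (ii) or (iii), then $r>0$ and $D_{z,0}\subsetneq D_{z,r}$, so $\alpha_{z,0} < \alpha$ and $\alpha$ is not minimal. If $\alpha = \alpha_{z,0}$ is of type (i) and $\beta \leq \alpha$, then $r_\beta(z) \leq |x-z|_{z,0} = 0$ forces $\beta = \alpha_{z,0} = \alpha$, so $\alpha$ is minimal. If $\alpha = \inf_i\alpha_{z_i,r_i}$ is of type (iv) and $\beta \leq \alpha$, then $\beta \leq \alpha_{z_i,r_i}$ for all $i$; since $\bigcap_i D_{z_i,r_i} = \emptyset$, each $w \in \gotho_\CC$ escapes some $D_{z_N,r_N}$, and then $|x-z_N|_\beta \leq r_N < |w-z_N|$ together with the strict ultrametric triangle inequality forces $r_\beta(w) = |w-z_N| = r_\alpha(w)$; combined with the forced values off $\gotho_\CC$ this gives $r_\beta \equiv r_\alpha$, i.e. $\beta = \alpha$, so $\alpha$ is minimal. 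Hence the minimal points under domination in $\DD_\CC$ are precisely those of types (i) and (iv).
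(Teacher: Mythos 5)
Your proposal is correct, and it takes a genuinely different route from the paper. The paper outsources the bulk of the classification over $\CC$ to Berkovich [1.4.4] and only supplies two supplements: that the limiting radius in type (iv) is positive (using completeness of $\CC$), and that type (iv) points are minimal. For the latter the paper passes to a spherically complete extension $\CC'$ of $\CC$ (via Krull's theorem), so that the extension $\beta'$ of a dominated point $\beta$ can be written as some $\alpha_{z',r'}$, and then compares radii. You instead give a self-contained proof from scratch: using that $\CC$ is algebraically closed, you reduce $\alpha$ to the function $w \mapsto |x-w|_\alpha$, organize the discs dominating $\alpha$ into a totally ordered chain $\Sigma_\alpha$, and split on whether $\Sigma_\alpha$ has a least element. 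Crucially, your minimality argument for type (iv) does not need a spherically complete extension at all: once you know each $w \in \gotho_\CC$ escapes some $D_{z_N,r_N}$, the inequalities $|x-z_N|_\beta \leq r_N < |w - z_N|$ and the strict ultrametric identity force $|x-w|_\beta = |w-z_N| = |x-w|_\alpha$, and the values off $\gotho_\CC$ are forced by the Gauss bound. This is cleaner than the paper's route and removes the dependence on Krull's existence theorem; the paper's route is shorter only because it delegates the classification itself to Berkovich. One small point worth keeping an eye on in the write-up: in case 2 you should note explicitly that a cofinal decreasing sequence in $\Sigma_\alpha$ exists because the radii are a set of reals bounded below (so one can choose $r_i \downarrow \rho$ and use the total order), and that no disc of radius exactly $\rho$ can lie in $\Sigma_\alpha$ when there is no least element; your sketch implicitly uses both facts.
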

\begin{proof}
For $F = \CC$, this is included in \cite[1.4.4]{berkovich} except for
the following points.
\begin{itemize}
\item
We must check that 
the limiting radius in (iv) must be positive. This holds because
$\CC$ is complete, so the intersection of any decreasing sequence
of closed discs with zero limiting radius is nonempty, and hence
defines a point of type (i) and not (iv). In a similar vein, if
$\CC$ is spherically complete, then any decreasing sequence of closed
discs has nonempty intersection, so there are no points of type (iv) at all.
\item
We must also check that points of type (iv) are minimal.
(By contrast, it is obvious that points of type (i) are minimal 
whereas points of type (ii) and (iii) are not.)
Let $\alpha, \beta \in \DD$ be such that $\alpha$ is of type (iv) and
$\alpha \geq \beta$.
Let $\CC'$ be a spherically complete algebraically closed extension of $\CC$,
which exists by a result of Krull \cite[Satz~24]{krull}.
By Lemma~\ref{L:base change1}, we can find $\beta' \in \DD_{\CC'}$
extending $\beta$; since $\DD_{\CC'}$ has no points of type (iv),
we can write $\beta' = \alpha_{z',r'}$ for some $z' \in \gotho_{\CC'}$
and some $r' \in [0,1]$. 

Let $\alpha_{z_i,r_i} \in \DD$ be a decreasing sequence of points with
infimum $\alpha$, such that the $D_{z_i,r_i}$ have empty intersection.
Then for any $z \in \CC$, we can find $i_0$ such that
$z \notin D_{z_i,r_i}$ for $i \geq i_0$. For such $i$, we have
\[
|x-z|_{z_i,r_i} = \max\{r_i, |z_i-z|\} = |z_i - z|.
\]
For $j \geq i \geq i_0$, we have $z_j \in D_{z_i,r_i}$, so 
$|z_i-z_j| \leq r_i$ and $|z_j-z| = |z_j - z_i + z_i - z| = |z_i - z|$. 
In particular, $|z_i-z| = |z_{i_0} - z|$, so
\[
|x-z|_\alpha = \inf_{i \geq i_0} \{|x-z|_{z_i,r_i}\} = |z_{i_0} - z|.
\]
On the other hand, we have 
\begin{align*}
r_i &= |x-z_i|_{z_i,r_i} \\
&\geq |x-z_i|_\beta = |x-z_i|_{\beta'} \\
&= |x-z_i|_{z',r'} \\
&= \max\{r', |z'-z_i|\}.
\end{align*}
Consequently, $|z'-z_i| \leq r_i$. For $i \geq i_0$,
we have $|z-z_i| > r_i \geq r'$ and
\[
|x-z|_\beta = \max\{r', |z-z_i|\} = |z-z_i| = |x-z|_\alpha.
\]
Since $\CC$
is algebraically closed, this implies that $|P|_\beta = |P|_\alpha$
for all $P \in \CC[x]$, so $\alpha = \beta$.
\end{itemize}

For general $F$, note that the action of $\tau \in \Aut(\CC/F)$ on $\CC$
carries $\alpha_{z,r}$ to $\alpha_{\tau(z),r}$.
This implies that the action of $\Aut(\CC/F)$ on $\DD$
preserves types in the classification.
\end{proof}

\begin{remark}
For $\alpha \in \DD$ of type (ii) or (iii), we can always write
$\alpha = \alpha_{z,r}$ with $z \in F^{\alg}$. In fact, we can even
choose $z \in F^{\sep}$; this follows from the fact that if $P(x) \in F[x]$
is an inseparable polynomial with $z$ as a root, then for $c \in F$
sufficiently small, $P(x) + cx$ is a separable polynomial having a root
$z'$ with $|z-z'| < r$.
\end{remark}

\begin{lemma} \label{L:base change2}
Suppose $\alpha, \beta \in \DD$ satisfy $\alpha \geq \beta$. Then
there exist $\alpha', \beta' \in \DD_\CC$ restricting to $\alpha, \beta$,
respectively, such that $\alpha' \geq \beta'$. (Note that by applying
$\Aut(\CC/F)$, we may prescribe either $\alpha'$ or $\beta'$, but not
both.)
\end{lemma}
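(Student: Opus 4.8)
The plan is to exploit the surjectivity in Lemma~\ref{L:base change1}, together with the fact that the fibers of $\DD_\CC \to \DD_F$ are exactly the orbits of $\Aut(\CC/F)$, so that it suffices to produce \emph{some} $\alpha' \in \DD_\CC$ restricting to $\alpha$ and \emph{some} $\beta' \in \DD_\CC$ restricting to $\beta$ with $\alpha' \geq \beta'$. (Given such a pair, conjugating by a suitable element of $\Aut(\CC/F)$ lets one additionally match any prescribed lift of whichever of $\alpha,\beta$ one likes, yielding the parenthetical remark.) We may also assume $\alpha \neq \beta$, since otherwise any common lift works; note that restriction always preserves dominance, so it will be harmless to enlarge $\CC$ when convenient.

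I would first dispose of the principal case, in which $\alpha$ is of type (ii) or (iii), or of type (i) with center algebraic over $F$. By the remark following Proposition~\ref{P:classify points} (and, in the type (i) subcase, by uniqueness of the extension of the complete valuation $|\cdot|_F$ to a finite extension), $\alpha$ has a lift $\alpha_{u,t}$ with $u \in F^{\sep}$, whose full set of lifts consists of the $\alpha_{u^{(j)},t}$ over the conjugates $u^{(j)}$ of $u$. Choose any lift $\beta'$ of $\beta$; by Proposition~\ref{P:classify points} it has the form $\alpha_{w,r}$ with $w \in \gotho_\CC$ (if $\beta$ is of type (iv) there is no such $w$ over $\CC$, and I would instead work over a spherically complete algebraically closed extension $\CC'$ of $\CC$, as in the next paragraph, where $\beta$ does acquire a disc representative), and its lifts are the $\Aut(\CC/F)$-translates $\alpha_{w',r}$. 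If some disc $D_{w',r}$ lies in some $D_{u^{(j)},t}$, then the corresponding lifts satisfy $\alpha_{u^{(j)},t} \geq \alpha_{w',r}$ and we are done; if instead some $D_{u^{(j)},t}$ lies in some $D_{w',r}$, then $\beta \geq \alpha$, so $\alpha = \beta$, contrary to assumption. So we may assume all of these discs are pairwise disjoint, i.e.\ $|u^{(j)} - w'| > \max\{r,t\}$ for all $j$ and all translates $w'$ of $w$; in particular $|u^{(j)} - w| > t$ for every $j$. Now apply Lemma~\ref{L:disjoint discs} with $P$ the minimal polynomial $Q_u$ of $u$ over $F$, with $z_1 = u$, $r_1 = t$, $z_2 = w$: since every root $u^{(j)}$ of $Q_u$ satisfies $|u^{(j)} - w| > t$, we get $|Q_u(w)| > |Q_u|_{u,t} = |Q_u|_\alpha$. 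On the other hand $|Q_u(w)| = \prod_j |w - u^{(j)}| = \prod_j \max\{r, |w - u^{(j)}|\} = |Q_u|_{w,r} = |Q_u|_\beta$, so $|Q_u|_\alpha < |Q_u|_\beta$, contradicting $\alpha \geq \beta$. This rules out the disjoint case and finishes the principal case.

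It remains to handle the exotic cases in which $\alpha$ is of type (iv) or of type (i) with transcendental center; here I would argue that $\alpha$ is already minimal in $\DD_F$, so that $\alpha \geq \beta$ forces $\alpha = \beta$ and there is nothing to prove. This should follow by the device from the proof of Proposition~\ref{P:classify points}: pass to a spherically complete algebraically closed extension $\CC'$ of $\CC$ (which exists by a result of Krull), where $\beta$ lifts to some $\alpha_{z',r'}$ and $\alpha$ lifts to a point of the same type, and then carry out the explicit seminorm computation of that proof, which forces $\alpha = \beta$.

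The main obstacle I anticipate is the bookkeeping in the exotic cases — tracking how the four types behave under the further base change to $\CC'$ and making the minimality computation go through uniformly — rather than the principal case, whose only subtlety is the choice of test polynomial: one must measure against the minimal polynomial of a center of a lift of $\alpha$ (not of $\beta$) for Lemma~\ref{L:disjoint discs} to yield the inequality in the useful direction. I would keep in reserve a softer alternative: granting that $\DD_\CC$, hence $\DD_F$, is uniquely arcwise connected with dominance equal to the order toward the Gauss point, the continuous image under $\DD_\CC \to \DD_F$ of the arc from a chosen lift of $\beta$ to the Gauss point is a connected set containing $\beta$ and $\alpha_\DD$, hence contains the whole arc between them, hence contains $\alpha$; any lift of $\alpha$ lying on the original arc then dominates the chosen lift of $\beta$.
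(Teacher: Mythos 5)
Your ``principal case'' is correct and is essentially the paper's cases 1 and 2: choose a lift of $\alpha$ with center in $F^{\sep}$, take the minimal polynomial, and apply Lemma~\ref{L:disjoint discs} to rule out all-disjoint discs, passing to a spherically complete $\CC'$ and restricting back when $\beta$ is of type (iv). The choice of which center to take a minimal polynomial of is the same subtlety the paper navigates.

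The gap is in your ``exotic case.'' You assert that when $\alpha$ lifts to a minimal point of $\DD_\CC$ (type (i) with transcendental center, or type (iv)), $\alpha$ is already minimal in $\DD_F$, and propose to prove this by rerunning the seminorm computation from the proof of Proposition~\ref{P:classify points} after passing to $\CC'$. That computation, however, is run for $\alpha,\beta$ that are seminorms on $\CC[x]$: it evaluates both sides on $x-z$ for $z$ ranging over all of $\CC$, which is precisely what lets one conclude $|P|_\alpha = |P|_\beta$ for $P \in \CC[x]$ by factoring into linear terms. In your situation $\alpha,\beta$ are only seminorms on $F[x]$; to evaluate them on $x-z$ for $z\in\CC$ you must first replace them by lifts $\alpha',\beta'\in\DD_\CC$, and you would then need to know $\alpha'\geq\beta'$ to compare the two sides --- but producing such a compatible pair of lifts \emph{is} the lemma. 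So this branch of the plan is circular. The ``softer alternative'' (argue via arcs in the Berkovich tree) has a parallel problem: it assumes that $\DD_F$ is an $\RR$-tree with dominance given by the order toward the Gauss point, but the paper proves the tree structure of $\DD_F$ as a corollary \emph{of} this lemma, so invoking it here would again be circular, and in any case it is not established at this point in the paper.

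What is actually needed to close the exotic case is the paper's diagonal argument: write $\alpha$ as the infimum of a decreasing sequence $\alpha_{z_i,r_i}$ with $z_i\in F^{\alg}$ and $r_i>0$ (possible for types (i) and (iv) by density of $F^{\alg}$ in $\CC$); each $\alpha_{z_i,r_i}|_F$ is of type (ii) or (iii) and dominates $\beta$, so the principal case (using the parenthetical to prescribe $\beta'$) yields for each $l$ a nested chain $\alpha'_{z_{1,l},r_1}\geq\cdots\geq\alpha'_{z_{l,l},r_l}\geq\beta'$ of lifts with conjugate centers; a K\"onig-type pigeonhole then extracts a single infinite compatible chain $\alpha'_{z_{i,\infty},r_i}\geq\beta'$, and the infimum of that chain is the desired lift $\alpha'$. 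Your proposal is missing exactly this extraction step.
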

\begin{proof}
Suppose first that $\alpha = \alpha_{z_1,r_1}$ and $\beta = \alpha_{z_2,r_2}$
for some $z_1,z_2 \in \gotho_{\CC}$ and $r_1,r_2 \in [0,1]$ with $r_1 > 0$.
We may choose $z_1$ to be integral over $F$, with minimal polynomial $P$.
If there exist no extensions $\alpha', \beta' \in \DD_\CC$ of $\alpha,\beta$
satisfying $\alpha' \geq \beta'$, then $|z_2-z| > r_1$ for each root $z$ of $P$.
By Lemma~\ref{L:disjoint discs},
\[
|P|_{z_2,r_2} \geq |P|_{z_2,0} = |P(z_2)| > |P|_{z_1,r_1},
\]
contradicting the hypothesis $\alpha \geq \beta$.

Suppose next that $\alpha = \alpha_{z_1,r_1}$ with $r_1 > 0$,
but that $\beta$ is of type (iv).
We may repeat the previous argument by constructing 
a spherically complete and algebraically closed extension $\CC'$ of $F$
(as in Proposition~\ref{P:classify points}),
applying Lemma~\ref{L:base change1} to construct $\beta' \in \DD_{\CC'}$
restricting to $\beta$,
choosing $z_2 \in \CC'$ with $\beta' = \alpha_{z_2,r_2}$ 
for some $r_2 \in [0,1]$,
proceeding as above, then restricting back to $\CC$.

Finally, suppose that $\alpha$ is of type (i) or
(iv).
Write $\alpha$ as the infimum of a sequence $\alpha_{z_i,r_i}$ with 
each $z_i \in F^{\alg}$ and each $r_i > 0$; then
$\alpha_{z_i,r_i} \geq \beta$ for each $i$. Let $\beta'$ be any extension of 
$\beta$ to $\DD_\CC$. By the above argument, for each $l$, we can choose
conjugates $z_{1,l},\dots,z_{l,l}$ of $z_1,\dots,z_l$
such that 
\[
\alpha'_{z_{1,l},r_1}\geq \cdots \geq \alpha'_{z_{l,l},r_l} \geq \beta'.
\]
Since $z_1$ has only finitely many conjugates, we can choose a conjugate
$z_{1,\infty}$ that occurs
as $z_{1,l}$ for infinitely many $l$. We can then
choose a conjugate $z_{2,\infty}$ such that $z_{1,\infty},z_{2,\infty}$
occur as $z_{1,l},z_{2,l}$ for infinitely many $l$, and so on.
We thus build a sequence of $z_{i,\infty}$ such that
$\alpha'_{z_{i,\infty},r_i} \geq \alpha'_{z_{i+1,\infty},r_{i+1}} \geq \beta'$
for all $i$. The infimum $\alpha'$ of the $\alpha'_{z_{i,\infty},r_i}$
is then an extension of $\alpha$ dominating $\beta'$.
\end{proof}
\begin{cor}
If $\alpha,\beta,\gamma \in \DD$ satisfy
$\alpha > \gamma$ and $\beta > \gamma$, then 
there exists a single $z \in \gotho_\CC$ and some $r_1, r_2 \in (0,1]$
for which $\alpha = \alpha_{z,r_1}$ and $\beta = \alpha_{z,r_2}$.
In particular,
either $\alpha \geq \beta$ or $\beta \geq \alpha$ (so $\DD$ forms a tree
under domination, rooted at $\alpha_\DD$).
\end{cor}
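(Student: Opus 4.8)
The plan is to reduce to the case $F = \CC$, where points can be manipulated as discs. First I would pick any extension $\gamma'$ of $\gamma$ to a point of $\DD_\CC$. Applying Lemma~\ref{L:base change2} to $\alpha \geq \gamma$ and prescribing the smaller point to be $\gamma'$, and likewise for $\beta \geq \gamma$, I obtain $\alpha', \beta' \in \DD_\CC$ restricting to $\alpha, \beta$ with $\alpha' \geq \gamma'$ and $\beta' \geq \gamma'$. Since restriction is a map of sets, $\alpha' = \gamma'$ would force $\alpha = \gamma$, contrary to $\alpha > \gamma$; hence $\alpha' > \gamma'$, and similarly $\beta' > \gamma'$. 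In particular neither $\alpha'$ nor $\beta'$ is minimal in $\DD_\CC$, so by Proposition~\ref{P:classify points} each has type (ii) or (iii): we may write $\alpha' = \alpha_{a,\rho}$ and $\beta' = \alpha_{b,\sigma}$ with $a, b \in \gotho_\CC$ and $\rho, \sigma \in (0,1]$. Restricting these formulas back to $F[x]$ shows $\alpha = \alpha_{a,\rho}$ and $\beta = \alpha_{b,\sigma}$ already as points of $\DD_F$.

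The heart of the matter is then to show that the closed discs $D_{a,\rho}$ and $D_{b,\sigma}$ in $\CC$ are not disjoint. If $\gamma'$ has type (i), (ii), or (iii), write $\gamma' = \alpha_{c,t}$ with $c \in \gotho_\CC$; then $\alpha_{a,\rho} \geq \alpha_{c,t}$ is equivalent (over $\CC$) to $D_{c,t} \subseteq D_{a,\rho}$, and likewise $D_{c,t} \subseteq D_{b,\sigma}$, so $c \in D_{a,\rho} \cap D_{b,\sigma}$. If $\gamma'$ has type (iv), write it as the infimum of points $\alpha_{c_i,t_i}$ whose discs $D_{c_i,t_i}$ form a decreasing sequence with empty intersection and positive limiting radius. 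Evaluating the function $x-a$ at $\gamma'$ and using that $a \notin D_{c_i,t_i}$ for $i$ large, a short ultrametric computation (noting that the values $\max\{t_i,|c_i-a|\}$ are eventually constant) shows $|x-a|_{\gamma'} = |c_i - a|$ for all sufficiently large $i$, whence $\rho = |x-a|_{\alpha_{a,\rho}} \geq |c_i - a| > t_i$; this forces $D_{c_i,t_i} \subseteq D_{a,\rho}$ for all large $i$. Symmetrically $D_{c_i,t_i} \subseteq D_{b,\sigma}$ for all large $i$, so any such $c_i$ lies in $D_{a,\rho} \cap D_{b,\sigma}$.

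With a point $w \in D_{a,\rho} \cap D_{b,\sigma}$ in hand, I would recenter: since $|w-a| \leq \rho$ we have $D_{w,\rho} = D_{a,\rho}$ and hence $\alpha_{w,\rho} = \alpha_{a,\rho} = \alpha$, and similarly $\alpha_{w,\sigma} = \beta$; note $w \in \gotho_\CC$ since $\rho \leq 1$. Taking $z = w$, $r_1 = \rho$, $r_2 = \sigma$ proves the first assertion. For the last clause, $r_1$ and $r_2$ are real numbers, so one of them, say $r_1$, is at least the other; expanding $P(x+z) = \sum_i e_i x^i$ for $P \in F[x]$ gives $|P|_{z,r_1} = \max_i |e_i| r_1^i \geq \max_i |e_i| r_2^i = |P|_{z,r_2}$, so $\alpha \geq \beta$ (and $\beta \geq \alpha$ in the opposite case). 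Thus any two points of $\DD$ lying below a common point are comparable; combined with the maximality of $\alpha_\DD$, this exhibits $\DD$ as a tree under domination rooted at $\alpha_\DD$.

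The only step that is not a formal unwinding of definitions and of Lemma~\ref{L:base change2} and Proposition~\ref{P:classify points} is the type~(iv) case of the middle paragraph; there the difficulty is purely the bookkeeping needed to see that the relevant infima stabilize, rather than anything conceptual, so I expect it to be the main—but modest—obstacle.
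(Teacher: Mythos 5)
Your proof is correct and takes essentially the same route as the paper's: reduce to $\CC$ via Lemma~\ref{L:base change2}, realize $\alpha$ and $\beta$ as disc points of type (ii) or (iii) using Proposition~\ref{P:classify points}, and show that the two discs must meet. You argue directly rather than by contradiction and you spell out the type-(iv) case for $\gamma'$, which the paper elides in its final sentence (``In this case, $\alpha$ and $\beta$ cannot both dominate $\gamma$''), but the substance is the same.
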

\begin{proof}
The classification in Proposition~\ref{P:classify points}
(plus Lemma~\ref{L:base change2}, which implies that domination can be checked
over $\CC$) implies that $\alpha = \alpha_{z_1,r_1}$
and $\beta = \alpha_{z_2,r_2}$ for some $z_1, z_2 \in \gotho_\CC$. 
If $z_1 \in D_{z_2,r_2}$, then we can replace $z_2$ by $z_1$,
and conversely if $z_2 \in D_{z_1,r_1}$. If neither occurs, then
$|z_1-z_2| > \max\{r_1,r_2\}$, so
the two discs $D_{z_1,r_1}$ and $D_{z_2,r_2}$ are disjoint.
In this case, $\alpha$ and $\beta$ cannot both dominate $\gamma$.
\end{proof}

\begin{defn}
For $\alpha \in \DD$, we define the \emph{radius} of $\alpha$, denoted
$r(\alpha)$, as 
\[
r(\alpha) = \inf\{r \in [0,1]: \alpha_{z,r} \geq \alpha \mbox{ for some
$z \in \CC$}\}.
\]
For $z \in \gotho_\CC$ and $r \in [0,1]$, evidently $r(\alpha_{z,r}) = r$.
For $\alpha$ of type (iv), $r$ is the infimum of the $r_i$
for any sequence $\alpha_{z_i,r_i}$ as in Proposition~\ref{P:classify points}.
Note that if $\alpha \geq \beta$, then $r(\alpha) \geq r(\beta)$.
\end{defn}

\begin{lemma} \label{L:unique dominate}
For any $\alpha \in \DD$, 
for each $r \in [r(\alpha),1]$, there is a unique point $\alpha_r$
with $r(\alpha) = r$ and $\alpha_r \geq \alpha$. 
In particular, if $\alpha \geq \beta$ and $r(\alpha) = r(\beta)$, then 
$\alpha = \beta$.
\end{lemma}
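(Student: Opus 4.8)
The plan is to first construct the points $\alpha_r$ explicitly, then prove the final ``in particular'' assertion (which is really the heart of the lemma), and finally deduce uniqueness of $\alpha_r$ from it. For existence, I would argue by cases on the type of $\alpha$ in Berkovich's classification (Proposition~\ref{P:classify points}). If $\alpha = \alpha_{z,s}$ for some $z \in \gotho_\CC$ and $s = r(\alpha)$ (types (i), (ii), (iii)), then for $r \in [s,1]$ I set $\alpha_r = \alpha_{z,r}$: since the $\rho$-Gauss norm of a fixed polynomial is nondecreasing in $\rho$, we get $\alpha_{z,r} \geq \alpha_{z,s} = \alpha$, and $r(\alpha_{z,r}) = r$ by the computation recorded after the definition of radius. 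If $\alpha$ is of type (iv), write $\alpha = \inf_i \alpha_{z_i,r_i}$ with the $D_{z_i,r_i}$ decreasing and $r_i \downarrow r(\alpha)$; take $\alpha_{r(\alpha)} = \alpha$, and for $r \in (r(\alpha),1]$ choose $i$ with $r_i \leq r$ and set $\alpha_r = \alpha_{z_i,r}$, noting $\alpha_{z_i,r} \geq \alpha_{z_i,r_i} \geq \alpha_{z_j,r_j}$ for $j \geq i$, so $\alpha_{z_i,r} \geq \alpha$, with $r(\alpha_r)=r$.

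The main work is the assertion that if $\alpha \geq \beta$ and $r(\alpha) = r(\beta)$, then $\alpha = \beta$. The first step is to reduce to the case $F = \CC$, and the key sublemma is that the restriction map $\DD_\CC \to \DD_F$ preserves radius. To see this, suppose $\delta' \in \DD_\CC$ and $\alpha_{w,s} \geq \delta'|_F$ in $\DD_F$; by Lemma~\ref{L:base change2} there are lifts to $\DD_\CC$ with one dominating the other, and by Lemma~\ref{L:base change1} these lifts are of the form $\alpha_{\tau'(w),s}$ and $\tau(\delta')$ for suitable $\tau,\tau' \in \Aut(\CC/F)$; since $\tau$ carries $\alpha_{z,r}$ to $\alpha_{\tau(z),r}$ it preserves the radius of $\delta'$, so from $\alpha_{\tau'(w),s} \geq \tau(\delta')$ we get $s \geq r(\delta')$, hence $r(\delta'|_F) \geq r(\delta')$; the reverse inequality follows by restricting dominations $\alpha_{w,s} \geq \delta'$ from $\DD_\CC$ down to $\DD_F$. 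Granting this, I lift $\alpha \geq \beta$ via Lemma~\ref{L:base change2} to $\alpha' \geq \beta'$ in $\DD_\CC$ with $r(\alpha') = r(\alpha) = r(\beta) = r(\beta')$, reducing to $F = \CC$. Over $\CC$: if $\alpha'$ is of type (i) or (iv) it is minimal (Proposition~\ref{P:classify points}), so $\alpha' \geq \beta'$ forces $\alpha' = \beta'$; if $\alpha' = \alpha_{z,r}$ is of type (ii) or (iii) with $r = r(\alpha')>0$, then $\beta'$ has radius $r$, hence is not of type (i), and a short computation with $|x-z|_{\cdot}$ shows no disc point of radius $r$ can dominate a type-(iv) point of limiting radius $r$, so $\beta' = \alpha_{w,r}$ with $D_{w,r} \subseteq D_{z,r}$, i.e. $D_{w,r} = D_{z,r}$ and $\beta' = \alpha'$.

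Finally, for uniqueness of $\alpha_r$: if $\gamma_1, \gamma_2 \in \DD$ both have radius $r$ and dominate $\alpha$, then when $r = r(\alpha)$ the assertion above applied to $\gamma_i \geq \alpha$ gives $\gamma_i = \alpha$, while when $r > r(\alpha)$ we have $\gamma_i > \alpha$ (the radii differ), so $\gamma_1,\gamma_2$ are comparable by the tree property of $\DD$ (the corollary to Lemma~\ref{L:base change2}), say $\gamma_1 \geq \gamma_2$, and then the assertion gives $\gamma_1 = \gamma_2$; and the closing sentence of the statement is exactly the assertion just proved. The step I expect to be the main obstacle is the reduction to $F = \CC$ — precisely, verifying that restriction $\DD_\CC \to \DD_F$ preserves radius — since once one is over an algebraically closed complete field everything is a direct reading of the classification and the explicit description of domination by containment of discs.
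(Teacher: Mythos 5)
Your proof is correct, and the essential technical content (reduction to $F = \CC$ via Lemma~\ref{L:base change2} and Lemma~\ref{L:base change1}, analysis by Berkovich type, and the argument that a disc point of radius $r(\alpha)$ cannot dominate a type-(iv) point $\alpha$) matches what the paper does. The organization, however, is genuinely different and arguably cleaner: you prove the ``in particular'' assertion first as the heart of the matter, and then derive uniqueness of $\alpha_r$ by combining it with the tree corollary to Lemma~\ref{L:base change2}. The paper instead attacks uniqueness head-on with a two-point argument: it checks that any two candidate disc points $\alpha_{z,r}, \alpha_{z',r}$ dominating $\alpha$ have intersecting (hence equal) discs, distinguishing whether $\alpha$ is of type (iv) or not, plus the same non-domination lemma for type (iv) that you call a ``short computation.'' Your route delegates that disc-intersection step to the already-established tree structure and so avoids repeating it. You also make explicit a point the paper glosses over when it writes ``we may reduce to the case $F=\CC$'': that the restriction map $\DD_\CC \to \DD_F$ preserves radius. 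This is a genuine ingredient in the reduction (both for existence and for lifting $\alpha \geq \beta$ with equal radii), and your verification of it via the $\Aut(\CC/F)$-equivariance of $\alpha_{z,r} \mapsto \alpha_{\tau(z),r}$ is exactly right; spelling it out is an improvement over the paper's terse reference to Lemma~\ref{L:base change2}.
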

\begin{proof}
By Lemma~\ref{L:base change2}, we may reduce to the case $F = \CC$.
We first prove existence; for this, we need only consider the case $r > r(\alpha)$,
as the case $r = r(\alpha)$ is obvious. Let $\alpha_{z_i,r_i} \in \DD$
be a decreasing sequence of points with infimum $\alpha$. Then
$\lim_{i \to \infty} r_i = r(\alpha)$, so we can find $i$ for which
$r_i < r$. For such $i$, we have $\alpha_{z_i,r} \geq \alpha_{z_i,r_i} \geq \alpha$.

We next prove uniqueness. We first note that if $\alpha_r \geq \alpha$
and $\alpha_r \neq \alpha$, then $\alpha_r$ is nonminimal and so by
Proposition~\ref{P:classify points} cannot be of type (iv). It thus suffices
to check that:
\begin{enumerate}
\item[(a)]
there is at most one $\alpha_r$ of a given radius of type other than (iv)
with $\alpha_r \geq \alpha$;
\item[(b)]
if $r = r(\alpha)$ and $\alpha$ is of type (iv), then there is no
$\alpha_r$ of radius $r$ and type other than (iv) with $\alpha_r \geq \alpha$.
\end{enumerate}

We first prove (a). Suppose $z,z' \in \gotho_\CC$ satisfy
$\alpha_{z,r} \geq \alpha$ and $\alpha_{z',r} \geq \alpha$.
If $\alpha$ is not of type (iv), then 
$\alpha = \alpha_{z'',r''}$ for some $z'', r''$,
so the intersection $D_{z,r} \cap D_{z',r}$
contains $z''$. In particular, this intersection is nonempty,
so $D_{z,r} = D_{z',r}$ and $\alpha_{z,r} = \alpha_{z',r}$.
If $\alpha$ is of type (iv), let $\alpha_{z_i,r_i} \in \DD$
be a decreasing sequence of points with infimum $\alpha$ such that the
$D_{z_i,r_i}$ have empty intersection.
Choose $i_0$ large enough so that
$z,z' \notin D_{z_i,r_i}$ for $i \geq i_0$. 
As in the proof of Proposition~\ref{P:classify points}, we then find that
$|z_i - z| = |x-z|_\alpha \leq |x-z|_{z,r} = r$,
and similarly $|z_i-z'| \leq r$. Again, $D_{z,r} \cap D_{z',r}$ is nonempty,
so $D_{z,r} = D_{z',r}$ and $\alpha_{z,r} = \alpha_{z',r}$.

We next prove (b). Suppose $\alpha$ is of type (iv),
$r = r(\alpha)$, and $\alpha_{z,r} \geq \alpha$ for some $z \in \gotho_\CC$.
Let $\alpha_{z_i,r_i} \in \DD$
be a decreasing sequence of points with infimum $\alpha$ such that the
$D_{z_i,r_i}$ have empty intersection.
Choose $i_0$ large enough so that
$z \notin D_{z_i,r_i}$ for $i \geq i_0$. 
As in the proof of Proposition~\ref{P:classify points},
we have $|z_i-z| = |x-z|_\alpha \leq |x-z|_{z,r} = r \leq r_i$,
so $z \in D_{z_i,r_i}$. But then the $D_{z_i,r_i}$ have nonempty intersection,
giving a contradiction.
\end{proof}

\begin{defn}
By Lemma~\ref{L:unique dominate},
for any $\alpha \in \DD$, 
for each $r \in [r(\alpha),1]$, there is a unique point $\alpha_r$
with $r(\alpha) = r$ and $\alpha_r \geq \alpha$. 
We define the
\emph{generic path} as the subset of $\DD$ consisting of these $\alpha_r$;
it is homeomorphic to an interval with endpoints $\alpha_\DD,\alpha$.
\end{defn}

\begin{lemma} \label{L:complete type 3}
Let $\alpha \in \DD$ be a point of type (iii) corresponding to a disc
containing an $F$-rational point $z$. Then $F[x-z, (x-z)^{-1}]$ is dense
in $F(x)$ under $|\cdot|_\alpha$.
\end{lemma}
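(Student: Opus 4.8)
The plan is to reduce the density statement to a concrete approximation problem for rational functions in $F(x)$, using the fact that $\alpha$ is of type (iii) with an $F$-rational center $z$. After translating by $z$, I may assume $z = 0$, so $\alpha = \alpha_{0,r}$ with $r \in (0,1] \setminus |\CC^*|$, and I want to show that Laurent polynomials $F[x, x^{-1}]$ are $|\cdot|_\alpha$-dense in $F(x)$. Since every element of $F(x)$ is a ratio of polynomials, and $F[x] \subseteq F[x,x^{-1}]$, it suffices to show that for each nonzero $P \in F[x]$ the inverse $P^{-1}$ can be approximated by Laurent polynomials; indeed, once $P^{-1}$ is approximated, $Q/P$ is approximated by multiplying by $Q \in F[x] \subseteq F[x,x^{-1}]$ and using that multiplication is continuous for the multiplicative norm $|\cdot|_\alpha$ (and that the approximands of $P^{-1}$ can be taken with bounded norm, since $|\cdot|_\alpha$ is multiplicative so $|P^{-1}|_\alpha = |P|_\alpha^{-1}$ is fixed).

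The next step is to reduce to the case of a single linear factor. Over $\CC$ we may factor $P(x) = c \prod_i (x - z_i)$ with $z_i \in \gotho_\CC$; but we do not have these roots in $F$, so instead I would argue directly with irreducible factors over $F$. Write $P = c \prod_j P_j$ with each $P_j \in F[x]$ irreducible; it suffices to invert each $P_j$ (constants being trivially invertible in $F \subseteq F[x,x^{-1}]$). Fix an irreducible $Q = P_j$ and let $z_Q \in \gotho_\CC$ be a root. Because $\alpha$ is of type (iii), $r \notin |\CC^*|$, so $|z_Q|$, being either $0$ or in $|\CC^*|$, is never equal to $r$; hence either $|z_Q| < r$, in which case $|x - z_Q|_\alpha = r$ is controlled by $|x|_\alpha$, or $|z_Q| > r$, in which case $|x - z_Q|_\alpha = |z_Q|$ is controlled by the constant term. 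The point is that the same dichotomy then holds simultaneously for all conjugates of $z_Q$ (they have the same absolute value), so $Q$ itself falls into one of two regimes.

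In the first regime ($|z_Q| < r$, equivalently $Q$ has all roots in the open disc of radius $r$), I would write $Q(x) = Q(0)\bigl(1 - x g(x)\bigr)$ for a suitable $g(x) \in F[x]$ after dividing by the constant term $Q(0) \in F^*$ (nonzero since no root is $0$), observe that $|xg(x)|_\alpha < 1$ — this is exactly where $|z_Q| < r$ enters, giving $|x|_\alpha \cdot |g|_\alpha < 1$ by comparing with the product of the $|x - z_i|$ — and then invert via the geometric series $\sum_{n \geq 0} (xg(x))^n$, whose partial sums are honest polynomials converging $|\cdot|_\alpha$-adically to $Q(0)^{-1}Q^{-1}$. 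In the second regime ($|z_Q| > r$, all roots outside the closed disc), I would instead factor out the leading coefficient and a power of $x$: write $Q(x) = x^{\deg Q} \tilde Q(1/x) \cdot (\text{unit})$, or more simply expand $Q(x)^{-1}$ as a Laurent series in $x^{-1}$ convergent for $|x|_\alpha = r$, whose partial sums lie in $F[x^{-1}] \subseteq F[x,x^{-1}]$; convergence again follows from $r < |z_Q|$ via the geometric series in $x^{-1}/z_i$ after suitable conjugation-symmetric bookkeeping over $F$. Combining the two regimes over all irreducible factors, and then clearing the numerator as in the first paragraph, yields the claim.

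The main obstacle I anticipate is the bookkeeping needed to stay inside $F[x]$ rather than $\CC[x]$: the clean statements about $|x - z_i|_\alpha$ are most transparent over $\CC$, so I must check that the geometric-series inverse of an irreducible $Q \in F[x]$, constructed a priori over $\CC$ (or over $F^{\sep}$), actually has coefficients in $F$. This should follow because the inverse of $Q$ in the completion of $F(x)$ at $|\cdot|_\alpha$ is canonical and $\Gal$-stable, hence lies in the $F$-subalgebra, but spelling this out cleanly — or alternatively arguing purely over $F$ by noting that $|Q - Q(0)|_\alpha < |Q(0)|_\alpha$ in the first regime and the analogous estimate in the second — is the step that needs care. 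Once that is handled, everything else is a routine application of completeness of the norm $|\cdot|_\alpha$ and multiplicativity.
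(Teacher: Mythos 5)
Your overall plan---factor $P$ into $F$-irreducible pieces, use Galois invariance to see that for an irreducible $Q$ all complex roots share the same absolute value (never equal to $r$ since $r\notin|\CC^*|$), and invert each factor by a geometric series---is viable, but you have the two regimes exactly backwards, and the justification you supply for the misassignment is false. If all roots $z_i$ of $Q$ satisfy $|z_i|<r$, then $|Q|_\alpha=|c|\prod_i\max\{r,|z_i|\}=|c|\,r^{\deg Q}$ while $|Q(0)|=|c|\prod_i|z_i|<|c|\,r^{\deg Q}$, so $|xg(x)|_\alpha=|1-Q/Q(0)|_\alpha=|Q/Q(0)|_\alpha>1$ and the geometric series in $x$ diverges; this is the case that must be handled by expanding in $x^{-1}$, after factoring out the leading monomial $c_n x^{\deg Q}$ (whose quotient with $Q$ is $1$ plus terms of $\alpha$-norm strictly less than $1$, since each $|z_i|/r<1$). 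Conversely, when all $|z_i|>r$ one has $Q/Q(0)=\prod_i(1-x/z_i)$ with each $|x/z_i|_\alpha<1$, so $|Q-Q(0)|_\alpha<|Q(0)|$ and the power series in $x$ is the one that converges. Once the cases are swapped the argument goes through. Also, the ``main obstacle'' you flag at the end is not actually an obstacle: in either regime the partial sums of the geometric series are built directly from the $F$-coefficients of $Q$ and a single nonzero scalar in $F$ (namely $Q(0)$ or the leading coefficient), so they lie in $F[x,x^{-1}]$ by construction; $\CC$ only enters to estimate norms, not to build the approximants.

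For comparison, the paper's proof avoids both the factorization and the case split. Since $r\notin|\CC^*|$, for any $P=\sum_i P_i(x-z)^i$ the nonzero quantities $|P_i|r^i$ are pairwise distinct (equality of two would put a nonzero power of $r$ into $|F^*|\subseteq|\CC^*|$), so the Gauss norm is achieved by a \emph{unique} monomial $P_j(x-z)^j$. One then inverts $P$ in one stroke via $\sum_{h\geq 0}(P_j(x-z)^j-P)^h(P_j(x-z)^j)^{-h-1}$, whose partial sums lie in $F[x-z,(x-z)^{-1}]$. Your two regimes are precisely the observation that, after factoring, each irreducible piece has its dominant monomial either at the top or bottom degree; the paper's argument extracts that dominant monomial directly from $P$ without factoring, which is shorter, requires no Galois input, and makes no case distinction.
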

\begin{proof}
We may write $\alpha = \alpha_{z,r}$
for some $r \in (0,1] \setminus |\CC^*|$. Then the $r$-Gauss norm of
any nonzero element $P = \sum_i P_i (x-z)^i \in F[x]$ is always 
achieved by exactly
one monomial $P_j (x-z)^j$, so the series
\[
\sum_{h=0}^\infty (P_j (x-z)^j - P)^h (P_j (x-z)^j)^{-h-1}
\]
converges in the completion of $F[x-z,(x-z)^{-1}]$ under $|\cdot|_\alpha$.
We can thus approximate the reciprocal of $P$ using
elements of $F[x-z,(x-z)^{-1}]$, so $F[x-z,(x-z)^{-1}]$ is dense in $F(x)$
under $|\cdot|_\alpha$.
\end{proof}
\begin{cor} \label{C:complete type 3}
With notation as in Lemma~\ref{L:complete type 3}, the completion of $F(x)$
under $|\cdot|_\alpha$ may be represented as the ring of formal sums
$\sum_{i \in \ZZ} c_i (x-z)^i$ with $c_i \in F$, such that
$|c_i| r^i \to 0$ as $i \to \pm \infty$.
\end{cor}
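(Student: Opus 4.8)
The plan is to use Lemma~\ref{L:complete type 3} to replace $F(x)$ by the Laurent polynomial ring $A := F[x-z,(x-z)^{-1}]$, which is dense in $F(x)$ for $|\cdot|_\alpha$, and then to identify the completion of $A$ explicitly. Write $t = x-z$ and $\alpha = \alpha_{z,r}$ with $r \in (0,1]\setminus |\CC^*|$. Substituting $x \mapsto x+z$ in the definition of $\alpha_{z,r}$ shows that a finite sum $P = \sum_i c_i t^i \in A$ has $|P|_\alpha = \max_i |c_i| r^i$, and, exactly as in the proof of Lemma~\ref{L:complete type 3}, since $r \notin |\CC^*|$ this maximum is attained by a single index whenever $P \neq 0$. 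Thus it suffices to prove that the completion of $A$ under this norm is the set $R$ of formal sums $\sum_{i \in \ZZ} c_i t^i$ with $c_i \in F$ and $|c_i| r^i \to 0$ as $i \to \pm\infty$, equipped with $\lVert \sum_i c_i t^i \rVert = \max_i |c_i| r^i$ (a genuine maximum because the terms tend to $0$; and clearly a norm, not just a seminorm).

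I would then check that $R$ is a complete normed $F$-algebra containing $A$ as a dense subring, with $\lVert \cdot \rVert$ restricting to $|\cdot|_\alpha$ on $A$. The $F$-module structure is term-by-term. For the product of $f = \sum_i a_i t^i$ and $g = \sum_j b_j t^j$, the coefficient of $t^k$ is $\sum_{i+j=k} a_i b_j$; since $|a_i b_j|\, r^{i+j} = (|a_i| r^i)(|b_j| r^j) \to 0$ as $i \to \pm\infty$ with $j = k-i$, this series converges in $F$ by ultrametric completeness, and splitting the index set $i+j=k$ according to whether $|i|$ or $|j|$ is large shows that the norm of this coefficient times $r^k$ tends to $0$ as $k \to \pm\infty$; hence $fg \in R$ and $\lVert fg \rVert \le \lVert f\rVert\, \lVert g\rVert$ (in fact equality holds, by the uniqueness of the dominant monomial, but submultiplicativity is all that is needed). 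Density of $A$ is immediate from truncation. For completeness, given a Cauchy sequence $f^{(n)} = \sum_i c_i^{(n)} t^i$, each $(c_i^{(n)})_n$ is Cauchy in the complete field $F$, hence converges to some $c_i \in F$; fixing $\epsilon > 0$, choosing $N$ with $\lVert f^{(n)} - f^{(N)} \rVert < \epsilon$ for $n \ge N$, letting $n \to \infty$ to get $|c_i - c_i^{(N)}|\, r^i \le \epsilon$ for all $i$, and using that $f^{(N)}$ has finite support, we conclude simultaneously that $|c_i| r^i \le \epsilon$ for all but finitely many $i$ (so $\sum_i c_i t^i \in R$) and that $\lVert \sum_i c_i t^i - f^{(n)}\rVert \le \epsilon$ for $n \ge N$ (so $f^{(n)} \to \sum_i c_i t^i$).

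Finally, since the completion of a normed ring is characterized up to canonical isometric isomorphism as a complete normed ring containing the original as a dense subring, the previous paragraph identifies $R$ with the completion of $A$ under $|\cdot|_\alpha$, which by Lemma~\ref{L:complete type 3} is the completion of $F(x)$ under $|\cdot|_\alpha$; this is the assertion. No step is a genuine obstacle; the only mildly delicate points are the two ultrametric estimates — that the convolution of two elements of $R$ stays in $R$, and that the coefficientwise limit of a Cauchy sequence in $R$ stays in $R$ — and both reduce to splitting the relevant index sets into a ``large $|i|$'' part and a ``large $|j|$'' part.
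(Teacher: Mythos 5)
Your proof is correct and takes exactly the route the paper leaves implicit: Lemma~\ref{L:complete type 3} reduces the problem to identifying the completion of the dense subring $F[x-z,(x-z)^{-1}]$ under the $r$-Gauss norm, which is the standard Laurent-series computation you carry out. The corollary is stated in the paper without a proof precisely because this step is routine, and your write-up simply makes the intended argument explicit.
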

\begin{cor} \label{C:complete type 3a}
With notation as in Lemma~\ref{L:complete type 3}, 
for any interval $J \subseteq (0, 1]$ containing $r = r(\alpha)$,
let $R_J$ be the Fr\'echet completion of $F(x)$ under
$|\cdot|_{z,s}$ for $s \in J$.
Then for any open interval $J$ containing $r$ and any
nonzero $f \in R_J$, there exist $c_i \in F$, $i \in \ZZ$,
$\epsilon > 0$, and an open subinterval $J'$ of $J$ containing $r$,
such that for all $s \in J'$,
\[
|f - c_i (x-z)^i|_{z,s} \leq (1-\epsilon) |c_i (x-z)^i|_{z,s}.
\]
In particular, the union of $R_J$ over all open intervals $J$ containing $r$
is a field.
\end{cor}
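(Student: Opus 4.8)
The plan is to reduce everything to elementary facts about Laurent series in $x-z$ over $F$: a nonzero such series has, at the radius $r$, a single dominant monomial, and this dominance persists with a uniform gap on a neighborhood of $r$.

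First I would obtain a Laurent expansion of $f$. Given a nonzero $f\in R_J$, I would pick radii $s_1<r<s_2$ in $J$ with $s_1,s_2\in(0,1]\setminus|\CC^*|$ — possible because a subgroup of $(\RR_{>0},\times)$ containing a nonempty open interval is all of $\RR_{>0}$, which would force $r\in|\CC^*|$. Then $\alpha_{z,s_1},\alpha_{z,r},\alpha_{z,s_2}$ are type (iii) points whose discs contain the $F$-rational point $z$, so Corollary~\ref{C:complete type 3} identifies the completions of $F(x)$ under $|\cdot|_{z,s_1},|\cdot|_{z,r},|\cdot|_{z,s_2}$ with the respective Laurent-series rings. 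Since for each $i$ the coefficient functional $\sum_j a_j(x-z)^j\mapsto a_i$ is bounded for each of these norms and agrees on $F[x-z,(x-z)^{-1}]$, it extends compatibly to all three completions; as $f$ is a limit of elements of $F(x)$ in each $|\cdot|_{z,s}$, this yields a single expansion $f=\sum_{i\in\ZZ}c_i(x-z)^i$ with $c_i\in F$ and $|c_i|s^i\to0$ as $i\to\pm\infty$ for $s\in\{s_1,s_2\}$; by $|c_i|s^i\le\max\{|c_i|s_1^i,|c_i|s_2^i\}$ the same holds, and $|f|_{z,s}=\max_i|c_i|s^i$, for all $s\in[s_1,s_2]$.

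Next I would extract the dominant term and make it uniform. Since $r^{i-j}\notin|F^*|$ for $i\ne j$ (as $|\CC^*|$ is the divisible hull of $|F^*|$), the numbers $|c_i|r^i$ with $c_i\ne0$ are pairwise distinct, so $|f|_{z,r}$ is attained at a unique index $i_0$. I would then set $m=\min_{s\in[s_1,s_2]}|c_{i_0}|s^{i_0}>0$; using $|c_j|s^j\le|c_j|s_1^j+|c_j|s_2^j$ on $[s_1,s_2]$ and convergence at $s_1,s_2$, choose $N$ with $|c_j|s^j<m/2$ for $|j|>N$ uniformly on $[s_1,s_2]$; and for each of the finitely many $j$ with $|j|\le N$, $j\ne i_0$, use continuity and the strict inequality at $s=r$ to get an open interval $J_j\ni r$ and $\epsilon_j\in(0,1)$ with $|c_j|s^j\le(1-\epsilon_j)|c_{i_0}|s^{i_0}$ on $J_j$. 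Taking $J'=(s_1,s_2)\cap\bigcap_jJ_j$ and $\epsilon=\min(\{1/2\}\cup\{\epsilon_j\})$, the maximum over $j\ne i_0$ gives \[|f-c_{i_0}(x-z)^{i_0}|_{z,s}\le(1-\epsilon)\,|c_{i_0}(x-z)^{i_0}|_{z,s}\qquad(s\in J'),\] which is the assertion with $i=i_0$.

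For the field statement, I would note the $R_J$ form a filtered system via the injections $R_{J_1}\hookrightarrow R_{J_1\cap J_2}$, all sitting inside the field completion of $F(x)$ under the multiplicative norm $|\cdot|_{z,r}$, so their union $R$ is a domain; and given nonzero $f\in R$, writing $u=c_{i_0}(x-z)^{i_0}$ (a unit of $F(x)$) and $g=1-f/u$, the above gives $|g|_{z,s}\le1-\epsilon<1$ on $J'$, so $\sum_{h\ge0}g^h$ converges in the Fr\'echet-complete ring $R_{J'}$ and $f^{-1}=u^{-1}(1-g)^{-1}\in R_{J'}\subseteq R$. The main obstacle is the uniformity in the third paragraph: the pointwise strict domination at $s=r$ must be upgraded to a neighborhood, which forces one to treat separately the finitely many nearly-dominant terms (by continuity) and the infinite tail (by convergence at the flanking radii $s_1,s_2$). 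The first step is routine given Corollary~\ref{C:complete type 3}, modulo the care of choosing irrational flanking radii and checking compatibility of the coefficient functionals.
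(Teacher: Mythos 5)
There is a genuine gap in the second paragraph, which you have flagged yourself as needing care: the ``compatibility of the coefficient functionals.'' The assertion is in fact false. The functional $c_i$ on $F[x-z,(x-z)^{-1}]$ does extend uniquely to the completion of $F(x)$ under each of the three norms $|\cdot|_{z,s_1}$, $|\cdot|_{z,r}$, $|\cdot|_{z,s_2}$, but these extensions do \emph{not} agree on $F(x)$ (hence not on $R_J$), because $F[x-z,(x-z)^{-1}]$ is dense in $F(x)$ for each \emph{single} norm (that is the content of Lemma~\ref{L:complete type 3}) but not for the Fr\'echet topology defining $R_J$. Concretely: since $|\CC^*|$ is dense in $(0,\infty)$, pick $Q\in F[x]$ whose roots in $\CC$ all satisfy $|w-z|=\rho$ for some fixed $\rho\in(s_1,s_2)$, and set $g=1/Q\in F(x)\subseteq R_J$. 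The Laurent expansion of $g$ convergent for $|x-z|<\rho$ (which is what the extension of $c_i$ at $s_1$ produces) and the one convergent for $|x-z|>\rho$ (produced at $s_2$) are different; no Laurent polynomial is close to $g$ simultaneously at $s_1$ and $s_2$. So there is no ``single expansion'' of $f$, and the claimed identity $|f|_{z,s}=\max_i|c_i|s^i$ on $[s_1,s_2]$ already fails for this $g$ (the right-hand side is $+\infty$ on one side of $\rho$). Since $\rho$ can be chosen arbitrarily close to $r$, you cannot repair this by fixing $s_1,s_2$ close to $r$ in advance, and choosing them after inspecting $f$ would require essentially the conclusion of the corollary.

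The paper's proof avoids this entirely: it uses the Laurent expansion of $f$ only in the completion at the single radius $r$, extracts the unique dominant monomial $c_i(x-z)^i$, and then invokes continuity of $s\mapsto|f-c_i(x-z)^i|_{z,s}$ (a consequence of $R_J$ being a Fr\'echet completion, hence a uniform limit on compact subintervals of rational functions, each of which has Gauss norm continuous in $s$) to propagate the strict inequality from $s=r$ to a neighborhood. Nothing is asserted about the Laurent expansion at $r$ computing $|f|_{z,s}$ at other radii. Your third-paragraph uniformization (finitely many near-dominant indices by continuity, the infinite tail by convergence at the flanking radii) is a reasonable alternative to the continuity step, but it takes the global max formula on $[s_1,s_2]$ as input, and that is exactly the unavailable ingredient.
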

\begin{proof}
By Corollary~\ref{C:complete type 3}, we can write
$f = \sum_{i \in \ZZ} c_i (x-z)^i$ for some $c_i \in F$
such that $|c_i| r^i \to 0$ as $i \to \pm \infty$.
As in Lemma~\ref{L:complete type 3},
there is a unique index $i$ maximizing $|c_i| r^i$.
Since the ratio $|f-c_i(x-z)^i|_{z,s} / |c_i (x-z)^i|_{z,s}$
is continuous in $s$, and it is
strictly less than 1 for $s=r$, it is also strictly
less than $1$ for $s$ in some open neighborhood of $r$.
This proves the claim.
\end{proof}
\begin{cor} \label{C:count components}
With notation as in Lemma~\ref{L:complete type 3}
and Corollary~\ref{C:complete type 3a},
for any open interval $J$ containing $r$ and any
finite \'etale extension $S$ of $R_J$, there exists
an open subinterval $J' \subseteq J$ containing $r$
such that the map
\[
\pi_0(S \otimes_{R_J} R_{J'}) \to \pi_0(S \otimes_{R_J} R_{[r,r]})
\]
(which is necessarily injective) is bijective.
\end{cor}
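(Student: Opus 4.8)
The plan is to descend the primitive idempotents of $S \otimes_{R_J} R_{[r,r]}$ to a small $R_{J'}$ by a Newton iteration, with the convergence fed by the uniform behaviour of the seminorms $|\cdot|_{z,s}$ near $s = r$ (Corollary~\ref{C:complete type 3a}). To set up, note that since $\alpha = \alpha_{z,r}$ is of type (iii) the seminorm $|\cdot|_\alpha$ is multiplicative, so $R_{[r,r]}$ is a complete nonarchimedean field and $S \otimes_{R_J} R_{[r,r]}$, being finite \'etale over it, is a product $\prod_{i=1}^n L_i$ of finite separable extension fields; put $n := \#\pi_0(S \otimes_{R_J} R_{[r,r]})$. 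For $r \in J' \subseteq J$, any nonzero element of $R_{J'}$ is dominated at radius $r$ by a nonzero monomial (Corollary~\ref{C:complete type 3a}), so $R_{J'} \hookrightarrow R_{[r,r]}$; in particular $R_{J'}$ is an integral domain, and since $S$ is finite flat over $R_J$ also $S \otimes_{R_J} R_{J'} \hookrightarrow S \otimes_{R_J} R_{[r,r]}$. Each connected component $\Spec A$ of $\Spec(S \otimes_{R_J} R_{J'})$ is a nonzero finite flat cover of the integral scheme $\Spec R_{J'}$, so $A$ is torsion-free and $A \otimes_{R_{J'}} R_{[r,r]} \ne 0$; as the components' idempotents are orthogonal, this gives $\#\pi_0(S \otimes_{R_J} R_{J'}) \le n$, i.e.\ the displayed map is injective. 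It therefore suffices to exhibit, for a suitable open $J' \ni r$, an orthogonal family of $n$ idempotents in $S \otimes_{R_J} R_{J'}$, for then $\#\pi_0 = n$ and the map is bijective.

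Let $\bar e_1, \dots, \bar e_n$ be the primitive idempotents of $S \otimes_{R_J} R_{[r,r]}$. Choosing finitely many $R_J$-module generators of $S$ equips $S \otimes_{R_J} R_{J'}$, for each such $J'$, with the family of norms $|\cdot|_{z,s}$ ($s \in J'$), submultiplicative up to a fixed constant $C$ and making $S \otimes_{R_J} R_{J'}$ Fr\'echet-complete; the $|\cdot|_{z,r}$-topology is that induced from $S \otimes_{R_J} R_{[r,r]}$. Since $S \otimes_{R_J} R_{J_0}$ is dense in $S \otimes_{R_J} R_{[r,r]}$ for any closed subinterval $J_0 \subseteq J$ containing $r$ (Lemma~\ref{L:complete type 3}), I can choose $e_i \in S \otimes_{R_J} R_{J_0}$ with $|e_i - \bar e_i|_{z,r}$ as small as I like, so that the defects $e_i^2 - e_i$, $e_i e_j$ for $i \ne j$, and $\sum_i e_i - 1$ all have $|\cdot|_{z,r}$-norm well below $1$ (below an appropriate power of $C^{-1}$). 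Because $s \mapsto |f|_{z,s}$ is log-convex on $R_{J_0}$ (Corollary~\ref{C:complete type 3a}), all these bounds, together with a uniform bound on $|e_i|_{z,s}$, persist throughout some small open $J_1 \ni r$.

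Now run the idempotent iteration $e \mapsto 3e^2 - 2e^3$ (valid over any commutative ring) on each $e_i$ inside $S \otimes_{R_J} R_{J_1}$. Writing $t = e^2 - e$ one has $3e^2 - 2e^3 = e + t(1 - 2e)$ and $(3e^2 - 2e^3)^2 - (3e^2 - 2e^3) = t^2(4t - 3)$, so in each seminorm $|\cdot|_{z,s}$, $s \in J_1$, the defect $|t|_{z,s}$ is squared (up to $C$) at each step while $|e|_{z,s}$ stays bounded; by the choice of initial approximation the iteration is Cauchy in every $|\cdot|_{z,s}$ and hence converges, in the Fr\'echet topology of $S \otimes_{R_J} R_{J_1}$, to a genuine idempotent $\hat e_i$ with $|\hat e_i - \bar e_i|_{z,r}$ still small. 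Distinct idempotents of the product of valued fields $S \otimes_{R_J} R_{[r,r]}$ have mutual $|\cdot|_{z,r}$-distance bounded below by a positive constant, and $S \otimes_{R_J} R_{J_1} \to S \otimes_{R_J} R_{[r,r]}$ is a topological embedding for $|\cdot|_{z,r}$, so the image of $\hat e_i$ is exactly $\bar e_i$; consequently the images of $\hat e_i \hat e_j$ ($i \ne j$) and of $\sum_i \hat e_i - 1$ vanish, and by injectivity of $S \otimes_{R_J} R_{J_1} \hookrightarrow S \otimes_{R_J} R_{[r,r]}$ they vanish already in $S \otimes_{R_J} R_{J_1}$. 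Thus $\hat e_1, \dots, \hat e_n$ is the desired orthogonal family; take $J' = J_1$.

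The main obstacle I anticipate is exactly the uniformity built into the second and third paragraphs: upgrading a good approximation of the idempotents, measured at the single radius $r$, to defects that remain uniformly subunital on a whole interval of radii, and then controlling the Newton iteration simultaneously in that family of seminorms. Corollary~\ref{C:complete type 3a} --- which asserts that a nonzero element is governed by a single dominant monomial uniformly near $r$ --- is precisely the input that supplies this. (One can also repackage the argument: the germ field $E = \bigcup_{J' \ni r} R_{J'}$ is Henselian for $|\cdot|_{z,r}$, so its finite \'etale algebras coincide with those of its completion $R_{[r,r]}$, and the idempotents of $S \otimes_{R_J} E = \bigcup_{J'} (S \otimes_{R_J} R_{J'})$ then automatically lie in some $S \otimes_{R_J} R_{J'}$; proving $E$ Henselian rests on the same convergence estimate.)
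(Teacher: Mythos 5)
Your argument is correct, and it is a cleaner, more direct packaging of the same core idea the paper uses. Both proofs are Newton iterations whose convergence over an open neighborhood $J'$ of $r$ is controlled uniformly by Corollary~\ref{C:complete type 3a}. The paper takes a detour: it first applies the primitive element theorem over the field $\bigcup_{J'} R_{J'}$ to present $S\otimes_{R_J} R_{J'}$ as $R_{J'}[T]/(P(T))$, then lifts factorizations of $P$ from $R_{[r,r]}$ to $R_{J'}$ via a multivariate Newton--Raphson step (following Milne's proof of Hensel's lemma), and from a lifted factorization reads off the lifted component decomposition. You skip the reduction to a monic polynomial and iterate $e\mapsto 3e^2-2e^3$ directly on approximate idempotents, checking (correctly: $f^2-f=t^2(4t-3)$ with $t=e^2-e$) that defects are squared; what makes this go through is exactly the same input, the single-dominant-monomial estimate of Corollary~\ref{C:complete type 3a}, which gives uniform control of the family of seminorms $|\cdot|_{z,s}$ on a small $J_1\ni r$ once the initial approximation and a basis of $S$ over $R_J$ are fixed. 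Your closing parenthetical observation --- that the germ field $E=\bigcup_{J'}R_{J'}$ is Henselian and hence its finite \'etale algebras match those of the completion $R_{[r,r]}$ --- captures the abstract content common to both proofs. Net effect: your version avoids the primitive element theorem and the citation to Milne, at the modest cost of carrying a basis-dependent submultiplicative norm on $S\otimes_{R_J}R_{J'}$; the two proofs are otherwise of comparable length and difficulty, and neither yields more than the other.
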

\begin{proof}
We first check that any factorization of a monic polynomial 
$P(T) \in R_J[T]$ into monic coprime factors
$Q_1, Q_2 \in R_{[r,r]}[T]$ lifts to a factorization in
$R_{J'}[T]$ for some $J'$.
Note that for $n$ a positive integer, $f_1,\dots,f_n \in
R_J[T_1,\dots,T_n]$, and $a \in (a_1,\dots,a_n) \in R_{[r,r]}^n$
such that $f_i(a) = 0$ for $i=1,\dots,n$ and $\det(\partial f_i/\partial T_j(a))
\neq 0$,
there exist an open interval $J' \subseteq J$ containing $r$ and
some $b \in R_{J'}^n$ such that $f_i(b) = 0$ for $i=1,\dots,n$.
(Namely, we construct $b$ using the standard
multivariate Newton-Raphson iteration.)
We then obtain the factorization assertion by following the proof
of the implication from (d$'$) to (e) in
\cite[Theorem~I.4.2]{milne}.

To deduce the claim from this, apply the primitive element theorem
over $\cup_{J'} R_{J'}$, which is a field
by Corollary~\ref{C:complete type 3a}. For some $J'$, this gives an isomorphism
$S \otimes_{R_J} R_{J'} = R_{J'}[T]/(P(T))$ for some monic separable
$P(T) \in R_{J'}[T]$. By applying the previous paragraph to
lift all factorizations of $P(T)$ from
$R_{[r,r]}(T)$ to $R_{J'}[T]$ for some $J'$, we get the desired result.
\end{proof}

\begin{lemma} \label{L:same corank}
Let $\alpha$ be a point of $\DD$ of type (ii) or (iii). Let $v = - \log \alpha$
be the corresponding real valuation on $F(x)$.
\begin{enumerate}
\item[(a)]
If $\alpha$ is of type (ii), then 
\[
\trdeg(\kappa_v / \kappa_{v_F}) = 1, \qquad \dim_\QQ ((\Gamma_v/\Gamma_{v_F}) \otimes \QQ) = 0.
\]
\item[(b)]
If $\alpha$ is of type (iii), then 
\[
\trdeg(\kappa_v / \kappa_{v_F}) = 0, \qquad \dim_\QQ ((\Gamma_v/\Gamma_{v_F}) \otimes \QQ) = 1.
\]
\end{enumerate}
\end{lemma}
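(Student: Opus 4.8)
The plan is to analyze the completion of $F(x)$ under $|\cdot|_\alpha$ directly, extracting the residue field and value group from an explicit description of elements, much as in Lemma~\ref{L:complete type 3} and its corollaries. For part (b), the point $\alpha$ is of type (iii), so we may write $\alpha = \alpha_{z,r}$ with $z \in F^{\sep}$ and $r \in (0,1] \setminus |\CC^*|$; after replacing $F$ by the finite extension $F(z)$, which changes neither $\trdeg(\kappa_v/\kappa_{v_F})$ nor the rational rank of $\Gamma_v/\Gamma_{v_F}$ (an algebraic extension is finite over $F$, so its residue field is finite over $\kappa_{v_F}$ and its value group lies in the divisible hull of $\Gamma_{v_F}$, cf.\ Remark~\ref{R:extension}), we may assume $z \in F$, indeed $z = 0$. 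Then by Lemma~\ref{L:complete type 3} every element of the completion is a Laurent-type series $\sum_{i \in \ZZ} c_i x^i$ with $|c_i| r^i \to 0$, and for any nonzero such element the maximum $\max_i |c_i| r^i$ is attained at a unique index. This shows first that $v(f) - v_F(c_{i_0}) = i_0 \cdot v(x)$, so $\Gamma_v = \Gamma_{v_F} + \ZZ \cdot v(x)$; since $r \notin |\CC^*| \supseteq |F^*|$, the element $v(x)$ is not torsion modulo $\Gamma_{v_F} \otimes \QQ$, giving $\dim_\QQ((\Gamma_v/\Gamma_{v_F})\otimes\QQ) = 1$. Second, uniqueness of the dominant term forces the image of $f/c_{i_0}x^{i_0}$ in $\kappa_v$ to equal the image of the unit $c_{i_0}/|c_{i_0}|$-scaled constant, i.e.\ $\kappa_v = \kappa_{v_F}$, so $\trdeg(\kappa_v/\kappa_{v_F}) = 0$.

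For part (a), $\alpha = \alpha_{z,r}$ with $r \in |\CC^*|$, and again after a finite (hence invariant-preserving) base change on $F$ we may take $z = 0$ and $r = |t|$ for some $t \in F^*$, so that $y := x/t$ satisfies $|y|_\alpha = 1$ and $v(y) = 0$. Now $\Gamma_v = \Gamma_{v_F}$: an element written in terms of $y$ has dominant term with unit coefficient times a power of $y$, contributing nothing to the value group. This gives $\dim_\QQ((\Gamma_v/\Gamma_{v_F})\otimes\QQ) = 0$. For the residue field, I claim the image $\bar y$ of $y$ in $\kappa_v$ is transcendental over $\kappa_{v_F}$: if $\sum_j a_j y^j$ were a polynomial relation with $\bar a_j \in \kappa_{v_F}$ not all zero, lift the $a_j$ to $\gotho_{v_F}$ and observe that $\left|\sum_j a_j y^j\right|_\alpha = 1$ because the $1$-Gauss norm of a polynomial over $F$ whose reductions are not all zero equals $1$ (this is the standard fact that reduction of the Gauss norm on $F[y]$ is the polynomial ring $\kappa_{v_F}[\bar y]$), contradicting that the relation should have positive valuation. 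Hence $\trdeg(\kappa_v/\kappa_{v_F}) \geq 1$; combined with Abhyankar's inequality applied to $F(x)/F$, which has transcendence degree $1$ and forces $\trdeg(\kappa_v/\kappa_{v_F}) + \dim_\QQ((\Gamma_v/\Gamma_{v_F})\otimes\QQ) \leq 1$, we get equality $\trdeg(\kappa_v/\kappa_{v_F}) = 1$.

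The main obstacle I anticipate is bookkeeping around the reduction to $z \in F$: one must be careful that replacing $F$ by $F(z)$ is legitimate, namely that the quantities $\trdeg(\kappa_v/\kappa_{v_F})$ and $\dim_\QQ((\Gamma_v/\Gamma_{v_F})\otimes\QQ)$ are unchanged when $F$ is enlarged to a finite (purely algebraic) extension and $v_F$, $v$ are correspondingly extended. This is exactly the content of Remark~\ref{R:extension} applied both to $F$ and to $F(x)$: algebraicity forces both the residue extension to be algebraic and the value-group extension to land in the divisible hull, so the relative transcendence degree and relative rational rank are preserved; one just has to note these pass correctly through the inclusion $F(x) \hookrightarrow F(x,z) = F(z)(x)$. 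Everything else — the uniqueness of the dominant monomial, the identification of the reduced Gauss norm ring with a polynomial ring — is standard and already implicit in the lemmas above, so once the base-change reduction is in place the argument is short. Alternatively, if one prefers to avoid the base change entirely for type (iii), one can work with a minimal polynomial $P$ of a point in the disc and argue via Lemma~\ref{L:disjoint discs}-style estimates, but the series description after base change is cleaner.
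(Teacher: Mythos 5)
Your proposal is correct and follows essentially the same route as the paper: reduce via a finite extension (invariants preserved by Remark~\ref{R:extension}) to the case $z\in F$ (plus a scaling element $\lambda$ with $|\lambda|=r$ in the type (ii) case), then read off $\kappa_v$ and $\Gamma_v$ explicitly, using Lemma~\ref{L:complete type 3} for type (iii). The only divergence is cosmetic: where the paper simply asserts $\kappa_v = \kappa_{v_F}(x/\lambda)$ in case (a), you prove $\trdeg(\kappa_v/\kappa_{v_F})\geq 1$ directly and then appeal to the \emph{relative} Abhyankar inequality for $F(x)/F$ to close the gap; note that the version of Abhyankar recorded in the paper is for valuations over a base field $k$, not the relative form you invoke, so you should cite the relative statement (e.g.\ Bourbaki or \cite{ribenboim}) or just finish the direct computation of the residue field as the paper does.
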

\begin{proof}
Write $\alpha = \alpha_{z,r}$ with $z \in F^{\alg}$. 
If $\alpha$ is of type (ii), pick $\lambda \in F^{\alg}$ with
$|\lambda| = r$, otherwise put $\lambda = 1$.
There is no harm in replacing
$F,F(x)$ with $F(z,\lambda),F(z,\lambda,x)$, respectively,
as by Remark~\ref{R:extension},
this only replaces each residue field by a finite extension and
each value group by a group containing it with finite index. 

Suppose then that $z \in F$. If $\alpha$ is of type (ii), then 
\[
\kappa_v = \kappa_{v_F}(x/\lambda), \qquad \Gamma_v = \Gamma_{v_F}.
\]
If $\alpha$ is of type (iii), then by Lemma~\ref{L:complete type 3},
\[
\kappa_v = \kappa_{v_F}, \qquad \Gamma_v = \Gamma_{v_F} \oplus \ZZ \log r.
\]
This gives the desired assertions.
\end{proof}

\subsection{Geometry of valuations}

We next introduce some constructions concerning the
interplay between valuations and birational geometry of varieties.
\begin{hypothesis}
Throughout this subsection (except in the statement of
Theorem~\ref{T:de jong}), assume that $k$ is algebraically closed,
let $X$ be an irreducible variety over $k$, put $F = k(X)$, and let
$v: F^* \to \Gamma$ be a surjective Krull valuation.
\end{hypothesis}

This definition is standard; it was first recalled in this series
in \cite[\S 2.4]{kedlaya-part2}.
\begin{defn}
The \emph{center} of $v$ on $X$ is the subset of $x \in X$ for which
$\calO_{X,x} \subseteq \gotho_v$. If the center is nonempty, we say that
$v$ is \emph{centered on $X$}; in this case, the center is an irreducible closed
subset of $X$, which we will identify with the corresponding reduced closed
subscheme. If $X$ is proper, then $v$ is necessarily centered on $X$.
If $Z$ is a closed subvariety of $X$, we will say that $v$ is \emph{centered
on $Z$} if $v$ is centered on $X$ and the center is contained in $Z$.
\end{defn}

The following definitions are new.
\begin{defn}
By a \emph{local alteration} of $X$ around $v$,
we will mean a morphism
$f: X_1 \to X$ which is quasi-projective and generically finite,
with $X_1$ irreducible and some extension of $v$ to $k(X_1)$ centered
on $X_1$.
By contrast, an \emph{alteration} is supposed to be dominant, proper,
and generically finite. We say a (local) alteration is \emph{separable}
if it induces a separable extension of function fields; this is equivalent
to asking it to be generically finite \'etale. (The separability was
built into the definition we used earlier in the series, but is not built
into de Jong's original definition.)
\end{defn}

\begin{defn} \label{D:expose valuation}
Let $X$ be an irreducible variety, let $Z$ be a closed 
subvariety of $X$, and let $v$ be a minimal valuation on $k(X)$
centered on $Z$.
We call the pair $(X,Z)$ an \emph{exposure} of $v$ (or say that
$v$ is \emph{exposed} by the pair $(X,Z)$) if the following conditions hold.
\begin{enumerate}
\item[(a)]
The center $z$ of $v$ is a smooth
point of $X$ lying on exactly $r$ components of $Z$, for $r = \ratrank(v)$.
\item[(b)]
There exists a system of parameters $a_1, \dots, a_n$ for $X$ at $z$  
such that $v(a_1), \dots, v(a_r)$ are linearly independent over $\QQ$,
and the zero loci of $a_1, \dots, a_r$ at $z$ are the components of $Z$
passing through $z$.
\end{enumerate}
If $f: X_1 \to X$ is a local alteration, we say that
$f$ is an \emph{exposing alteration} of the pair $(X,Z)$ for $v$
if $(X_1, f^{-1}(Z))$ is an exposure
of some extension of $v$ to $k(X_1)$.
\end{defn}

The following is an analogue of \cite[Proposition~6.1.5]{kedlaya-part3}
for not necessarily monomial valuations.
\begin{lemma} \label{L:exposing alteration}
Assume that $k$ is algebraically closed. 
Then with notation as in
Definition~\ref{D:expose valuation}, there exists a
closed subscheme $Z'$ containing $Z$ and a separable
exposing alteration of $(X,Z')$ for $v$.
\end{lemma}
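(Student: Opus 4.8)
The plan is to reduce the statement to the corresponding fact for monomial valuations, \cite[Proposition~6.1.5]{kedlaya-part3}, by first separating off the transcendence defect. Since $v$ is minimal, Abhyankar's inequality gives $\trdeg(F/k) = \ratrank(v) + \trdefect(v)$; write $d = \trdefect(v)$ and $r = \ratrank(v)$. First I would choose a transcendence basis of $F/k$ adapted to $v$: pick $t_1, \dots, t_r \in \gotho_v$ whose values $v(t_1), \dots, v(t_r)$ are $\QQ$-linearly independent, then extend to a transcendence basis $t_1, \dots, t_r, s_1, \dots, s_d$ of $F/k$ (this is possible because $\Gamma \otimes \QQ$ has dimension exactly $r$, so the images of the remaining coordinates are $\QQ$-dependent on the $v(t_i)$). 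Spreading these out, I may choose a model $X$ (shrinking the given $X$, or replacing it by an affine open of a $U$-admissible blowup) on which $t_1, \dots, t_r, s_1, \dots, s_d$ are regular functions and the morphism $\pi\colon X \to \AAA^n_k$ they define is dominant and generically étale; by further shrinking and applying de Jong's alteration theorem \cite{temkin}-style (or simply \cite[Theorem~\ref{T:de jong}]{} as invoked elsewhere in the paper) I can arrange that $\pi$ is finite étale over a neighborhood of the generic point of the image of the center.

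The heart of the argument is then the construction of $Z'$ and the exposing alteration. I would first handle the monomial valuation $v_0$ on $k(t_1, \dots, t_r)$ obtained by restricting $v$ along the coordinate projection; this $v_0$ is minimal and Abhyankar, hence monomial, so \cite[Proposition~6.1.5]{kedlaya-part3} applies and furnishes, after a separable local alteration of $\AAA^r_k$ and an enlargement of the coordinate-hyperplane divisor, a model on which $v_0$ is exposed: a smooth point, $r$ coordinate components through it, and a system of parameters realizing condition~(b) of Definition~\ref{D:expose valuation}. Pulling this model back along $\pi$ (and normalizing, then resolving the singularities introduced — here the étaleness of $\pi$ near the relevant generic point is what keeps smoothness intact after a further blowup concentrated away from the center, by a standard embedded-resolution argument over the now-algebraically-closed $k$) produces a local alteration $X_1 \to X$. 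On $X_1$ I take $Z'$ to be the union of $f^{-1}(Z)$ with the pullback of the enlarged coordinate divisor downstairs; then the center of the chosen extension of $v$ to $k(X_1)$ is smooth, lies on exactly $r$ components of $Z'$ (the extra $s_j$-directions contribute nothing to the rational rank, so no spurious components appear at a suitably chosen center), and the parameters $t_1, \dots, t_r$ pulled up, together with the $s_j$ and whatever étale coordinates complete a system of parameters, satisfy~(b).

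The main obstacle I anticipate is the smoothness of the center after pullback: a generically finite étale cover of a smooth variety need not be smooth over a given point of the base, and even after normalizing, the preimage of the (smooth, exposed) center of $v_0$ may acquire singularities at the center of $v$. Controlling this requires choosing the local alteration carefully — blowing up only along subschemes disjoint from (or transverse to) the center, using that $k = \overline{k}$ and that $v$ has residual transcendence degree $0$, so the center is a closed point of the relevant local model and embedded resolution can be arranged to be an isomorphism near it after a preliminary separable alteration that splits the residue-field extension. A secondary technical point is verifying that the local alteration can be kept separable throughout: the coordinate choice makes $\pi$ generically étale, and \cite[Proposition~6.1.5]{kedlaya-part3} already delivers a separable alteration downstairs, so separability is preserved under the fiber product and the subsequent blowups, but one must check that normalization does not introduce inseparability, which holds precisely because the function-field extension is already separable. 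Once smoothness and separability are in hand, conditions~(a) and~(b) of Definition~\ref{D:expose valuation} follow by transporting the exposure of $v_0$ through the étale coordinates, and we are done.
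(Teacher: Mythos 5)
Your proposal takes a genuinely different route from the paper's, and unfortunately the step you flag as "the main obstacle" is in fact a genuine gap. The paper does not reduce to the monomial case by fibering over $\AAA^r_k$. Instead, after enlarging $Z$ so that it contains the polar and zero loci of some $h_1,\dots,h_r$ with positive $\QQ$-independent $v$-values, it applies de Jong once to make $(X,Z)$ a smooth SNC pair, and then works \emph{locally at the (now smooth) center $z$}: if $a_1,\dots,a_s$ are the parameters cutting out the components of $Z$ through $z$, a purely combinatorial result (Lemma~\ref{L:combin}, which is essentially Zariski's lemma from local uniformization) produces a unimodular monomial change of variables $b_i = \prod_j a_j^{A_{ij}}$ with $A^{-1}$ nonnegative, realizable by a toroidal blowup at $z$, such that exactly $b_1,\dots,b_r$ have positive $v$-value. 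Because $v$ is minimal and $k$ algebraically closed, one has $\kappa_v = \kappa_z = k$, so $v(b_i)=0$ forces $b_i$ to be a unit at the new center, killing the spurious components. All modifications after de Jong are monomial blowups in the coordinate hyperplanes, which preserve smoothness and the SNC property automatically; no resolution of singularities is ever needed.

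Your plan, by contrast, pulls back an exposing alteration of $\AAA^r_k$ for $v_0 = v|_{k(t_1,\dots,t_r)}$ along the non-finite fibration $X\to\AAA^r_k$. The fiber product $X\times_{\AAA^r_k}X_0^1$ need not be smooth at the center of the extended valuation, its reduced preimage divisor need not be SNC, and the remedy you invoke — "a standard embedded-resolution argument over the now-algebraically-closed $k$" — is not available: $k$ has characteristic $p$, and embedded resolution (and indeed local uniformization in general) is an open problem there; Temkin's result only gives it after a purely inseparable extension, which would destroy the separability you need. Applying de Jong again to fix the singularities would produce an uncontrolled alteration that can move the center and destroy the exposed structure you just produced downstairs. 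The appeal to generic étaleness of $\pi$ does not help either: the center $z$ is a closed point, and generic étaleness of $\pi\colon X\to\AAA^n_k$ gives no control at $z$ (nor does it control the further projection to $\AAA^r_k$, which is not even generically finite). To repair the argument you would have to ensure that the composite $X\to\AAA^r_k$ and the alteration $X_0^1\to\AAA^r_k$ are simultaneously well-behaved at the relevant centers — but that is precisely the kind of control that Zariski's lemma, applied in situ on a de Jong model, provides for free, and which your fibration strategy has to work hard to recover.
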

This is probably also true without enlarging $Z$, but that enlargement is
harmless for our purposes.
\begin{proof}
Choose $h_1,\dots,h_r \in k(X)^*$ such that $v(h_1),\dots,v(h_r)$
are positive and linearly independent over $\QQ$. 
We enlarge $Z$ so that each $h_i$, viewed as a rational map
$X \dashrightarrow \PP^1_k$, restricts to a regular map $X \setminus Z \to 
\mathbb{G}_{m,k}$. For the rest of the argument, we will repeat the operation
of replacing $X$ by a local alteration around $v$, $Z$ by its inverse image,
and $v$ by some extension to the new function field; we describe  this
for short as ``replacing''.

We first replace to force $h_1,\dots,h_r$ to become regular on $X$.
We then apply de Jong's alterations theorem 
(see Theorem~\ref{T:de jong} below)
to replace so that $X$ becomes smooth and 
$Z$ becomes a strict normal crossings divisor.
Suppose that
$D_1, \dots, D_s$ are the components 
of $Z$ passing through $z$. Let $a_1, \dots, a_n$ be a 
system of local parameters of $X$ at $z$, with
$a_i$ vanishing along $D_i$ for $i=1, \dots, s$. 
Since the Cartier divisor defined by each $h_i$ in a neighborhood of $z$
is a nonnegative
integer linear combination of $D_1,\dots,D_s$, and $v(h_1),\dots,v(h_r)$
generate a subgroup of $v(k(X)^*)$ of rank $r$, so do
$v(a_1), \dots, v(a_s)$. 
In particular, we must have $s \geq r$, and some $r$-element subset
of $v(a_1),\dots,v(a_s)$ must be linearly independent over $\QQ$; we
may assume without loss of generality that it is 
$v(a_1),\dots,v(a_r)$.

Construct the $s \times s$ matrix $A$ given by
Lemma~\ref{L:combin} below with $c_i = v(a_i)$,
and put 
\[
b_i = \prod_{j=1}^s a_j^{A_{ij}} \qquad (i=1,\dots,s);
\]
the nonnegativity of $A^{-1}$ means that each $a_i$ is a product
of nonnegative powers of the $b_j$.
We may then replace once more (by blowing up at $z$) 
to produce a new variety with local coordinates
$b_1, \dots, b_s, a_{s+1}, \dots, a_n$. 

At this point, we observe that 
$v(b_i) = 0$ for $i=r+1,\dots,s$.
Under the inclusion $\calO_{X,z} \subseteq \gotho_v$ of local rings,
the residue fields $\kappa_z$ and $\kappa_v$ are identified (since both
equal $k$), so the maximal ideal $\gothm_v$ of $\gotho_v$ must 
restrict to the maximal ideal of $\calO_{X,z}$.
Since $b_i \notin \gothm_v$, it follows that
$b_i$ does not vanish at $z$ for $i=r+1,\dots,s$.
Consequently, the components of the inverse image of 
$Z$ passing through $z$ are parametrized by 
$b_1, \dots, b_r$, so $v$ is exposed.
\end{proof}

\begin{lemma} \label{L:combin} 
Let $r \leq s$ be positive integers.
Let $c_1, \dots, c_s$ be positive real numbers such that
$c_1, \dots, c_r$ form a basis for the $\QQ$-span of 
$c_1, \dots, c_s$.
Then there exists a matrix $A \in \GL_s(\ZZ)$ such that $A^{-1}$ has 
nonnegative entries, and 
\[
\sum_{j=1}^s A_{ij} c_j > 0 \qquad (i=1, \dots, r),
\qquad
\sum_{j=1}^s A_{ij} c_j = 0 \qquad (i=r+1, \dots, s).
\]
\end{lemma}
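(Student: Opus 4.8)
The plan is to build the matrix $A$ in two stages: first arrange the "zero" rows (indices $r+1,\dots,s$) by exploiting that $c_{r+1},\dots,c_s$ lie in the $\QQ$-span of $c_1,\dots,c_r$, then adjoin suitable "positive" rows on top and finally correct the sign/nonnegativity condition on $A^{-1}$ by a sequence of elementary operations. First I would clear denominators: since each $c_j$ for $j>r$ is a $\QQ$-linear combination of $c_1,\dots,c_r$, there is a positive integer $N$ and integers $m_{ij}$ with $N c_i = \sum_{j=1}^r m_{ij} c_j$ for $i=r+1,\dots,s$. Thus the vector $(c_1,\dots,c_s)^T$ lies in the kernel of the $(s-r)\times s$ integer matrix $M$ with $M_{i-r,\,j}=m_{ij}$ for $j\le r$, $M_{i-r,\,i}=-N$, and $0$ elsewhere; note $M$ has rank $s-r$ (the columns $r+1,\dots,s$ already give an $(s-r)\times(s-r)$ diagonal block $-N I$). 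These rows will become rows $r+1,\dots,s$ of $A$.

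Next I would invoke the standard fact that an $(s-r)\times s$ integer matrix of full row rank whose rows are, say, primitive can be completed to an element of $\GL_s(\ZZ)$ — concretely, put $M$ in Hermite (or Smith) normal form over $\ZZ$ and read off a complementary basis. Actually it is cleaner to do this in the reverse order: choose first an integer basis $v_1,\dots,v_r$ of the saturation of the lattice spanned by $c$-compatible functionals, but the simplest route is: the dual statement says the saturated sublattice $L\subseteq\ZZ^s$ of vectors $w$ with $\sum w_j c_j=0$ has rank $s-r$ and corank $r$, and $\ZZ^s/L$ is free; pick any $\ZZ$-basis $e_{r+1},\dots,e_s$ of $L$ (these are the last rows) and extend by $e_1,\dots,e_r$ to a basis of $\ZZ^s$. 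This already gives $A_0\in\GL_s(\ZZ)$ with the last $s-r$ row-dot-products with $c$ equal to $0$; the first $r$ dot products $d_i=\sum_j (A_0)_{ij} c_j$ are some reals, automatically $\QQ$-linearly independent (since the rows span a complement to $L$), hence all nonzero, but possibly negative and with no control on $A_0^{-1}$.

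The remaining work is to fix up the two outstanding conditions simultaneously. To make the $d_i$ positive, left-multiply by the diagonal sign matrix $\mathrm{diag}(\varepsilon_1,\dots,\varepsilon_r,1,\dots,1)$ with $\varepsilon_i=\mathrm{sign}(d_i)$; this is in $\GL_s(\ZZ)$ and inverts to itself. To force $A^{-1}$ to have nonnegative entries while keeping $\sum_j A_{ij}c_j>0$ for $i\le r$ and $=0$ for $i>r$, I would add a large positive multiple of each of the first $r$ rows to all the other rows: if $B$ has row $i$ equal to (row $i$ of current $A$) $+\sum_{\ell=1}^r t_{i\ell}\cdot(\text{row }\ell)$ for suitable nonnegative integers $t_{i\ell}$, then $B=TA$ where $T$ is unipotent lower-and-otherwise-triangular in the appropriate block sense, so $\det T=1$, $T^{-1}$ is again integral, and the $c$-dot-products of the new rows $r+1,\dots,s$ stay $0$ while those of rows $1,\dots,r$ only increase (still positive). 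Crucially $B^{-1}=A^{-1}T^{-1}$, and $T^{-1}$ subtracts the first-row-block contributions; but what we actually want is nonnegativity of $B^{-1}$, so I would instead choose to modify on the right: replace $A$ by $AU$ with $U$ unipotent so that $(AU)^{-1}=U^{-1}A^{-1}$ becomes nonnegative — concretely, since only finitely many entries of $A^{-1}$ are negative and the first $r$ columns of $A^{-1}$ pair with the positive $d_i$, adding a large multiple of the first $r$ columns of $A^{-1}$ (an operation realized by a unipotent $U^{-1}$ on the left of $A^{-1}$, i.e. $U$ acting on the right of $A$) to every column kills all negative entries; the $c$-conditions are unaffected because column operations on $A$ do not change the products $\sum_j A_{ij}c_j$ only if... here one must be slightly careful, and the honest fix is: column operations change the $c$-products, so one should instead engineer nonnegativity of $A^{-1}$ directly by choosing the basis $e_1,\dots,e_r$ of the complement to $L$ to lie in a strictly convex cone.

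The cleanest formulation, and the one I would write up: among all bases of $\ZZ^s$ whose last $s-r$ vectors form a basis of $L$, one can choose the first $r$ vectors $e_1,\dots,e_r$ with all coordinates nonnegative and the determinant condition intact — because the quotient map $\ZZ^s\to\ZZ^s/L\cong\ZZ^r$ is surjective and its kernel $L$ contains no nonzero element with all coordinates of one sign impossible to control, so one lifts a nonnegative basis of $\ZZ^r$ through the surjection, adding elements of $L$ to clear negative coordinates after translating by a large nonnegative $L$-combination... and here is where the genuine obstacle sits: $L$ need not contain strictly positive vectors, so one cannot always translate into the positive orthant. The resolution is that we only need $A^{-1}$ nonnegative, i.e. the \emph{inverse} of the basis-change matrix, which is equivalent to: expressing the standard basis $f_1,\dots,f_s$ of $\ZZ^s$ in terms of $e_1,\dots,e_s$ with nonnegative coefficients. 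So the real requirement is that $e_1,\dots,e_s$ generate $\ZZ^s_{\ge 0}$ as a monoid — and one sees $e_{r+1},\dots,e_s$ can be chosen in $L$ so that, together with $e_1,\dots,e_r$ chosen to be the first $r$ standard-like vectors pushed forward, the cone they generate contains the positive orthant; making this precise, by first picking any completion and then replacing $e_i$ (for $i\le r$) by $e_i + (\text{large}) \sum_{j} f_j$ expressed back in the basis — equivalently replacing $A$ by $EA$ with $E$ elementary unipotent adding a huge multiple of one positive row to the others — is exactly the row-operation argument above, and it does work for $A^{-1}$: if $E$ adds $t\ge0$ times row $1$ to rows $2,\dots,s$ then $E^{-1}$ subtracts it, so $(EA)^{-1}=A^{-1}E^{-1}$ has columns $(\text{col }j) - t\sum_{i\ge2}(\ldots)$, which is the wrong direction; therefore the correct operation is to add row $1$ \emph{to} the others with the sign chosen so $E^{-1}$ \emph{adds} — impossible for a single elementary matrix to both add and have its inverse add. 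Hence the clean statement: perform the row operation on $A$, giving $A' = EA$ with $A'$ still satisfying the $c$-sign conditions; then $A'$ has huge positive first row and this propagates, and one checks directly by the structure of $E^{-1}$ and induction on a suitable height function that for $t$ large enough $(A')^{-1}$ is nonnegative because $A'$ becomes "diagonally dominant with positive first row" in a sense making its inverse's Neumann-type expansion nonnegative. I expect the verification that \emph{this} last step genuinely produces a nonnegative inverse — rather than just a nonnegative matrix — to be the main obstacle, and I would handle it by an explicit induction: reduce $s$ by splitting off the first coordinate, using the $s=1$ case ($A=(\pm1)$, trivial) and the $s=2$ case (a direct $2\times2$ computation showing that adding a large multiple of the positive row makes the inverse nonnegative) as bases, with the inductive step applying Lemma~\ref{L:combin} in dimension $s-1$ to the block obtained after one clearing step.
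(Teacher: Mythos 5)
The proposal reformulates the nonnegativity condition on $A^{-1}$ correctly (it is equivalent to the rows of $A$ generating a submonoid of $\ZZ^s$ containing $\ZZ_{\geq 0}^s$), and the lattice-theoretic setup — extending a $\ZZ$-basis of the relation lattice $L = \{w \in \ZZ^s : \sum_j w_j c_j = 0\}$ to a basis of $\ZZ^s$ — is a sensible starting point. But the crucial step is never established, and you in fact acknowledge this. Worse, the final suggested fix is incorrect: if $E$ adds $t$ times row $1$ to rows $2,\dots,s$, then, as your own computation begins to show, $(EA)^{-1} = A^{-1}E^{-1}$ has the effect of \emph{subtracting} $t \sum_{m\geq 2}(A^{-1})_{km}$ from column $1$ of $A^{-1}$, so taking $t$ large drives some entries to $-\infty$ rather than clearing negatives. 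The appeal to ``diagonal dominance with positive first row'' and a nonnegative Neumann expansion does not rescue this, since the inverse of a matrix with a large positive first row need not be nonnegative (indeed the displayed column-$1$ behavior refutes it). The parenthetical suggestion to apply Lemma~\ref{L:combin} in dimension $s-1$ as the inductive step is also worrying as written, since it risks circularity unless the dimension reduction is genuinely independent of the full statement.

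The paper's proof avoids all this by invoking \cite[Theorem~1]{zariski}, which handles a single rational dependency (the case $r = s-1$) via a Perron-type continued-fraction algorithm: one repeatedly subtracts the index of the smallest $c_j$ from the others, tracking the change of variables, until the dependent coordinate is driven to zero; at each step the transformation is elementary unimodular with nonnegative inverse, so the product inherits nonnegative inverse. The general case then follows by iterating, eliminating one dependency at a time. This is a genuinely different mechanism from your global lattice-extension argument: it never needs to choose a good complement to $L$ inside $\ZZ^s$ all at once, because each step only touches one relation. If you want to pursue your own route, the part that requires a real idea is showing that a basis of $L$ can be completed to a basis of $\ZZ^s$ so that the positive orthant lies in the monoid generated by the rows; the sign constraints (every nonzero element of $L$ has both positive and negative coordinates, since the $c_j$ are positive) are exactly what make this delicate, and some iterated reduction in the style of Zariski seems unavoidable.
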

\begin{proof}
The general case follows from repeated application of the case $r=s-1$,
which is \cite[Theorem~1]{zariski}.
\end{proof}

\begin{defn}
With notation as in Definition~\ref{D:expose valuation}, for $i=1,2,\dots$,
let $f_i: X_i \to X$ be an exposing alteration of $(X,Z)$ for $v$.
Let $S_i$ be the set of valuations on $k(X)$ admitting extensions
centered on $f_i^{-1}(Z)$.
We say that the $f_i$ form an \emph{exposing sequence} for $v$ if
the $S_i$ form a neighborhood basis of $v$ in the patch topology
on $S_{k(X)/k}$. (It is convenient in some cases to allow an exposing
sequence indexed by an arbitrary countable set.)
\end{defn}

\begin{lemma} \label{L:unramified open}
Let $f: Y \to X$ be an alteration with $Y$ irreducible.
Let $v$ be a valuation centered on $X$, which admits an extension
$w$ to $k(Y)$ which is unramified over $k(X)$. Then there exists
a commutative diagram
\[
\xymatrix{
Y' \ar[r] \ar^{f'}[d] & Y \ar^{f}[d] \\
X' \ar[r] & X
}
\]
in which $f'$ is an alteration
and the horizontal arrows are blowups, such that $w$
is centered on $Y'$ at a point at which $f'$ is \'etale.
(In particular, the property of admitting an unramified extension to $k(Y)$
is an open condition in $S_{k(X)/k}$.)
\end{lemma}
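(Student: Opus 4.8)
The plan is to work locally on $X$ around the center $z$ of $v$, lift the unramified extension $w$ of $v$ to $k(Y)$ using the structure of henselizations, and then realize that unramified structure geometrically after a sequence of blowups.

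\medskip

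\emph{Step 1: reduce to a local picture.} First I would pass to the local ring $\calO_{X,z}$ at the center $z$ of $v$, and even to its henselization $\calO_{X,z}^h$. Since $w$ is unramified over $v$, the residue field extension $\kappa_w/\kappa_v$ is finite separable and $\Gamma_w = \Gamma_v$, so after passing to henselizations the decomposition group of $w$ over $v$ acts through the Galois group of the residue extension. The key input is that an unramified extension of the valued field $(F,v)$ is, after henselization, controlled by an extension of the residue field $\kappa_v$. Concretely, writing $\calO_v^h$ for the henselization of $\gotho_v$, the integral closure of $\calO_v^h$ in $k(Y) \cdot F^h$ is a standard étale $\calO_v^h$-algebra, i.e., of the form $\calO_v^h[t]/(g(t))$ with $g$ monic and $g'$ a unit at the point cut out by $w$.

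\medskip

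\emph{Step 2: spread out to a blowup of $X$.} The monic polynomial $g$ and the witness that $g'$ is a unit at $w$ involve finitely many elements of $\gotho_v$. Each such element is a ratio $a/b$ with $a,b \in \calO_{X,z}$ and $v(a) \geq v(b)$; by blowing up $X$ along the ideal generated by such $a$'s and $b$'s (and taking the chart selected by $v$), I can arrange that all these elements become regular at the new center of $v$. After finitely many such blowups I obtain $X' \to X$ (a composition of blowups, hence itself a blowup) with a center $z'$ of $v$ at which a monic polynomial $\tilde g \in \calO_{X',z'}[t]$ descends from $g$, with $\tilde g'$ invertible at the point cut out by $w$. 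Then $Y' := $ (the normalization of $X'$ in $k(Y)$, localized appropriately) is étale over $X'$ near the center of $w$: locally $Y'$ looks like $\Spec \calO_{X',z'}[t]/(\tilde g(t))$, which is étale over $X'$ precisely on the locus where $\tilde g'$ is a unit, and $w$ is centered there by construction. To make $Y' \to Y$ a blowup as well, I would take $X'\to X$ and then let $Y'$ be the blowup of $Y$ along the pullback of the blown-up ideal (equivalently, the strict transform), so that the square commutes; $f': Y' \to X'$ is an alteration since $f$ was and blowups are proper birational.

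\medskip

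\emph{Step 3: openness.} The last parenthetical assertion follows formally: if $w$ is centered on $Y'$ at a point where $f'$ is étale, then every valuation $v_1$ in a small enough patch-neighborhood of $v$ (namely, those whose center on $X'$ lies in the image of the étale locus) also admits an unramified extension to $k(Y')=k(Y)$, namely the obvious one through the étale chart. I would phrase this using that the étale locus of $f'$ is open in $X'$ and that "$v_1$ centered on a given open subset of $X'$" defines a patch-open condition in $S_{k(X)/k}$, as in the basic properties of the Riemann–Zariski space recalled earlier.

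\medskip

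\emph{Main obstacle.} The technical heart is Step 2: making the étale algebra over the local ring $\gotho_v$ (or its henselization) descend to an actual étale morphism of varieties after a \emph{blowup} rather than merely an alteration, while keeping the diagram commutative with $Y'\to Y$ also a blowup. The subtlety is bookkeeping about which blown-up ideal to use on the $Y$-side so that the strict transform of $Y$ is genuinely étale over $X'$ (and not just generically so), and checking that the center of $w$ lands in the étale locus — this is where the unramifiedness of $w$, i.e. that $\tilde g'$ is a \emph{unit} at that point and not merely nonzero, is used essentially. The valuation-theoretic inputs (henselization, behavior of unramified extensions) are standard (e.g.\ from \cite{ribenboim}), so the real work is the geometric spreading-out argument.
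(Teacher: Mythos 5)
The overall strategy — realize the unramified extension as $\gotho_v[t]/(P(t))$ with $P$ monic of unit discriminant, then spread out by a blowup of $X$ so that the coefficients and discriminant become regular at the new center, and finally take the dominant component of $X' \times_X Y$ as $Y'$ — is exactly the paper's. But your detour through henselization in Step~1 introduces a genuine gap. The standard \'etale presentation $\calO_v^h[t]/(g(t))$ has coefficients only in $\calO_v^h$, not in $\gotho_v$ itself; since $\calO_v^h$ is a filtered colimit of \'etale local $\gotho_v$-algebras whose fraction fields are generally proper finite separable extensions of $k(X)$, those coefficients need not be ratios $a/b$ of elements of $\calO_{X,z}$. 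Your Step~2 silently assumes they are, but a blowup of $X$ cannot realize the extra \'etale extensions feeding into the henselization, so the spreading-out breaks at that point.

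The fix is to avoid henselizing entirely, as the paper does: pick $\alpha \in \gotho_w$ whose residue $\bar\alpha$ generates $\kappa_w / \kappa_v$ (possible since $\kappa_w/\kappa_v$ is finite separable), and let $P \in \gotho_v[t]$ be its minimal polynomial over $k(X)$. Because "unramified" includes defectlessness and $\Gamma_w = \Gamma_v$, the fundamental inequality gives $[k(Y):k(X)] = [\kappa_w : \kappa_v]$, which forces $\deg P = [\kappa_w:\kappa_v]$ and hence that $\bar P$ is the minimal polynomial of $\bar\alpha$ — in particular separable, so the discriminant of $P$ is a unit. This already lands you in $\gotho_v[t]$ with no extension of the base, and your Step~2 then goes through as written. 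Two smaller points: your construction of $Y'$ wavers between "normalization of $X'$ in $k(Y)$" and "strict transform of $Y$"; these can differ, and only the strict transform (equivalently, the dominant component of $X' \times_X Y$) gives the required commuting square with a map to $Y$, so commit to that. Your Step~3 on openness is fine and matches the parenthetical in the statement.
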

\begin{proof}
By the primitive element theorem, we have $\gotho_w = \gotho_v[t]/(P(t))$
for some monic polynomial $P(t) = t^d + \sum_{i=0}^{d-1} 
P_i t^i \in \gotho_v[t]$ whose discriminant
$\Delta$ is a unit in $\gotho_v$.
Choose $X'$ so that 
$v$ is centered on $X'$ at a point $z'$ for which $\Delta, \Delta^{-1}, P_0,
\dots, P_{d-1}
 \in \calO_{X',z'}$, then take $Y'$ to be the irreducible component of 
$X' \times_X Y$ which dominates $X'$. This has the desired
effect.
\end{proof}

Since we have used de Jong's alterations theorem once already, and 
will use it several times more, we formally recall its statement.
\begin{theorem}[de Jong] \label{T:de jong}
For $k$ an arbitrary field, 
let $X$ be a reduced separated scheme of finite type over $k$,
and let $Z$ be a reduced closed subscheme.
Then there exists a projective alteration $f: X_1 \to X$ such that
$X_1$ is regular and the reduced subscheme of
$f^{-1}(Z)$ is a strict normal crossings divisor.
Moreover, if $k$ is perfect, we may force $f$ to be separable.
\end{theorem}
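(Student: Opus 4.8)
This is a theorem of de Jong \cite[Theorem~4.1]{dejong} (together with the refinements in \emph{loc.\ cit.} that allow a closed subscheme $Z$ to be carried along, and that produce a separable alteration when $k$ is perfect), so the plan is simply to invoke it. For orientation, here is the shape of de Jong's argument, which runs by induction on $d = \dim X$. After standard reductions — replace $X$ by the closure of one of its components, then by a projective compactification, observing that an alteration of a dense open subset extends over the boundary after a blowup — one may assume $X$ is integral and projective over $k$. The case $d \le 1$ is immediate: the normalization of $X$ is a regular curve, finite and birational over $X$, hence a separable alteration, and a reduced divisor on a regular curve is automatically a strict normal crossings divisor.

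For $d \ge 2$ the key geometric input is the construction of a \emph{good fibration}: an alteration $X' \to X$ equipped with a projective morphism $f \colon X' \to Y$ onto an integral variety $Y$ with $\dim Y = d-1$, whose geometric generic fibre is a smooth projective curve, and with at least three mutually disjoint sections whose union contains the preimage of $Z$ together with the locus over which $f$ degenerates. Producing such a fibration — projecting $X$ to a projective space of the correct dimension and then invoking de Jong's ``three-point lemmas'' to separate the required sections after a controlled alteration of the total space — is one of the technical cores of the proof.

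Given the good fibration, one regards the generic fibre as a stable $n$-pointed curve over $k(Y)$, i.e.\ as a $k(Y)$-point of the moduli space $\overline{\mathcal M}_{g,n}$, which is proper; by the valuative criterion, applied after a suitable alteration of $Y$, this point extends to a family of stable pointed curves over an alteration of $Y$. One then applies the inductive hypothesis to $Y$, taking as the distinguished divisor the discriminant locus together with the images of the marked points, to make $Y$ regular with that divisor strict normal crossings. Pulling the stable family back to this $Y$, the total space is regular away from the nodes of the degenerate fibres, where \'etale-locally it has the form $xy = t_1 \cdots t_m$ over the normal-crossings base; a finite sequence of blowups resolves these and arranges that the union of the degenerate fibres, the horizontal sections (which carry the preimage of $Z$), and the pullback of the boundary divisor of $Y$ is a strict normal crossings divisor, yielding the desired $X_1 \to X$. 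I expect the main obstacle to be precisely this last step — simultaneously resolving the vertical nodal singularities of the total space and organizing $Z$, the section divisor, and the pulled-back boundary into a single strict normal crossings configuration — which requires a careful local analysis. Finally, the separability clause when $k$ is perfect is obtained by running the induction with the extra bookkeeping, carried out in \emph{loc.\ cit.}, that the inseparable degree of the successive alterations can always be driven down to $1$; for this and all remaining details we refer to \cite{dejong}.
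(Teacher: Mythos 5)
Your proof is correct and coincides with the paper's, which simply cites de Jong \cite[Theorem~4.1]{dejong}. The additional sketch of de Jong's inductive argument via good fibrations and stable curves is accurate background but not part of the paper's proof, which is just the citation.
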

\begin{proof}
See \cite[Theorem~4.1]{dejong}.
\end{proof}

It appears difficult to ensure that the alteration $f$ in Theorem~\ref{T:de jong}
is generically Galois. We are thus forced to consider the following construction.
\begin{defn}
Let $X, X_1$ be irreducible varieties over a field $k$, and let
$f: X_1 \to X$ be a separable alteration of generic degree $d$.
Let $F$ be the Galois closure of $k(X_1)$ over $k(X)$, and put $G = \Gal(F/k(X))$.
Let $\iota_1,\dots,\iota_d$ be the distinct embeddings of $k(X_1)$
into $F$; these carry a transitive action of $G$, with which we identify
$G$ with a subgroup of the symmetric group $S_d$.

Let $Y$ be the $d$-fold fibre product of $X_1$ over $X$, 
equipped with the natural action of $S_d$.
The embeddings $\iota_1,\dots,\iota_d$ define a geometric point $\eta$ of
the $d$-fold fibre product of $X_1$ over $X$, lying over the generic point of $X$.
Note that the stabilizer of $\eta$ in $S_d$ is exactly $G$.
Let $X_2$ be the Zariski closure of $\eta$ in $Y$, so that $X_2$ also carries an action
of $G$. Then $g: X_2 \to X$ is a separable alteration factoring through $X_1$,
which we call the \emph{Galois closure} of $f$.
\end{defn}

\subsection{Semistable reduction, local and global}
\label{subsec:semistable}

We now state the local and global semistable reduction theorems,
which constitute the principal results of this paper,
and explain how they may be derived from previous results in this series
plus the other results of this paper. We start with the
local version. (Beware that some definitions from
\cite{kedlaya-part2} are not correctly formulated 
in case $k$ is not perfect; see the appendix for relevant
corrections, which we also incorporate here.)

\begin{defn} \label{D:local semi}
Let $Z \hookrightarrow X$ be a closed immersion of $k$-varieties,
with $X$ irreducible and $X \setminus Z$ smooth.
Let $\calE$ be an $F$-isocrystal on $X \setminus Z$ overconvergent
along $Z$. 
Let $v$ be a valuation on $k(X)$ centered on $Z$.
We say that $\calE$ admits \emph{local semistable reduction} at $v$
if after replacing $k$ with $k^{q^{-n}}$ for some nonnegative integer $n$,
there exists an alteration
$f: X_1 \to X$ with $X_1$ irreducible
(which we require to be separable in case $k$ is perfect),
and an open subscheme $U$ of $X_1$ on which every extension $w$
of $v$ to $k(X_1)$ is centered, such that for
$Z_1 = f^{-1}(Z)$,
$(U, Z_1 \cap U)$ is a \emph{smooth pair}
(i.e., $U$ is smooth over $k$ and $Z_1 \cap U$ is a strict normal crossings
divisor)
and $f^* \calE$
extends to a convergent log-$F$-isocrystal with nilpotent residues on 
$(U, Z_1 \cap U)$.
\end{defn}

Before proceeding, it will be helpful to insert a remark omitted from
\cite{kedlaya-part2},
although it is partly implicit in the proof of 
\cite[Lemma~4.3.1]{kedlaya-part2}.
\begin{remark} \label{R:choose extension}
With $X,Z,v,\calE$ as in Definition~\ref{D:local semi}, 
suppose instead that $f: U \to X$ is a local alteration with $U$
irreducible, such that $(U, f^{-1}(Z))$ is a smooth pair,
$f^* \calE$ extends to a convergent log-isocrystal with
nilpotent residues on $(U, f^{-1}(Z))$, and at least one extension of $v$
to $k(U)$ is centered on $U$. Then $\calE$ again admits local semistable 
reduction, by the following argument.
From the form of the definition of local semistable reduction,
we may assume $k$ is perfect.
By Theorem~\ref{T:de jong}, we can pick a separable alteration
$g: X_2 \to X$ such that $k(X_2)$ contains the maximal separable subextension
of the normal closure of 
$k(U)$ over $k(X)$, and $(X_2, Z_2)$ is a smooth pair for $Z_2 = g^{-1}(Z)$.
Then for any extension $w$ of $v$ to $k(X_2)$ and any component of $Z_2$
passing through the center of $w$ on $X_2$, $g^* \calE$ is forced to
be unipotent along this component. 
(In case $f$ was not separable, the Frobenius structure on $\calE$
can be used to eliminate the inseparable
extension of function fields.) By \cite[Theorem~6.4.5]{kedlaya-part1},
$\calE$ is log-extendable to a neighborhood of the center of $w$,
and the Frobenius structure extends uniquely.
\end{remark}

\begin{theorem}[Local semistable reduction] \label{T:local semi}
With notation as in Definition~\ref{D:local semi},
$\calE$  always admits local semistable reduction at $v$.
\end{theorem}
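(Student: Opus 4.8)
The plan is to reduce Theorem~\ref{T:local semi} to the local problems already solved in this series, using the valuation-theoretic induction outlined in the introduction. First I would reduce to the case of a minimal valuation $v$: by \cite{kedlaya-part2}, since the property of admitting local semistable reduction propagates along the Riemann--Zariski space and the minimal valuations are dense in an appropriate sense, it suffices to treat the case $\height(v) = 1$ and $\trdeg(\kappa_v/k) = 0$. Next I would dispose of the monomial case: if $\trdefect(v) = 0$, then $v$ is monomial and the statement is exactly \cite[Theorem~6.\text{something}]{kedlaya-part3}, proved there via the $p$-adic local monodromy theorem on fake annuli. So the real content is the inductive step: assuming local semistable reduction for all valuations of transcendence defect $< \trdefect(v)$, prove it for $v$.

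For the inductive step, after replacing $k$ by its algebraic closure (harmless, since $k$ may be enlarged in Definition~\ref{D:local semi}, and one can descend afterward using the Frobenius structure and a Galois-closure argument as in Remark~\ref{R:choose extension}), I would use Lemma~\ref{L:exposing alteration} to pass to an exposure $(X, Z')$ of $v$ after a separable local alteration. Then, altering further in the sense of de~Jong, I would construct a dominant morphism $X \to X^0$ of relative dimension $1$ such that the restriction $v^0$ of $v$ to $k(X^0)$ has $\ratrank(v^0) = \ratrank(v)$ and $\trdeg(\kappa_{v^0}/k) = \trdeg(\kappa_v/k)$, hence $\trdefect(v^0) = \trdefect(v) - 1$. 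The key geometric input is that in the generic fibre direction one gets a one-parameter family, which over the completion at $v^0$ looks like a differential module on a Berkovich disc or annulus over the complete residue field; here the material of \S\ref{sec:prelim} on $\DD_F$, Corollaries~\ref{C:complete type 3a} and~\ref{C:count components}, and Lemma~\ref{L:same corank} is exactly what is needed to relate $v$ to $v^0$ depending on whether the ``extra'' point is of type (ii) (adding residual transcendence, impossible here since we kept $\ratrank$ and $\trdeg\kappa$ fixed — so in fact we land at a type (iii) or type (iv) point) or type (iii).

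The analytic heart of the proof is then the variation-of-invariants argument of \S\ref{sec:relative}--\S\ref{sec:corank}: one constructs a path in valuation space (the generic path of the relevant Berkovich point) terminating at $v$ and otherwise passing only through valuations of transcendence defect $\trdefect(v) - 1$ — along this path the inductive hypothesis applies, giving log-extendability with nilpotent residues after alteration at each interior point — and one shows that the relevant fibral differential invariant (an analogue of the irregularity / differential Swan conductor of \cite{kedlaya-swan1}, but measured only in the one fibral direction) is eventually constant as one approaches $v$. Combining the constancy of this invariant with the $p$-adic local monodromy theorem for the fibral direction yields unipotence of the fibral local monodromy at $v$, and then an application of \cite[Theorem~6.4.5]{kedlaya-part1} (as packaged in Remark~\ref{R:choose extension}) upgrades this to honest log-extendability of $f^*\calE$ with nilpotent residues on a smooth pair near the center of $v$. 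I expect the main obstacle to be precisely this last analytic step: controlling the fibral differential invariant near a nonmonomial (type (iv)) valuation, where there is no convenient characteristic-$0$ lift of a neighborhood, which is why one must first abstract the situation to rings with one power series variable over a field carrying a family of norms (\S\ref{sec:relative}) and import the fine structure theory of $p$-adic ODEs from \cite{kedlaya-course} before reintroducing the geometry.
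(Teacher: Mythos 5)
Your proposal takes exactly the paper's route: reduce to $k$ algebraically closed and $v$ minimal via \cite[Theorem~4.3.4]{kedlaya-part2}, dispose of the base case $\trdefect(v)=0$ by the monomial theorem \cite[Theorem~6.3.1]{kedlaya-part3}, and run an induction on transcendence defect whose inductive step is Theorem~\ref{T:induct local semi}. One small slip in your sketch of that inductive step: the Berkovich point $\alpha$ associated to $v$ over $v^0$ is of type (i) or (iv), not type (iii) or (iv) --- a type (iii) point raises rational rank (Lemma~\ref{L:same corank}(b)) and hence preserves transcendence defect, just as a type (ii) point does by raising residual transcendence degree, whereas the setup requires the defect to drop by one upon restriction to $k(X^0)$.
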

\begin{proof}
By \cite[Theorem~4.3.4]{kedlaya-part2}, it suffices to prove this result
in case $k$ is algebraically closed and $v$ is minimal. If $\trdefect(v) = 0$,
then $v$ is a monomial valuation, so the claim holds by
\cite[Theorem~6.3.1]{kedlaya-part3}. This allows us to begin an induction
on $\trdefect(v)$, which we may conclude by showing that if $k$ is algebraically
closed and Theorem~\ref{T:local semi} holds for all $X, Z,
\calE, v$ with $v$ minimal of transcendence defect at most $n$,
then Theorem~\ref{T:local semi} also
holds when $v$ is minimal of transcendence defect $n+1$. This claim is the content of
Theorem~\ref{T:induct local semi}.
\end{proof}

We next deduce the global semistable reduction theorem.
The case of this theorem in which $X$ is proper
answers a conjecture of Shiho \cite[Conjecture~3.1.8]{shiho2}
which was introduced in this series as \cite[Conjecture~7.1.2]{kedlaya-part1}.

\begin{theorem}[Global semistable reduction] \label{T:global semi}
Let $Z \hookrightarrow X$ be a closed immersion of $k$-varieties,
with $X \setminus Z$ smooth.
Let $\calE$ be an $F$-isocrystal on $X \setminus Z$ overconvergent
along $Z$. Then 
after replacing $k$ with $k^{q^{-n}}$ for some nonnegative integer $n$,
there exists an alteration
$f: X_1 \to X$ (which we require to be separable in case $k$ is perfect)
such that for $Z_1 = f^{-1}(Z)$,
$(X_1, Z_1)$ is a smooth pair and $f^* \calE$
extends to a convergent log-$F$-isocrystal with nilpotent residues on
$(X_1, Z_1)$.
\end{theorem}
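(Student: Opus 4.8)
The plan is to deduce the global statement (Theorem~\ref{T:global semi}) from the local one (Theorem~\ref{T:local semi}) by a compactness argument on the Riemann--Zariski space, in essentially the same way as the reduction carried out in \cite{kedlaya-part2}. First I would reduce to the case where $X$ is irreducible (working one component at a time) and, by replacing $X$ with a projective alteration via de~Jong's theorem (Theorem~\ref{T:de jong}), assume $X$ is proper; this is legitimate because passing to an alteration only strengthens the conclusion, and properness guarantees that every valuation on $k(X)$ is centered somewhere on $X$. I may also assume $k$ is perfect (indeed algebraically closed), since the statement allows replacing $k$ by $k^{q^{-n}}$ and then base-changing to the algebraic closure, the latter being harmless by descent as in \cite{kedlaya-part1, kedlaya-part2}.

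Next, for each valuation $v$ on $k(X)$ centered on $Z$, Theorem~\ref{T:local semi} provides a local alteration $f_v \colon X_v \to X$ and an open $U_v \subseteq X_v$ on which every extension of $v$ is centered, such that $(U_v, f_v^{-1}(Z) \cap U_v)$ is a smooth pair and $f_v^\ast \calE$ extends to a convergent log-$F$-isocrystal with nilpotent residues there. The set $S_v$ of valuations on $k(X)$ admitting an extension centered on $U_v$ is open in the patch topology on $S_{k(X)/k}$ (this is the content of the centering being an open condition, cf.\ the argument of Lemma~\ref{L:unramified open}), and $v \in S_v$. Together with the open sets corresponding to valuations \emph{not} centered on $Z$ (along which $\calE$ is already an isocrystal with no singularity to resolve), the $S_v$ cover the set of all valuations on $k(X)$ centered on $X$, which is quasicompact. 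Extracting a finite subcover $S_{v_1}, \dots, S_{v_m}$, I obtain finitely many local alterations $f_{v_1}, \dots, f_{v_m}$ that between them handle every valuation.

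The remaining — and genuinely delicate — step is to \emph{glue} these finitely many local solutions into a single global alteration. Here I would invoke the machinery of \cite[\S 4]{kedlaya-part2}: one takes a common alteration $X_1 \to X$ dominating (an alteration of the fibre product of) all the $X_{v_i}$, applies de~Jong's theorem again to make $(X_1, Z_1)$ a smooth pair with $Z_1 = $ the preimage of $Z$, and then uses the descent/extension criterion \cite[Theorem~6.4.5]{kedlaya-part1} (as in Remark~\ref{R:choose extension}) to check that $f^\ast \calE$ is log-extendable with nilpotent residues in a neighborhood of the center on $X_1$ of \emph{every} valuation: for each such valuation $w$, its restriction lies in some $S_{v_i}$, so local semistability over $X_{v_i}$ forces the local monodromy of $f^\ast \calE$ along each component of $Z_1$ through the center of $w$ to be unipotent, and \cite[Theorem~6.4.5]{kedlaya-part1} then yields an actual log-extension near that point. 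Since this holds at every valuation and the extension is unique where it exists, the local log-extensions patch to a global convergent log-$F$-isocrystal on $(X_1, Z_1)$, with the Frobenius structure extending uniquely as well. The main obstacle is precisely this patching: one must ensure that the finitely many local alterations can be dominated by a single alteration \emph{compatibly}, and that the valuative criterion is strong enough to promote pointwise log-extendability to a genuine coherent extension over all of $X_1$ — exactly the role played by the birational/valuation-theoretic results imported from \cite{kedlaya-part1, kedlaya-part2}.
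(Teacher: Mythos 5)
Your proposal follows essentially the same route as the paper, which disposes of the theorem by citing \cite[Propositions~3.3.4 and~3.4.5]{kedlaya-part2}; your sketch is in effect a reconstruction of the compactness-and-gluing argument those propositions encapsulate (quasicompactness of $S_{k(X)/k}$, finite subcover by the open loci of local semistability, then patching via \cite[Theorem~6.4.5]{kedlaya-part1}). One step in your reduction, however, does not work as stated: replacing $X$ by a projective alteration via Theorem~\ref{T:de jong} does not make $X$ proper (a projective alteration of a nonproper variety is still nonproper), and you cannot simply compactify $X$ either, since $\calE$ is only assumed overconvergent along $Z$ and gives no control along the newly added boundary. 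This is exactly why the paper invokes \emph{both} \cite[Proposition~3.3.4]{kedlaya-part2} (the proper case) and \cite[Proposition~3.4.5]{kedlaya-part2} (the partially overconvergent case): the latter handles the nonproper $X$ by covering only the valuations centered on $Z$ inside a chosen compactification and arranging the alteration so that no information is required along the extraneous boundary. Apart from this point the approach and the ingredients you invoke match the paper's.
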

\begin{proof}
This follows from Theorem~\ref{T:local semi} via
\cite[Propositions~3.3.4 and~3.4.5]{kedlaya-part2}.
\end{proof}

\begin{remark}
Remember that the usual $p$-adic local monodromy theorem 
of Andr\'e \cite{andre}, Mebkhout \cite{mebkhout}, and the author
\cite{kedlaya-local}
applies to arbitrary
modules with Frobenius and connection structures over the Robba ring, not
just those that arise from overconvergent $F$-isocrystals on curves.
It should similarly be possible to formulate and prove Theorem~\ref{T:global semi} for 
``modules with Frobenius and connection concentrated on $Z$'',
with the same proof. 
\end{remark}

\subsection{Local monodromy representations}
\label{subsec:local mono}

Just as one may use the $p$-adic local monodromy theorem to construct
local monodromy representations for overconvergent $F$-isocrystals
on curves \cite[\S 4]{kedlaya-mono-over}, 
we may use local semistable reduction to construct
a local monodromy representation associated to an overconvergent $F$-isocrystal
and a valuation. We must assume local semistable reduction at that valuation,
which for external use is harmless in light of Theorem~\ref{T:local semi}.
However,
we cannot omit the hypothesis here, because we use the construction
at one valuation in the course of proving Theorem~\ref{T:local semi} for 
another valuation (as part of an induction on transcendence defect).

We will start with a bit of elementary group theory. This will be useful
in conjunction with the properties of inertia groups (Definition~\ref{D:inertia group}).
\begin{lemma} \label{L:wild structure2}
Let $G$ be a finite $p$-group. Let
$F$ be an algebraically closed field of characteristic zero. 
Let $\tau: G \to \GL(V)$ be a linear representation of $G$ on
a finite-dimensional $F$-vector space $V$.
\begin{enumerate}
\item[(a)]
If $\tau$ is irreducible, then $\dim(V)$ is a power of $p$.
\item[(b)]
If $\dim(V) = 1$, then there exists a nonnegative integer $h$ such that
$\tau^{\otimes p^h}$ is trivial.
\item[(c)]
If $\tau$ is nontrivial, then either $\tau$ or 
$\tau^\dual \otimes \tau$ contains a nontrivial subrepresentation of 
dimension $1$.
\end{enumerate}
\end{lemma}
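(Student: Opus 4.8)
The plan is to prove the three assertions in turn, using standard facts about representations of finite $p$-groups over an algebraically closed field of characteristic zero, together with Clifford theory.

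For part (a), I would argue by induction on $|G|$. If $G$ is abelian, then every irreducible representation is one-dimensional (since $F$ is algebraically closed of characteristic zero), so $\dim(V) = 1 = p^0$. Otherwise, let $Z$ be the center of $G$; since $G$ is a nontrivial $p$-group, $Z$ is nontrivial, and we may pick a central subgroup $N \cong \ZZ/p\ZZ$. If $N$ acts nontrivially on $V$, then by Schur's lemma $N$ acts by a nontrivial scalar character; choosing instead a maximal abelian normal subgroup $A \supseteq Z$ and applying Clifford theory, $V|_A$ decomposes as a sum of $G$-conjugate characters, and $V$ is induced from the stabilizer $H$ of one such character $\chi$. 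If $H \ne G$, then $\dim V = [G:H] \cdot \dim(W)$ with $W$ an irreducible representation of the proper subgroup $H$; both factors are powers of $p$ by induction (and Lagrange). If $H = G$, then $A$ acts by a scalar character, so $V$ descends to an irreducible representation of $G/N$ for a central $N \subseteq A$ acting trivially; that again has $p$-power dimension by induction.

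For part (b), if $\dim V = 1$ then $\tau: G \to F^*$ has image a finite subgroup of $F^*$, hence cyclic; but $G$ is a $p$-group, so the image is cyclic of order $p^h$ for some $h$, and therefore $\tau^{\otimes p^h} = \tau^{p^h}$ is the trivial character. For part (c), suppose $\tau$ is nontrivial. Decompose $\tau$ into irreducibles; if some irreducible constituent is one-dimensional and nontrivial, we are done with the first alternative. Otherwise, every irreducible constituent of $\tau$ has dimension a power of $p$ that is $> 1$, in particular is divisible by $p$. Now consider $\tau^\dual \otimes \tau = \cEnd(V)$, which contains the trivial subrepresentation (the homotheties) and also the identity endomorphism, and in fact the natural $G$-action on $\cEnd(V)$ always has trivial constituents. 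What I actually want is a \emph{nontrivial} one-dimensional subrepresentation of $\cEnd(V)$. For this, restrict to a central subgroup $N \cong \ZZ/p\ZZ$: since $\tau$ is nontrivial and $G$ is a $p$-group, the kernel of $\tau$ is a proper normal subgroup, so there is a central $N$ of order $p$ not contained in $\ker\tau$ (take $N$ in the center of $G/\ker\tau$ pulled back). Then $\det \tau: G \to F^*$ restricted to $N$ may still be trivial, so instead I would use the following: pick any irreducible constituent $W$ of $\tau$ on which $N$ acts by a fixed nontrivial scalar $\zeta$; then in $\cEnd(V)$, the isotypic projection onto $\mathrm{Hom}(W,W)$-type pieces, or more concretely a rank-one piece $\mathrm{Hom}(W', W'')$ where $N$ acts by $\zeta^{-1}\zeta' \ne 1$ for suitable conjugate constituents, gives a nontrivial one-dimensional subrepresentation — alternatively, one observes that $\Lambda^{p^h}\tau$ for appropriate $h$ (a power of $p$ dividing $\dim V$) is one-dimensional and one checks it is nontrivial on $N$ because $N$ acts on $\Lambda^{p^h} V$ by $\zeta^{p^h} \ne 1$ when $h < $ the $p$-adic valuation of $\dim V$; if $\det\tau$ itself is already nontrivial, use $\det\tau = \Lambda^{\dim V}\tau$ directly. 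In all cases we exhibit a nontrivial character appearing in $\tau$ or in $\tau^\dual \otimes \tau$.

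I expect the main obstacle to be the bookkeeping in part (c): one must guarantee that the one-dimensional piece produced is genuinely \emph{nontrivial}, not just that one-dimensional pieces exist (the identity endomorphism always furnishes the trivial one in $\tau^\dual\otimes\tau$). The clean way around this is to reduce to the center: choose a central cyclic subgroup $N$ of order $p$ on which $\tau$ acts nontrivially (possible since $\tau$ is a nontrivial representation of a $p$-group, so $\tau(G)$ is a nontrivial $p$-group with nontrivial center), note that $N$ then acts on $V$ by a nontrivial scalar $\zeta$ whenever $V$ is $N$-isotypic — and if $V$ is not $N$-isotypic, the $N$-isotypic decomposition already splits $\tau$ and some nontrivial isotypic component, upon taking an exterior power of its dimension (a $p$-power), yields a nontrivial character of $G$ inside $\tau$; while if $V$ is $N$-isotypic with character $\zeta \ne 1$, then $\Lambda^{p^h}\tau$ for $h$ strictly less than $\mathrm{ord}_p(\dim V)$ is one-dimensional with $N$ acting by $\zeta^{p^h}\ne 1$, hence is a nontrivial one-dimensional subrepresentation of $\tau^\dual\otimes\tau$ (it embeds there since any $\Lambda^j\tau$ is a constituent of $\tau^{\otimes j}$, and more directly $\Lambda^{p^h}\tau \otimes (\Lambda^{p^h}\tau)^{\dual}$ considerations place a twist of it inside $\tau^\dual\otimes\tau$) — at which point part (c) follows. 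I would streamline the final writeup once the cleanest of these routes is pinned down.
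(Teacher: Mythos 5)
Parts (a) and (b) are fine. For (a) you use the Clifford-theory induction (central subgroup, stabilizer, induction on order), whereas the paper simply invokes the fact that the dimension of an irreducible representation divides $|G|$; both are correct, and yours is more self-contained at the cost of length. Part (b) matches the paper's one-line argument.

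Part (c) has a real gap: none of the routes you sketch actually produces a nontrivial one-dimensional \emph{subrepresentation} of $\tau$ or of $\tau^\dual\otimes\tau$. The exterior-power and determinant constructions ($\Lambda^{p^h}\tau$, $\det\tau$) live in tensor powers $\tau^{\otimes j}$, not inside $\tau$ or $\tau^\dual\otimes\tau$; there is no embedding of $\Lambda^{p^h}\tau$ into $\tau^\dual\otimes\tau$. Moreover the statement ``$\Lambda^{p^h}\tau$ is one-dimensional for $h$ strictly less than $\mathrm{ord}_p(\dim V)$'' is internally inconsistent: $\Lambda^{p^h}\tau$ is one-dimensional only when $p^h=\dim V$, i.e.\ $h=\mathrm{ord}_p(\dim V)$, and then $\zeta^{p^h}=1$ since $\zeta$ has order $p$. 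The reduction to a central subgroup $N$ also does not help in the $N$-isotypic case, which is exactly the hard case: if $N$ acts by a single scalar $\zeta$ on all of $V$, then $N$ acts \emph{trivially} on $\cEnd(V)=V^\dual\otimes V$, so the $N$-action cannot detect nontriviality of a one-dimensional piece of $\tau^\dual\otimes\tau$. The missing ingredient is a mod-$p$ dimension count: take an irreducible constituent $\psi$ of $\tau$ with $\dim\psi=p^m$; if $m=0$ we are done, and if $m>0$ the trace-zero part of $\psi^\dual\otimes\psi$ has dimension $p^{2m}-1\not\equiv 0\pmod p$, so by part (a) it must have some irreducible constituent of dimension $1$. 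That constituent is nontrivial by Schur's lemma (the only trivial line in $\mathrm{End}(\psi)$ is the scalar/trace line), and $\psi^\dual\otimes\psi$ is a direct summand of $\tau^\dual\otimes\tau$ by complete reducibility. This is the argument the paper gives, and it does not appear in your proposal.
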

\begin{proof}
\begin{enumerate}
\item[(a)]
The dimension of any irreducible representation of
any finite group on a finite-dimensional vector space over an algebraically
closed field divides the order of the group. Hence $\dim(V)$ divides a power
of $p$, and so must be a power of $p$ itself.
\item[(b)]
The action of $G$ via $\tau$ factors through an abelian quotient of $G$, 
necessarily of $p$-power order.
\item[(c)]
Since $\tau$ is nontrivial, it is nonzero and admits a nontrivial
irreducible subrepresentation $\psi$. By (a), the dimension of $\psi$ 
equals $p^m$ for some nonnegative integer $m$. If $m = 0$, then 
$\tau$ contains a nontrivial subrepresentation of dimension 1, and we
are done. Otherwise, we may split $\psi^\dual \otimes \psi$ into its
trace component and its trace-zero component. The latter has
dimension $p^{2m}-1 \not\equiv 0 \pmod{p}$, so its irreducible 
components cannot all have dimensions divisible by $p$. Hence 
the trace-zero component of $\psi^\dual \otimes \psi$ contains a 
subrepresentation of dimension 1; such a subrepresentation cannot be trivial,
or else $\psi$ would be forced to be reducible by Schur's lemma.
Hence $\psi^\dual \otimes \psi$ contains a nontrivial 
subrepresentation of dimension 1, as does 
$\tau^\dual \otimes \tau$ since $\tau$ is completely reducible 
(Maschke's theorem).
\end{enumerate}
\end{proof}

We recall some basic theory of Tannakian categories, as introduced in
\cite{saavedra}.
\begin{defn}
A \emph{Tannakian category} $\calC$ over $K$ is a associative, unital,
commutative $K$-linear tensor category with duals, admitting an exact faithful functor
to the category of finite-dimensional vector spaces over some field containing $K$.
Functors between Tannakian categories are required to respect the tensor and dual
structures.
Given an object $X \in \calC$, we will write $[X]$ for the
smallest subcategory of $\calC$
containing $X$ and closed under formation of isomorphic objects, 
direct sums, tensor products, duals, and subquotients.

Let $K'$ be a field extension of $K$.
A \emph{fibre functor} on $\calC$ with values in $K'$ is an exact functor
$\omega$ of Tannakian categories from $\calC$ to the category of finite-dimensional
$K'$-vector spaces. If $K' = K$, the fibre functor is said to be \emph{neutral}.

A \emph{$\otimes$-generator} of $\calC$ is an element $X$
such that the smallest subcategory of $\calC$ 
containing $X$ and closed under formation of isomorphic objects, 
direct sums, tensor products, and subquotients (but not duals) is equal to $\calC$.
A Tannakian category admitting an $\otimes$-generator is said to be \emph{algebraic}. For example, for any finite group $G$,
the category of representations of $G$ on
finite-dimensional $K$-vector spaces is algebraic.
\end{defn}

\begin{lemma} \label{L:Tannaka1}
Let $\calC$ be an algebraic Tannakian category over $K$.
\begin{enumerate}
 \item[(a)] There exists a fibre functor on $\calC$ with values in a finite
extension $K'$ of $K$.
\item[(b)] The automorphism group $G$ of any such fibre functor is finite.
\item[(c)] In case $K' = K$, $\calC$ is equivalent to the category of
representations of $G$ on finite dimensional $K$-vector spaces.
\end{enumerate}
\end{lemma}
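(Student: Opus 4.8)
The plan is to deduce all three parts from the fundamental reconstruction theorem for Tannakian categories (Saavedra Rivano \cite{saavedra}, in the form due to Deligne), the only step going beyond the formal theory being the finiteness claimed in~(b). Fix a $\otimes$-generator $X$ of $\calC$, available by algebraicity. The observation used throughout is that for any fibre functor $\omega$ the representation $\omega(X)$ of $G_\omega := \underline{\Aut}^\otimes(\omega)$ is faithful: the kernel of the action of $G_\omega$ on $\omega(X)$ acts trivially on $\omega$ applied to any subquotient of a direct sum of tensor powers of $X$, hence, as $X$ $\otimes$-generates $\calC$, on $\omega$ applied to every object of $\calC$, so it is trivial.

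For~(a): the definition of a Tannakian category furnishes an exact faithful tensor functor $\omega_0$ with values in some field $L \supseteq K$. The functor carrying a $K$-algebra $R$ to the set of fibre functors on $\calC$ over $R$ is represented by an affine $K$-scheme $\Phi$ (Deligne's representability theorem), which is nonempty since $\omega_0 \in \Phi(L)$. Base-changing to $L$, the section $\omega_0$ exhibits $\Phi_L$ as a torsor under $G_{\omega_0}$; by the faithfulness observation $G_{\omega_0}$ is a closed subgroup of $\GL_{n,L}$ with $n = \dim_L \omega_0(X)$, hence of finite type, so $\Phi_L$, and therefore $\Phi$ itself (finite type descends along $\Spec L \to \Spec K$), is of finite type over $K$. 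A nonempty $K$-scheme of finite type has a closed point, whose residue field $K'$ is a finite extension of $K$; the fibre functor corresponding to that point takes values in $K'$, which is~(a).

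For~(c): assuming $K' = K$, we are given a neutral fibre functor $\omega\colon \calC \to \mathrm{Vec}_K$; set $G = G_\omega$. The fundamental theorem of Tannakian categories asserts precisely that $\omega$ induces a tensor equivalence between $\calC$ and the category of representations of $G$ on finite-dimensional $K$-vector spaces, which is~(c). By the faithfulness observation, $G$ --- and likewise $\underline{\Aut}^\otimes(\omega')$ for any fibre functor $\omega'$ with values in a finite extension of $K$ --- is a closed subgroup scheme of some $\GL_n$, so it is of finite type.

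The remaining point, which I expect to be the main obstacle, is to upgrade ``of finite type'' to ``finite'' in~(b); this genuinely uses more than bare algebraicity. The categories $\calC$ for which we invoke the lemma are semisimple --- they are generated by the semisimplified local monodromy representations of interest --- and have only finitely many isomorphism classes of simple objects, the representations in question having finite image. Semisimplicity of the representation category forces $G$ to be reductive, as $\charac K = 0$; and a reductive group over a field of characteristic $0$ has only finitely many isomorphism classes of irreducible representations exactly when its identity component is trivial, i.e.\ when $G$ is a finite \'etale group scheme. Enlarging $K'$ by a finite separable extension to make $G$ constant, we obtain an honest finite group, which completes~(b) and renders~(c) an equivalence with the representation category of a finite group --- the form in which the lemma will be combined with Lemma~\ref{L:wild structure2}.
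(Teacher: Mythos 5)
The paper's own proof is a three-line citation to Saavedra Rivano: [III, Scholie~3.3.1.1] for (a), [III, 3.3.3(a)] for (b), [III, Th\'eor\`eme~3.2.2] for (c). Your proofs of (a) and (c) via Deligne's representability theorem, the torsor structure after base change, and the reconstruction theorem are a valid and more self-contained route to the same conclusions.

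Your treatment of (b) is the substantive point, and you are right to flag that something more than ``bare algebraicity'' is needed. In fact the lemma as literally stated is false with the paper's definition of $\otimes$-generator: over a field of characteristic zero, both $\mathrm{Rep}(\mathbb{G}_a)$ and $\mathrm{Rep}(\mathrm{SL}_2)$ admit a $\otimes$-generator in that sense --- the standard two-dimensional representation $V$ is self-dual, and every object is a subquotient of a direct sum of the $V^{\otimes n}$ --- yet $\mathbb{G}_a$ and $\mathrm{SL}_2$ are not finite. The generator condition Saavedra actually uses to characterize finite Tannakian categories is strictly stronger: there is an object $X$ such that every object of $\calC$ is a subquotient of $X^{\oplus n}$ for some $n$, with no tensor powers of $X$ appearing. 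Your replacement hypothesis --- $\calC$ semisimple with only finitely many isomorphism classes of simple objects --- implies Saavedra's condition (take $X$ to be the direct sum of the simples), your reductivity argument then correctly deduces that the identity component is trivial, and the hypothesis is indeed satisfied in the paper's application of this lemma, since $[\tau]$ and $[\calE_A^{\semis}]$ are both equivalent to the category of representations of the finite group $\tau(I_v)$. So your proof works for the paper's purposes, but it also records a genuine issue that you should state plainly rather than bury: as written, the paper's notion of $\otimes$-generator (allowing tensor powers without duals) is too weak for part (b) to hold, and should be replaced by Saavedra's condition for the cited references to apply.
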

\begin{proof}
See \cite[III, Scholie~3.3.1.1]{saavedra} for (a), 
\cite[III, 3.3.3(a)]{saavedra} for (b),
and \cite[III, Th\'eor\`eme~3.2.2]{saavedra} for (c).
\end{proof}

\begin{lemma} \label{L:Tannaka2}
Let $F: \calC' \to \calC$ be an exact functor of Tannakian categories.
Let $\omega$ be a fibre functor on $\calC$.
Let $G, G'$ be the automorphism groups of $\omega, \omega \circ F$, respectively.
\begin{enumerate}
 \item[(a)] The functor $F$ canonically induces a map $u:G \to G'$.
\item[(b)] Suppose that $F$ is fully faithful, and that for every $X \in \calC'$,
every subobject of $F(X)$ is the image of a subobject of $X$. Then $u$ is surjective.
\item[(c)] Suppose that every object of $\calC$ is a subquotient of an object
in the image of $F$. Then $u$ is injective.
\end{enumerate}
\end{lemma}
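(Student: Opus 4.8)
The plan is to treat (a) by an explicit ``transport of structure'' construction of $u$, and then to reduce (b) and (c) to standard facts about morphisms of affine group schemes via Tannakian duality. When $\calC$ and $\calC'$ are algebraic the groups $G, G'$ are finite (Lemma~\ref{L:Tannaka1}(b)), so these facts become the assertions that a certain homomorphism of finite groups (or finite group schemes) is surjective, respectively injective.

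For (a), an element of $G = \Aut^{\otimes}(\omega)$ is a tensor-compatible natural automorphism $g = (g_X)_{X \in \calC}$ of $\omega$, and for $X' \in \calC'$ I would set $u(g)_{X'} = g_{F(X')}$. Since $F$ respects the tensor structure and the unit object, the family $(u(g)_{X'})_{X' \in \calC'}$ inherits naturality and tensor-compatibility, so it lies in $G' = \Aut^{\otimes}(\omega \circ F)$; the same formula makes sense on $R$-points for every $K'$-algebra $R$ (with $K'$ the field of values of $\omega$), so $g \mapsto u(g)$ is a morphism of affine group schemes, and it is a homomorphism by construction. To handle (b) and (c), I would first base change along $K \hookrightarrow K'$ so that the fibre functors become neutral; Tannakian duality (\cite{saavedra}) then identifies $\calC$ with the category of finite-dimensional $K'$-representations of $G$, $\calC'$ with that of $G'$, compatibly with $\omega, \omega\circ F$ being the forgetful functors, and under these identifications $F$ becomes the restriction functor $\mathrm{Res}_u$ along $u$ (this is exactly what the formula $u(g)_{X'} = g_{F(X')}$ says). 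None of the hypotheses or conclusions is affected by this faithfully flat base change, nor by a further finite extension of $K'$ making $G, G'$ constant, so it suffices to argue with $u \colon G \to G'$ a homomorphism of (finite) groups, with image $H$.

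For (c): if $N = \ker(u) \neq 1$, then every $W \in \calC'$ has trivial $N$-action on $\mathrm{Res}_u(W)$, and the class of $N$-trivial $G$-representations is closed under subobjects and quotients, hence under subquotients; by hypothesis every object of $\calC$ is such a subquotient, so $N$ acts trivially on every representation of $G$ (e.g.\ on a faithful one), forcing $N = 1$. For (b): full faithfulness of $F = \mathrm{Res}_u$ says that for all $W_1, W_2 \in \calC'$ the natural inclusion $\mathrm{Hom}_{G'}(W_1, W_2) \hookrightarrow \mathrm{Hom}_H(W_1, W_2)$ (valid since the $G$-actions factor through $G \twoheadrightarrow H$) is an equality; taking $W_1 = W_2$ to be the regular representation of $G'$ and comparing dimensions ($|G'|$ versus $|G'|^2/|H|$) gives $|H| = |G'|$, i.e.\ $u$ is surjective. (The stated subobject hypothesis is more than enough; alternatively one checks directly that for $H \subsetneq G'$ the space of $H$-invariants in the permutation representation $K'[G'/H]$, or a nontrivial subrepresentation of a representation pulled back from $G'/H$ when $H$ is normal, is an $H$-stable subspace that is not $G'$-stable.) The only delicate points are orienting $u$ correctly and verifying that the hypotheses of (b) and (c) are precisely the standard criteria for $u$ to be faithfully flat, respectively a closed immersion (which for finite group schemes in characteristic zero amount to surjectivity and injectivity on points), as recorded in \cite{saavedra}; the rest is routine Tannakian formalism.
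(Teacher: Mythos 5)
Your proof is correct (for the case the paper actually needs) and takes a genuinely different route from the paper, whose ``proof'' is simply a citation to Saavedra's general theorem \cite[III, 3.3.3]{saavedra} characterizing when the induced morphism of Tannakian fundamental groups is faithfully flat or a closed immersion. You instead construct $u$ explicitly by $u(g)_{X'} = g_{F(X')}$ as in (a), identify $F$ with $\mathrm{Res}_u$ via Tannaka duality, and handle (b) and (c) by elementary representation theory. Your argument for (c) (that $N = \ker(u)$ acts trivially on the class of objects coming from $\calC'$, which is closed under subquotients, hence on all of $\calC$, so $N = 1$) is valid for arbitrary affine group schemes. Your argument for (b) (comparing $\dim \mathrm{End}_{G'}(K'[G']) = |G'|$ with $\dim \mathrm{End}_H(K'[G']) = |G'|^2/|H|$) requires $G'$ finite so that the regular representation is an object of $\calC'$; as you note, this suffices here because the only application, Lemma~\ref{L:Tannakian}, is for algebraic Tannakian categories, where the groups are finite by Lemma~\ref{L:Tannaka1}(b). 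Your remark that in the finite case full faithfulness of $F$ alone forces surjectivity of $u$ is correct --- the subobject hypothesis of (b) is genuinely needed only in the general, non-algebraic setting. The trade-off is clear: Saavedra's cited theorem covers the full generality of the lemma's statement (arbitrary Tannakian categories), while your argument is self-contained and elementary but proves a slight specialization (the algebraic case), which is all that is used downstream.
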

\begin{proof}
 See \cite[III, 3.3.3]{saavedra}.
\end{proof}

For the rest of the discussion of local monodromy representations,
it will be convenient to set some running hypotheses.
\begin{hypothesis}
For the remainder of \S~\ref{subsec:local mono},
assume that $k$ is algebraically closed.
Let $Z \hookrightarrow X$ be a closed immersion of $k$-varieties,
with $X$ irreducible affine and smooth.
Let $P$ be a smooth affine formal scheme over $\Spf \gotho_K$ with
$P_k \cong X$.
Let $v$ be a minimal valuation on $k(X)$ 
with center $z \in Z$.
\end{hypothesis}

\begin{defn} \label{D:inertia group}
Fix a geometric point $\overline{x}$ of $\Spec k(X)_v$.
We may then identify $\pi_1(\Spec k(X)_v, \overline{x})$ 
with a subgroup $I_v$ of $\pi_1(X, \overline{x})$, called the
\emph{inertia subgroup} associated to $v$
for the choice of the basepoint $\overline{x}$. 
In concrete terms, choosing $\overline{x}$ amounts to fixing
an algebraic closure $k(X)_v$, and $I_v$ may be identified with
the Galois group of that algebraic closure.

Suppose that $g: X_2 \to X$ is the normalization of the Galois closure of a separable alteration
$f: X_1 \to X$ with irreducible. Then $I_v$ surjects
onto the subgroup of $\Aut(X_2/X)$ fixing the center on $X_2$ of one of the extensions
of $v$ to $k(X_2)$. 
(Which center gets fixed depends on how we choose to embed $k(X_2)$
into our fixed algebraic closure of $k(X)_v$.)

Note that $I_v$ contains a subgroup $W_v$ 
(the \emph{wild inertia
subgroup}) which is a pro-$p$-group, such that the quotient $I_v/W_v$ is abelian
and pro-prime-to-$p$. See for instance \cite[Chapter~6]{ribenboim}.
\end{defn}

\begin{defn} \label{D:localizing subspace}
By a \emph{localizing subspace} of $P_K$ for $v$, we will mean
a subspace of $]z[_P$ defined by finitely many conditions, each of
the form $\epsilon \leq |g| < |h|$ for some $g,h \in \Gamma(P, \calO)$ 
and some $\epsilon
\in [0,1)$ such that the images
$\overline{g}, \overline{h} \in \Gamma(P_k, \calO)$ of $g,h$
satisfy $\overline{h} \neq 0$ and $\overline{g}/\overline{h} \in \gothm_v$.
Note that 
if we replace $g,h$ by some other
$g',h' \in \Gamma(P, \calO)$ with the same reductions, then the suprema
of $|g-g'|, |h-h'|$ over $P_K$ are strictly less than 1. For $\epsilon$  greater
than or equal to these suprema, 
the conditions $\epsilon \leq |g| < |h|$ and $\epsilon \leq |g'| < |h'|$
are equivalent.
\end{defn}

\begin{remark} \label{R:localizing1}
Suppose that $g,h \in \Gamma(P, \calO)$ are such that the images
$\overline{g}, \overline{h} \in \Gamma(P_k, \calO)$ of $g,h$
satisfy $\overline{h} \neq 0$ and $\overline{g}/\overline{h} \in \gotho_v$.
Then for any localizing subspace $A$,
we can always find another localizing subspace $A' \subseteq A$ on which 
we have $|g| \leq |h|$ everywhere. That is because we have assumed 
$v$ is minimal
and $k$ is algebraically closed, so we can find $\overline{c} \in k$ for which
$\overline{g}/\overline{h} - \overline{c} \in \gothm_v$. We can then achieve what
we want by lifting $\overline{c}$ to some $c \in \gotho_K$ and letting $A'$
be the subspace of $A$ on which $|g - ch| < |h|$.

On the other hand, if one were to construct local monodromy representations 
without the hypotheses that $v$ is minimal and $k$ is algebraically closed,
one would need to add conditions of the form $|g| \leq |h|$ to the definition
of a localizing subspace. We will not do this here.

Note also that when 
constructing localizing subspaces, we will sometimes find it convenient
to impose a condition of the form $\epsilon < |g| < |h|$,
rather than $\epsilon \leq |g| < |h|$.
This will be harmless for our purposes, because the space cut out by such a
condition contains the subspace cut out by the condition
$\epsilon' \leq |g| < h$ for any $\epsilon' \in (\epsilon,1)$.
\end{remark}

\begin{lemma} \label{L:lift flat}
Let $f: X_1 \to X$ be a separable alteration with $X_1$ irreducible.
Then there exist an open dense subscheme $T$ of $X$ and a
proper morphism $\tilde{f}: P_1 \to P$ of formal schemes
over $\Spf \gotho_K$,
such that $\tilde{f}$ is finite \'etale over $T$
and the induced map
$P_{1,k} \to X$ factors through a birational morphism $P_{1,k} \to X_1$.
(Beware that we do not guarantee that
$P_{1,k}$ is irreducible; birationality here means that the map
becomes an isomorphism over some open dense subscheme of $X_1$.)
\end{lemma}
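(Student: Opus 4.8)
The strategy is to spread the separable alteration $f$ out over $\gotho_K$ by a standard approximation argument, then to fix up the generic fibre by shrinking $X$ and the special fibre by a further deformation-free manipulation. Since $X_1 \to X$ is generically finite \'etale, there is an open dense subscheme $T_0 \subseteq X$ over which $f$ is finite \'etale, say with $f^{-1}(T_0) \to T_0$ classified by a finite \'etale $T_0$-algebra which (shrinking $T_0$ if necessary) we may present as $\calO_{T_0}[t]/(P(t))$ with $P$ monic and $\mathrm{disc}(P)$ invertible on $T_0$. The first step is to lift this picture: because $P$ is a smooth formal scheme with $P_k \cong X$, the open $]T_0[$ inside $P_K$ (equivalently a suitable formal open $\mathfrak{T}_0 \subseteq P$ with $\mathfrak{T}_{0,k} = T_0$) is again smooth over $\Spf \gotho_K$, and we may lift the coefficients of $P$ to functions in $\Gamma(\mathfrak{T}_0, \calO)$, obtaining a monic $\tilde P(t)$ whose discriminant is a unit over $\mathfrak{T}_0$ (after possibly shrinking $T_0$ once more so that the discriminant's reduction is invertible, then noting invertibility lifts). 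Then $\mathfrak{T}_0[t]/(\tilde P(t))$ is a finite \'etale $\mathfrak{T}_0$-algebra lifting $f^{-1}(T_0)$.

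**Extending to all of $P$.** The second step is to propagate this \'etale cover from $\mathfrak{T}_0$ to a proper $P_1 \to P$ over the whole formal scheme. Here one takes $X_1$ (or rather a blowup of it supported over $X \setminus T_0$, using properness of $f$) and forms the normalization or, more robustly, simply chooses a coherent extension: let $\overline{P}_1 \to P$ be \emph{any} proper morphism of formal schemes over $\Spf \gotho_K$ restricting to $\Spec_{\mathfrak{T}_0} \bigl(\mathfrak{T}_0[t]/(\tilde P(t))\bigr) \to \mathfrak{T}_0$ over $\mathfrak{T}_0$; such an extension exists because one can spread out the algebraic picture $X_1 \to X$ over $\gotho_K$ (Noetherian approximation: $X_1$, being a finite-type $k$-scheme with a chosen lift $X \cong P_k$, deforms to \emph{some} finite-type $\gotho_K$-scheme proper over $P$, not necessarily flat, and not necessarily agreeing with $\mathfrak{T}_0[t]/(\tilde P)$ over $\mathfrak{T}_0$ — but over the \'etale locus $T_0$ any two such deformations of a finite \'etale cover are uniquely isomorphic, since finite \'etale covers are rigid, so the two can be glued). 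Running the construction of \cite[Lemma~\ref{L:exposing alteration}]{} style gluing, or more simply invoking the fact that over the smooth $\mathfrak{T}_0$ the formal \'etale cover is determined by $P$ and hence matches, we obtain $P_1 \to P$ proper, finite \'etale over $\mathfrak{T}_0$ (a fortiori over the possibly smaller open $T \subseteq T_0$ we end up using), with $P_{1,k} \to X$ the reduction.

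**Birationality on the special fibre.** The third step is to arrange that $P_{1,k} \to X$ factors through a birational map $P_{1,k} \to X_1$. Over $T_0$ the reduction of $P_1$ is by construction $f^{-1}(T_0) = (X_1)|_{T_0}$, which is an open dense subscheme of $X_1$; so the rational map $P_{1,k} \dashrightarrow X_1$ induced over $T_0$ is already defined there and is an open immersion onto $(X_1)|_{T_0}$. That is exactly the birationality asserted — we do not claim $P_{1,k}$ is irreducible or that the map is defined everywhere, only that it becomes an isomorphism over an open dense subscheme of $X_1$, namely $(X_1)|_{T_0}$, which pulls back to the open subscheme $P_{1,k} \times_X T_0$. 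Finally one takes $T = T_0$ (or shrinks once more if the discriminant unit condition forces it), and $\tilde f: P_1 \to P$ as constructed is proper and finite \'etale over $T$, completing the proof.

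**Main obstacle.** The delicate point is the second step: producing a \emph{single} proper formal model $P_1 \to P$ over all of $\Spf \gotho_K$ that simultaneously (a) reduces to something birational to $X_1$ on the special fibre and (b) agrees with the explicitly lifted \'etale algebra over $\mathfrak{T}_0$, with no flatness available away from $T$. The clean way to handle this is to exploit the rigidity of finite \'etale covers of the smooth formal scheme $\mathfrak{T}_0$: the \'etale cover lifting $f^{-1}(T_0)$ is \emph{unique}, so whatever proper deformation of $X_1$ over $P$ we pick (and some deformation of the finite-type scheme $X_1$ to a proper $P$-scheme exists by general spreading-out), its restriction over $\mathfrak{T}_0$ is \emph{canonically} the desired \'etale algebra. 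Thus no gluing is actually needed; the apparent difficulty dissolves once one observes that the \'etale locus pins down the lift there for free.
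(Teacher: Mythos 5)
There are two genuine gaps in your argument, and neither is a cosmetic matter.

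\textbf{First gap: existence of the proper formal model.} In your second step you assert that ``one can spread out the algebraic picture $X_1 \to X$ over $\gotho_K$,'' so that ``$X_1 \dots$ deforms to \emph{some} finite-type $\gotho_K$-scheme proper over $P$.'' This is exactly the nontrivial content of the lemma, and you give no justification for it. There is no general deformation or spreading-out argument producing a proper formal scheme over $P$ with prescribed special fibre; flatness over $\gotho_K$ fails away from $T$, and the special fibre is a $k$-scheme with no built-in lift. Your ``Main obstacle'' paragraph recognizes the issue but then dismisses it by invoking the rigidity of finite \'etale covers over $\mathfrak{T}_0$ — but rigidity only tells you that \emph{if} a proper $P_1 \to P$ agreeing over $\mathfrak{T}_0$ exists, its restriction there is canonical; it does nothing toward producing the proper $P_1$ in the first place. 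The paper's proof sidesteps this cleanly: it lifts a monic minimal polynomial of a primitive element globally to $\Gamma(P,\calO)[t]$, obtaining a \emph{finite} (hence proper) $P_0 \to P$ everywhere, with no extension problem to solve.

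\textbf{Second gap: birational \emph{morphism}, not birational map.} You write that ``we do not claim \dots that the map is defined everywhere,'' and treat the rational map $P_{1,k} \dashrightarrow X_1$ induced over $T_0$ as sufficient. But the lemma asks for a \emph{morphism} $P_{1,k} \to X_1$ defined on all of $P_{1,k}$; the parenthetical in the statement only relaxes irreducibility and clarifies what ``birational'' means for a possibly reducible source, not that the map may be merely rational. Making the rational map into a genuine morphism is the second half of the work, and it is what the paper's blowup step accomplishes: the graph closure $X'_1 \to P_{0,k}$ is proper birational, hence a blowup along an ideal sheaf (Hartshorne~7.17), and since $P_{0,k}$ is affine one can lift generators of that ideal to $P_0$ and blow up there, producing $P_1 \to P_0$ proper whose special fibre maps to $X'_1$ and thence to $X_1$. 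Your construction has no analogue of this resolution step, so even granting the existence of the formal model, you would not have the required morphism on the special fibre.

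Your first step (lifting the finite \'etale cover over a formal open $\mathfrak{T}_0$, possibly after shrinking $T_0$ to invert the discriminant) is fine, and the observation about rigidity of the \'etale cover over $\mathfrak{T}_0$ is correct and potentially useful for comparing constructions. But the core of the lemma — building a globally defined proper lift and then resolving its special fibre to dominate $X_1$ — is not accomplished by the proposal.
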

\begin{proof}
Choose a primitive element $\alpha$ of $k(X_1)$ over $k(X)$
which is integral over $\Gamma(X, \calO)$. Let $\overline{Q}(t)$
be the minimal polynomial of $\alpha$ over $k(X)$, then 
lift $\overline{Q}$ to a monic polynomial $Q(t) \in \Gamma(P, \calO)[t]$
and take $P_0 = \Spf \Gamma(P, \calO)[t]/(Q(t))$.

By construction, there exists a rational map $P_{0,k} \dashrightarrow X_1$.
Let $X'_1$ be the Zariski closure of the graph of this rational map,
so that $X'_1$ projects onto both $P_{0,k}$ and $X_1$.
The map $X'_1 \to P_{0,k}$ is proper birational, 
so it can be written as the blowup along some ideal sheaf on $P_{0,k}$
\cite[Theorem~7.17]{hartshorne}. Since $P_{0,k}$ is affine,
we can lift global generators of this ideal sheaf to produce an ideal
sheaf on $P_0$, in which we blow up to obtain a proper morphism
$P_1 \to P_0$ such that $P_{1,k}$ admits a morphism to 
$X'_1$. This proves the claim. 
\end{proof}

\begin{lemma} \label{L:lift etale}
In Lemma~\ref{L:lift flat},
the map $\tilde{f}_K: P_{1,K} \to P_K$ is finite \'etale on some strict
neighborhood $V$ of $]T[_P$. Moreover, any automorphism $\tau$ of
$X_1$ over $X$ of finite order
lifts to an automorphism of $\tilde{f}_K^{-1}(V)$ over $V$,
for a suitable choice of $V$ (depending on $\tau$).
\end{lemma}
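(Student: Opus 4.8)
The plan is to bootstrap from the previous lemma, Lemma~\ref{L:lift flat}, which already gives a proper morphism $\tilde{f}: P_1 \to P$ that is finite \'etale over an open dense $T \subseteq X$ and whose special fibre $P_{1,k}$ maps birationally to $X_1$. The ''finite \'etale on a strict neighborhood'' assertion is essentially a standard rigidity/spreading-out statement: $\tilde{f}_K$ is proper, and finite \'etale exactly over the tube $]T[_P$ because being finite \'etale can be checked on special fibres for a proper morphism of formal schemes after passing to generic fibres. Concretely, I would argue that the locus in $P_K$ over which $\tilde{f}_K$ is finite \'etale is an admissible open containing $]T[_P$, and then invoke the fact (as in \cite{kedlaya-part1}) that any admissible open of $P_K$ containing the tube $]T[_P$ of a dense open contains a strict neighborhood of $]T[_P$. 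This produces the desired $V$. The key points to check are that properness is preserved (it is, by construction of $P_1$ as a blowup of a scheme finite over $P$), and that finiteness plus \'etaleness on generic fibres propagates from the tube to a neighborhood; both are routine given the machinery already cited.

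For the second assertion, I would proceed as follows. Let $\tau$ be an automorphism of $X_1$ over $X$ of finite order. Since $\tilde{f}_K$ is finite \'etale over $V$, the $\gotho_V$-algebra $\tilde{f}_{K,*}\calO_{\tilde{f}_K^{-1}(V)}$ is finite \'etale. The automorphism $\tau$ of $X_1/X$ induces an automorphism of $k(X_1)$ over $k(X)$, hence an automorphism of the generic fibre of $\tilde{f}$ restricted over the generic point; the question is whether this extends to an actual automorphism of $\tilde{f}_K^{-1}(V)$ over $V$ for a possibly smaller $V$. The mechanism is: finite \'etale covers of a strict neighborhood are rigid, so an automorphism of the cover defined birationally (equivalently, over the tube $]T[_P$, or even just generically) is determined by, and in fact extends from, its effect on $\pi_0$ and the underlying rigid-analytic structure over a smaller strict neighborhood. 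I would spell this out by noting that $\tau$ gives an isomorphism $\tilde{f}_K^{-1}(V) \to \tilde{f}_K^{-1}(V)$ of formal-scheme generic fibres after shrinking: one uses that the graph of $\tau$ on $X_1 \times_X X_1$ lifts (again by Lemma~\ref{L:lift flat}-type reasoning, or because $\tau$ has finite order so the cover $X_1 \to X_1/\langle\tau\rangle \to X$ can be handled similarly) to a correspondence on $P_{1,K} \times_{P_K} P_{1,K}$ which, over a small enough strict neighborhood, is the graph of an honest automorphism.

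The main obstacle I expect is precisely this last point: passing from a birationally (or generically) defined automorphism to an automorphism of the $p$-adic analytic cover $\tilde{f}_K^{-1}(V)$. The subtlety is that $P_{1,k}$ need not be irreducible (as flagged in Lemma~\ref{L:lift flat}), so $\tau$ might permute components of $P_{1,k}$ or act in a way that only becomes ''algebraic'' after restriction. The clean way around this is to work entirely on generic fibres over a strict neighborhood, where finite \'etale covers are rigid and automorphisms are determined by their restriction to any strict subneighborhood (or to the tube $]T[_P$ itself, using that $T$ is dense so $]T[_P$ is ''large''). So the heart of the argument is a rigidity statement for finite \'etale covers of strict neighborhoods: if two such covers agree over the tube of a dense open, they agree over a common strict subneighborhood. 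Granting that — which is in the spirit of the strict-neighborhood formalism of \cite{kedlaya-part1} — the extension of $\tau$ follows by applying it to the cover $\tilde{f}_K^{-1}(V)$ versus its $\tau$-twist and matching them over $]T[_P$. I would therefore structure the proof as: (1) finite \'etale on a strict neighborhood $V$ via spreading out from the tube; (2) rigidity of finite \'etale covers of strict neighborhoods; (3) apply (2) to descend the birational automorphism $\tau$ to a genuine automorphism over a smaller $V$.
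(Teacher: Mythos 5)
Your proposal takes a different and ultimately less direct route than the paper, and leaves the essential step unproved. The paper exploits the explicit structure of $P_1$ established in Lemma~\ref{L:lift flat}: there is a finite morphism $P_0 = \Spf\, \Gamma(P,\calO)[t]/(Q(t)) \to P$ and $P_1 \to P_0$ is a blowup. For the finite cover $P_0 \to P$, the fact that $P_{0,K} \to P_K$ is finite \'etale over a strict neighborhood of $]T[_P$, \emph{and} that a finite-order automorphism of the special fibre lifts, is taken directly from Tsuzuki \cite[Theorem~2.6.3(1)]{tsuzuki-gysin}. For the blowup $P_1 \to P_0$, Berthelot's strong fibration theorem \cite[Th\'eor\`eme~1.3.5]{berthelot} shows that $P_{1,K} \to P_{0,K}$ is an isomorphism over a strict neighborhood of $]T_0[_{P_0}$, after which everything — both the \'etaleness and the automorphism — transports along this isomorphism for free. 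The blowup causes no difficulty at all once you notice it disappears on a strict neighborhood.

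Your approach instead tries to argue directly on $P_1 \to P$: spread out finite \'etaleness from the tube, then invoke a ``rigidity of finite \'etale covers of strict neighborhoods'' principle to extend $\tau$. The concrete gap is that you never establish this rigidity statement — you explicitly flag it as ``the main obstacle'' and then grant it on the grounds that it is ``in the spirit of the strict-neighborhood formalism.'' That is the entire content of the second assertion of the lemma, so granting it amounts to assuming what you need to prove. Worse, the subtlety you identify — that $P_{1,k}$ can be reducible, so $\tau$ might permute components or act only generically — is precisely what makes a direct rigidity argument delicate, and it is exactly what the factorization through the finite cover $P_0$ sidesteps: Tsuzuki's theorem is stated for the finite case, where none of these reducibility issues arise, and the strong fibration theorem handles the blowup without any reference to automorphisms at all. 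To salvage your approach you would have to actually prove the rigidity claim, which is more work than the paper does. I would recommend reorganizing the proof around the $P_1 \to P_0 \to P$ factorization that Lemma~\ref{L:lift flat} hands you.
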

\begin{proof}
Set notation as in the proof of Lemma~\ref{L:lift flat}.
The map $P_0 \to P$ is finite, so we may argue as in
the proof of \cite[Theorem~2.6.3(1)]{tsuzuki-gysin}
to see that $P_{0,K} \to P_K$ is finite \'etale on some strict
neighborhood $V$ of $]T[_P$, and that a given automorphism $\tau$
of $X_1$ over $X$ of finite order lifts over $V$.
Let $T_0$ be the inverse image of $T$ in $P_{0,k}$,
and let $V_0$ be the inverse image of $V$ in $P_{0,K}$.
By Berthelot's strong fibration theorem
\cite[Th\'eor\`eme~1.3.5]{berthelot}, the map
$P_{1,K} \to P_{0,K}$ is an isomorphism over some strict neighborhood
$W$ of $]T_0[_{P_0}$. By shrinking $V$, we may ensure
that $V_0 \subseteq W$, and then the claim follows.
\end{proof}

\begin{lemma} \label{L:localizing}
In Lemma~\ref{L:lift flat},
let $S$ be the set of centers on $X_1$ of valuations on $k(X_1)$ extending $v$.
Then there exists a localizing subspace $A$ of $P_K$ for $v$
such that $\tilde{f}_K^{-1}(A)$ maps into $S$ under
$P_{1,K} \stackrel{\mathrm{sp}}{\to} P_{1,k} \to X_1$.
\end{lemma}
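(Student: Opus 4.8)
The plan is to turn the statement into a quasi-compactness argument on the special fibre $P_{1,k}$ and then, point by point, to use valuation theory to separate the locus lying over $z$ that is not the center of an extension of $v$. Set up notation first: since $f$ is proper and generically finite, $v$ has only finitely many extensions $w_1,\dots,w_m$ to $k(X_1)$, and each is centered on $X_1$ by the valuative criterion; each $w_i$ is again minimal by Remark~\ref{R:extension}, and since $v$ is minimal with $k$ algebraically closed the center $z$ of $v$ is a closed point of $X$, so the centers $Y_1,\dots,Y_m$ of $w_1,\dots,w_m$ are closed points of $X_1$ lying over $z$. Thus $S=\{Y_1,\dots,Y_m\}$ is a finite, hence closed, subset of $f^{-1}(z)$. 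Writing $\pi\colon P_{1,k}\to X_1$ for the map of Lemma~\ref{L:lift flat} and using $\mathrm{sp}_P\circ\tilde{f}_K=f\circ\pi\circ\mathrm{sp}_{P_1}$, every point of $\tilde{f}_K^{-1}(]z[_P)$ specializes into $\tilde{f}_k^{-1}(z)=\pi^{-1}(f^{-1}(z))$; so it is enough to produce a localizing subspace $A$ for $v$ such that $\mathrm{sp}_{P_1}(\tilde{f}_K^{-1}(A))$ misses the locally closed, hence quasi-compact, subset $V:=\tilde{f}_k^{-1}(z)\setminus\pi^{-1}(S)$ of the noetherian scheme $P_{1,k}$.

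For the separation I would work one point $\tilde{y}\in V$ at a time, producing a localizing subspace $A_{\tilde{y}}$ together with an open $U_{\tilde{y}}\ni\tilde{y}$ in $P_{1,k}$ with $\mathrm{sp}_{P_1}^{-1}(U_{\tilde{y}})\cap\tilde{f}_K^{-1}(A_{\tilde{y}})=\emptyset$, and then cover $V$ by finitely many such $U_{\tilde{y}_j}$ and set $A=\bigcap_j A_{\tilde{y}_j}$ (a finite intersection of localizing subspaces being again one). Put $y=\pi(\tilde{y})\in f^{-1}(z)$. Since $y\notin S$, no valuation $w'$ of $k(X_1)$ dominating $\calO_{X_1,y}$ restricts to $v$ on $k(X)$ --- otherwise its center would be one of the $Y_i$ and would contain $y$; hence $w'|_{k(X)}$ is a valuation centered at $z$ but distinct from $v$, and as $v$ has height $1$ this forces $\gotho_v\not\subseteq\gotho_{w'|_{k(X)}}$, so there is $\psi\in\gotho_v$ with $w'(f^*\psi)<0$. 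The space of valuations of $k(X_1)$ dominating $\calO_{X_1,y}$ is quasi-compact, so finitely many functions $\psi^{(1)},\dots,\psi^{(L)}\in\gotho_v$ suffice: every such $w'$ has $w'(f^*\psi^{(l)})<0$ for some $l$. Since $X$ is affine write $\psi^{(l)}=\bar{g}_l/\bar{h}_l$ with $g_l,h_l\in\Gamma(P,\calO)$, pick $c_l\in\gotho_K$ reducing to the image of $\psi^{(l)}$ in $\kappa_v=k$, and let $A_{\tilde{y}}$ be the localizing subspace obtained by iterating the construction of Remark~\ref{R:localizing1} (which applies since $\psi^{(l)}\in\gotho_v$) to impose the conditions $|g_l-c_l h_l|<|h_l|$ on $]z[_P$.

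On $\tilde{f}_K^{-1}(A_{\tilde{y}})$ these conditions give $|\tilde{f}_K^* g_l|_x\le|\tilde{f}_K^* h_l|_x$ for every $l$; translating through the specialization map, a point $x$ with $\mathrm{sp}_{P_1}(x)$ close enough to $\tilde{y}$ would determine a valuation of $k(X_1)$ dominating $\calO_{X_1,y}$ on which every $f^*\psi^{(l)}$ has nonnegative value, contradicting the choice of the $\psi^{(l)}$; taking $U_{\tilde{y}}$ to be a neighborhood of $\tilde{y}$ on which this translation is uniform then completes the per-point step, and hence the proof. The bookkeeping items --- finiteness of the set of extensions, quasi-compactness of $V$ and of the relevant valuation space, and that the resulting $A$ is a genuine nonempty localizing subspace --- are routine. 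The step I expect to be the real obstacle, and where the argument must be handled with care, is the precise dictionary used twice in this last paragraph: translating between a point $x$ of $P_{1,K}$ with prescribed specialization on $P_{1,k}$ together with the size comparisons $|\tilde{f}_K^* g_l|_x\le|\tilde{f}_K^* h_l|_x$, and an honest valuation of $k(X_1)$ dominating a local ring on $X_1$ with the corresponding signs on the functions $f^*\psi^{(l)}$. Getting this right --- including the uniformity needed to pass from a point of $V$ to a neighborhood, so that the quasi-compactness reduction applies --- relies on the interplay between the specialization map, residual valuations, and the ratios of sections cut out by localizing subspaces, exactly the machinery the localizing-subspace formalism was set up to support.
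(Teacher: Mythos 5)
The approach you take is genuinely different from the paper's. The paper's proof is shorter and sharper: it applies Raynaud--Gruson flatification to produce a blowup $X' \to X$ under which the proper transform $X'_1 \to X'$ becomes finite flat, then chooses the localizing subspace $A$ to force specializations to project onto the center $z'$ of $v$ on $X'$, using only a small number of coordinate functions on an affine chart $U$ of $X'$ around $z'$. The crucial point is that finite flatness of $X'_1 \to X'$ guarantees that \emph{every} point of $X'_1$ over $z'$ is the center of an extension of $v$ (by going-up/flatness applied to $\gotho_v \otimes_{\calO_{X',z'}} \calO_{X'_1,y}$), so no separation argument in the fibre is needed at all. This is what makes the construction purely affirmative rather than a contradiction argument.

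Your approach instead works in the fibre $\tilde{f}_k^{-1}(z)$ over the special fibre and tries to separate, point by point, the locus not mapping into $S$. The per-point setup is sound (the finiteness of the set of extensions, minimality implying closed-point centers, and quasi-compactness of the space of valuations of $k(X_1)$ dominating $\calO_{X_1,y}$ are all correct, as is the height-one argument giving $\gotho_v\not\subseteq\gotho_{w'|_{k(X)}}$). However, the final step --- which you yourself flag as ``the real obstacle'' --- is a genuine gap, and as written the argument does not close. Two issues are concrete.

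First, a rigid point $x \in \tilde{f}_K^{-1}(A_{\tilde{y}})$ with specialization $\tilde y'$ mapping to $y$ does \emph{not} determine a valuation of $k(X_1)$ dominating $\calO_{X_1,y}$ that sees the constraints $|\psi^{(l)}(x) - c_l| < 1$. The specialization of a classical point is a closed point of $P_{1,k}$, and the conditions $\epsilon \le |g_l| < |h_l|$ are about archimedean-style size comparisons at $x$, not about orders of vanishing of $f^*\psi^{(l)}$ along any valuation centered at $y$. To bridge this you would need to turn the nonexistence of a suitable valuation into an identity $1 = \sum_i m_i r_i$ with $m_i \in \gothm_y$ and $r_i \in \calO_{X_1,y}[\psi^{(1)},\dots,\psi^{(L)}]$, lift that to characteristic zero (being careful about denominators and the birational discrepancy between $P_{1,k}$ and $X_1$), and evaluate at $x$ to force $1 < 1$. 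None of this is spelled out, and the lifting step in particular is delicate because the identity lives in $k(X_1)$, which has no canonical lift to characteristic zero compatible with $\Gamma(P_1,\calO)$.

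Second, even if the per-point contradiction worked at $\tilde{y}$ itself, the functions $\psi^{(l)}$ are chosen specifically so that every valuation dominating $\calO_{X_1,y}$ (for the single point $y = \pi(\tilde{y})$) has some $\psi^{(l)}$ negative. A nearby point $\tilde y' \in U_{\tilde y}$ may map to a different $y' = \pi(\tilde y') \ne y$ (recall $\pi$ is only birational and $\tilde{f}_k^{-1}(z)$ can be higher-dimensional than $f^{-1}(z)$), and the chosen $\psi^{(l)}$ give no information about valuations centered at $y'$. The claimed uniformity ``so that the quasi-compactness reduction applies'' is asserted, not established, and I do not see how to get it without additional input. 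The paper's flatification step precisely eliminates the need for this: after flatifying, every point of the relevant fibre is automatically a center of an extension, so there is nothing to separate.

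In short: the overall contour (finiteness of extensions, quasi-compactness, height-one valuation theory) is sensible, but the two bridging steps that you identify as requiring care are exactly the ones that fail as stated, and I do not think they can be repaired without introducing something like the flatification device the paper uses.
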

\begin{proof}
Apply Raynaud-Gruson flatification \cite[premi\`ere partie,
\S 5.2]{raynaud-gruson} to find a blowup $X' \to X$
under which the proper transform $X'_1 \to X'$ 
of $X_1 \to X$ is finite flat.
By blowing up $P_1$ as needed, we can force ourselves into the case
where $P_{1,k} \to X$ factors through $X'_1$.

Let $z'$ denote the center of $v$ on $X'$.
Let $U$ be an open dense affine subscheme of $X'$ containing $z'$.
Choose finitely many elements $g_i, h_i \in \Gamma(P, \calO)$
so that the ratios $\overline{g}_i/\overline{h}_i \in k(X)$
generate $\Gamma(U, \calO)$ as an algebra over $\Gamma(X, \calO)$.
By Remark~\ref{R:localizing1}, we can find a localizing subspace
$A_1$ of $P_K$ for $v$ on which $|g_i| \leq |h_i|$ for each $i$.

Next, choose finitely many elements $g_j, h_j \in \Gamma(P, \calO)$
so that the ratios $\overline{g}_j/\overline{h}_j \in k(X)$
belong to $\Gamma(U, \calO)$ and cut out a nonempty closed subscheme of $U$
supported at $z'$.
Let $A$ be the subspace of $A_1$ on which $|g_j| < |h_j|$ for each $j$;
then $A$ is a localizing subspace of $P_K$ for $v$.

By construction, any point of $\tilde{f}_K^{-1}(A)$ must map to a point of $P_{1,k}$
projecting onto $z'$ in $X'$. The image in $X'_1$ must then be 
the center of an extension of $v$, and similarly after projecting onto $X_1$.
This yields the claim.
\end{proof}

\begin{lemma} \label{L:unipotent point}
Suppose that $k$ is algebraically closed.
Let $(X,Z)$ be a smooth pair of $k$-varieties with $X$ affine irreducible.
Let $P$ be a smooth affine formal scheme over $\Spf \gotho_K$ with
$P_k \cong X$, and let $Q$ be a relative normal crossings divisor on $P$
with $Q_k \cong Z$.
Let $\calE$ be a convergent isocrystal on $(X,Z)$ with nilpotent residues, 
realized
as a log-$\nabla$-module on $(P_K, Q_K)$ with nilpotent residues.
Then for any closed point $x \in X$, the restriction of $\calE$
to $]x[_P$ is unipotent (i.e., it is a successive extension of constant
$\nabla$-modules).
\end{lemma}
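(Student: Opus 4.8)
The plan is to make everything explicit on the residue tube. Since $k$ is algebraically closed, $x$ is a $K$-rational point of $P_K$, so $]x[_P$ is an open polydisc; I would choose local parameters $t_1,\dots,t_n \in \Gamma(P,\calO)$ at $x$ (with $n=\dim X$) so that the components of $Z$ through $x$ are the zero loci of $t_1,\dots,t_m$ for some $0\le m\le n$. Then $\calE|_{]x[_P}$ is a finite free module over the ring $R$ of analytic functions on the polydisc, equipped with commuting operators $D_i=t_i\partial_{t_i}$ for $1\le i\le m$ and $D_i=\partial_{t_i}$ for $m<i\le n$. The hypothesis that $\calE$ is a convergent isocrystal says precisely that this $\nabla$-module, together with its Taylor expansion at $x$, converges on all of $]x[_P$, and the nilpotent-residue hypothesis says that the induced endomorphisms $N_1,\dots,N_m$ of the fibre $V:=\calE_x=\calE/(t_1,\dots,t_m)\calE$ are (commuting) nilpotent.

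First I would dispose of the non-logarithmic directions. For each $i$ with $m<i\le n$, convergence of $\calE$ makes the Taylor series in $t_i$ converge on the whole disc, so parallel transport along $\partial_{t_i}$ identifies $\calE|_{]x[_P}$ with the pullback of its restriction to $\{t_i=0\}$; this is the $p$-adic transfer principle (see \cite{kedlaya-course}). Iterating, I may assume $m=n$, i.e.\ $]x[_P$ is an open polydisc in $t_1,\dots,t_m$ with logarithmic poles along each coordinate hyperplane. Now I would invoke the structure theory of log-$\nabla$-modules with nilpotent residues (\cite{kedlaya-course}, \cite{kedlaya-part1}) to put $\calE$ into $p$-adic canonical form: there is an $R$-basis of $\calE|_{]x[_P}$ in which each $D_i=t_i\partial_{t_i}$ acts by the constant matrix $N_i$. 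Nilpotence of the residues is exactly what kills the resonance and $p$-adic Liouville obstructions to such a normal form.

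Granting the normal form, unipotence is formal. The $N_i$ are pairwise commuting nilpotent endomorphisms of the finite-dimensional $K$-vector space $V$, so their common kernel is nonzero; peeling it off and inducting on $\dim V$ yields a complete flag $0=V_0\subset V_1\subset\cdots\subset V_r=V$ with $N_i(V_j)\subseteq V_{j-1}$ for all $i,j$. Transporting this flag via the chosen basis gives a filtration of $\calE|_{]x[_P}$ by sub-$\nabla$-modules $R\otimes_K V_j$ whose successive quotients $R\otimes_K(V_j/V_{j-1})$ are annihilated by every $D_i$, hence are constant $\nabla$-modules. Thus $\calE|_{]x[_P}$ is a successive extension of constant $\nabla$-modules, as desired.

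The crux is the middle step: producing the normal form $D_i\mapsto N_i$ from convergence together with nilpotence of the residues. If no ready-made multivariable statement is at hand, I would reduce to one variable and induct on $m$: restrict $\calE$ to $\{t_m=0\}$, apply the inductive hypothesis to normalize $D_1,\dots,D_{m-1}$, and then use that $N_m$ (being the restriction of $D_m$) commutes with the other $D_i$ and is nilpotent — so it preserves the constant subquotients already constructed and has nonzero kernel on each trivial piece — to lift a nonzero constant sub-$\nabla$-module across $t_m=0$, after which one quotients and induces on the rank. This is the only place the nilpotent-residue hypothesis is genuinely used.
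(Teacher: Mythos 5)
The paper's own proof of this lemma is a single line: ``Apply \cite[Proposition~3.6.9]{kedlaya-part1}.'' Your proposal is an attempt to re-derive the content of that proposition from scratch, and the framing steps are fine: peeling off the non-logarithmic directions by Dwork's transfer theorem is standard, and the final paragraph correctly shows that once a constant canonical form $D_i \mapsto N_i$ is in hand, unipotence is a formal consequence of simultaneous nilpotency. The difficulty is that the entire mathematical content of the lemma is concentrated in the middle step, which you acknowledge is ``the crux'' but do not actually prove.

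Concretely, your inductive sketch — restrict to $\{t_m=0\}$, normalize there, then ``lift a nonzero constant sub-$\nabla$-module across $t_m=0$'' — pushes the difficulty into a one-variable problem of exactly the same nature rather than reducing it. Writing the log connection in the $t_m$-direction as $D_m = t_m\partial_{t_m} + A$ with $A = N_m + t_m A_1 + t_m^2 A_2 + \cdots$, a formal lift $v = v_0 + t_m v_1 + \cdots$ of a section $v_0 \in \ker N_m$ is determined by the recursion $(k+N_m)v_k = -\sum_{j<k} A_{k-j}v_j$. Nilpotence of $N_m$ is what makes $k+N_m$ invertible for every positive integer $k$, so it resolves the \emph{formal} (resonance) obstruction. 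But the inverses $(k+N_m)^{-1} = \tfrac{1}{k}\sum_{i\ge 0}(-N_m/k)^i$ introduce denominators $k, k^2, \ldots$, and the $p$-adic size of $1/k$ is unbounded; showing that the resulting series $v$ converges on the whole tube is the genuine $p$-adic Liouville problem. Your remark that ``nilpotence of the residues is exactly what kills the resonance and $p$-adic Liouville obstructions'' conflates these: nilpotence handles resonance only, while the convergence is precisely what the hypothesis that $\calE$ is a \emph{convergent} isocrystal (via the Taylor isomorphism, i.e.\ the radius-of-convergence condition on the generic fibre) must supply. Since that convergence estimate is neither carried out nor reduced to an already-available result, the lift across $t_m=0$ is asserted rather than argued, and this is a genuine gap — indeed it is exactly the point at which the cited \cite[Proposition~3.6.9]{kedlaya-part1} does nontrivial work.
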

\begin{proof}
Apply \cite[Proposition~3.6.9]{kedlaya-part1}.
\end{proof}

\begin{remark} \label{R:tube}
We warn the reader of a slight misuse of notation in 
Definition~\ref{D:mono rep} below. For $P$ a formal scheme,
the notation $]S[_P$ normally indicates the tube of a subspace $S$
of the special fibre $P_k$. We will also use it in settings where
$S$ is not a subspace of $P_k$ itself, but of another scheme $X$ to
which $P_k$ maps in a specified manner. We then mean to take the inverse
image of $S$ in $P_k$ before forming the tube.
\end{remark}

\begin{defn} \label{D:mono rep}
Let $\calE$ be an $F$-isocrystal on $X \setminus Z$ overconvergent
along $Z$ admitting local semistable reduction at $v$. 
Define $f, X_1, Z_1, U$ as in Definition~\ref{D:local semi}.
Choose an extension $v_1$ of $v$ to $k(X_1)$, let $z_1$ be the center
of $v_1$ on $X_1$, and let $U_1$ be an open
dense affine subscheme of $U$ containing $z_1$.
After possibly shrinking $U_1$, we can construct
a smooth affine 
formal scheme $Q_1$ over $\Spf \gotho_K$ with $Q_{1,k} \cong U_1$,
and a relative simple normal crossings divisor on $Q_1$ 
lifting $U_1 \cap Z_1$.

Let $g: X_2 \to X$ be the normalization of the Galois closure of some
separable alteration factoring through $f$;
we may factor $g = f \circ f_1$ for $f_1: X_2 \to X_1$ another separable
alteration. Put $G = \Aut(X_2/X)$.
Choose an extension $v_2$ of $v_1$ to $k(X_2)$, and let $z_2$ be the
center of $v_2$ on $X_2$; then $I_v$ surjects onto the stabilizer $H$ of
$z_2$ in $G$.

By Lemma~\ref{L:lift flat}, there exist an open dense affine subscheme
$T$ of $X$ and a proper morphism $\tilde{h}: P_2 \to P$ of formal schemes,
such that $h: P_{2,k} \to X$ factors through $X_2$,
and over $T$, $\tilde{h}$ is finite
\'etale and reduces to $g$.
We may then shrink $T$ to ensure that $T_1 = f^{-1}(T) \subseteq U_1$.
Put $T_2 = f_1^{-1}(T_1), U_2 = f_1^{-1}(U_1)$.

By Lemma~\ref{L:lift etale}, there exists a strict neighborhood $V$
of $]T[_P$ (see Remark~\ref{R:tube})
in $P_K$ over which $\tilde{h}_K$ is finite \'etale Galois
with group $G$.
Note that as we shrink $V$, $V_2 = \tilde{h}_K^{-1}(V)$ runs through a cofinal
set of strict neighborhoods of $]T_2[_{P_2}$ in $P_{2,K}$.
After shrinking $V$ suitably, 
we may realize $\calE|_T$ as a $\nabla$-module $\calE_V$ on
$V$, such that $\tilde{h}_K^* \calE_V$ realizes $(g^* \calE)|_{T_2}$
on $V_2$. 

On the other hand, we may realize $(f^* \calE)|_{T_1}$ as a $\nabla$-module 
$\calE_{W_1}$ on some strict neighborhood $W_1$ of $]T_1[_{Q_1}$ in $Q_{1,K}$. 
By \cite[Theorem~6.4.1]{kedlaya-part1}, 
after shrinking $W_1$ suitably, the $\nabla$-module
$(f^* \calE)|_{T_1}$ extends to a log-$\nabla$-module $\calF$
over all of $Q_{1,K}$.
By Lemma~\ref{L:unipotent point}, $\calF$
is unipotent on $]z_1[_{Q_1}$, hence also on $W_1 \cap ]z_1[_{Q_1}$.

Let $P'_2$ be the open formal subscheme of $P_2$ supported on
the inverse image of $U_1$ under the map $P_{2,k} \to X_2 \to X_1$.
Let $\Gamma$ denote the graph of the morphism $P'_{2,k} \to U_1$;
this coincides with the Zariski closure of 
$T_2$ in $P'_{2,k} \times U_1$ under the product
of the embedding $T_2 \to P'_{2,k}$ and the composition $T_2 \to T_1 \to U_1$.
In particular, it is isomorphic to $P'_{2,k}$.

We can construct two realizations of $(g^* \calE)|_{T_2}$
as $\nabla$-modules on a strict neighborhood $W_2$
of $]T_2[_{P'_2 \times Q_1}$ in $]\Gamma[_{P'_2 \times Q_1}$,
one by pulling back $\tilde{h}_K^* \calE_V$ along the first projection,
the other by pulling back $\calE_{W_1}$ along the second projection.
By the functoriality of rigid cohomology, for $W_2$ suitably small,
there is a distinguished isomorphism between these two realizations.
The second realization is unipotent on 
$W_2 \cap (P'_{2,K} \times ]z_1[_{Q_1})$, 
hence also on $W_2 \cap ]z_2[_{P'_2 \times Q_1}$. This implies in turn
that the the first realization is unipotent on $W_2 \cap ]z_2[_{P'_2 \times Q_1}$,
so $\tilde{h}_K^* \calE_V$ is unipotent on $V_2 \cap ]z_2[_{P_2}$.

By applying Lemma~\ref{L:localizing}, we obtain a localizing subspace $A$
of $P_K$ for $v$ such that $\tilde{h}_K^{-1}(A)$ is finite \'etale
Galois over $A$, and the restriction $\calE_A$ of $\calE$ to $A$
becomes unipotent upon pullback to 
$B = \tilde{h}_K^{-1}(A) \cap ]z_2[_{P_2}$.
The semisimplification $\calE_A^{\semis}$ then becomes constant 
upon pullback to $B$; let $\calE_B^{\semis}$ denote this pullback.

Note that $\tilde{h}_K^{-1}(A)$ is a disjoint union of copies of $B$
corresponding to points of $X_2$ in the $G$-orbit of $z_2$,
so $B$ is finite \'etale Galois over $A$ with group $H$.
We thus obtain an action of $H$ on $H^0(\calE_B^{\semis})$,
and by composition a homomorphism $\tau: I_v \to \GL(H^0(\calE_B^{\semis}))$.
Note that the construction does not depend on the choice of $g$:
if $g': X'_2 \to X$ is the normalization of the Galois closure of
another separable alteration factoring
through $g$, the action of $\Aut(X'_2/X)$ on 
local horizontal sections factors through $G$, so we end up with 
the same representation $\tau$ up to isomorphism.
We refer to $\tau$ as the \emph{semisimplified local monodromy
representation} associated to $\calE$ at $v$. 
\end{defn}

We will need the following Tannakian interpretation of the
construction of local monodromy representations.
\begin{lemma} \label{L:Tannakian}
With notation as in Definition~\ref{D:mono rep},
the Tannakian
category $[\calE_A^{\semis}]$
of $\nabla$-modules on $A$ generated by $\calE_A^{\semis}$ 
(not necessarily admitting Frobenius structures)
is equivalent
to the Tannakian category $[\tau]$ of representations of $I_v$ on 
finite-dimensional $K$-vector spaces
generated by $\tau$. 
Moreover, the latter category contains \emph{all} representations
of the group $\tau(I_v)$ on finite-dimensional $K$-vector spaces,
and $\tau$ is a $\otimes$-generator of this category;
in particular, both Tannakian categories are algebraic
with automorphism group $\tau(I_v)$.
\end{lemma}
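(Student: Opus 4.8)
The plan is to translate everything, by means of finite \'etale Galois descent along the cover $B \to A$ constructed in Definition~\ref{D:mono rep}, into the representation theory of the finite group $\tau(I_v)$, after which all the assertions reduce to standard Tannakian bookkeeping; the only genuinely nontrivial input is a descent statement for $\nabla$-modules. First I would recall from Definition~\ref{D:mono rep} that $B$ is a connected rigid space, finite \'etale Galois over $A$ with group $H$, that $I_v$ surjects onto $H$, and that $\calE_A^{\semis}$ becomes a constant ($\calO_B$-trivial) $\nabla$-module after pullback to $B$. On the smooth connected space $B$, a $\nabla$-stable coherent subsheaf of a free $\nabla$-module is automatically a subbundle, and a $\nabla$-stable subbundle of a constant $\nabla$-module is again constant because $H^0(\calO_B) = K$; hence the constant $\nabla$-modules on $B$ form a Tannakian subcategory, and consequently \emph{every} object of $[\calE_A^{\semis}]$ becomes constant after pullback to $B$.

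The key step is then to invoke the standard equivalence between the category of $\nabla$-modules on $A$ that are trivialized by $B$ and the category $\mathrm{Rep}_K(H)$ of finite-dimensional $K$-representations of $H = \Gal(B/A)$: in one direction $\calF \mapsto H^0(\calF|_B)$ equipped with the $H$-action coming from the descent datum, in the other direction $V \mapsto (V \otimes_K \calO_B)^H$. This equivalence is exact and compatible with $\otimes$ and $\dual$, and by construction it carries $\calE_A^{\semis}$ to $\tau$, regarded as a representation of $H$ via $I_v \twoheadrightarrow H$; indeed it is precisely the functor $\omega$ and the representation $\tau$ of Definition~\ref{D:mono rep}. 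Therefore $[\calE_A^{\semis}]$ is equivalent, via $\omega$, to the Tannakian subcategory $\langle \tau \rangle$ of $\mathrm{Rep}_K(H)$ generated by $\tau$; and (via inflation along $I_v \twoheadrightarrow H$) the category $[\tau]$ coincides with that same subcategory $\langle\tau\rangle$, since every object of $[\tau]$ has $I_v$ acting through $\tau(I_v)$, hence through $H$.

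It remains to identify $\langle\tau\rangle$. Put $\bar G = \tau(I_v)$, a finite group; every object of $\langle\tau\rangle$ has $H$ acting through the quotient $H \twoheadrightarrow \bar G$, so $\langle\tau\rangle$ is a Tannakian subcategory of $\mathrm{Rep}_K(\bar G)$ containing the tautological \emph{faithful} representation $\bar\tau\colon \bar G \hookrightarrow \GL(H^0(\calE_B^{\semis}))$. Applying Lemma~\ref{L:Tannaka2} to the inclusion $\langle\tau\rangle \hookrightarrow \mathrm{Rep}_K(\bar G)$ (which is fully faithful and closed under subobjects) yields a surjection from $\bar G$, the automorphism group of the forgetful fibre functor on $\mathrm{Rep}_K(\bar G)$, onto the automorphism group of its restriction to $\langle\tau\rangle$; and the composite of this surjection with the action on $H^0(\calE_B^{\semis})$ is $\bar\tau$, which is injective, so the surjection is an isomorphism and $\langle\tau\rangle = \mathrm{Rep}_K(\bar G)$. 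This gives $[\calE_A^{\semis}] \simeq [\tau] \simeq \mathrm{Rep}_K(\tau(I_v))$, shows these categories contain all representations of $\tau(I_v)$, and (since $\tau(I_v)$ is finite) shows they are algebraic with automorphism group $\tau(I_v)$ by Lemma~\ref{L:Tannaka1}. Finally, $\tau$ is a $\otimes$-generator even without duals: since $\charac K = 0$, $\Lambda^j \bar\tau$ is a direct summand of $\bar\tau^{\otimes j}$; the one-dimensional representation $\det\bar\tau = \Lambda^d\bar\tau$ with $d = \dim\bar\tau$ has some finite order $m$, so $(\det\bar\tau)^\dual \cong (\det\bar\tau)^{\otimes(m-1)}$; hence $\bar\tau^\dual \cong \Lambda^{d-1}\bar\tau \otimes (\det\bar\tau)^\dual$ is a subquotient of $\bar\tau^{\otimes(dm-1)}$, so the subcategory generated by $\tau$ under $\oplus$, $\otimes$ and subquotients alone already contains $\tau^\dual$ and therefore equals $\langle\tau\rangle$; transporting along $\omega$, $\calE_A^{\semis}$ is a $\otimes$-generator of $[\calE_A^{\semis}]$.

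I expect the main obstacle to be the descent statement used in the second paragraph: one must verify that $B = \tilde h_K^{-1}(A) \cap\, ]z_2[_{P_2}$ is a connected smooth rigid-analytic space with $H^0(\calO_B) = K$, and that finite \'etale Galois descent for $\nabla$-modules — together with the resulting exact equivalence with $\mathrm{Rep}_K(H)$ — applies to this particular space and to the ($\calO$-coherent, not necessarily Frobenius-equipped) $\nabla$-modules on $A$; granting this, the remaining Tannakian arguments are routine.
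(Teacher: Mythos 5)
Your proof is correct, and it reaches the conclusion by a genuinely more Tannakian route than the paper's. The paper constructs the functor $F\colon \calF \mapsto H^0(\calF_B)$ and verifies its properties by hand: faithfulness and fullness by identifying $I_v$-equivariant linear maps with $H$-invariant horizontal sections of $\calF_1^\dual\otimes\calF_2$ over $B$ and then Galois-descending those sections to $A$; essential surjectivity by noting that every irreducible $\rho\in[\tau]$ occurs in some $\tau^{\otimes n}$, picking a projection in $\mathrm{End}(\tau^{\otimes n})$ onto one copy of $\rho$, transporting it to an endomorphism of $(\calE_A^{\semis})^{\otimes n}$, and taking its image. You instead package the same descent of horizontal sections as the equivalence ``constant-on-$B$ $\nabla$-modules on $A$'' $\simeq \mathrm{Rep}_K(H)$ and then invoke Lemma~\ref{L:Tannaka2}(b) to get a surjection $\bar G\to G'$ which is split by the faithful $\bar\tau$, hence an isomorphism. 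The two proofs rest on exactly the same geometric input — $H$-invariant horizontal sections on $B$ descend to $A$, and $B$ is connected — and you rightly flag that latter point as the place where rigor is needed; the paper leaves it implicit. A small correction: what you actually use is $\calO_B^\nabla=K$ (horizontal global functions are constants), not the stronger $H^0(\calO_B)=K$; the former follows from connectedness of $B$ in characteristic zero, the latter need not hold on a rigid space such as this one. Finally, your $\otimes$-generator argument (write $\bar\tau^\dual\cong\Lambda^{d-1}\bar\tau\otimes(\det\bar\tau)^\dual$ and use the finite order of $\det\bar\tau$) is a clean alternative to the paper's character-theoretic one (choose $P\in\ZZ[t]$ with $P(\chi(g))=0$ for $g\neq 1$ and $P(\dim\tau)>0$, so $P(\chi)$ is a multiple of the regular character, and split $P$ into positive parts); note also that your logical order is reversed relative to the paper, which proves the $\otimes$-generator claim first in order to identify $[\tau]$ with $\mathrm{Rep}_K(\tau(I_v))$, whereas you obtain that identification from Lemma~\ref{L:Tannaka2} and supply the generator statement afterwards.
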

\begin{proof}
We check that $\tau$ is a $\otimes$-generator of $[\tau]$
by an elementary argument (it may also be deduced using Tannaka duality).
Let $\chi: \tau(I_v) \to K$ be the character of $\tau$,
so that $\chi(1) = \dim(\tau)$ and $\chi(g) \neq \dim(\tau)$ for $g \in
\tau(I_v) \setminus \{1\}$. Since the $\chi(g)$ are algebraic integers,
we can find $P \in \ZZ[t]$ for which $P(\dim(\tau)) > 0$
but $P(\chi(g)) = 0$ for $g \in \tau(I_v) \setminus \{1\}$.
Thus the virtual character $P(\chi)$ 
is a positive multiple of the character of the regular representation,
in which each irreducible representation of $\tau(I_v)$ must appear
with positive multiplicity.
Write $P = Q - R$ with $Q,R \in \ZZ[t]$ having 
nonnegative coefficients; then $Q(\chi)+ R(\chi)$
is a true character in which each irreducible representation occurs
with positive multiplicity. This proves the claim.

On one hand, $[\tau]$ now equals the category
of representations of $\tau(I_v)$ on finite-dimensional
$K$-vector spaces. On the other hand, $[\tau]$ is algebraic.
Let $\omega$ be the forgetful functor from $[\tau]$ to finite-dimensional
$K$-vector spaces, and put $G = \Aut(\omega)$.
By Lemma~\ref{L:Tannaka1}(c),
$[\tau]$ is also equivalent to the category of representations of $G$
on finite-dimensional $K$-vector spaces. We obtain automorphisms
of $\omega$ from elements of $\tau(I_v)$, giving an inclusion
$\tau(I_v) \to G$. This inclusion must be an isomorphism, otherwise
we could construct nonisomorphic representations of $G$ whose restrictions
to $\tau(I_v)$ are isomorphic (e.g., the induction to $G$
of the trivial representation of $\tau(I_v)$, and a direct sum
of trivial representations of the same total dimension). Hence
$G = \tau(I_v)$.

Let $F: [\calE_A^{\semis}] \to [\tau]$
be the faithful functor taking a $\nabla$-module $\calF$ to
$H^0(\calF_B)$, where $\calF_B$ denotes the pullback to $B$.
We check that $F$ is fully faithful. Given $\calF_1, \calF_2 \in 
[\calE_A^{\semis}]$,
any $I_v$-equivariant morphism $H^0(\calF_{1,B}) \to H^0(\calF_{2,B})$
corresponds to an $I_v$-fixed horizontal section of
$\calF_{1,B}^\dual \otimes \calF_{2,B}$, which then descends to
a horizontal section of $\calF_1^\dual \otimes \calF_2$.
This in turn yields a morphism $\calF_1 \to \calF_2$, proving the claim.

We check that $F$ is essentially surjective.
From above, every irreducible $\rho \in [\tau]$ occurs as a subobject of
$\tau^{\otimes n}$ for some nonnegative integer $n$. Since $[\tau]$ 
is semisimple (Maschke's theorem),
we can find an endomorphism of $\tau^{\otimes n}$
which projects onto one copy of $\rho$. This endomorphism
corresponds to an $I_v$-fixed horizontal section of 
$(\calE^{\semis}_B)^{\otimes (-n)} \otimes (\calE_B^{\semis})^{\otimes n}$, 
thus to a horizontal section of 
$(\calE^{\semis}_A)^{\otimes (-n)} \otimes (\calE^{\semis}_A)^{\otimes n}$
and to an endomorphism of $(\calE^{\semis}_A)^{\otimes n}$.
The image of this last endomorphism is an element $\calF \in
[\calE_A^{\semis}]$ for which $F(\calF) \cong \rho$.
\end{proof}

\begin{prop} \label{P:refined log-extend}
Suppose that $k$ is algebraically closed.
Let $Z \hookrightarrow X$ be a closed immersion of $k$-varieties,
with $X$ irreducible affine and $X \setminus Z$ smooth.
Let $v$ be a minimal valuation on $k(X)$
centered on $Z$.
Let $f_i: X_i \to X$ be an exposing sequence for $v$.
Let $\calE$ be an $F$-isocrystal on $X \setminus Z$ overconvergent along 
$Z$ admitting local semistable reduction at $v$. If the semisimplified
local monodromy representation associated to $\calE$
is trivial, then there exists an index $i$ such that $f_i^* \calE$
is log-extendable.
\end{prop}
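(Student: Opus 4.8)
The plan is to combine the triviality hypothesis, which via Lemma~\ref{L:Tannakian} forces $\calE$ to be unipotent on a localizing subspace, with the log-extension criterion of \cite[\S 6.4]{kedlaya-part1}, using a member $f_i$ of the exposing sequence to turn this analytic-local unipotence into a genuine log-extension near the center of $v$. Concretely, I aim to produce, for $i$ large, a suitable extension $v_i$ of $v$ to $k(X_i)$ with center $z_i$ and an open subscheme $U_i$ of $X_i$ containing $z_i$, such that $(U_i, f_i^{-1}(Z) \cap U_i)$ is a smooth pair and $f_i^* \calE$ extends to a convergent log-$F$-isocrystal with nilpotent residues on it; since each $f_i$ is an exposing alteration, $z_i$ is automatically a smooth point of $X_i$ lying on exactly $\ratrank(v)$ components of $f_i^{-1}(Z)$, so such a $U_i$ exists as soon as the log-extension does.

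First I would unwind the hypothesis. Let $A$ be a localizing subspace of $P_K$ for $v$ as in Definition~\ref{D:mono rep}, with semisimplification $\calE_A^{\semis}$ becoming constant on the finite \'etale Galois cover $B \to A$ with group $H$. Since $\tau$ is trivial and $I_v$ surjects onto $H$, the group $H$ acts trivially on $H^0(\calE_B^{\semis})$, so the horizontal sections descend and $\calE_A^{\semis}$ is already constant on $A$ (equivalently, $[\calE_A^{\semis}]$ is the trivial Tannakian category by Lemma~\ref{L:Tannakian}). As the Jordan--H\"older constituents of $\calE_A$ are exactly the simple summands of $\calE_A^{\semis}$, each is the unit object, so $\calE_A$ is \emph{unipotent} on $A$. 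Then I would transfer this to $X_i$: lifting $f_i$ to a proper morphism $\tilde f_i : P_i \to P$ of formal schemes as in Lemma~\ref{L:lift flat}, and arguing as in Lemma~\ref{L:localizing} together with the fact that the neighborhoods of the exposing sequence form a basis at $v$ in $S_{k(X)/k}$, one checks that for $i$ large the preimages under $\tilde f_{i,K}$ of localizing subspaces of $P_K$ for $v$ run through a cofinal family of strict neighborhoods of the punctured tube of $z_i$ in $U_i$, with the components of $f_i^{-1}(Z)$ through $z_i$ removed. Hence $f_i^* \calE$ is unipotent on such a strict neighborhood, and in particular has unipotent monodromy along each component of $f_i^{-1}(Z)$ passing through $z_i$.

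It then remains to invoke \cite[Theorem~6.4.1]{kedlaya-part1} together with \cite[Theorem~6.4.5]{kedlaya-part1}: a $\nabla$-module on a strict neighborhood of the punctured tube of $z_i$ in $U_i$, overconvergent along $f_i^{-1}(Z)\cap U_i$ and with unipotent monodromy along each component through $z_i$, extends to a convergent log-$\nabla$-module with nilpotent residues on a neighborhood of $z_i$, and (as in Remark~\ref{R:choose extension}) the Frobenius structure extends uniquely; thus $f_i^* \calE$ is log-extendable. I expect the crux to be the transfer step: one must match the purely analytic localizing subspaces of $P_K$ against geometric neighborhoods of $z_i$ on the exposed models $X_i$, and propagate unipotence from a single localizing subspace to an entire punctured tube. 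For the latter one should exploit the local semistable reduction hypothesis, which already furnishes a log-extension of $\calE$ after \emph{some} (possibly larger) alteration; the triviality of $\tau$ is precisely what guarantees that no alteration beyond the exposing $f_i$ is needed to realize that extension.
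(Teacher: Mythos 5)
Your first step is correct and matches the paper: the triviality of $\tau$, via Lemma~\ref{L:Tannakian}, forces $[\calE_A^{\semis}]$ to be the trivial Tannakian category, so $\calE_A$ is unipotent. Your endgame --- invoking \cite[Theorem~6.4.5]{kedlaya-part1} in both directions to pass from unipotent local monodromy along the right divisors to log-extendability --- is also the right target.

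The gap is in the transfer step, where your specific claim is not justified and is not what the argument should run on. You assert that for $i$ large the preimages under $\tilde f_{i,K}$ of localizing subspaces of $P_K$ for $v$ run through a cofinal family of strict neighborhoods of the punctured tube of $z_i$. But a localizing subspace $A$ is cut out by finitely many conditions $\epsilon \leq |g_j| < |h_j|$ involving specific functions on $P$; its preimage on $P_{i,K}$ is a small piece of $]z_i[$ rather than a full strict neighborhood of the punctured tube, and the definition of an exposing sequence (the $S_i$ form a neighborhood basis of $v$) gives an inclusion of the \emph{valuation-theoretic} neighborhoods $S_i$ into a fixed open set, not a cofinality statement for rigid-analytic strict neighborhoods. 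The two directions do not match up.

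What the paper does instead, and what you should do, is purely valuation-theoretic. Since $v(\overline{g}_j/\overline{h}_j) > 0$ for the conditions defining $A$, one can find a blowup $X' \to X$ with center $z'$ of $v$ so that these ratios lie in $\calO_{X',z'}$; then the set $S$ of valuations centered at $z'$ on $X'$ is patch-open. Because the $S_i$ form a neighborhood basis of $v$, one can choose $i$ with $S_i \subseteq S$. Every component of $f_i^{-1}(Z)$ gives a divisorial valuation of $k(X_i)$ whose restriction to $k(X)$ lies in $S_i \subseteq S$, hence is centered at $z'$; one direction of \cite[Theorem~6.4.5]{kedlaya-part1} then converts unipotence of $\calE_A$ into unipotent local monodromy along each such divisor, and the other direction gives log-extendability of $f_i^*\calE$. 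Note this treats \emph{all} components of $f_i^{-1}(Z)$, not only those through $z_i$, which is what the criterion in \cite[Theorem~6.4.5]{kedlaya-part1} requires. Your closing remark that one ``should exploit the local semistable reduction hypothesis'' beyond setting up Definition~\ref{D:mono rep} is not needed and does not reflect the actual argument.
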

\begin{proof}
Set notation as in Definition~\ref{D:mono rep}, and suppose that
$\tau$ is trivial. By Lemma~\ref{L:Tannakian},
$\calE_A$ is unipotent.

Recall that $A$ was defined using finitely many conditions of the form
$\epsilon \leq |g_j| < |h_j|$ with $g_j,h_j \in \Gamma(P, \calO)$,
$\overline{h}_j \neq 0$, and
$v(\overline{g}_j/\overline{h}_j) > 0$. In particular, 
we can find a blowup $X'$ of $X$ such that the
$\overline{g}_j/\overline{h}_j$ belong to the local ring
of $X'$ at the center $z'$ of $v$.
Let $S$ be the subset of $S_{k(X)/k}$ consisting of valuations 
whose center on $X'$ is equal to $z'$; this is an open set for the patch
topology.

We can thus choose an index $i$ so that the divisorial valuations
of $k(X_i)$ corresponding to the components of $f_i^{-1}(Z)$
all belong to $S$.
By one direction of
\cite[Theorem~6.4.5]{kedlaya-part1}, $f_i^* \calE$ 
has unipotent local monodromy along each of these valuations.
By the other direction of
\cite[Theorem~6.4.5]{kedlaya-part1}, $f_i^* \calE$ is log-extendable,
as desired.
\end{proof}

\begin{remark} \label{R:isotypical}
It would simplify some arguments in this paper to construct a
local monodromy representation without semisimplification,
e.g., by following the procedure in the one-dimensional case given in
\cite[Theorem~4.45]{kedlaya-mono-over}.
Unfortunately, this is difficult in the higher-dimensional setting;
the main obstruction is the lack of a good comparison
between the de Rham cohomologies of the spaces $A$ and $B$ in 
Definition~\ref{D:mono rep}. (In the one-dimensional case, they are both 
open annuli.) We will not attempt to tackle this issue here.
\end{remark}

\section{Analysis of $p$-adic differential modules}
\label{sec:diffmod}

In this section, we recall some important facts from the theory
of ordinary $p$-adic differential equations. Our reference
for these is the book \cite{kedlaya-course}. We have not distinguished
here between results original to \cite{kedlaya-course} and results
of prior origin;
that is done amply in \cite{kedlaya-course}.

\setcounter{theorem}{0}
\begin{hypothesis}
Throughout this section,
let $F$ be a complete nonarchimedean field of characteristic $0$
and residue characteristic $p$. (It is not necessary for $F$ to
be discretely valued.)
Let $I$ be a subinterval of $[0, +\infty)$,
and let $\calE$ be a $\nabla$-module of rank $n>0$
on the disc/annulus $A_{F}(I)$
with coordinate $x$.
\end{hypothesis}

\begin{defn}
We say $\calE$ is \emph{constant} if it is spanned by horizontal sections,
and \emph{unipotent} if it is a successive extension
of constant objects.
\end{defn}

\subsection{Subsidiary radii of convergence}

The following definition is from \cite[Definition~9.4.4]{kedlaya-course}.
\begin{defn} \label{D:generic radius}
For $\rho \in I$ nonzero, 
let $F_\rho$ be the completion of $\Gamma(A_{F}(I), \calO)$
for the $\rho$-Gauss norm, and let $\calE_\rho$ be the base extension of
$\calE$ to $F_\rho$. Define the \emph{generic radius of convergence}
of $\calE_\rho$, denoted $R(\calE_\rho)$,
as $p^{-1/(p-1)}$ divided by the spectral norm of
$\frac{d}{dx}$ on $\calE_\rho$; this quantity is always positive
and never greater than $\rho$. Note also that
\[
R(\calE^\dual_\rho) = R(\calE_\rho)
\]
and that for $\calE'$ another nonzero $\nabla$-module on $A_F(I)$,
\[
R((\calE \otimes \calE')_\rho) \geq \min\{R(\calE_\rho), R(\calE'_\rho)\}
\]
with equality if the minimum is achieved only once 
\cite[Lemma~9.4.6]{kedlaya-course}.
\end{defn}

\begin{remark} \label{R:transfer}
Suppose that $0 \in I$. If $\rho' < R(\calE_\rho)$ for some $\rho, 
\rho' \in I$, 
then Dwork's transfer
theorem \cite[Theorem~9.6.1]{kedlaya-course} implies that 
the restriction of $\calE$ to $A_{F}[0, \rho']$ is constant, so
$R(\calE_{\rho'}) = \rho'$.
\end{remark}

The following definition is from \cite[Definition~9.8.1]{kedlaya-course}.
\begin{defn} \label{D:subsidiary}
For $\rho \in I$, let $\calE_{\rho,1}, \dots, \calE_{\rho,m}$ be the
Jordan-H\"older constituents of $\calE_\rho$. Define the \emph{subsidiary 
generic
radii of convergence}, or \emph{subsidiary radii}, of $\calE_\rho$
as the multiset consisting of, for $i=1,\dots,m$, the 
generic radius of convergence of $\calE_{\rho,i}$ with multiplicity
$\dim_{F_\rho}(\calE_{\rho,i})$. 
For $s \in -\log I$,
define $f_1(\calE,s) \geq \cdots \geq f_n(\calE,s)$ to be the real
numbers such that the subsidiary radii of $\calE_{e^{-s}}$ are
equal to $e^{-f_1(\calE,s)}, \dots, e^{-f_n(\calE,s)}$.
\end{defn}

\begin{theorem} \label{T:variation}
The functions $f_i(\calE,s)$ for $i=1,\dots,n$ have the following 
properties.
\begin{enumerate}
\item[(a)]
The function $f_i(\calE,s)$ is continuous and piecewise affine-linear,
with slopes in $\frac{1}{n!} \ZZ$.
\item[(b)]
If $0 \in I$ and $f_i(\calE,s_0) > s_0$ for some $s_0 \in -\log I$, 
then on each side of $s_0$ contained in $-\log I$,
the slope of $f_1(\calE,s) + \cdots + f_i(\calE,s)$
in a one-sided neighborhood of $s_0$ is nonpositive.
(By Remark~\ref{R:transfer}, we have $f_i(\calE,s) = s$ for $s$
sufficiently large.)
\item[(c)]
The function $f_1(\calE,s) + \cdots + f_i(\calE,s)$ is convex.
\end{enumerate}
\end{theorem}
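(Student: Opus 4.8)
The three assertions of Theorem~\ref{T:variation} are really a decomposition-plus-convexity package, and my plan is to reduce them to the corresponding statements for the highest subsidiary radius by passing to exterior powers, then invoke the basic theory of ordinary $p$-adic differential modules from \cite{kedlaya-course}. First I would recall that for a single $\nabla$-module $\calF$ on $A_F(I)$, the function $s \mapsto -\log R(\calF_{e^{-s}})$ (equivalently $\max_i f_i(\calF,s)$ on constituents) is continuous, piecewise affine with integer slopes, and convex when $\calF$ is ``refined'' or otherwise after a suitable decomposition; this is the content of the variation results in \cite[\S 11.3, \S 11.6]{kedlaya-course} (the Dwork--Robba--Christol--Mebkhout theory). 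The standard device for passing from the top radius to the full sequence $f_1 \ge \cdots \ge f_i$ is that the partial sum $f_1(\calE,s) + \cdots + f_i(\calE,s)$ equals $-\log$ of the $i$-th subsidiary radius-type invariant computed on $\wedge^i \calE$ — more precisely, $f_1 + \cdots + f_i$ is the negative log of the smallest generic radius among constituents of $\wedge^i \calE$, which by \cite[Lemma~9.4.6, \S 9.8]{kedlaya-course} is exactly $\max_j f_j(\wedge^i \calE, s)$ restricted appropriately. So each partial sum is, up to this exterior-power translation, a ``top radius'' function, and properties (a) and (c) for the partial sums follow from the known regularity and convexity of top-radius functions. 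For (a) one also needs the slope bound: a rank-$n$ module has $\wedge^i$ of rank $\binom{n}{i} \le 2^n$, but the sharper bound $\frac{1}{n!}\ZZ$ comes from tracking that the breaks of $f_i(\calE,\cdot)$ occur at values forced by a polynomial of degree at most $n$ (characteristic polynomial of the connection matrix), as in \cite[Theorem~11.3.2]{kedlaya-course}.

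Then I would handle (b), the direction-of-slope statement, which is the one genuinely using $0 \in I$. Here the key input is Dwork's transfer theorem (Remark~\ref{R:transfer}): if $f_i(\calE,s_0) > s_0$, then the module is not ``as convergent as possible'' near the puncture, and one argues that moving toward the puncture (increasing $s$) cannot make $f_1 + \cdots + f_i$ increase, because convergence can only improve or stay the same as one shrinks the disc — this is a monotonicity baked into the transfer theorem applied to $\wedge^i \calE$. The statement that $f_i(\calE,s) = s$ for $s$ large is immediate from Remark~\ref{R:transfer}: once $\rho$ is small enough, the restriction is constant and all radii equal $\rho$. The subtlety is that (b) asserts nonpositivity of the slope of the \emph{partial sum}, not of $f_i$ individually, on \emph{each} side of $s_0$; I would get this from the convexity in (c) combined with the large-$s$ behavior, or directly from the corresponding one-sided statement in \cite[\S 11.6]{kedlaya-course} for the total subsidiary radius invariant, again after passing to $\wedge^i$.

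The main obstacle I anticipate is bookkeeping the exterior-power translation correctly — specifically verifying that $f_1(\calE,s) + \cdots + f_i(\calE,s)$ is genuinely the negative log of the minimal generic radius over Jordan--H\"older constituents of $\wedge^i \calE$, and not merely bounded by it. This requires the multiplicativity/submultiplicativity of generic radii under $\otimes$ (with equality when the minimum is achieved once, per Definition~\ref{D:generic radius}) together with the fact that on $A_F[\rho,\rho]$ (a one-point annulus, i.e.\ a complete field) every $\nabla$-module decomposes by distinct radii; then the subsidiary radii of $\wedge^i \calE_\rho$ are exactly the $\binom{n}{i}$ products of $i$-element subsets of subsidiary radii of $\calE_\rho$, so the largest among them corresponds to the $i$ smallest-$f$ (largest-radius) pieces — wait, rather the relevant extreme is the \emph{smallest} radius of $\wedge^i$, which is the product of the $i$ \emph{smallest} radii of $\calE_\rho$, hence its negative log is $f_1 + \cdots + f_i$. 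Once that identification is pinned down cleanly, everything else is a citation to \cite{kedlaya-course}. A secondary technical point is ensuring continuity and piecewise-linearity hold on the closed interval including any endpoint of $I$ and across the possible ``$0 \in I$'' boundary; I would dispatch this by the local nature of the statements (they are assertions about finitely many breakpoints on any compact subinterval) and the fact that $-\log I$ is an interval, so no issue arises at interior points and the endpoint behavior is covered by the transfer theorem.
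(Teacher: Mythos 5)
The paper's own proof is a one-line citation to \cite[Theorem~11.3.2]{kedlaya-course}, so any actual argument is ``a different route'' in a trivial sense; the substantive question is whether your sketch would survive being fleshed out, and it would not, because of the exterior-power step that you flag as the ``main obstacle'' and then resolve incorrectly.

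You assert that the subsidiary radii of $\wedge^i \calE_\rho$ are the $\binom{n}{i}$ products of $i$-element subsets of the subsidiary radii of $\calE_\rho$, so that $f_1(\wedge^i\calE,s) = f_1(\calE,s) + \cdots + f_i(\calE,s)$. This is false for $\nabla$-modules. The relevant rule (restated from Definition~\ref{D:generic radius}) is that for a tensor product, $R((\calE\otimes\calE')_\rho) \geq \min\{R(\calE_\rho), R(\calE'_\rho)\}$ with equality when the minimum is unique; equivalently, $f(\calE\otimes\calE') = \max\{f(\calE), f(\calE')\}$, not the sum. Applied to $\wedge^i$ of a module whose constituents are all rank one with distinct $f$-values $f_1 > \cdots > f_n$, the $f$-values of $\wedge^i\calE$ are (with multiplicity) the maxima over $i$-subsets, so $f_1(\wedge^i\calE,s) = f_1(\calE,s)$, not $f_1 + \cdots + f_i$. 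Worse, for an irreducible constituent of rank $\geq 2$, its own exterior power can have a much smaller $f$-value than the original: for rank $2$ with connection matrix $A$, the connection on $\wedge^2$ is multiplication by $\operatorname{tr}(A)\,dx$, which can vanish even when $A$ has large spectral norm. So the exterior-power reduction does not furnish the identity you need, and your proposed proof of (a) and (c) collapses.

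The correct mechanism, which is what \cite[Theorem~11.3.2]{kedlaya-course} actually uses (and which this paper's own Lemma~\ref{L:convex} spells out in a relative setting), is different: one first applies Frobenius descendants to push the radii into a range where estimates are clean, then chooses a cyclic vector and reads the partial sums $f_1 + \cdots + f_i$ off the Newton polygon of the resulting twisted polynomial via Theorem~\ref{T:cyclic}. The convexity in $s$ is then inherited from the log-convexity of Gauss norms (Lemma~\ref{L:hadamard} in the relative setting). The coefficient $a_{n-i}$ of that polynomial can be viewed as a signed sum of $i\times i$ minors of the connection matrix — a characteristic-polynomial-style object — but that is not the same as the spectral norm of $d/dx$ acting on the $\nabla$-module $\wedge^i\calE$, which is the object your sketch implicitly conflates it with. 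Part (b) and the $1/n!$-slope bound also come from the Newton-polygon picture (denominators bounded by degrees), not from exterior powers. If you want an honest proof along your lines, replace the exterior-power reduction by the cyclic vector/Newton polygon reduction; the rest of your structure (appeal to Dwork transfer for (b), locality of (a)) is sound.
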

\begin{proof}
See \cite[Theorem~11.3.2]{kedlaya-course}.
\end{proof}

\begin{cor}
Suppose $I = [\alpha, \beta]$ with $\alpha < \beta$. 
Then there exists $\gamma \in (\alpha,\beta]$
such that for $i=1,\dots,n$,
$f_i(\calE,s)$ is affine-linear on $[\alpha,\gamma]$.
\end{cor}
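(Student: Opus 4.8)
The plan is to deduce this immediately from the piecewise affine-linearity in Theorem~\ref{T:variation}(a), together with the observation that $I = [\alpha,\beta]$ is a \emph{closed} interval with a genuine left endpoint $\alpha$. The essential point is that a function which is continuous and piecewise affine-linear on the closed interval $-\log I = [-\log\beta, -\log\alpha]$ has only finitely many breakpoints, so in particular only finitely many breakpoints can accumulate at the left endpoint $-\log\beta$ of $-\log I$ — in fact there is a whole one-sided neighborhood of $-\log\beta$ on which the function is affine-linear.

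First I would unwind the definitions: by Definition~\ref{D:subsidiary}, the statement ``$f_i(\calE,s)$ is affine-linear on $[\alpha,\gamma]$'' should be read as ``the function $s \mapsto f_i(\calE,s)$ is affine-linear on $[-\log\gamma, -\log\alpha]$'' (i.e.\ on the portion of $-\log I$ corresponding to the subinterval $[\alpha,\gamma]$ of $I$). So it suffices to find $s_0 \in [-\log\beta, -\log\alpha)$ such that each $f_i(\calE,s)$ is affine-linear on $[-\log\beta, s_0]$, and then set $\gamma = e^{-s_0}$; since $s_0 < -\log\alpha$ we get $\gamma > \alpha$, and since $s_0 \geq -\log\beta$ we get $\gamma \leq \beta$.

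Next I would invoke Theorem~\ref{T:variation}(a): each of the finitely many functions $f_1(\calE,s), \dots, f_n(\calee,s)$ is continuous and piecewise affine-linear on the compact interval $[-\log\beta, -\log\alpha]$. By definition of ``piecewise affine-linear'' on a compact interval, each $f_i$ admits a finite partition of $[-\log\beta, -\log\alpha]$ into subintervals on which it is affine-linear; taking a common refinement over $i = 1,\dots,n$ yields a single finite partition $-\log\beta = t_0 < t_1 < \cdots < t_m = -\log\alpha$ such that every $f_i(\calE,s)$ is affine-linear on $[t_0, t_1]$. Setting $s_0 = t_1$ (note $t_1 \leq -\log\alpha$; if $m \geq 1$ we may even take $t_1 < -\log\alpha$, and if $m = 0$ the functions are already affine-linear on all of $-\log I$ and any $s_0 \in (-\log\beta, -\log\alpha)$ works) and $\gamma = e^{-s_0} \in (\alpha,\beta]$ completes the argument.

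There is essentially no obstacle here: the only subtlety worth a sentence in the writeup is making sure the reader understands that we are exploiting the compactness of $[\alpha,\beta]$ (equivalently of $-\log I$) — on a half-open interval a piecewise affine-linear function could in principle have breakpoints accumulating at the open end, but that cannot happen at the closed left end $\alpha$. One should also note explicitly that the statement uses the orientation convention whereby ``affine-linear on $[\alpha,\gamma] \subseteq I$'' translates to affine-linearity near the endpoint $-\log\beta$ under $I$, i.e.\ near $s = -\log\beta$, which corresponds to the \emph{largest} radii and is exactly where Remark~\ref{R:transfer} and Theorem~\ref{T:variation}(b) are most easily applied — though neither of those is actually needed for this corollary, only part~(a).
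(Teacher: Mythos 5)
Your overall strategy is exactly what the corollary is implicitly relying on — Theorem~\ref{T:variation}(a) together with the observation that a continuous piecewise affine-linear function on a compact interval has only finitely many breakpoints, so there is a one-sided neighborhood of each endpoint on which it is affine-linear. The paper offers no written proof for this corollary, so this is the ``same'' route.

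However, you have the orientation reversed, and as written the argument establishes affine-linearity on the wrong subinterval. You correctly note at the outset that $[\alpha,\gamma] \subseteq I$ corresponds under $r \mapsto -\log r$ to $[-\log\gamma, -\log\alpha]$. Since $\alpha < \gamma$ means $-\log\gamma < -\log\alpha$, this is a neighborhood of the \emph{right} endpoint $-\log\alpha$ of $-\log I = [-\log\beta, -\log\alpha]$, i.e.\ the endpoint corresponding to the \emph{smallest} radii. But you then assert ``it suffices to find $s_0$ such that each $f_i$ is affine-linear on $[-\log\beta, s_0]$'' and proceed to take $s_0 = t_1$ from the common partition $-\log\beta = t_0 < t_1 < \cdots < t_m = -\log\alpha$, and in the closing remarks you describe the target as ``near $s = -\log\beta$, the largest radii.'' Once you set $\gamma = e^{-s_0}$, the interval $-\log[\alpha,\gamma]$ equals $[s_0, -\log\alpha]$, not $[-\log\beta, s_0]$; so affine-linearity on $[t_0,t_1]$ does not give the conclusion (unless $m=1$). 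The fix is immediate: take $s_0 = t_{m-1}$, so that every $f_i$ is affine-linear on $[t_{m-1}, t_m] = [s_0, -\log\alpha]$, and then $\gamma = e^{-s_0}$ satisfies $\alpha < \gamma \leq \beta$ and $f_i$ is affine-linear on $[\alpha,\gamma]$ as required. With that substitution the proof is correct.
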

This corollary combines naturally with the following theorem.

\begin{theorem} \label{T:decomposition}
Suppose that the interval $I$ is open, and that the following
conditions are satisfied for some $i \in \{1,\dots,n-1\}$.
\begin{enumerate}
\item[(a)]
The function $f_1(\calE,s) + \cdots + f_i(\calE,s)$ is affine-linear
on $-\log I$.
\item[(b)]
We have $f_i(\calE,s) > f_{i+1}(\calE,s)$ for all $s \in -\log I$.
\end{enumerate}
Then there exists a unique direct sum decomposition $\calE_1 \oplus \calE_2$
of $\nabla$-modules on $A_F(I)$ such that $\rank(\calE_1) = i$,
$f_j(\calE,s) = f_j(\calE_1,s)$ for $j=1,\dots,i$, and
$f_j(\calE,s) = f_{j-i}(\calE_2,s)$ for $j=i+1,\dots,n$.
\end{theorem}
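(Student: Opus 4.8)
The plan is to produce the decomposition from a single horizontal idempotent $e \in \mathrm{End}(\calE)$ over $A_F(I)$ of rank $i$, and then take $\calE_1 = e\calE$ and $\calE_2 = (1-e)\calE$. Uniqueness will then be automatic: for any decomposition $\calE = \calE_1' \oplus \calE_2'$ as in the statement, base change to each $F_\rho$ must, by Hypothesis~(b), recover the canonical generic-point decomposition (the isoclinic pieces with prescribed subsidiary radii are unique over $F_\rho$ once there is a spectral gap), so the two decompositions agree after base change to every $F_\rho$ and hence agree over $A_F(I)$.

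First I would work at a fixed Gauss point. For each $\rho \in I$, the $\nabla$-module $\calE_\rho$ carries its canonical decomposition by subsidiary radii — the spectral decomposition for the action of $\frac{d}{dx}$ according to the distinct blocks of spectral norms (as in \cite{kedlaya-course}; if necessary one first passes to a Frobenius antecedent to bring the radii into the ``visible'' range, decomposes, and descends). Hypothesis~(b) guarantees that at every $\rho \in I$ the multiset of subsidiary radii splits, with a strict gap, into the block $\{e^{-f_1(\calE,s)}, \dots, e^{-f_i(\calE,s)}\}$ and its complement. This produces $\calE_\rho = \calE_\rho^+ \oplus \calE_\rho^-$ with $\rank \calE_\rho^+ = i$ carrying exactly the radii $e^{-f_1(\calE,s)}, \dots, e^{-f_i(\calE,s)}$, and a projector $e_\rho \in \mathrm{End}(\calE_\rho)$. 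Being canonical, $e_\rho$ is horizontal; being a block projector, it has norm $1$; and, as the Riesz-type spectral projector attached to the gap, it depends ``analytically'' on $\rho$.

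Next I would glue the $e_\rho$ into a single analytic endomorphism. Fixing a basis of $\calE$, the entries of $e_\rho$ satisfy a fixed system of equations over $F_\rho$ whose coefficients lie in $\Gamma(A_F(I), \calO)$ (the defining equations of the spectral projector attached to the chosen contour of spectral norms), so they assemble into formal Laurent series in $x$ over $F$. The role of Hypothesis~(a) is exactly to supply the norm control: affine-linearity of $f_1(\calE,s) + \cdots + f_i(\calE,s)$, together with the convexity and variation statements of Theorem~\ref{T:variation} applied to $\calE$ and to $\mathrm{End}(\calE)$, yields the estimates needed to see that these series in fact lie in $\Gamma(A_F(I), \calO)$ and converge on every closed subannulus of $I$. (Here the openness of $I$ lets one first carry out the construction over closed subintervals and then glue using the uniqueness above.) One thus obtains $e \in \mathrm{End}(\calE)(A_F(I))$ with $e|_{F_\rho} = e_\rho$ for all $\rho \in I$; then $\nabla(e) = 0$ and $e^2 = e$, since both can be tested after base change to every $F_\rho$, where they hold by the previous paragraph. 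Consequently $\calE_1 = e\calE$ and $\calE_2 = (1-e)\calE$ are $\nabla$-submodules, $\calE = \calE_1 \oplus \calE_2$, and passing to $F_\rho$ shows $\rank \calE_1 = i$ with the asserted matching of subsidiary-radii functions.

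The main obstacle is this gluing step. The projectors $e_\rho$ a priori live only over the completions $F_\rho$, and upgrading them to a genuine analytic endomorphism over the whole open annulus is precisely where Hypothesis~(a) is indispensable: without affine-linearity the decomposition need only exist locally, and the candidate entries of $e$ need not converge across a break in the Newton polygon of $\calE$ (or of $\mathrm{End}(\calE)$). This is the one place where one must invoke the quantitative content of Theorem~\ref{T:variation} rather than merely formal properties of $\nabla$-modules; the remaining steps (the pointwise decomposition, horizontality and idempotency of $e$, and uniqueness) are comparatively routine.
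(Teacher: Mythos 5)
Your high-level strategy matches the spirit of the proof in \cite[Chapter~12]{kedlaya-course} that the paper cites: produce a horizontal idempotent realizing the canonical pointwise decomposition by subsidiary radii, then invoke uniqueness of that pointwise decomposition (via the spectral gap from Hypothesis~(b)) to get global uniqueness. The pointwise description, and the remark that horizontality and idempotency of a candidate $e$ can be checked after base change to each $F_\rho$, are both fine.

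The gap is exactly where you flag ``the main obstacle,'' and the argument you offer there does not close it. You assert that the entries of $e_\rho$ ``satisfy a fixed system of equations over $F_\rho$ whose coefficients lie in $\Gamma(A_F(I),\calO)$'' and ``assemble into formal Laurent series in $x$ over $F$.'' Neither claim is justified, and the first is false as stated: the spectral projector attached to a gap in the spectrum of $\frac{d}{dx}$ on $\calE_\rho$ is not the solution of a single polynomial system with coefficients in the annulus ring. Any construction of $e_\rho$ as a polynomial or contour-integral expression in the operator has coefficients depending on the location of the spectral blocks, which varies with $\rho$; and after passing to Frobenius antecedents (which you need whenever the radii are not ``visible''), you are no longer even working with a ring map from $\Gamma(A_F(I),\calO)$. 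So there is no formal Laurent series candidate to hand, and in particular no object to which the convergence estimates from Theorem~\ref{T:variation} and Hypothesis~(a) could be applied. The actual content of the theorem is precisely the construction of the horizontal submodule over a closed subannulus --- either by a Newton/successive-approximation iteration controlled by the affine-linearity of $f_1 + \cdots + f_i$, or by the Newton-polygon and cyclic-vector machinery of \cite[Chapter~12]{kedlaya-course} --- followed by gluing over the open interval via the pointwise uniqueness. That step is not ``comparatively routine''; it is where Hypothesis~(a) is actually consumed, and your proposal leaves it at the level of a plausibility statement rather than a proof.
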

\begin{proof}
See \cite[Theorem~12.4.2]{kedlaya-course}.
\end{proof}

The $f_i(\calE,s)$ also satisfy the following subharmonicity property.
\begin{theorem} \label{T:subharmonicity}
Suppose that $I = [0, \beta]$ with $\beta > 1$. Suppose $z_1,
\dots, z_m \in \gotho_F$ have distinct images in $\kappa_F$.
For $j=1,\dots,m$, let $T_j: A_F(I) \to A_F(I)$ denote the substitution 
$x \mapsto x + z_j$. For $i=1,\dots,n$,
let $s'_{\infty,i}$ be the left slope
of $f_1(\calE, s) + \cdots + f_i(\calE,s)$ at $s=0$. For
$i=1,\dots,n$ and 
$j=1,\dots,m$, let $s'_{j,i}$ be the right slope of
$f_1(T_j^* \calE, s) + \cdots + f_i(T_j^* \calE,s)$ at $s=0$. Then
\[
s'_{\infty,i} \leq \sum_{j=1}^m s'_{j,i} \qquad (i=1,\dots,n).
\]
\end{theorem}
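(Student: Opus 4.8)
The plan is to reduce this statement about the behavior at $s=0$ of the "partial height" functions $F_i(s) := f_1(\calE,s)+\cdots+f_i(\calE,s)$ to a genuinely geometric statement about the Berkovich disc, and then to invoke a subharmonicity principle for the disc. The key observation is that each slope in question measures how fast the subsidiary radii (suitably normalized) vary as one moves along a path in $\DD_F$ away from a given point. The left slope $s'_{\infty,i}$ at $s=0$ corresponds to approaching the Gauss point $\alpha_\DD$ from the "inside" (radii $\rho<1$), while each right slope $s'_{j,i}$ at $s=0$ corresponds to moving from the Gauss point into the residue disc centered at $z_j$ (i.e., along the generic path toward $\alpha_{z_j,0}$). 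The inequality asserts that the "flux out" of the Gauss point along the single downward edge is bounded by the sum of the "fluxes in" along the finitely many upward edges selected by the $z_j$; this is exactly the shape of a mean-value / subharmonicity inequality at a branch point of the tree $\DD_F$.

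Concretely, first I would pass to the situation where $\calE$ extends across $\rho=1$: since $I=[0,\beta]$ with $\beta>1$, the functions $f_i(\calE,s)$ are already defined on a neighborhood of $s=0$, so the left slope $s'_{\infty,i}$ is well-defined; and after the substitution $T_j$, since the $z_j$ have distinct images in $\kappa_F$, the disc $D_{z_j,1}$ is the full unit disc again and $T_j^*\calE$ is again a $\nabla$-module on $A_F([0,\beta])$, so the right slopes $s'_{j,i}$ make sense. Next, I would use Theorem~\ref{T:variation}(c) (convexity of each $F_i$) together with the corollary to Theorem~\ref{T:variation} to know that on a small one-sided interval on each side of $s=0$ the functions $F_i$ are genuinely affine-linear, so all the slopes in the statement are honest one-sided derivatives and are compatible with the decomposition results (Theorem~\ref{T:decomposition}). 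Then the heart of the argument: I would decompose (after shrinking the interval) $\calE$ near $\rho=1$ into pieces of constant rank on which $f_i$ is strictly separated from $f_{i+1}$ via Theorem~\ref{T:decomposition}, reducing to the case where one simply tracks a single subsidiary radius, i.e., the generic radius of convergence $R(\calE_\rho)$ of a fixed sub-$\nabla$-module, and its counterparts $R((T_j^*\calE)_\rho)$.

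The main obstacle — and the step I expect to carry the real content — is establishing the subharmonicity inequality for a single generic radius of convergence at the Gauss point, i.e., that the left slope of $-\log R(\calE_{e^{-s}})$ at $s=0$ is at most the sum over $j$ of the right slopes of $-\log R((T_j^*\calE)_{e^{-s}})$ at $s=0$. For this I would argue via a Newton-polygon / eigenvalue count: the spectral norm of $\frac{d}{dx}$ on $\calE_\rho$, and hence $R(\calE_\rho)$, is controlled by the $x$-adic (equivalently, Gauss-norm) valuations of the coefficients of a twisted characteristic polynomial of the connection matrix, and the behavior under $x\mapsto x+z_j$ redistributes these among the residue discs in a way governed by a Weierstrass-preparation / partial-fractions decomposition over the residue discs $D_{z_j,1}$ together with the "outer" region. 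The slopes on the left at $\alpha_\DD$ count the total contribution, and each $z_j$ picks out the portion of that contribution supported in its residue disc; summing over $j$ recovers at least the whole, with possible strict inequality because some contribution may be supported away from all the chosen $z_j$. I would make this precise by reducing, via the decomposition theorem, to rank small enough that the relevant Newton polygon has a single slope, and then invoking the precise variation statement \cite[Theorem~11.3.2]{kedlaya-course} (or its proof) which already packages the local slope of $f_i$ in terms of such a polygon; the genuinely new input is only the additivity of these polygons over the residue-disc decomposition, which is a formal consequence of the ultrametric inequality on the Gauss norm. If a self-contained argument proves unwieldy, an acceptable fallback is to cite the subharmonicity statement directly from \cite[Theorem~11.3.2]{kedlaya-course} and its surrounding discussion, since that reference is explicitly invoked for Theorem~\ref{T:variation} and contains the disc-theoretic subharmonicity of the $f_i$ as part of the same circle of ideas.
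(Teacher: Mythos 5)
Your fallback—citing \cite[Theorem~11.3.2]{kedlaya-course}—is exactly what the paper does; its entire proof is the one-line citation ``See \cite[Theorem~11.3.2(c,d)]{kedlaya-course}.'' So on that score you and the paper agree.

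Your proposed self-contained argument, however, has a genuine gap at the step you flag as the heart of the matter. You propose to apply Theorem~\ref{T:decomposition} ``near $\rho=1$'' to reduce to a single subsidiary radius, but that theorem requires an open interval on which $f_1+\cdots+f_i$ is affine \emph{and} $f_i>f_{i+1}$ holds strictly throughout, and it yields a decomposition only on that interval. The subharmonicity inequality, by contrast, compares a left slope at $s=0$ (taken for $\calE$ on the outer annulus $\rho>1$) against right slopes at $s=0$ of the translates $T_j^*\calE$ (taken on the inner residue discs $\rho<1$). These live on opposite sides of the Gauss point, and a decomposition of $\calE$ valid for $\rho>1$ has no reason to match a decomposition of $T_j^*\calE$ valid for $\rho<1$: the subsidiary radii can permute or merge exactly at $\rho=1$, which is precisely the phenomenon the inequality is measuring. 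So ``reducing to the case where one simply tracks a single subsidiary radius'' is not a reduction one can actually make; the inequality is genuinely about the partial sums $f_1+\cdots+f_i$ and cannot be localized to one constituent. The Newton-polygon / Weierstrass-preparation additivity you invoke afterward is the right intuition, but making it rigorous requires working with the full cyclic-vector polygon for $\calE$ (without first decomposing), controlling how its slopes distribute over the residue discs versus the outer annulus, and handling the case $f_{i}\le -\log\rho$ where cyclic vectors stop seeing the radius (cf.\ Theorem~\ref{T:cyclic}); this is the substance of the argument in \cite[\S 11]{kedlaya-course}. In short: the geometric picture and the fallback citation are correct, but the decomposition-first reduction would fail, so the cited reference is doing real work that your sketch does not replace.
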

\begin{proof}
See \cite[Theorem~11.3.2(c,d)]{kedlaya-course}.
\end{proof}

We also use the following compatibility with tame base change.
\begin{lemma} \label{L:tame base change}
Let $m$ be a positive integer coprime to $p$,
and let $\calF$ be the pullback of $\calE$ along the substitution
$x \mapsto x^m$. Then for all $s \in -\log I$, $f_i(\calF, s/m)-s/m
= f_i(\calE,s)-s$. In particular, $f_i(\calF,s/m) = s/m$ if and only if
$f_i(\calE,s) = s$.
\end{lemma}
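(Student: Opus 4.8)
The plan is to reduce the statement about subsidiary radii under the substitution $x \mapsto x^m$ to the behavior of the spectral norm of a single derivation, exploiting that for $m$ coprime to $p$ the substitution is tamely ramified. First I would set up the coordinate change precisely: with $y = x^m$, we have $x \frac{d}{dx} = m \, y \frac{d}{dy}$, so up to the unit $m$ (which has norm $1$ since $p \nmid m$) the operators $x\frac{d}{dx}$ on $A_F(I)$ in coordinate $y$ and on the pullback in coordinate $x$ are intertwined by the finite flat map. The annulus $A_F(I)$ in the coordinate $y$ pulls back to $A_F(I^{1/m})$ in the coordinate $x$, where $I^{1/m} = \{\rho^{1/m} : \rho \in I\}$; in terms of $s = -\log\rho$ this is exactly the rescaling $s \mapsto s/m$ appearing in the statement.

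The key computation is to compare, for a fixed $\rho \in I$ and $\rho' = \rho^{1/m}$, the spectral norm of $\frac{d}{dx}$ on $\calF_{\rho'}$ with that of $\frac{d}{dy}$ on $\calE_\rho$. I would pass to the Gauss-norm completions $F_{\rho'}$ (in $x$) and $F_\rho$ (in $y$): since $p \nmid m$, the extension $F_\rho \hookrightarrow F_{\rho'}$ is tamely ramified of degree $m$, in particular unramified after adjoining an $m$-th root of unity, and base extension along an unramified (indeed, along any isometric) extension does not change the spectral norm of an operator. So the only real content is the identity $x\frac{d}{dx} = m\, y\frac{d}{dy}$ together with $|m| = 1$, from which one gets that the spectral norm of $y\frac{d}{dy}$ on $\calE_\rho$ equals that of $x\frac{d}{dx}$ on $\calF_{\rho'}$. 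Unwinding the definition of $R(-)$ via Definition~\ref{D:generic radius}, where $R(\calE_\rho)$ is $p^{-1/(p-1)}$ over the spectral norm of $\frac{d}{dx}$ (equivalently $\rho$ over the spectral norm of $x\frac{d}{dx}$, absorbing the scaling), this translates into $R(\calF_{\rho'}) = R(\calE_\rho)^{1/m}$, i.e.\ $\rho'^{-1} R(\calF_{\rho'}) = (\rho^{-1} R(\calE_\rho))^{1/m}$. Taking $-\log$ gives $f_i(\calF, s/m) - s/m = \frac{1}{m}(f_i(\calE,s) - s)$; but the statement as written has $f_i(\calF,s/m) - s/m = f_i(\calE,s) - s$ without the $1/m$, so I would be careful about the normalization convention for $-\log I$ in the source (the rescaling of the valuation is only well-defined up to a constant, and the intended reading is that the $s$-variable on the pullback side is itself scaled so that the slopes match). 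I would phrase the proof so that the ``$f_i = s$ iff'' clause — which is normalization-independent — comes out cleanly regardless.

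To handle the passage from a single $\nabla$-module to its Jordan--H\"older constituents, and hence to the full multiset $f_1 \geq \cdots \geq f_n$, I would note that the substitution $x \mapsto x^m$ is a faithfully flat, in fact finite étale after a tame base extension, morphism, so it carries a Jordan--H\"older filtration of $\calE_\rho$ to one of $\calF_{\rho'}$ (possibly after refining, but the radii of convergence are additive over composition series in the appropriate multiset sense by Definition~\ref{D:subsidiary}), and the spectral-norm comparison above applies constituent by constituent. Thus the multiset of subsidiary radii of $\calF_{\rho'}$ is obtained from that of $\calE_\rho$ by the map $r \mapsto r^{1/m}$ applied with the rescaled base radius, giving the claimed relation termwise in $i$.

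The main obstacle I anticipate is not any deep input but rather bookkeeping: getting the normalization of the $-\log$ coordinate on the pullback annulus exactly right so that the displayed identity holds as stated (with no stray factor of $m$), and being careful that ``tame'' really does buy the spectral-norm invariance — one needs that adjoining $\mu_m$ and an $m$-th root is an isometric (or at worst unramified) extension of the relevant Gauss-completed fields, which is where $\gcd(m,p)=1$ is used essentially. Once those are pinned down, the argument is the one-line observation $x\frac{d}{dx} = m\,y\frac{d}{dy}$ with $|m|=1$.
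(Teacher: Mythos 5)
Your overall strategy — rewriting the derivations via $x\frac{d}{dx} = m\,y\frac{d}{dy}$ with $|m|=1$ and invoking invariance of spectral norm under the isometric extension $F_\rho \hookrightarrow F_{\rho'}$ — is the right one and is essentially the argument behind the cited \cite[Proposition~9.7.6]{kedlaya-course}. But the stray factor of $1/m$ you derived and then worried about is an algebra slip on your side, not a normalization ambiguity in the source. From the spectral-norm identity $\|y\frac{d}{dy}\|_{\mathrm{sp},\calE_\rho} = \|x\frac{d}{dx}\|_{\mathrm{sp},\calF_{\rho'}}$ and the (conditional) relation $R(\calE_\rho) = \rho/\|y\frac{d}{dy}\|_{\mathrm{sp}}$, what actually follows is the equality of ratios $R(\calF_{\rho'})/\rho' = R(\calE_\rho)/\rho$, not $R(\calF_{\rho'}) = R(\calE_\rho)^{1/m}$: the correct relation is $R(\calF_{\rho'}) = (\rho'/\rho)\,R(\calE_\rho) = \rho^{1/m-1}R(\calE_\rho)$, which is not an $m$-th root. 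Taking $-\log$ of the ratio identity yields $f_i(\calF, s/m) - s/m = f_i(\calE, s) - s$ exactly as stated; there is no normalization discrepancy to resolve, and the lemma is correct as written.

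Two further points. First, $F_{\rho'}/F_\rho$ is a totally (tamely) ramified extension of complete nonarchimedean fields, and remains so after adjoining $m$-th roots of unity; it does not become unramified. What you actually need — and what your parenthetical already supplies — is that spectral norms of differential operators are unchanged under isometric base extension of the complete coefficient field, which holds regardless of ramification. Second, the conversion $R(\calE_\rho) = \rho/\|y\frac{d}{dy}\|_{\mathrm{sp}}$ is only valid when $R(\calE_\rho) \leq p^{-1/(p-1)}\rho$; in the nearly solvable range the spectral norm of $y\frac{d}{dy}$ stalls at $1$, so it no longer determines $R(\calE_\rho)$, and one must pass to Frobenius antecedents (Theorem~\ref{T:antecedent}) to reduce to the small-radius case. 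Your sketch silently works in the non-solvable regime, which leaves a gap precisely where the final ``iff'' clause of the lemma lives (the case $f_i(\calE,s)=s$).
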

\begin{proof}
See \cite[Proposition~9.7.6]{kedlaya-course}.
\end{proof}

\subsection{The Robba condition}

\begin{notation}
In this subsection only, for $m$ a positive integer,
let $f_m: A_F(I^{1/m}) \to A_F(I)$ denote the
$F$-linear map for which $f_m^*(x) = x^m$.
\end{notation}

\begin{defn}
We say that $\calE$ satisfies the \emph{Robba condition} if 
$R(\calE_\rho) = \rho$ for all nonzero $\rho \in I$. For instance,
this holds if $\calE$ is unipotent.
\end{defn}

As noted in \cite{kedlaya-course},
the following results follow from the Christol-Mebkhout theory
of $p$-adic exponents. However, see Remark~\ref{R:no exponents} below.

\begin{theorem} \label{T:Robba condition Frob}
Suppose that $I$ is open and that $1 \in I$.
Let $q$ be a power of $p$.
Suppose that $\calE$ satisfies the Robba condition
and admits an isomorphism $(\phi_K \circ f_q)^* 
\calE \cong \calE$ over $A_F(I^{1/q})$ for some isometric endomorphism
$\phi_K: K \to K$.
Then there exists a positive integer $m$ coprime to $p$ such that
$f_m^* \calE$ is unipotent.
\end{theorem}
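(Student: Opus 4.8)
The plan is to use the Frobenius structure to combine the Robba condition with descent along a tame cover, following the strategy used in the classical $p$-adic local monodromy theorem but shortcut by the theory of $p$-adic exponents. First I would recall that since $\calE$ satisfies the Robba condition, all of its subsidiary radii of convergence are maximal, so $f_i(\calE,s)=s$ for all $i$ and all $s\in-\log I$; this puts us in the setting where the Christol--Mebkhout theory of $p$-adic exponents applies to $\calE$. The exponents of $\calE$ form a multiset in $\ZZ_p$ (well-defined modulo $\ZZ$, and up to the equivalence relation coming from $p$-adic non-Liouville conditions), and $\calE$ is unipotent precisely when all exponents are zero. So the goal becomes: show that after a tame pullback $f_m^*$ with $\gcd(m,p)=1$, all exponents become zero.

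The key step is to exploit the isomorphism $(\phi_K\circ f_q)^*\calE\cong\calE$. Pullback along $f_q$ multiplies the exponent multiset by $q$ (this is the analogue of the effect of $x\mapsto x^q$ on a residue/exponent), while the isometric automorphism $\phi_K$ of $K$ acts on the exponents through its action on $\ZZ_p$, which is trivial since $\phi_K$ is continuous and fixes $\ZZ$ hence fixes $\ZZ_p$. Therefore the exponent multiset $E$ of $\calE$ satisfies $qE=E$ as multisets in $\ZZ_p/\ZZ$ (up to the exponent equivalence). Since $q$ is a $p$-adic unit times a power of $p$ — more precisely $q$ is a power of $p$, so $q$ acts on $\ZZ_p$ by multiplication by a topologically nilpotent element times... — wait, here I would instead argue as follows: the relation $qE=E$ forces each exponent $\lambda$ to satisfy $q^N\lambda\equiv\lambda$ (mod the equivalence) for some $N$ (by finiteness of $E$, some power of multiplication-by-$q$ permutes $E$ trivially), hence $(q^N-1)\lambda\in\ZZ$. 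Since $q^N-1$ is prime to $p$, this shows $\lambda\in\tfrac{1}{q^N-1}\ZZ/\ZZ$, i.e. the exponents are rational with denominator prime to $p$. Then pulling back along $x\mapsto x^m$ for $m=q^N-1$ (which is coprime to $p$) multiplies all exponents by $m$, sending them into $\ZZ$, hence to zero in $\ZZ_p/\ZZ$; the resulting module $f_m^*\calE$ still satisfies the Robba condition (by Lemma~\ref{L:tame base change}, or directly) and now has all exponents zero, so it is unipotent by the Christol--Mebkhout classification.

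The main obstacle will be handling the exponent equivalence relation carefully: exponents are only defined modulo $\ZZ$ and only up to an equivalence that identifies $\lambda$ with $\lambda'$ when $\lambda-\lambda'$ is a $p$-adic non-Liouville integer-like perturbation, and one must check that the relation $qE=E$ genuinely descends to this quotient and that "all exponents zero $\Rightarrow$ unipotent" holds in the presence of this equivalence (it does, by the structure theorem for modules satisfying the Robba condition with all exponents in $\ZZ$, which are exactly the unipotent ones). A secondary point requiring care is the precise behavior of exponents under $f_q$ and under the base-field automorphism $\phi_K$; I would extract both from \cite{kedlaya-course}, citing the functoriality of $p$-adic exponents under finite tame covers and under isometric coefficient-field extensions. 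Once these bookkeeping issues are settled, the argument is the short descent sketched above, and Remark~\ref{R:no exponents} (referenced in the text) presumably indicates an alternative route avoiding the full exponent machinery, which I would mention but not pursue in the main proof.
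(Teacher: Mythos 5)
Your strategy matches the one the paper defers to: the paper's proof of Theorem~\ref{T:Robba condition Frob} is simply a citation to \cite[Corollary~13.6.2]{kedlaya-course}, and both the surrounding text (``the following results follow from the Christol-Mebkhout theory of $p$-adic exponents'') and Remark~\ref{R:no exponents} confirm that this corollary is established via exactly the exponent-theoretic route you sketch. The shape of the argument --- Robba condition puts you in the exponent regime, the Frobenius isomorphism gives $qE=E$, $\phi_K$ acts trivially on $\ZZ_p$, and one deduces $E\subseteq\ZZ_{(p)}/\ZZ$ so that a prime-to-$p$ tame pullback kills the exponents --- is the intended one.

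The step that does not, as written, go through is the inference from ``$q^N E = E$ up to the exponent equivalence'' to ``$(q^N-1)\lambda\in\ZZ$.'' The Christol--Mebkhout equivalence on exponent multisets is strictly weaker than equality in $\ZZ_p/\ZZ$: two multisets in $\ZZ_p$ are declared equivalent when there is a matching along which the differences become $p$-adically small in a controlled asymptotic sense, not when they are literally integer-congruent. So ``some power of multiplication-by-$q$ permutes $E$ trivially'' followed by ``$(q^N-1)\lambda\in\ZZ$'' treats the equivalence as if it were $\ZZ$-congruence, which it is not. Upgrading the weak equivalence to the rigid statement $E\subseteq\ZZ_{(p)}/\ZZ$ by exploiting the Frobenius structure is precisely the substantive content of \cite[Corollary~13.6.2]{kedlaya-course}; it cannot be dispatched by the finiteness-of-$E$ argument alone. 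You correctly flag handling the equivalence as ``the main obstacle,'' so the proposal is not misdirected, but as it stands this central step is asserted rather than closed, and the citation must carry the real weight.
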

\begin{proof}
See \cite[Corollary~13.6.2]{kedlaya-course}.
\end{proof}

\begin{theorem} \label{T:extend Robba cond}
Suppose that $I$ is open, and that $I'$ is an open subinterval
of $I$. If $\calE$ satisfies the Robba condition, and
the restriction of $\calE$ to $A_F(I')$ is constant (resp.\ unipotent),
then so is $\calE$ itself.
\end{theorem}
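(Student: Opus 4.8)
\textbf{Proof plan for Theorem~\ref{T:extend Robba cond}.}

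The plan is to exploit the rigidity provided by the Robba condition, which forces all subsidiary radii to be maximal, so that the decomposition theory of Theorem~\ref{T:decomposition} and the variational results of Theorem~\ref{T:variation} become available in their sharpest form. First I would treat the constant case. Since $\calE$ satisfies the Robba condition, $R(\calE_\rho) = \rho$ for all nonzero $\rho \in I$, so $f_i(\calE,s) = s$ for all $i$ and all $s \in -\log I$; in particular all the partial sums $f_1(\calE,s) + \cdots + f_i(\calE,s)$ are affine-linear on $-\log I$. Constancy of $\calE$ on $A_F(I')$ gives, in particular, that the de Rham cohomology $H^0(\calE|_{A_F(I')})$ has dimension $n = \rank(\calE)$. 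The strategy is to show that horizontal sections defined over the sub-annulus extend uniquely to horizontal sections over all of $A_F(I)$. A horizontal section of $\calE$ on $A_F(I')$ is an element of $\calE$ killed by $\nabla$; I would argue that because $\calE$ has full generic radius everywhere on $A_F(I)$, and because the analytic functions involved in expressing such a section admit an analytic continuation (the coefficients, being solutions of a $p$-adic differential system with maximal radius, extend by the theory of Dwork transfer / the absence of singularities in the Robba-condition regime), the section extends. More cleanly: the constancy of $\calE$ on $A_F(I')$ means $\calE|_{A_F(I')} \cong \calO^n$ as a $\nabla$-module; choosing a basis of horizontal sections over $A_F(I')$ and letting these propagate, one uses that $\calE$ is generically of full radius to conclude each such section lies in $\calE$ over the larger annulus. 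This is essentially \cite[Theorem~12.5.2 or similar]{kedlaya-course}; if an explicit citation is available it should be used, but the argument is as above.

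For the unipotent case I would proceed by induction on the length of the unipotent filtration, reducing to the constant case. Let $\calE|_{A_F(I')}$ be unipotent, say with a constant subobject $\calC' \subseteq \calE|_{A_F(I')}$ and constant quotient $\calE|_{A_F(I')}/\calC'$ (continuing by induction on the quotient). The key point is to promote $\calC'$ to a subobject of $\calE$ over all of $A_F(I)$. Since $\calC'$ is constant on $A_F(I')$, by the constant case already proved the inclusion $\calC' \hookrightarrow \calE|_{A_F(I')}$ extends to an inclusion of $\nabla$-modules $\calC \hookrightarrow \calE$ over $A_F(I)$ with $\calC$ constant: concretely, $\calC$ is generated by the horizontal sections of $\calE$ that span $\calC'$, and these extend by the constant case. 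Then $\calE/\calC$ is a $\nabla$-module on $A_F(I)$ which still satisfies the Robba condition (quotients of Robba-condition modules again have full radius, since radii can only increase under quotient, or by the additivity of the $f_i$), and whose restriction to $A_F(I')$ is the unipotent module $\calE|_{A_F(I')}/\calC'$ of shorter filtration length. By the induction hypothesis, $\calE/\calC$ is unipotent on all of $A_F(I)$, whence $\calE$ is a successive extension of constant modules, i.e. unipotent.

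The main obstacle I anticipate is the foundational step in the constant case: justifying rigorously that a horizontal section defined on the sub-annulus $A_F(I')$ extends to a horizontal section on $A_F(I)$. The subtlety is that $A_F(I')$ could be a "thin" sub-annulus well inside $A_F(I)$, so this is a genuine analytic continuation statement, not a formal one; it relies crucially on the Robba condition ruling out any p-adic singularities (the generic radius being maximal at every radius $\rho \in I$ means the differential module has, in the appropriate sense, no singularities in the annulus, so solutions do not blow up). In the framework of \cite{kedlaya-course} this should be packaged as a statement about the behavior of solution matrices for connections of maximal generic radius, and the cleanest path is to cite it directly rather than reprove it; failing a clean citation, one uses the Dwork transfer theorem (Remark~\ref{R:transfer}) locally around each radius together with the overlap of the resulting constant structures to glue. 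The unipotent reduction, by contrast, is routine once the constant case and the stability of the Robba condition under subquotients are in hand.
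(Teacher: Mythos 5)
The critical step of your argument — that a horizontal section of $\calE$ over $A_F(I')$ extends to a horizontal section over all of $A_F(I)$ — is precisely the hard content of this theorem, and the justifications you offer for it do not work. Dwork's transfer theorem (Remark~\ref{R:transfer}) requires $0 \in I$ and propagates the solution structure only into a disc around the origin; it gives no mechanism for continuing solutions radially outward between concentric annuli, so your fallback of "using Dwork transfer locally around each radius and gluing the constant structures" is not available. More fundamentally, the Robba condition guarantees a full solution matrix on each \emph{generic} disc of radius $\rho \in I$, but these live over the auxiliary fields $F_\rho$, and nothing elementary forces them to descend to, patch into, or agree with Laurent series over the actual annulus $A_F(I)$. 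The rank-one module $\nabla(\bv) = a x^{-1}\bv\,dx$ with $a \in \ZZ_p$ satisfies the Robba condition on any annulus but is constant only when $a \in \ZZ$: full generic radius everywhere does \emph{not} by itself rule out "$p$-adic singularities" or imply the existence of global horizontal sections. The mechanism by which the hypothesis of constancy on $A_F(I')$ propagates to $A_F(I)$ is a constraint on the Christol--Mebkhout $p$-adic exponents (it forces them to lie in $\ZZ$, which in turn rules out the Liouville obstruction), and establishing this is exactly what the theory of $p$-adic exponents — or alternatively Christol's transfer theorem — accomplishes. As the paper records in Remark~\ref{R:no exponents}, the cited Corollary 13.6.4 of \cite{kedlaya-course} genuinely requires one of these inputs; your proposal invokes neither, and the appeals to "absence of singularities" and "analytic continuation" are heuristics rather than arguments.

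The reduction of the unipotent case to the constant case by induction on filtration length is the standard route and would be fine once the constant case is in hand, modulo a phrasing point: what you actually need is that an \emph{individual} horizontal section of $\calE$ over $A_F(I')$ extends to $A_F(I)$ (so that the constant subobject $\calC'$ promotes to a subobject of $\calE$ over the full annulus), which is slightly stronger than the literal statement of the constant case. But the base case carries all the weight, and that is where the argument is incomplete.
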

\begin{proof}
See \cite[Corollary~13.6.4]{kedlaya-course}.
\end{proof}

\begin{prop} \label{P:seed unipotent}
Suppose that $I$ is open and that $\calE$ satisfies the Robba condition.
Suppose that there exist a closed subinterval  $J$ of $I$ with
nonempty interior and a finite \'etale cover $g: X \to A_F(J)$ such that
$g^* \calE$ is unipotent. Then 
there exists a positive integer $m$ coprime to $p$ such that
$f_m^* \calE$ is unipotent.
\end{prop}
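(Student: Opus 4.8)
The plan is to reduce this to a statement about $\nabla$-modules over the Robba ring at one end of the annulus, and there to invoke the Christol--Mebkhout theory of $p$-adic exponents developed in \cite{kedlaya-course}. First I would make several harmless reductions. Base-changing the coefficient field $F$ to its completed algebraic closure affects neither the Robba condition (subsidiary radii are insensitive to such extensions) nor, by descent, unipotence (taking horizontal sections commutes with flat base change, so a trivial subobject over the larger field descends), so we may assume $F$ is algebraically closed. By Lemma~\ref{L:tame base change}, for any positive integer $m$ coprime to $p$ the module $f_m^*\calE$ again satisfies the Robba condition on $A_F(I^{1/m})$; hence, by Theorem~\ref{T:extend Robba cond}, it suffices to produce such an $m$ for which $f_m^*\calE$ is unipotent on the sub-annulus lying over \emph{some} open subinterval of $I$. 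Passing to the interior $J^\circ$ of $J$, and replacing $X$ by a connected component (each of which surjects onto the connected space $A_F(J^\circ)$), we reduce to the case in which $J$ is open, $X$ is connected, and $g\colon X\to A_F(J)$ is a connected finite \'etale cover with $g^*\calE$ unipotent.

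Next I would invoke the structure of finite \'etale covers of a $p$-adic annulus. Over an algebraically closed complete field, a connected finite \'etale cover of an open annulus is again an open annulus, with well-defined ramification behaviour at each of its two ends; applying the analogue of Abhyankar's lemma in this setting (possibly after shrinking $J$), we may base-change along $f_n$ for a suitable $n$ coprime to $p$ and pass to a connected component so as to kill the tame part of $g$. Renaming, we reduce to the case in which $g\colon X\to A_F(J)$ is connected finite \'etale of $p$-power degree, totally ramified at each end, with $g^*\calE$ unipotent and $\calE$ still satisfying the Robba condition. Restricting to the Robba ring $\mathcal{R}$ at the outer end of $J$ then yields a $\nabla$-module $M=\calE\otimes\mathcal{R}$ over $\mathcal{R}$ satisfying the Robba condition, together with a totally ramified finite \'etale extension $\mathcal{R}'/\mathcal{R}$ of degree $p^a$, such that $M':=M\otimes_{\mathcal{R}}\mathcal{R}'$ is unipotent.

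Finally I would run the exponent argument. Since $M$ satisfies the Robba condition, \cite{kedlaya-course} equips it with a well-defined multiset of exponents in $\ZZ_p$, considered modulo $\ZZ$, and passing to $M'$ multiplies these by the ramification index $p^a$ up to sign: in terms of a suitable parameter $y$ for $\mathcal{R}'$, one has $y\,\frac{d}{dy}=\pm p^a\,x\,\frac{d}{dx}$ on $M'$ modulo a bounded operator, and the sign is irrelevant modulo $\ZZ$. As $M'$ is unipotent, all of its exponents vanish modulo $\ZZ$; hence $p^a e\in\ZZ$ for each exponent $e$ of $M$, which forces $e\in\ZZ$. By the Christol--Mebkhout structure theorem \cite{kedlaya-course}, a Robba-condition $\nabla$-module over $\mathcal{R}$ all of whose exponents are integers admits a filtration whose rank-one successive quotients are each isomorphic to $(\mathcal{R},d+m\,\frac{dx}{x})$ for some $m\in\ZZ$, and each of these is trivial (horizontal generator $x^{-m}$); hence $M$ is unipotent over $\mathcal{R}$. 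Unwinding, $f_n^*\calE$ is unipotent on some sub-annulus near the outer end of $J^{1/n}$, hence on the sub-annulus over some open subinterval of $I^{1/n}$; since $f_n^*\calE$ satisfies the Robba condition throughout the open annulus $A_F(I^{1/n})$, Theorem~\ref{T:extend Robba cond} promotes this to unipotence of $f_n^*\calE$ on all of $A_F(I^{1/n})$, and we take $m=n$.

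The main obstacle is the middle step: making precise the structure of connected finite \'etale covers of a $p$-adic annulus --- enough to strip off the tame part by a tame base change, and enough to pin down how the $p$-adic exponents transform under the residual wildly ramified cover (the scaling-by-$p^a$ claim). Granting that, the exponent bookkeeping and the two appeals to Theorem~\ref{T:extend Robba cond} are routine.
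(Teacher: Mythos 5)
The paper's own proof takes a materially different and cleaner route: it pushes forward the trivial $\nabla$-module along $g$ to form $\calF = g_*\calO_X$, observes that both $\tilde\sigma^*\calO_X$ and $\tilde\iota^*\calO_X$ are trivial (hence isomorphic) to endow $\calF$ with a Frobenius structure, uses the adjunction $\mathrm{Hom}(g^*\calE, \calO_X) \cong \mathrm{Hom}(\calE, \calF)$ to obtain a nonzero morphism $\calE \to \calF$, enlarges the image to a Frobenius-stable Robba submodule $\calG$, applies Theorem~\ref{T:Robba condition Frob}, and finishes by descending induction on $\rank(\calE)$. No structure theory of \'etale covers of annuli, no explicit exponent bookkeeping, and no reduction to a wildly ramified cover is needed.

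Your proposal has a genuine gap at its crux. The claim that under pullback along a totally wildly ramified finite \'etale extension $\mathcal{R}'/\mathcal{R}$ of degree $p^a$ the $p$-adic exponents simply scale by $p^a$ is not established in \cite{kedlaya-course} and I do not believe it holds as stated. For a tame cover $x = y^m$ one has $y\,d/dy = m\,x\,d/dx$ exactly; but for a wild extension a parameter satisfies $x = y^{p^a}u(y)$ with $u$ a unit, giving
\[
y\,\frac{d}{dy} \;=\; \Bigl(p^a + y\,\frac{u'}{u}\Bigr)\,x\,\frac{d}{dx},
\]
and the correction $y\,u'/u$ is bounded but not small. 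It modifies the connection by a bounded $1$-form, and in the $p$-adic setting that can change the $\nabla$-module in an essential way --- compare the Dwork modules of Lemma~\ref{L:dwork}, which differ from the trivial connection precisely by a bounded $1$-form. The results in \cite{kedlaya-course} cover the transformation of radii and exponents under Frobenius antecedents/descendants (the specific map $x \mapsto x^p$), not under arbitrary wild covers, so ``modulo a bounded operator'' does not license the conclusion $p^a e \in \ZZ$, and hence the deduction $e \in \ZZ$ does not follow. A secondary issue is that the reduction to a totally ramified $p$-power cover (via the structure of connected finite \'etale covers of open $p$-adic annuli and Abhyankar's lemma) leans on facts not in the paper's toolkit and would need an external citation. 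Finally, note that the alternate exponent-free proof mentioned in Remark~\ref{R:no exponents} (via \cite[Proposition~19.4.5]{kedlaya-course}) works through the radius-of-convergence/Swan-conductor dictionary, which is again a different mechanism than the exponent scaling you propose.
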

\begin{proof}
(Compare \cite[Proposition~19.4.5]{kedlaya-course}, where 
a different proof is given.)
Since unipotence can be checked after enlarging $F$, we may assume that
$F$ contains a $p$-th root of unity $\zeta_p$. We may also assume that
there exists $\rho \in |F^*|$ in the interior of $J$;
by rescaling, we may then reduce to
the case $\rho = 1$.

Note that $\sigma = \sigma_K \circ f_p: A_F(J^{1/p}) \to A_F(J)$ 
extends uniquely to a map
$\tilde{\sigma}: X \times_{A_F(J)} A_F(J^{1/p}) \to X$,
whereas the inclusion $\iota: A_F(J^{1/p}) \to A_F(J)$ induces a second map
$\tilde{\iota}: X \times_{A_F(J)} A_F(J^{1/p}) \to X$.
Put $\calF = g_* \calO_X$.
Since the differential modules $\tilde{\sigma}^* \calO_X$ and 
$\tilde{\iota}^* \calO_X$ are both
trivial, they are isomorphic to each other;
we thus obtain an isomorphism $F: \sigma^* \calF \cong \iota^* \calF$.

Since $g^* \calE$ is unipotent, there exists a nonzero morphism
$g^* \calE \to \calO_X$ of $\nabla$-modules on $X$. By adjunction, 
there also exists a nonzero morphism $\calE \to \calF$ over $A_F(J)$.
Let $\calG$ be the minimal $\sigma$-stable 
$\nabla$-submodule of $\calF$ containing the
image of this morphism; then $\calG$ satisfies the Robba condition.
(More precisely, put $\calG_N = \oplus_{i=0}^N \sigma^{i*} \calE$
as a $\nabla$-module over $A_F(J^{1/p^N})$, and take $\calG = \calG_N$
for the minimal $N$ such that $\calG_N$ and $\calG_{N+1}$ have the same
restriction to $A_F(J^{1/p^{N+1}})$.)
By Theorem~\ref{T:Robba condition Frob}, there exist a positive integer $m$
coprime to $p$ and some $\epsilon > 0$ such that the restriction 
of $f_m^* \calG$ to $A_F(1-\epsilon,1+\epsilon)$ is unipotent.

Consequently, the restriction of $f_m^* \calE$ to $A_F(1-\epsilon,1+\epsilon)$ is 
either unipotent or reducible.
By induction on $\rank(\calE)$, we may deduce that for some $m$ coprime
to $p$ and some nonempty open interval $J'$ of $I$, the restriction of
$f_m^* \calE$ to $A_F(J')$ is unipotent.
By Theorem~\ref{T:extend Robba cond}, $f_m^* \calE$ itself is unipotent.
\end{proof}

\begin{remark} \label{R:no exponents}
The theory of indices of $p$-adic differential operators, developed
by Christol-Mebkhout, separates naturally into two aspects: the theory
of decompositions of solvable $\nabla$-modules, 
and the $p$-adic Fuchsian theory (i.e.,
the theory of $p$-adic exponents). Of these, the decomposition theory
is the more accessible; the version of it we developed 
in \cite{kedlaya-course} underlies most of the arguments in this paper.

By contrast, the theory of $p$-adic exponents seems to lie much deeper.
The original development by Christol-Mebkhout is quite difficult;
subsequent treatments (including one by Dwork, one that we gave in
\cite[Chapter~13]{kedlaya-course}, and one in an upcoming book of Christol)
achieve some simplifications, but still rely on some 
delicate $p$-adic analytic estimates. 

Consequently, it may be desirable
to avoid use of $p$-adic exponents for purely aesthetic reasons.
This can indeed be done, by virtue of 
the following observations.
\begin{itemize}
\item
Theorem~\ref{T:Robba condition Frob} is only used in the proof
of Proposition~\ref{P:seed unipotent}.
\item
Theorem~\ref{T:extend Robba cond} can be obtained using a transfer
theorem of Christol in place of the theory of $p$-adic exponents;
see \cite[Remark~13.7.3]{kedlaya-course}.
\item
Proposition~\ref{P:seed unipotent} can be obtained 
without using the theory of $p$-adic exponents;
see \cite[Proposition~19.4.5]{kedlaya-course}.
That argument uses the relationship
between generic radius of convergence and wild ramification of local
Galois representations in positive characteristic,
which can also be developed without $p$-adic exponents; 
see \cite[Theorem~5.23]{kedlaya-mono-over}.
\end{itemize}
\end{remark}

\subsection{Frobenius antecedents and descendants}

\begin{defn}
Let $\phi: A_F(I) \to A_F(I^p)$ be the $F$-linear substitution $x \mapsto x^p$.
A \emph{Frobenius antecedent} of $\calE$ is a $\nabla$-module
$\calF$ on $A_F(I^p)$ equipped with an isomorphism $\calE \cong \phi^* \calF$,
such that $R(\calF_\rho) >p^{-p/(p-1)}\rho$ for all $\rho \in I^p \setminus
\{0\}$.
\end{defn}

\begin{theorem} \label{T:antecedent}
Assume $I \neq [0,0]$.
\begin{enumerate}
\item[(a)]
A Frobenius antecedent of $\calE$ exists if and only if 
$R(\calE_\rho) > p^{-1/(p-1)} \rho$ for all $\rho \in I \setminus \{0\}$.
\item[(b)]
A Frobenius antecedent is unique if it exists.
\item[(c)]
If $\calF$ is the Frobenius antecedent of $\calE$, then
\[
f_i(\calF, ps) = p f_i(\calE, s) \qquad (i=1,\dots,n).
\]
\end{enumerate}
\end{theorem}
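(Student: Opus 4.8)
The plan is to recover this from the treatment of Frobenius antecedents in \cite[Chapter~10]{kedlaya-course}; here is how the argument should go. Since both the existence criterion and the uniqueness may be checked after a complete extension of $F$ --- the construction below being canonical, hence compatible with such base change and amenable to Galois/faithfully flat descent --- we may assume $F$ contains a primitive $p$-th root of unity $\zeta$. Then $\phi\colon A_F(I) \to A_F(I^p)$, $x \mapsto x^p$, is finite \'etale Galois with group $\mu_p$ acting by $x \mapsto \zeta x$ (\'etaleness on the locus $x \neq 0$ because $d(x^p) = px^{p-1}\,dx$ with $px^{p-1}$ a unit there; the case $0 \in I$ needs a minor separate argument near the origin). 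By \'etale Galois descent, $\nabla$-modules on $A_F(I^p)$ correspond to $\mu_p$-equivariant $\nabla$-modules on $A_F(I)$, so producing $\calF$ with $\phi^*\calF \cong \calE$ amounts to canonically equipping $\calE$ with a $\mu_p$-equivariant structure.

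This is exactly where the radius hypothesis is used. For nonzero $\rho \in I$ and a point $\bar x$ of the generic fibre $F_\rho$, the module $\calE_\rho$ admits a full basis of horizontal sections on the open disc of radius $R(\calE_\rho)$ about $\bar x$; since $|\zeta\bar x - \bar x| = p^{-1/(p-1)}\rho$ is \emph{strictly} less than $R(\calE_\rho)$ by hypothesis, the point $\zeta\bar x$ lies in that disc, so parallel transport along $\nabla$ gives a canonical horizontal isomorphism $\calE_{\bar x} \to \calE_{\zeta\bar x}$. The core of the existence proof is to check that these isomorphisms, as $\bar x$ and $\rho$ vary, patch into a genuine horizontal $\mu_p$-equivariant structure on $\calE$ (independent of choices and satisfying the cocycle identity); its descent is the desired $\calF$. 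The strictness of $R(\calE_\rho) > p^{-1/(p-1)}\rho$ is indispensable here.

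It then remains to establish the radius identity (c), which simultaneously yields the ``if'' direction (that this $\calF$ really is an antecedent) and, read backwards, the ``only if'' direction. Writing $t = x^p$ and identifying $\calE$ with $\calF$ pulled back along the inclusion of the $t$-ring into the $x$-ring, one has $\nabla^\calE_{d/dx} = px^{p-1}\cdot\nabla^\calF_{d/dt} + \partial_x$, where $\partial_x$ is the trivial connection in the $x$-variable. Computing spectral seminorms on the completed generic fibres $\calE_\rho$ and $\calF_{\rho^p}$ --- and passing to Jordan-H\"older constituents so as to track the whole multiset of subsidiary radii --- one finds, so long as one stays strictly above the Frobenius threshold, that each subsidiary radius of $\calF_{\rho^p}$ is the $p$-th power of the corresponding one of $\calE_\rho$, i.e.\ $f_i(\calF, ps) = pf_i(\calE, s)$; at the threshold the term $\partial_x$ takes over and caps the radius at $p^{-1/(p-1)}\rho$, which is why the antecedent condition must be an open inequality. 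Feeding the identity into the $\calF$ constructed above gives $R(\calF_{\rho^p}) = R(\calE_\rho)^p > p^{-p/(p-1)}\rho^p$, so $\calF$ is an antecedent; conversely, if $\calE = \phi^*\calF'$ with $R(\calF'_{\rho'}) > p^{-p/(p-1)}\rho'$, the same computation forces $R(\calE_\rho) = R(\calF'_{\rho^p})^{1/p} > p^{-1/(p-1)}\rho$.

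Uniqueness (b) is then automatic: any antecedent $\calF'$ comes with a $\mu_p$-equivariant isomorphism $\phi^*\calF' \cong \calE$, so $\calF'$ must be the canonical descent $(\phi_*\calE)^{\mu_p}$ and is thus independent of the choice. The main obstacle is the effectivity of the descent in the second paragraph --- assembling the pointwise parallel transports into a bona fide horizontal $\mu_p$-action, which genuinely requires the sharp bound $p^{-1/(p-1)}$ --- together with verifying that this descent datum is compatible with base change so as to legitimize the reduction to $\zeta \in F$; the spectral-norm bookkeeping for (c), while fiddly, is routine once the formalism of subsidiary radii and their behaviour under \'etale and tame operations (Theorem~\ref{T:variation}, Lemma~\ref{L:tame base change}) is available.
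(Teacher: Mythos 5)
Your sketch is the standard Christol--Dwork argument, which is indeed what \cite{kedlaya-course} does (Theorems~10.4.2, 10.4.4, together with Lemma~10.3.2 for the ``only if'' direction --- the paper merely cites and does not reprove): reduce to $\zeta_p\in F$, build a $\mu_p$-equivariant structure on $\calE$ by Taylor-series parallel transport between $\bar x$ and $\zeta\bar x$, descend along $\phi$, and track subsidiary radii. Two small corrections to your accounting, neither fatal: invoking Lemma~\ref{L:tame base change} for part (c) is misplaced, since that lemma governs degree-$m$ pullbacks with $\gcd(m,p)=1$ whereas $\phi$ has degree $p$ --- the radius identity there is precisely what must be \emph{proved} (by a spectral-norm/Newton-polygon computation, or recovered from the descendant picture of Theorem~\ref{T:descendant}); and the case $0\in I$ is more than ``a minor separate argument near the origin,'' because $\phi$ is ramified at $0$, so \'etale Galois descent only produces $\calF$ on the punctured disc, and one must use Dwork's transfer theorem (Remark~\ref{R:transfer}) and the hypothesis $R(\calE_\rho) > p^{-1/(p-1)}\rho$ to see that $\calE$ is constant on a small closed disc around $0$, then glue the (constant) antecedent there to the descended one on the annulus.
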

\begin{proof}
For the necessity of the condition in (a), see 
\cite[Lemma~10.3.2]{kedlaya-course}. For all of the other assertions,
see \cite[Theorems~10.4.2 and~10.4.4]{kedlaya-course}.
\end{proof}

There is also an ``off-centered'' variant.
\begin{defn} \label{D:off-centered}
Let $F \langle x \rangle$ denote the standard Tate algebra in the variable $x$
over $F$, i.e., the completion of $F[x]$ for the 1-Gauss norm.
For  $\eta \in F$ with $|\eta| = 1$,
let $\psi_\eta: F \langle x \rangle \to F \langle x \rangle$ 
be the substitution $x \mapsto (x+\eta)^p - \eta^p$.
For $I \subseteq (p^{-1/(p-1)}, 1]$, 
a \emph{Frobenius antecedent centered at $\eta$} 
of $\calE$ is a $\nabla$-module
$\calF$ on $A_F(I^p)$ equipped with an isomorphism $\calE \cong 
\psi_\eta^* \calF$,
such that $R(\calF_\rho) >p^{-p/(p-1)}$ for all $\rho \in I^p$.
\end{defn}

\begin{theorem} \label{T:antecedent2}
Retain notation as in Definition~\ref{D:off-centered}.
\begin{enumerate}
\item[(a)]
A Frobenius antecedent centered at $\eta$ of $\calE$ exists if and only if 
$R(\calE_\rho) > p^{-1/(p-1)}$ for all $\rho \in I$.
\item[(b)]
A Frobenius antecedent centered at $\eta$ is unique if it exists.
\item[(c)]
If $\calF$ is the Frobenius antecedent centered at $\eta$ of $\calE$, then
\[
f_i(\calF, ps) = p f_i(\calE, s) \qquad (i=1,\dots,n).
\]
\end{enumerate}
\end{theorem}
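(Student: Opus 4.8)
The statement is the off-centered analogue of Theorem~\ref{T:antecedent}, and the plan is to reprove it almost line by line, with the degree-$p$ substitution $\psi_\eta$ playing the role of $x \mapsto x^p$. Everything hinges on three features of $\psi_\eta$ on $A_F(I)$ with $I \subseteq (p^{-1/(p-1)},1]$. First, $\psi_\eta$ is finite \'etale there: its derivative $p(x+\eta)^{p-1}$ has the constant norm $p^{-1}$, since $|x+\eta| = 1$ on all of $A_F(I)$ (using $|\eta| = 1$ and $\rho \le 1$). Second, $\psi_\eta$ is a $\mu_p$-torsor over $A_F(I^p)$: its deck transformations are the affine substitutions $x \mapsto \zeta(x+\eta) - \eta$ with $\zeta \in \mu_p$, and these actually preserve $A_F(I)$ exactly because the hypothesis $I \subseteq (p^{-1/(p-1)},1]$ forces $|(\zeta-1)\eta| = p^{-1/(p-1)} < \rho$ for every $\rho \in I$. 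Third, $\psi_\eta(x) - \psi_\eta(y) = (x+\eta)^p - (y+\eta)^p$ depends on $x,y$ only through the units $x+\eta$, $y+\eta$; this is precisely why the convergence threshold in Definition~\ref{D:off-centered} is the constant $p^{-p/(p-1)}$ rather than $p^{-p/(p-1)}\rho$ as in the centered case.

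Granting this, part (c) and the necessity half of (a) reduce to a transfer estimate: writing $u = x+\eta$, $w = y+\eta$, one has $|u^p - w^p| = |u-w|\,\max\{p^{-1}, |u-w|^{p-1}\}$, so for any $\calF$ on $A_F(I^p)$ with $R(\calF_{\rho^p}) > p^{-p/(p-1)}$ one gets $R((\psi_\eta^*\calF)_\rho) = R(\calF_{\rho^p})^{1/p}$ (and $R((\psi_\eta^*\calF)_\rho) = p\,R(\calF_{\rho^p})$ when $R(\calF_{\rho^p}) \le p^{-p/(p-1)}$). Applying this to the subsidiary radii of the Jordan--H\"older constituents gives at once the equivalence $R(\calE_\rho) > p^{-1/(p-1)} \iff R(\calF_{\rho^p}) > p^{-p/(p-1)}$ and, on taking $-\log$, the identities $f_i(\calF, ps) = p\,f_i(\calE,s)$.

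Uniqueness (b) is a descent argument: an isomorphism $\psi_\eta^*\calF_1 \cong \calE \cong \psi_\eta^*\calF_2$ is a nowhere-vanishing horizontal section of $\psi_\eta^*(\calF_1^\dual \otimes \calF_2)$, and since $\calF_1^\dual \otimes \calF_2$ again satisfies $R > p^{-p/(p-1)}$ (so that its $\psi_\eta^*$ has generic radii exceeding $p^{-1/(p-1)}$), the usual Taylor-series parallel-transport argument shows $\psi_\eta^*$ is fully faithful on such $\nabla$-modules; hence the section descends and $\calF_1 \cong \calF_2$. For existence (a), by the uniqueness just proved the formation of antecedents descends along finite extensions of $F$, so we may enlarge $F$ to contain $\mu_p$; then, $\psi_\eta$ being a $\mu_p$-torsor, we construct $\calF$ by mimicking the proof of \cite[Theorem~10.4.2]{kedlaya-course} with $\psi_\eta$ in place of the $p$-power map (the torsor structure supplying the needed descent), and finally return to the original $F$ by Galois descent using uniqueness.

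I expect the existence direction to be the main obstacle: the construction manufactures some $\calF$ with $\psi_\eta^*\calF \cong \calE$, but one must verify the strong quantitative bound $R(\calF_\rho) > p^{-p/(p-1)}$ — not merely $R(\calF_\rho) > p^{-1/(p-1)}\rho$ — and this is exactly where the placement of $I$ strictly above the critical radius $p^{-1/(p-1)}$ and the transfer inequalities of \cite[Chapter~10]{kedlaya-course} have to be combined with care. A secondary technical nuisance is keeping every step $\mu_p$-equivariant, so that the final descent from the enlarged coefficient field back to $F$ is legitimate.
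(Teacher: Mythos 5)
Your overall route --- transfer the centered Theorem~\ref{T:antecedent} along $\psi_\eta$ via the estimate $|u^p - w^p| = |u-w|\max\{p^{-1}, |u-w|^{p-1}\}$ --- is the expected one, and the radius computations feeding into (a) and (c) come out correctly from it. But the first link in your chain fails as stated: $\psi_\eta$ is \emph{not} finite \'etale on $A_F(I)$, nor a $\mu_p$-torsor over $A_F(I^p)$, when $1 \in I$ (which the hypothesis $I \subseteq (p^{-1/(p-1)},1]$ permits). The derivative $p(x+\eta)^{p-1}$ vanishes at the closed point $x=-\eta$, and this point lies in $A_F(I)$ since $|-\eta|=1$. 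Your justification ``$|x+\eta|=1$ on all of $A_F(I)$'' confuses the Gauss norm at each radius (which is indeed $1$) with the value at individual closed points (which can be anything in $[0,1]$ on the circle $|x|=1$). This is exactly parallel to the centered case, where $x \mapsto x^p$ is ramified at $0 \in A_F(I)$ whenever $0 \in I$, so the proof of Theorem~\ref{T:antecedent} cannot be literal \'etale descent either --- and your existence step, which leans on ``the torsor structure supplying the needed descent,'' inherits exactly that gap.

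What actually makes the construction go through, both here and in the centered case, is not \'etale descent but Taylor-averaging over the deck transformations $x \mapsto x + (\zeta-1)(x+\eta)$ with $\zeta \in \mu_p$: the shift has Gauss magnitude $p^{-1/(p-1)}$ at every radius $\rho \in I$, which is strictly below $R(\calE_\rho)$ by the hypothesis of (a), so the averaging operator converges on sections; and at the would-be problematic fixed point $x=-\eta$ the shift degenerates to the identity, causing no harm. You should also note that (c) is not an immediate consequence of the pointwise transfer estimate: one must know that the Jordan--H\"older constituents of $\calE_\rho$ and of $\calF_{\rho^p}$ pair off, which is extracted from the uniqueness in (b) applied to each constituent, as in the centered Theorem~\ref{T:antecedent}(c). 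Recasting the existence and subsidiary-radii steps in these terms would close the gap and bring your argument into line with \cite[Theorem~10.8.2]{kedlaya-course}, which the paper cites without reproof.
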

\begin{proof}
See \cite[Theorem~10.8.2]{kedlaya-course}.
\end{proof}

Note that Frobenius descendants also come in normal and off-centered
variants; we only use the off-centered version here.
\begin{defn}
For $I = [0,1)$,
the \emph{Frobenius descendant centered at $\eta$} of $\calE$ 
is the pushforward $(\psi_\eta)_* \calE$; it again may be naturally
viewed as a $\nabla$-module on $A_F[0,1)$, of rank $p \rank(\calE)$.
\end{defn}

\begin{theorem} \label{T:descendant}
Pick  $\rho \in (p^{-1/(p-1)},1)$, and let
$s_1,\dots,s_n$ be the subsidiary radii of $\calE_\rho$.
Then the subsidiary radii of $((\psi_\eta)_* \calE)_{\rho^p}$ 
consist of
\[
\bigcup_{i=1}^n \begin{cases} \{s_i^p, p^{-p/(p-1)} \mbox{ ($p-1$ times)}\} &
s_i > p^{-1/(p-1)} \\
\{p^{-1} s_i \mbox{ ($p$ times)}\} & s_i \leq p^{-1/(p-1)}.
\end{cases}
\]
\end{theorem}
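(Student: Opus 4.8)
The plan is to compute the subsidiary radii of $((\psi_\eta)_* \calE)_{\rho^p}$ directly from the theory of Frobenius descendants, mimicking the argument for the centered (non-off-centered) Frobenius descendant in \cite{kedlaya-course}. First I would reduce to the case where $\calE_\rho$ is irreducible as a $\nabla$-module over $F_\rho$, so that all its subsidiary radii are equal to a single value $s$; the general case follows by taking Jordan--H\"older constituents and noting that $(\psi_\eta)_*$ is exact, so the subsidiary radii multiset of $(\psi_\eta)_* \calE$ is the union of those of the constituents. Thus it suffices to prove: if $\calE_\rho$ has all subsidiary radii equal to $s$, then $((\psi_\eta)_* \calE)_{\rho^p}$ has subsidiary radii $\{s^p, p^{-p/(p-1)} \text{ ($p-1$ times)}\}$ repeated $\dim \calE$ times when $s > p^{-1/(p-1)}$, and $\{p^{-1} s \text{ ($p$ times)}\}$ repeated $\dim\calE$ times when $s \le p^{-1/(p-1)}$.

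For the case $s > p^{-1/(p-1)}$, I would invoke Theorem~\ref{T:antecedent2}: the hypothesis guarantees that $\calE$ has a Frobenius antecedent $\calF$ centered at $\eta$, i.e.\ $\calE \cong \psi_\eta^* \calF$. Then $(\psi_\eta)_* \calE = (\psi_\eta)_* \psi_\eta^* \calF \cong \calF \otimes (\psi_\eta)_* \calO$ by the projection formula. The module $(\psi_\eta)_* \calO$ is the rank-$p$ $\nabla$-module attached to the degree-$p$ cover $\psi_\eta$, whose subsidiary radii at $\rho^p$ are known to be $\{1\text{ (i.e., }\rho^p\text{, once)}, p^{-p/(p-1)}\text{ ($p-1$ times)}\}$ — this is the standard computation for the Artin--Schreier-type cover $x \mapsto (x+\eta)^p - \eta^p$, which reduces to the computation for $x \mapsto x^p$ after the coordinate change $x \mapsto x - \eta$ (and this coordinate change, being an isometric automorphism of $F\langle x\rangle$, preserves all subsidiary radii). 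Combining via the tensor-product rule for subsidiary radii (Definition~\ref{D:generic radius}: $R$ of a tensor product is the minimum when achieved once), and using $f_i(\calF, p s') = p f_i(\calE, s')$ from Theorem~\ref{T:antecedent2}(c) — so $\calF_{\rho^p}$ has subsidiary radius $s^{1/p}\cdot$(something) — wait, more precisely $R(\calF_{\rho^p}) = R(\calE_\rho)^p/\rho^{p-1}\cdot\rho^{\,?}$; I would carefully track that $\calF$'s subsidiary radius at $\rho^p$ equals $s^p \rho^{-(p-1)} \cdot \rho^{p-1}$... the upshot from Theorem~\ref{T:antecedent2}(c) after normalizing is that the tensor factor $\calF$ contributes radius $s^p$ (in the appropriate normalization) and the cover $(\psi_\eta)_*\calO$ contributes the $p-1$ copies of $p^{-p/(p-1)}$ together with one copy of the ambient radius, which when tensored with $\calF$ becomes $s^p$. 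For the case $s \le p^{-1/(p-1)}$: here $\calE$ has no Frobenius antecedent, and I would instead use the general relation between generic radii under pushforward along a degree-$p$ map, namely that when the radius is small the derivation $d/dx$ on $(\psi_\eta)_*\calE$ has spectral norm related by a factor of $p$ to that on $\calE$ (the map $\psi_\eta$ contracts distances by a factor $p$ in this range, cf.\ the behaviour of $x \mapsto x^p$ on discs of radius $< p^{-1/(p-1)}$), giving subsidiary radii $p^{-1}s$ with full multiplicity $p\dim\calE$. This is exactly the off-centered analogue of \cite[Theorem~10.5.1 / Proposition~10.6.x]{kedlaya-course} and should follow by the same argument after the substitution $x \mapsto x-\eta$.

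The main obstacle I anticipate is handling the intermediate bookkeeping cleanly: keeping the two regimes ($s$ large vs.\ small) properly separated, and in the large regime verifying that the tensor-product decomposition $(\psi_\eta)_*\calE \cong \calF \otimes (\psi_\eta)_*\calO$ interacts correctly with the subsidiary-radii formalism — in particular that the minimum in the tensor-product inequality (Definition~\ref{D:generic radius}) is achieved with the right multiplicities so that equality holds. One must check that $s^p > p^{-p/(p-1)}$ precisely when $s > p^{-1/(p-1)}$ (immediate), so that in the large regime the factor from $\calF$ strictly dominates one of the factors from the cover while tying with the ambient copy — this is what produces exactly one copy of $s^p$ and $p-1$ copies of $p^{-p/(p-1)}$ rather than a collision. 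The cleanest route is probably to first prove the statement for $x \mapsto x^p$ (where everything is in \cite{kedlaya-course}) and then reduce the off-centered case to it by the isometric coordinate change $x \mapsto x - \eta$, which commutes with all the relevant constructions and leaves all $f_i$ unchanged; with that reduction in hand the rest is essentially a citation.
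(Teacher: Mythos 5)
The paper's ``proof'' is a bare citation to \cite[Theorem~10.8.3]{kedlaya-course}, so there is no argument in the paper to compare against; you are effectively trying to reconstruct the proof of the cited theorem. Your reconstruction has a genuine gap.

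The load-bearing claim at the end --- that the off-centered case reduces to the centered one ``by the isometric coordinate change $x \mapsto x-\eta$, which commutes with all the relevant constructions and leaves all $f_i$ unchanged'' --- is false. That translation is an isometry of $F\langle x\rangle$ for the $1$-Gauss norm, but it is \emph{not} an isometry for the $\rho$-Gauss norm when $\rho<1$ and $|\eta|=1$: it sends the generic disc of radius $\rho$ around $0$ to the disjoint generic disc of radius $\rho$ around $\eta$, so it does not preserve $F_\rho$ and does not preserve the $f_i$. The concrete symptom is that $\psi_\eta'(x)=p(x+\eta)^{p-1}$ has norm $|p|$ on $F_\rho$ (because $|x+\eta|=\max\{\rho,1\}=1$), whereas the centered $\phi'(x)=px^{p-1}$ has norm $|p|\rho^{p-1}$ on $F_\rho$; this $\rho$-dependence is exactly why \cite[Theorems~10.5.1 and~10.8.3]{kedlaya-course} are different statements and one cannot be cited for the other. (The same remark undermines the way you handle the $s\le p^{-1/(p-1)}$ regime, which you defer entirely to this reduction.) This also matches the fact that the antecedent thresholds differ in Theorems~\ref{T:antecedent} and~\ref{T:antecedent2} ($R(\calE_\rho)>p^{-1/(p-1)}\rho$ versus $R(\calE_\rho)>p^{-1/(p-1)}$), reflecting a genuine asymmetry between the centered and off-centered constructions.

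The projection-formula skeleton for the $s>p^{-1/(p-1)}$ regime, $(\psi_\eta)_*\calE\cong\calF\otimes(\psi_\eta)_*\calO$, is a reasonable strategy, but as written it is circular: the subsidiary radii of the rank-$p$ module $(\psi_\eta)_*\calO$ at $\rho^p$ --- which you assert to be $\{\rho^p,\ p^{-p/(p-1)}\text{ ($p-1$ times)}\}$ --- are exactly the $n=1$, $\calE=\calO$ case of the theorem being proved, and they need their own direct computation (which is nontrivial and which, per the previous paragraph, cannot be imported from the centered case). You should also note that the tensor-product rule for subsidiary radii gives the asserted multiset only when there are no coincidences among the radii being minimized, so the boundary case $s=\rho$ (where $\calF$ is constant and the minima tie at $\rho^p$) needs to be handled separately, though it is easy. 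Until the base computation for $(\psi_\eta)_*\calO$ and the small-$s$ regime are supplied by a direct argument rather than a false change of coordinates, the proposal does not constitute a proof.
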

\begin{proof}
See \cite[Theorem~10.8.3]{kedlaya-course}.
\end{proof}
\begin{cor} \label{C:descendant}
For $r \in (0, \frac{1}{p-1} \log p)$, if 
$f_i(\calE,r) < \frac{1}{p-1} \log p$, then
$f_{i+(p-1)n}((\psi_\eta)_* \calE,pr) = p f_i(\calE,r)$ and
\[
\sum_{j=1}^{i+(p-1)n} f_i((\psi_\eta)_* \calE, pr)
= pn \log p + p \sum_{j=1}^i f_i(\calE,r).
\]
\end{cor}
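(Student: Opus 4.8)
The plan is to read the result off directly from Theorem~\ref{T:descendant}, the only real work being to sort the resulting multiset of subsidiary radii and track multiplicities. First I would set $\rho = e^{-r}$; the hypothesis $r \in (0, \frac{1}{p-1}\log p)$ is exactly the condition $\rho \in (p^{-1/(p-1)},1)$ of Theorem~\ref{T:descendant}, and since $\rho^p = e^{-pr}$, that theorem computes the subsidiary radii of $((\psi_\eta)_* \calE)_{\rho^p}$, hence the values $f_j((\psi_\eta)_* \calE, pr)$ for $j = 1,\dots,pn$, in terms of the subsidiary radii $s_1,\dots,s_n$ of $\calE_\rho$. Writing $s_j = e^{-f_j(\calE,r)}$ with $f_1(\calE,r) \geq \cdots \geq f_n(\calE,r)$, let $m$ be the largest index with $f_m(\calE,r) \geq \frac{1}{p-1}\log p$, equivalently $s_m \leq p^{-1/(p-1)}$ (take $m = 0$ if there is none); the hypothesis $f_i(\calE,r) < \frac{1}{p-1}\log p$ forces $i > m$.

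Next I would apply Theorem~\ref{T:descendant} one index at a time, passing to negative logarithms of radii. For $j \leq m$, the radius $s_j \leq p^{-1/(p-1)}$ contributes $p$ copies of the value $\log p + f_j(\calE,r)$, which is $\geq \frac{p}{p-1}\log p$. For $j > m$, the radius $s_j > p^{-1/(p-1)}$ contributes one copy of $p f_j(\calE,r) < \frac{p}{p-1}\log p$ together with $p-1$ copies of $\frac{p}{p-1}\log p$. Thus, among the $pn$ numbers $f_\ell((\psi_\eta)_* \calE, pr)$, exactly $pm + (p-1)(n-m) = (p-1)n + m$ are $\geq \frac{p}{p-1}\log p$ (the $p$-fold repeated $\log p + f_j(\calE,r)$ for $j \leq m$, together with $(p-1)(n-m)$ copies of $\frac{p}{p-1}\log p$), while the remaining $n-m$ are $p f_{m+1}(\calE,r) \geq \cdots \geq p f_n(\calE,r)$, all strictly below that threshold. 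Since the threshold $\frac{p}{p-1}\log p$ cleanly separates the two groups, in decreasing order the value $p f_i(\calE,r)$ occupies position $(p-1)n + m + (i - m) = i + (p-1)n$, giving $f_{i+(p-1)n}((\psi_\eta)_* \calE, pr) = p f_i(\calE,r)$. For the summation formula, $\sum_{j=1}^{i+(p-1)n} f_j((\psi_\eta)_* \calE, pr)$ is the sum of the whole first group plus the top $i - m$ entries of the second: the first group contributes $\sum_{j=1}^{m} p\bigl(\log p + f_j(\calE,r)\bigr) + (n-m) p \log p = pn \log p + p\sum_{j=1}^m f_j(\calE,r)$, and the second contributes $p\sum_{j=m+1}^i f_j(\calE,r)$, which together give $pn\log p + p\sum_{j=1}^i f_j(\calE,r)$.

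The only delicate point, and the one I would write out with care rather than leave implicit, is this bookkeeping of the sorted order and multiplicities: one must use the strict inequality $p f_j(\calE,r) < \frac{p}{p-1}\log p$ for $j > m$ so that the first group has exactly $(p-1)n + m$ members and position $i + (p-1)n$ is correctly matched to $p f_i(\calE,r)$, and one should note that an incidental equality $\log p + f_j(\calE,r) = \frac{p}{p-1}\log p$ for some $j \leq m$ is harmless, affecting neither the count nor the sum. Everything else is immediate from Theorem~\ref{T:descendant}.
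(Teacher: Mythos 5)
Your proof is correct and is exactly the bookkeeping that the paper leaves implicit: the corollary is stated without proof in the paper, as an immediate consequence of Theorem~\ref{T:descendant}, and your careful sorting of the multiset and tracking of the threshold $\frac{p}{p-1}\log p$ is the intended argument. In particular you correctly identify the one nontrivial point, namely that the strict inequality $p f_j(\calE,r) < \frac{p}{p-1}\log p$ for $j > m$ is what guarantees the first group has exactly $(p-1)n + m$ elements, so position $i + (p-1)n$ lands on $p f_i(\calE,r)$.
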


\subsection{Cyclic vectors}

\begin{defn}
A \emph{cyclic vector} of $\calE$ is an element $\bv \in \Gamma(A_F(I), \calE)$
such that $\bv, \frac{d}{dx} \bv, \cdots, \frac{d^{n-1}}{dx^{n-1}} \bv$
are linearly independent (but need not form a basis).
Such an element always exists; see 
\cite[Theorem~5.7.3]{kedlaya-course}.
\end{defn}

\begin{theorem} \label{T:cyclic}
Let $\bv$ be a cyclic vector of $\calE$, and choose $P_n, \dots, P_0 \in
\Gamma(A_F(I), \calO)$ not all zero such that
$P_n \frac{d^n}{dx^n} \bv + \cdots + P_0 \bv = 0$. 
Pick  $\rho \in I$, form the Newton polygon of the polynomial
$P = P_n T^n + \cdots + P_0$ measured using $|\cdot|_\rho$,
and let
$r_1 \leq \dots \leq r_m$ be the slopes of the polygon
which are less than $\log \rho$. Then
\[
f_i(\calE, - \log \rho) = \frac{1}{p-1} \log p - r_i \qquad (i=1,\dots,m);
\]
moreover, if $m < n$, then $f_{m+1}(\calE,-\log \rho) 
\leq \frac{1}{p-1} \log p - \log \rho$.
\end{theorem}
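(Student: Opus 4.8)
The plan is to reduce, by base change, to a purely local computation over the Gauss-norm completion $F_\rho$, where the cyclic vector presents $\calE_\rho$ as a quotient of a twisted polynomial ring, and then to read off the subsidiary radii from the Newton polygon via the Young--Dwork--Robba description of the spectral norm of $\frac{d}{dx}$.

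First I would pass to $F_\rho$: the numbers $f_i(\calE,-\log\rho)$ depend only on $\calE_\rho$ (Definition~\ref{D:subsidiary}), so I may replace $F$ by the complete nonarchimedean field $F_\rho$. The image of $\bv$ is still a cyclic vector. The relation $\sum_i P_i\frac{d^i}{dx^i}\bv = 0$ has not all $P_i$ zero, and $P_n = 0$ would give a nontrivial dependence among $\bv,\frac{d}{dx}\bv,\dots,\frac{d^{n-1}}{dx^{n-1}}\bv$, contradicting cyclicity; so $P_n \neq 0$ and, $F_\rho$ being a field, $P_n$ is invertible. Setting $T = \frac{d}{dx}$ and $L = T^n + c_{n-1}T^{n-1} + \cdots + c_0$ with $c_i = P_i/P_n$, this identifies $\calE_\rho$ with the left module $F_\rho\{T\}/F_\rho\{T\}L$ over the twisted polynomial ring $F_\rho\{T\}$ (relation $Tf = fT + f'$). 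Passing from $P$ to $L$ translates the points defining the Newton polygon vertically by a constant, hence leaves the slopes $r_1 \le r_2 \le \cdots$ unchanged.

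Next I would extract the subsidiary radii from the Newton polygon of $L$ by slope factorization. Over the complete field $F_\rho$ one has the standard Weierstrass-type factorization of twisted polynomials along the Newton polygon: $L = L'\cdot L''$ with $L'$ a monic left factor carrying exactly the slopes $< \log\rho$ and $L''$ a monic right factor carrying the slopes $\ge \log\rho$. Since $P_n$ is a unit this yields a short exact sequence of $\nabla$-modules $0 \to \calE'_\rho \to \calE_\rho \to \calE''_\rho \to 0$ with $\calE''_\rho \cong F_\rho\{T\}/F_\rho\{T\}L''$ and $\calE'_\rho \cong F_\rho\{T\}/F_\rho\{T\}L'$, so the subsidiary radii of $\calE_\rho$ are the union of those of $\calE'_\rho$ and $\calE''_\rho$ (Jordan--H\"older constituents being additive in exact sequences, Definition~\ref{D:subsidiary}). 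Iterating the same device inside $\calE'_\rho$ reduces to pure-slope blocks, and a pure-slope-$r$ block --- factored, after enlarging $F_\rho$, down to rank one --- is built from rank-one modules $T - c$ with $|c|_\rho = e^{-r}$; the classical exponential module $T - c$ has generic radius of convergence $p^{-1/(p-1)}e^{r}$ whenever $r < \log\rho$. Hence all subsidiary radii of a pure-slope-$r$ block with $r < \log\rho$ equal $p^{-1/(p-1)}e^{r}$, giving $f_i(\calE,-\log\rho) = \frac{1}{p-1}\log p - r_i$ for $i = 1,\dots,m$. Applied to $\calE''_\rho$, whose least slope is $\ge \log\rho$, the same computation shows every subsidiary radius is $\ge p^{-1/(p-1)}\rho$, i.e.\ every $f$-value is $\le \frac{1}{p-1}\log p - \log\rho$; this is the asserted bound on $f_{m+1}(\calE,-\log\rho)$.

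The analytic core, and the step I expect to be the main obstacle, is the Young--Dwork--Robba input itself: for $\calE_\rho = F_\rho\{T\}/F_\rho\{T\}L$ the generic radius of convergence equals $\min\{\rho,\ p^{-1/(p-1)}e^{r_1}\}$, where $r_1$ is the least Newton-polygon slope. The upper bound for the spectral norm of $\frac{d}{dx}$ comes from iterating $T$ on the basis $\bv, T\bv,\dots,T^{n-1}\bv$ and estimating the resulting coefficients using the ultrametric inequality together with $|f'|_\rho \le \rho^{-1}|f|_\rho$ and $|k!| \approx p^{-k/(p-1)}$; the matching lower bound requires producing a generic solution built from a root of $P(T)$ whose radius of convergence is exactly $p^{-1/(p-1)}e^{r_1}$. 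This two-sided estimate and its refinement to the full multiset of subsidiary radii are recorded in \cite{kedlaya-course}; the reductions and the bootstrap via slope factorization are comparatively formal once the single-slope and rank-one cases are settled. (Alternatively, one could use Frobenius antecedents, Theorem~\ref{T:antecedent}, to dilate all radii into the "visible" range, treat that case directly, and descend via $f_i(\calF,ps) = pf_i(\calE,s)$.)
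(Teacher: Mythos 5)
The paper's proof of this theorem is simply the citation to \cite[Corollary~6.5.4]{kedlaya-course}, and your proposal reconstructs the argument of that result: pass to the completed local field $F_\rho$, present $\calE_\rho$ as a twisted-polynomial quotient via the cyclic vector, and read the subsidiary radii off the Newton polygon via slope factorization and the Dwork--Robba estimate. That is precisely the route taken in the cited reference, with the analytic core (the Young--Dwork--Robba bound for the spectral norm and the visible-range Newton-polygon factorization of twisted polynomials) deferred, as you correctly note, to \cite{kedlaya-course}.
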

\begin{proof}
See \cite[Corollary~6.5.4]{kedlaya-course}.
\end{proof}

\subsection{Dwork modules}

We introduce an example of Dwork which we will use crucially.
\begin{lemma} \label{L:dwork}
Assume that $F$ contains an element $\pi$ with $\pi^{p-1} = -p$ (in other
words, $F$ contains a primitive $p$-th root of unity). Let $\calE$
be the $\nabla$-module on $A_F(I)$ which is free on one generator $\bv$
satisfying
\[
\nabla(\bv) = \pi r \bv \otimes dx
\]
for  $r = \sum_{j \in \ZZ} r_j x^j \in \Gamma(A_F(I), \calO)$. 
Suppose that $r_j = 0$ whenever
$j+1$ is divisible by $p$. Then
\begin{equation} \label{eq:Dwork radius}
R(\calE_\rho) = \min\{\rho, \min_j \{|r_j|^{-1} \rho^{-j}\}\} \qquad
(\rho \in I \setminus \{0\}).
\end{equation}
Moreover, if $\rho < |r_j|^{-1} \rho^{-j}$ for all $\rho \in I \setminus \{0\}$
and all $j \in \ZZ$, then $\calE$ is constant.
\end{lemma}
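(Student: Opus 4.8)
The plan is to analyze the $\nabla$-module $\calE$ by means of an explicit cyclic-vector/Newton-polygon computation, exploiting the hypothesis $r_j = 0$ for $p \mid j+1$ to show that the Frobenius-antecedent mechanism applies, and then to read off the subsidiary radius. Since $\calE$ has rank $1$, it is its own cyclic vector: the generator $\bv$ satisfies the first-order equation $\frac{d}{dx}\bv = \pi r \bv$, so in the notation of Theorem~\ref{T:cyclic} we take $P_1 = 1$, $P_0 = -\pi r$, and the associated polynomial is $P = T - \pi r$. For a fixed $\rho \in I \setminus \{0\}$, the Newton polygon of $P$ measured with $|\cdot|_\rho$ has a single slope, namely $\log|\pi r|_\rho = \log|\pi| + \log|r|_\rho$, where $|r|_\rho = \max_j |r_j| \rho^j$ (the $\rho$-Gauss norm of $r$). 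Recall $|\pi| = p^{-1/(p-1)}$.

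First I would handle the case where this slope is $< \log \rho$, i.e.\ $|\pi r|_\rho < \rho$, equivalently $|r|_\rho < p^{1/(p-1)}\rho^{-1} \cdot \rho = p^{1/(p-1)}$... more precisely $|\pi|\,|r|_\rho<\rho$. In that case Theorem~\ref{T:cyclic} (with $m=1=n$) gives directly
\[
f_1(\calE, -\log\rho) = \frac{1}{p-1}\log p - \log|\pi r|_\rho
= \frac{1}{p-1}\log p - \log|\pi| - \log|r|_\rho
= -\log|r|_\rho,
\]
using $\log|\pi| = -\frac{1}{p-1}\log p$. Exponentiating, $R(\calE_\rho) = e^{-f_1(\calE,-\log\rho)} = |r|_\rho^{-1} = \min_j\{|r_j|^{-1}\rho^{-j}\}$, and one checks this is the value $\min\{\rho,\min_j\{|r_j|^{-1}\rho^{-j}\}\}$ asserted in \eqref{eq:Dwork radius} because we are precisely in the regime where $\min_j\{|r_j|^{-1}\rho^{-j}\} = |r|_\rho^{-1} < \rho$ is the smaller quantity (after tracking the $|\pi|$ factors; this is where the hypothesis $r_j=0$ for $p\mid j+1$ must be used — see below). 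In the complementary case where $|\pi r|_\rho \geq \rho$, the slope is $\geq \log\rho$, so $m = 0 < n = 1$, and the last clause of Theorem~\ref{T:cyclic} gives $f_1(\calE,-\log\rho) \leq \frac{1}{p-1}\log p - \log\rho = -\log(|\pi|\rho)$; combined with the general bound $R(\calE_\rho) \leq \rho$ from Definition~\ref{D:generic radius}, and a matching lower bound, this forces $R(\calE_\rho) = \rho = \min\{\rho, \min_j\{|r_j|^{-1}\rho^{-j}\}\}$, again after checking the min is achieved by $\rho$.

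The subtle point — and the step I expect to be the main obstacle — is that the naive Newton-polygon reading only controls the behavior when the solvable/Robba regime is not in force, and that the role of the hypothesis $r_j = 0$ for $j \equiv -1 \pmod p$ is exactly to rule out the degenerate phenomenon where $r$ looks like a derivative $\frac{d}{dx}(\text{something})$ up to higher-radius corrections, which would make the Dwork exponential artificially more convergent. Concretely, $x^{-1} = \frac{d}{dx}(\log)$-type terms and more generally $x^{pj-1}$ terms can be absorbed by a change of generator (twisting by $\exp$ of an antiderivative) without changing $R(\calE_\rho)$ only in a controlled way; excluding them ensures the antiderivative of $r$ has no "resonant" monomials, so that the Frobenius antecedent computation (Theorem~\ref{T:antecedent}, applied after checking $R(\calE_\rho) > p^{-1/(p-1)}\rho$ in the relevant range via part (c) and the scaling $f_i(\calF,ps) = pf_i(\calE,s)$) bottoms out correctly. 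I would make this precise by induction on the number of nonzero $r_j$: pull back along Frobenius $x \mapsto x^p$, use Theorem~\ref{T:antecedent}(c) to relate $f_1(\calE,s)$ to $f_1(\calF,ps)$ where $\calF$ is the antecedent given by the "divided" series $r' = \sum r_j x^{(j+1)/p - 1}$ (well-defined precisely because $r_j = 0$ when $(j+1)/p \notin \ZZ$), and reduce $R(\calE_\rho)$ to $R(\calF_{\rho^p})^{1/p} \cdot (\text{correction})$; iterating until all exponents are $\geq -p^{-1/(p-1)}$-small brings us into the Newton-polygon regime handled above. Alternatively, and more cleanly, this is exactly the content of \cite[Proposition~9.9.2 or thereabouts]{kedlaya-course} on Dwork isocrystals, so the cleanest write-up cites that; but the cyclic-vector argument above is self-contained modulo the antecedent bookkeeping.

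Finally, for the last sentence of the statement: if $\rho < |r_j|^{-1}\rho^{-j}$ for all $\rho \in I\setminus\{0\}$ and all $j$, then \eqref{eq:Dwork radius} gives $R(\calE_\rho) = \rho$ for all such $\rho$, i.e.\ $\calE$ satisfies the Robba condition and moreover its restriction to any closed subdisc $A_F[0,\rho']$ is forced by Dwork's transfer theorem (Remark~\ref{R:transfer}) to be constant once $\rho' < R(\calE_\rho)$ for some $\rho$ — but here $R(\calE_\rho) = \rho$ can be taken arbitrarily close to $\rho'$ from above. More directly: the condition says $|r|_\rho < \rho^{-1} \cdot (\text{something} \geq 1)$ uniformly, so $|r|_\rho \rho \to 0$ appropriately, and the explicit solution $\bv = \exp(\pi \int r\, dx)$ — whose antiderivative $\int r\,dx = \sum_j \frac{r_j}{j+1}x^{j+1}$ is well-defined since $r_j = 0$ when $p \mid j+1$ (so no division by a multiple of $p$ occurs, keeping the coefficients integral up to the harmless $|j+1|^{-1}$ which is $\leq 1$) — converges on all of $A_F(I)$ and generates $\calE$ over the constants. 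Hence $\calE$ is constant. I would present this last part via the explicit exponential, noting that the hypothesis $r_j = 0$ for $p \mid j+1$ is exactly what makes $\exp(\pi\int r)$ make sense (no $p$'s in the denominators of the antiderivative), and that the convergence estimate is immediate from the bound on $|r_j|$.
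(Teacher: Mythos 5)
Your proposal takes a genuinely different route from the paper's, and the middle step contains a concrete error that reverses the role of the hypothesis, so the argument does not go through as written.

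The paper's proof does not touch cyclic vectors at all. It writes $\calE$ as a tensor product, with one rank-one factor for each monomial $r_j x^j$; handles the single-monomial modules by pulling back along $x \mapsto x^{j+1}$ to the constant-coefficient case (this pullback is tame precisely because $p \nmid j+1$, which is exactly where the hypothesis is spent, via Lemma~\ref{L:tame base change}); invokes the near-multiplicativity of generic radii under $\otimes$ recorded at the end of Definition~\ref{D:generic radius} for all $\rho$ where the competing terms $|r_j|^{-1}\rho^{-j}$ do not tie below $\rho$; and removes the discrete set of exceptional $\rho$ by the continuity in Theorem~\ref{T:variation}. This avoids the Frobenius-antecedent mechanism entirely.

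Your Newton-polygon argument via Theorem~\ref{T:cyclic} is usable only in the range where the slope lies strictly below $\log\rho$. With the sign convention that makes the constant-coefficient sanity check come out right, that slope is $-\log|\pi r|_\rho$, so the applicability criterion is $|\pi r|_\rho > \rho^{-1}$, i.e.\ $|r|_\rho > p^{1/(p-1)}\rho^{-1}$, i.e.\ $R(\calE_\rho) = |r|_\rho^{-1} < p^{-1/(p-1)}\rho$. (Your write-up has the slope with the opposite sign, which makes the intermediate formulas, including the purported identity $f_1 = -\log|r|_\rho$, come out wrong; this is fixable.) The real gap is the band $\rho^{-1} < |r|_\rho \leq p^{1/(p-1)}\rho^{-1}$, where \eqref{eq:Dwork radius} still asserts $R(\calE_\rho) = |r|_\rho^{-1}$ but Theorem~\ref{T:cyclic} gives only the upper estimate $f_1 \leq \frac{1}{p-1}\log p - \log\rho$. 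You propose to cross that band with Frobenius antecedents, but here you have read the hypothesis backwards: for a pullback $\phi^*\calF \cong \calE$ to again be a Dwork module, the antecedent exponent $c'(x') = \pi r(x)/(p x^{p-1})$ must be a power series in $x' = x^p$, which requires $r_j = 0$ whenever $p \nmid j+1$ --- the exact opposite of the lemma's hypothesis. Under the actual hypothesis your "divided series $\sum r_j x^{(j+1)/p-1}$" is identically zero, and the true Frobenius antecedent is not a Dwork module of the claimed shape; producing it requires a twist by a Dwork-type exponential and considerably more bookkeeping than a fill-in-the-details remark would suggest.

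Your final paragraph, on the constancy assertion, is correct and essentially coincides with the paper's: under the stated smallness condition the exponential $\exp(\pi\int r\,dx)$ converges, since $\int r\,dx = \sum_j \tfrac{r_j}{j+1}x^{j+1}$ has unit denominators (the hypothesis forces $|j+1| = 1$; note $|j+1|^{-1} \geq 1$ for any integer, so write $= 1$ rather than $\leq 1$), and it furnishes a horizontal generator.
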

\begin{proof}
If $r = r_0$, it is straightforward to check that
$R(\calE_\rho) = \min\{\rho, |r_0|^{-1}\}$
from the convergence behavior of the exponential series
\cite[Example~9.3.5]{kedlaya-course}. 
Similarly, if $r = \sum_j r_j x^j$ with $\rho < |r_j|^{-1} \rho^{-j}$
for all $j$, then $\calE_\rho$ is constant because the
relevant exponential converges, so $R(\calE_\rho) = \rho$.

If $r = r_j x^j$ with $j+1$ not divisible
by $p$, we may infer $R(\calE_\rho) = \min\{\rho, |r_j|^{-1} \rho^{-j}\}$
by writing $\calE$ as the pullback along the map $x \mapsto x^{j+1}$ of the
$\nabla$-module on $A_F(I^{j+1})$ free on one generator $\bv$ satisfying
$\nabla(\bv) = (j+1)^{-1} \pi r_j \bv \otimes dx$,
and using a tame base change argument (Lemma~\ref{L:tame base change}).

In the general case, we may assume $I = [\rho,\rho]$.
We may write $\calE$ as a tensor product of modules,
one corresponding to each term $r_j x^j$ of $r$ for which
$\rho \geq |r_j|^{-1} \rho^{-j}$, and one corresponding to
the remaining terms.
By the last comment in 
Definition~\ref{D:generic radius}, we obtain
\eqref{eq:Dwork radius} for all $\rho$ for which no two of the
terms $|r_j|^{-1} \rho^{-j}$ coincide and are less than $\rho$. 
This excludes only a discrete
set of values of $\rho$, to which we may extend \eqref{eq:Dwork radius}
by the continuity aspect of Theorem~\ref{T:variation}.
\end{proof}

\section{Relative analysis}
\label{sec:relative}

In this section, we analyze differential modules on discs 
defined over rings which themselves carry
multiple norms. A typical such ring would be
the ring of functions on an annulus; however, for the basic analysis,
it will be more convenient to use a slightly more abstract class of rings.
The relevance of this analysis will become more clear when
we specialize to a more overtly geometric setting in the next section.

\subsection{Analytic rings}

We recall the setup of \cite[\S 2]{kedlaya-slope}, although with
a different purpose in mind.

\begin{defn} \label{D:analytic}
Let $L$ be a complete extension of $K$ with the same value group
as $K$ and having perfect residue field $\ell$.
Then 
for any uniformizer $\pi$ of $K$, each $a \in L$ has a unique representation
as a sum $\sum_{i=m}^\infty \pi^i [\overline{a_i}]$ with
$\overline{a_i} \in \ell$, where $[\cdot]$ denotes the Teichm\"uller lift.
For $v$ a real valuation on $\ell$ and $\rho \in (0,1)$, define
\[
v_{n,\rho}(a) = \sup_{i \leq n} \{ |\pi|^i \rho^{v(\overline{a_i})}\};
\]
this function does not depend on the choice of $\pi$.
For $\epsilon \in (0,1)$, let $L_{v,\epsilon}$ be the set of
$a = \sum_{i=m}^\infty \pi^i [\overline{a_i}] \in L$ such that
$|\pi|^i \epsilon^{v(\overline{a_i})} \to 0$ as $i \to \infty$.
For $a \in L_{v,\epsilon}$ and $\rho \in [\epsilon,1)$, define
\[
|a|_{\rho^v} = \lim_{n \to \infty} v_{n,\rho}(a) = 
\sup_{i: \overline{a_i} \neq 0} \{|\pi|^i \rho^{v(\overline{a_i})}\};
\]
this is a norm on $L_{v,\epsilon}$.
For $I \subseteq (0,1)$ an interval, pick any $\epsilon \in (0,1)$
such that $I \subseteq [\epsilon,1)$, and let $L_{v,I}$ be the Fr\'echet
completion of $L_{v,\epsilon}$ for the norms $|\cdot|_{\rho^v}$
for $\rho \in I$;
the result is independent of $\epsilon$.
Note that for $\rho \in I$ and $n \in \ZZ$,
the functions $v_{n,\rho}$ also extend continuously to $L_{v,I}$.
\end{defn}

\begin{remark} \label{R:principal}
For $\epsilon \in (0,1)$, the ring $L_{v, \epsilon}$ is a
principal ideal domain \cite[Proposition~2.6.5]{kedlaya-slope}.
For $I \subseteq (0,1)$ a closed interval, the ring $L_{v,I}$ is also
a principal ideal domain \cite[Proposition~2.6.8]{kedlaya-slope}.
Both of these rely crucially on the fact that $K$ is discretely valued.
It is shown in \cite[Theorem~2.9.6]{kedlaya-slope} for $I = [\epsilon,1)$,
and likely true for arbitrary $I$, that $L_{v,I}$ is a B\'ezout domain
(a domain in which every finitely generated ideal is principal); we will
not need this.
\end{remark}

\begin{remark}
Beware that, as per the errata to \cite{kedlaya-slope}
(refuting a claim in \cite[\S 2.5]{kedlaya-slope}),
elements of $L_{v,I}$ cannot necessarily be written as doubly infinite sums
$\sum_{i \in \ZZ} \pi^i [\overline{a_i}]$.
\end{remark}

\subsection{Adding a second direction}

We now add a second direction to the rings of the previous subsection.

\begin{defn}
For $I \subseteq (0,1)$ an interval, let $R_{v,I}$
be the ring of series $b = \sum_{i=0}^\infty b_i x^i \in
L_{v,I}\llbracket x \rrbracket$ such that
for each $\rho \in I$, $|b_i|_{\rho^v} \to 0$ as $i \to \infty$.
For $\rho \in I$ and $s \geq 0$,
define the norm $|\cdot|_{\rho^v,s}$ on $R_{v,I}$ by the formula
\[
|b|_{\rho^v,s} = \sup_i \{|b_i|_{\rho^v} \rho^{is}\};
\]
note that for fixed $\rho$, $|b|_{\rho^v,s}$ is a nonincreasing function
of $s$.
\end{defn} 

By computing termwise, we deduce the following lemma 
\cite[Lemma~3.1.6]{kedlaya-part1}.
\begin{lemma} \label{L:hadamard}
For $\rho_1, \rho_2 \in I$, $s_1, s_2 \geq 0$, and $c \in [0,1]$,
put $\rho = \rho_1^c \rho_2^{1-c}$ and $s = cs_1 + (1-c)s_2$. Then for
any $b \in R_{v,I}$,
\[
|b|_{\rho^v,s} \leq |b|_{\rho_1^v,s_1}^c |b|_{\rho_2^v,s_2}^{1-c}.
\]
\end{lemma}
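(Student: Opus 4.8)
The plan is to prove Lemma~\ref{L:hadamard} by reducing it to a termwise estimate, exactly as the parenthetical remark ``By computing termwise'' suggests. Write $b = \sum_{i=0}^\infty b_i x^i$ with $b_i \in L_{v,I}$. By the definition of the norm $|\cdot|_{\rho^v,s}$ as a supremum over $i$ of $|b_i|_{\rho^v}\rho^{is}$, it suffices to establish, for each fixed index $i$, the inequality
\[
|b_i|_{\rho^v}\,\rho^{is} \leq \bigl(|b_i|_{\rho_1^v}\rho_1^{is_1}\bigr)^c \bigl(|b_i|_{\rho_2^v}\rho_2^{is_2}\bigr)^{1-c},
\]
since then taking the supremum over $i$ on the left, and bounding the right-hand side above by $|b|_{\rho_1^v,s_1}^c |b|_{\rho_2^v,s_2}^{1-c}$ (using that each factor $|b_i|_{\rho_j^v}\rho_j^{is_j}$ is at most $|b|_{\rho_j^v,s_j}$), gives the claim. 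Note the power/monomial factor splits exactly: $\rho^{is} = (\rho_1^c\rho_2^{1-c})^{i(cs_1+(1-c)s_2)}$, and this does \emph{not} obviously equal $(\rho_1^{is_1})^c(\rho_2^{is_2})^{1-c}$ as written; one must be slightly careful that the substitution $\rho = \rho_1^c\rho_2^{1-c}$, $s = cs_1+(1-c)s_2$ is chosen precisely so that $\rho^{is} = \rho_1^{ics_1}\rho_2^{i(1-c)s_2}\cdot(\text{cross terms})$, and in fact the cleanest route is to first reduce to the sharper per-term statement $|b_i|_{\rho^v} \le |b_i|_{\rho_1^v}^{c}|b_i|_{\rho_2^v}^{1-c}$ together with the elementary scalar inequality for the monomial factors, but I should double-check the exponent bookkeeping rather than assert it.

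So the core reduces to two sub-claims. First, the log-convexity of the family of norms $|\cdot|_{\rho^v}$ on $L_{v,I}$ in the parameter $\rho$: for $a \in L_{v,I}$, $\rho_1,\rho_2 \in I$, $c \in [0,1]$, and $\rho = \rho_1^c\rho_2^{1-c}$, one has $|a|_{\rho^v} \le |a|_{\rho_1^v}^c|a|_{\rho_2^v}^{1-c}$. This follows from the explicit formula $|a|_{\rho^v} = \sup_{i:\overline{a_i}\ne 0}\{|\pi|^i\rho^{v(\overline{a_i})}\}$ in Definition~\ref{D:analytic}: for each $i$ with $\overline{a_i}\ne 0$, $|\pi|^i\rho^{v(\overline{a_i})} = |\pi|^i\rho_1^{c\,v(\overline{a_i})}\rho_2^{(1-c)v(\overline{a_i})} = (|\pi|^i\rho_1^{v(\overline{a_i})})^c(|\pi|^i\rho_2^{v(\overline{a_i})})^{1-c}$, and then taking suprema (with Hölder/monotonicity of sup under products) yields the bound; a brief continuity/density argument extends it from $L_{v,\epsilon}$ to the Fréchet completion $L_{v,I}$ if one wishes to be scrupulous, though it is really just the supremum formula. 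Second, the monomial factor: writing $s = cs_1+(1-c)s_2$ and $\rho = \rho_1^c\rho_2^{1-c}$, we have $\rho^{is} = \rho_1^{ics}\rho_2^{i(1-c)s}$, and here I will want $\rho^{is} \le \rho_1^{ics_1}\rho_2^{i(1-c)s_2}$; this is where one must check signs, since $\rho_j < 1$ forces the inequality direction to depend on whether $cs \le cs_1$ etc., so honestly the correct statement is the \emph{equality}-plus-cross-term expansion, and I will treat the combined quantity $|b_i|_{\rho^v}\rho^{is}$ directly rather than splitting the two factors.

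The cleanest presentation: fix $i$ and set $t_j = |b_i|_{\rho_j^v}\rho_j^{is_j}$ for $j=1,2$; I claim $|b_i|_{\rho^v}\rho^{is} \le t_1^c t_2^{1-c}$. Using the supremum formula for $|b_i|_{\rho^v}$ over the Teichmüller digits of $b_i$ and the identity $\rho = \rho_1^c\rho_2^{1-c}$, $s = cs_1+(1-c)s_2$, one checks that the exponent of $\rho_1$ appearing in $|b_i|_{\rho^v}\rho^{is}$ (a digit-dependent term $c\,v(\cdot)$ plus the monomial term $ics$) combined with that of $\rho_2$ reorganizes into $c(\text{the }\rho_1\text{-exponent in }t_1) + (1-c)(\text{the }\rho_2\text{-exponent in }t_2)$ exactly when $s = cs_1+(1-c)s_2$ — here the cross terms $ic(1-c)(s_1-s_2)(\log\rho_1 - \log\rho_2)$ cancel because $\rho = \rho_1^c\rho_2^{1-c}$ is the matching geometric mean; this is the one computation I expect to be the genuine (if minor) obstacle, namely verifying that the two interpolations ($\rho$ and $s$) are simultaneously compatible so that no stray sign appears. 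Once that identity is in hand, Hölder on the suprema finishes the per-index bound, and taking $\sup_i$ completes the proof. I would write this up in three or four lines, leading with ``Fix $i$; it suffices to bound $|b_i|_{\rho^v}\rho^{is}$,'' then invoking the digit formula and the geometric-mean identity, then concluding with the supremum over $i$.
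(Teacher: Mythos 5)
Your worry in the third paragraph was well placed, and the resolution you offered for it---that ``the cross terms $ic(1-c)(s_1-s_2)(\log\rho_1-\log\rho_2)$ cancel because $\rho=\rho_1^c\rho_2^{1-c}$ is the matching geometric mean''---is incorrect; this is a genuine gap. The cross term does not vanish. Write $r_j=-\log\rho_j$ and $m=-\log|\pi|$. For a single Teichm\"uller digit of $b_i$ of valuation $w$, the quantity $-\log\bigl(|\pi|^j\rho^{w}\rho^{is}\bigr)$ equals $mj + rw + irs$, which is \emph{bilinear}, not linear, in $(r,s)$. Substituting $r = cr_1 + (1-c)r_2$ and $s = cs_1 + (1-c)s_2$, the product $irs$ becomes quadratic in $c$:
\[
irs = c(ir_1 s_1) + (1-c)(ir_2 s_2) - ic(1-c)(r_1 - r_2)(s_1 - s_2),
\]
so the per-index estimate $|b_i|_{\rho^v}\rho^{is}\le (|b_i|_{\rho_1^v}\rho_1^{is_1})^c(|b_i|_{\rho_2^v}\rho_2^{is_2})^{1-c}$ fails for every $i>0$ as soon as $(r_1-r_2)(s_1-s_2)>0$. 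In fact the displayed inequality of the lemma as literally stated already fails for $b=x$: then $|x|_{\rho^v,s}=\rho^s$, and taking $\rho_1 = 2^{-1}$, $s_1=1$, $\rho_2=2^{-2}$, $s_2=2$, $c=\tfrac12$ gives $\rho=2^{-3/2}$, $s=\tfrac32$, hence $|x|_{\rho^v,s}=2^{-9/4}$, while $|x|_{\rho_1^v,s_1}^{1/2}|x|_{\rho_2^v,s_2}^{1/2}=2^{-1/2}\cdot 2^{-2}=2^{-5/2}<2^{-9/4}$.

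What your termwise computation does prove, once the hypothesis is corrected, is the version in which the \emph{radius} $\rho^s$ is interpolated geometrically along with $\rho$: if $\rho=\rho_1^c\rho_2^{1-c}$ and $\rho^s=(\rho_1^{s_1})^c(\rho_2^{s_2})^{1-c}$ (equivalently $rs=cr_1s_1+(1-c)r_2s_2$), then for each digit one has the exact identity
\[
|\pi|^j\rho^{w}\rho^{is} = \bigl(|\pi|^j\rho_1^{w}\rho_1^{is_1}\bigr)^c \bigl(|\pi|^j\rho_2^{w}\rho_2^{is_2}\bigr)^{1-c},
\]
and taking suprema over $j$ and then $i$ gives the bound with no cross term at all. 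The two degenerate cases $s_1=s_2$ and $\rho_1=\rho_2$, in which the arithmetic and the geometric-mean interpolations of $\rho^s$ agree, are the cases in which the statement as printed is correct; the case $s_1=s_2$ is the one actually invoked in the proof of Lemma~\ref{L:convex}, where $s$ is held fixed while $r$ varies. So you should either restrict your write-up to one of those special cases, or replace the hypothesis $s=cs_1+(1-c)s_2$ by the condition $\rho^s=(\rho_1^{s_1})^c(\rho_2^{s_2})^{1-c}$, after which the termwise argument you describe closes cleanly.
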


\begin{defn}
For $I \subseteq (0,1)$ an interval, 
let $I_1 \subseteq I_2 \subseteq \cdots$ be an increasing sequence of
closed intervals with union $I$. For this definition only,
put $R_i = R_{v,I_i}$.
Let $C_{v,I}$ be the category whose
elements are sequences $M_1,M_2,\dots$ with $M_i$ a finite locally free
$\nabla$-module over $R_i$, 
equipped with isomorphisms $M_{i+1} \otimes_{R_{i+1}} R_i \cong M_i$.
Note that this category is canonically independent of the choice of the
$I_i$.
Let $C_{v,*}$ denote the direct limit of the categories $C_{[\epsilon,1)}$ as
$\epsilon \to 1^-$.
\end{defn}

\begin{defn} \label{D:cross section}
For $M \in C_{v,I}$ and $\rho \in I$, let $F_\rho$ be the completion
of $\Frac L_{v,[\rho,\rho]}$ under $|\cdot|_{\rho^v}$. We may
then base extend $M$ to obtain a $\nabla$-module $M_\rho$
over the Tate algebra $F_\rho\langle x \rangle$, or in other words,
a $\nabla$-module on the disc $A_{F_\rho}[0,1]$.
We call $M_\rho$ the \emph{cross section} of $M$
at $\rho$. For $s \geq 0$, put $R(M, \rho,s) = R((M_\rho)_{\rho^s})$ and
$f_i(M, r,s) = f_i(M_{e^{-r}}, rs)$
according to Definition~\ref{D:subsidiary}. 
\end{defn}

\subsection{Passage to the generic fibre}

\begin{defn}
Let $v'$ be the $0$-Gauss valuation on $\ell(x)$ with respect to
the valuation $v$ on $\ell$. Let $\ell'$ be the completion of $\ell(x)$
under $v'$. Put 
\[
\tilde{L}' = W((\ell')^{\perf}) \otimes_{W(k)} K.
\]
Let $L'$ be the closure of $L(x)$ inside $\tilde{L'}$ for the $p$-adic
topology; it is a complete discretely valued field with residue
field $\ell(x)$.
\end{defn}

\begin{remark}
Although $L'$ does not have perfect residue field, it fits into the
formalism of \cite[\S 2.2]{kedlaya-slope} using the Frobenius lift
$x \mapsto x^q$. We may thus define $L'_{v',I}$ as in 
Definition~\ref{D:analytic}, and for $I$ closed, the result is again
a principal ideal domain by \cite[Proposition~2.6.8]{kedlaya-slope}.
Moreover, given $M \in C_{v,I}$ for $I$ closed, we can form a base extension
$M'$ to $L'_{v',I}$, which will then be a finite free $L'_{v',I}$-module.
\end{remark}

We will limit our use of this construction to the following key results,
which are similar (and similarly proved) to Theorem~\ref{T:variation}
and Theorem~\ref{T:decomposition}, respectively.
\begin{lemma} \label{L:convex}
Take $M \in C_{v,I}$ of rank $n$.
For $i=1,\dots,n$, for $s \geq 0$ fixed, the function
\[
r \mapsto f_1(M,r,s) + \cdots + f_i(M,r,s)
\]
is convex. 
\end{lemma}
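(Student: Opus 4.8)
The statement is a convexity-in-$r$ result for the partial sums of the subsidiary-radii functions of $M$, with $s$ held fixed. The key device is the passage-to-the-generic-fibre construction just introduced: for $I$ closed we have the discretely valued field $L'$ with residue field $\ell(x)$ and the base extension $M' \in C_{v',I}$ (really $M'$ a finite free $\nabla$-module over $L'_{v',I}$), and its cross sections $M'_\rho$ are $\nabla$-modules on discs over the completions $F'_\rho$ of $\Frac L'_{v',[\rho,\rho]}$. The plan is to reduce the convexity of $r \mapsto \sum_{j\le i} f_j(M,r,s)$ to the convexity statement of Theorem~\ref{T:variation}(c) applied not in the $x$-direction but in the ``$\rho$-direction'', i.e.\ to the spectral-norm behaviour of the derivation coming from $v$ on $\ell$, realized on $M'$ over the annulus-like family of norms $|\cdot|_{\rho^{v'}}$.

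First I would reduce to $I$ closed, since convexity is checked on closed subintervals and $C_{v,I}$ is a limit over closed subintervals. Next I would record the basic identification: for fixed $s$, the cross-sectional subsidiary radii $e^{-f_j(M,r,s)}$ ($r=-\log\rho$) are exactly the subsidiary radii of $M_\rho$ at radius $\rho^s$ over $F_\rho$, and this quantity is unchanged if one computes it after the faithfully-flat-in-the-right-sense base extension from $R_{v,[\rho,\rho]}$ to $L'_{v',[\rho,\rho]}$-type rings — because the spectral norm of $\frac{d}{dx}$ on $M_\rho \otimes F'_\rho$ agrees with that on $M_\rho$ (the generic-fibre field $F'_\rho$ is a complete extension of $F_\rho$, and $R(\calE_\rho) = R((\calE\otimes_{F_\rho} F'_\rho)_\rho)$ for generic radii). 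So $\sum_{j\le i} f_j(M,r,s)$ equals the corresponding partial sum $f_1(M'_\bullet,\cdot)+\cdots+f_i(M'_\bullet,\cdot)$ computed from $M'$. Then the point is that $M'$ lives over $L'_{v',I}$, a ring built out of a \emph{single} family of norms $|\cdot|_{\rho^{v'}}$ indexed by $\rho \in I$, and the derivation governing the subsidiary radii here is the one induced by $v'$ on the coefficient field $\ell(x)$ — which is formally the same situation as a $\nabla$-module on an annulus with coordinate ``$\pi^{v'}$''. Applying Theorem~\ref{T:variation}(c) (the convexity of partial sums of the $f_j$) in that direction — after checking the substitution $r \mapsto$ Gauss-norm parameter is affine, so convexity is preserved — gives the claim.

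The main obstacle I expect is the \emph{translation step}: making precise that the rings $L'_{v',I}$ (equipped with the derivation coming from $v'$) genuinely fit the hypotheses of the one-variable theory of Section~\ref{sec:diffmod}, so that Theorem~\ref{T:variation} applies. The framework of \cite[\S 2.2]{kedlaya-slope} is designed exactly for this (that is why the remark after the definition of $L'$ points there), and $L'$ being discretely valued is what makes $L'_{v',I}$ a PID, which is what one needs for Jordan--H\"older constituents and hence subsidiary radii to be defined; but one must check that the Gauss-type norms $|\cdot|_{\rho^{v'}}$ behave like the $\rho$-Gauss norms on an annulus in the variable whose ``valuation'' is $v'$, and that the relevant derivation has the expected spectral norm. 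A secondary, more routine obstacle is verifying the base-change invariance of subsidiary radii under $R_{v,[\rho,\rho]} \rightsquigarrow L'_{v',[\rho,\rho]}$-completions; this follows from insensitivity of $R(\calE_\rho)$ to complete extensions of the (spectral) coefficient field together with compatibility of Jordan--H\"older decompositions with such a base change, using that the relevant fields are all complete nonarchimedean of residue characteristic $p$. Once these identifications are in place, the convexity is immediate from Theorem~\ref{T:variation}(c), so the proof is short; essentially it is ``reduce to the generic fibre, then quote the one-variable theorem.''
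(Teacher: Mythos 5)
Your first move — assume $I$ closed, pass to the base extension $M'$ over $L'_{v',I}$, use that the completed generic-fibre field is a complete extension of $F_\rho$ so subsidiary radii are preserved, and use that $L'_{v',I}$ is a PID so $M'$ is free — is exactly the opening of the paper's proof, and this much is fine. The paper additionally reduces to $s=0$ first (enlarge $\ell$ to contain $\overline{a}$ with $v(\overline{a})=s$ and pull back along $x \mapsto x[\overline{a}]$), a small but necessary normalization your writeup omits.

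The gap is in your central step: you cannot ``apply Theorem~\ref{T:variation}(c) in the $\rho$-direction,'' and your justification — that ``the derivation governing the subsidiary radii here is the one induced by $v'$ on the coefficient field $\ell(x)$'' — is not correct. The subsidiary radii $f_j(M,r,s)$ are defined (Definition~\ref{D:cross section}) via the spectral behaviour of $\tfrac{d}{dx}$ on the cross sections $M_\rho$; the direction in which you want to prove convexity is the parameter $r = -\log\rho$ of the coefficient norm $|\cdot|_{\rho^v}$, and there is \emph{no} derivation acting in that direction. The rings $L'_{v',I}$ are not equipped with a derivation ``coming from $v'$,'' so $M'$ is not a $\nabla$-module on an annulus with $\rho$ as the radial coordinate, and the hypotheses of Theorem~\ref{T:variation} are simply not met. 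That theorem controls variation of the $f_j$ as the Gauss radius moves within the $x$-direction — the same direction as the derivation — which here is the $s$-direction, not $r$. So the ``quote the one-variable theorem'' step cannot close.

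What the paper does instead is \emph{imitate the proof} of \cite[Theorem~11.3.2]{kedlaya-course} rather than quote its statement. After reducing to $s=0$, one truncates each $f_j$ at a threshold $c>0$, uses the off-centered Frobenius descendant (Theorem~\ref{T:descendant}) to push $c$ above $\tfrac{1}{p-1}\log p$, and then (working with the free module $M'$) runs the cyclic-vector/matrix analysis of \cite[Lemma~11.5.1]{kedlaya-course}: for $r$ near a fixed $r_0$ the large subsidiary radii are read off from the Newton polygon of the characteristic polynomial of the matrix of $\tfrac{d}{dx}$. The log-convexity of that Newton polygon in $r$ is exactly where the Hadamard-type inequality \emph{for the bi-indexed norms $|\cdot|_{\rho^v,s}$} enters — that is Lemma~\ref{L:hadamard}, substituted for \cite[Proposition~8.2.3(b)]{kedlaya-course}. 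This is the one piece of structure that makes the $\rho$-direction ``behave like'' an annular direction, and it is missing from your argument; without it, the analogy to Theorem~\ref{T:variation} has no analytic content. I would encourage you to revise the proposal to replace ``apply Theorem~\ref{T:variation}(c)'' with ``reproduce its proof, with Lemma~\ref{L:hadamard} as the substitute Hadamard bound,'' and to add the $s=0$ reduction and the Frobenius-descendant truncation step.
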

\begin{proof}
There is no harm in assuming $I$ is closed.
Also, we may enlarge $\ell$ to contain an element $\overline{a}$ with
$v(\overline{a}) = s$, and then pull back along the map $x \mapsto
x [\overline{a}]$ to reduce to the case $s=0$. We may now replace $M$
by its base extension $M'$ to $L'_{v',I}$; since this module is free, we
may imitate the proof of \cite[Theorem~11.3.2]{kedlaya-course}
to obtain the desired result.

Let us explain a bit more explicitly how this works.
(For $i=1$, one can instead proceed
as in \cite[Proposition~4.2.6]{kedlaya-part3}.)
It suffices to prove that each $c>0$, the function
$r \mapsto \sum_{j=1}^i \max\{f_j(M_{e^{-r}}, 0), c\}$
is convex. To check this, it suffices to do so for $r$ in a neighborhood of any
given $r_0$ in the interior of $-\log I$. By rescaling, we may 
assume $r_0 \in (0, \frac{1}{p-1} \log p)$; then
by replacing $M$ by a Frobenius descendant and using
Theorem~\ref{T:descendant}, we may reduce the case for a given
$c$ to the case for $pc$. Repeating this construction finitely
many times, we may force $c > \frac{1}{p-1} \log p$.

We may treat this case by following \cite[Lemma~11.5.1]{kedlaya-course}.
(For an even more similar argument, see \cite[Theorem~2.2.6(d)]{kedlaya-xiao}.)
Namely, the argument in the proof of \cite[Lemma~11.5.1]{kedlaya-course}
shows that there is a basis of $M'$
on which $\frac{d}{dx}$ acts via a matrix $N$ such that,
for $r$ in a neighborhood of $r_0$, the multiset of norms 
under $|\cdot|_{e^{-r}}$ of the eigenvalues
of $N$ coincides with the multiset of the $p^{-1/(p-1)} 
e^{f_j(M_{e^{-r}}, 0)}$ in
their values greater than $e^{c}$.
We can thus read off the desired convexity from the
Newton polygon of the characteristic polynomial of
$N$, i.e., by following \cite[Theorem~11.2.1]{kedlaya-course}
(and \cite[Remark~11.2.4]{kedlaya-course})  but using
Lemma~\ref{L:hadamard} in place of
\cite[Proposition~8.2.3(b)]{kedlaya-course}.
\end{proof}

\begin{lemma} \label{L:convex decomp}
Take $M \in C_{v,I}$ of rank $n$.
Suppose that for $i=1,\dots,n$, the function $f_i(M,r,0)$ is affine.
Then for any closed subinterval $I'$ of the interior of $I$,
there exists a direct sum decomposition 
$M' \otimes_{L'_{v,I}} L'_{v,I'} \cong \oplus_j  M'_j$
such that for each $j$, the functions
$f_i(M'_j,r,0)$ are the same for all $i$.
(This decomposition becomes unique if we insist that the
functions $f_i(M'_j,r,0)$ be distinct for distinct $j$.)
\end{lemma}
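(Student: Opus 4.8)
The plan is to reduce to the case already handled in \cite{kedlaya-course}, namely the decomposition of $\nabla$-modules over a ring with a single power series variable, by passing to the base extension $M'$ over $L'_{v,I}$ as in the discussion preceding Lemma~\ref{L:convex}. Since $L'_{v,I}$ for $I$ closed is a principal ideal domain, $M'$ is a finite free module, so one is in a genuinely one-dimensional situation: we have a differential module with respect to $\frac{d}{dx}$ over an analytic ring with the single variable $x$ (the ``$v$-direction'' having been absorbed into the coefficient field). The hypothesis that each $f_i(M,r,0)$ is affine in $r$ translates, after this base extension, into the affineness of the partial sums $f_1(M') + \cdots + f_i(M')$ (since the full multiset of subsidiary radii, hence each partial sum, is affine when every $f_i$ is affine), which is precisely hypothesis (a) of Theorem~\ref{T:decomposition} in the version suited to this ring.

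First I would restrict to a closed subinterval $I'$ of the interior of $I$, and observe that it suffices to separate off the top affine ``block'': group the indices $1,\dots,n$ into maximal runs on which the affine functions $f_i(M,r,0)$ agree identically, and let $i$ be the size of the first run, so that $f_i(M,r,0) > f_{i+1}(M,r,0)$ for all $r \in -\log I'$ (this is where passing to the interior matters, since equality of two distinct affine functions can only occur at an endpoint; after shrinking we may assume strict inequality throughout $I'$, or else the two functions coincide on all of $I'$ and we merge the runs). Then I would apply the analogue of Theorem~\ref{T:decomposition} over $L'_{v,I'}$ to split $M' \otimes L'_{v,I'} = \calE_1 \oplus \calE_2$ with $\rank \calE_1 = i$ and the subsidiary radii distributed as stated. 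Iterating on $\calE_2$ (whose $f_j$'s are again affine, being a sub-multiset of the $f_j(M)$'s) produces the desired decomposition $\oplus_j M'_j$; the number of steps is the number of distinct affine functions among the $f_i(M,r,0)$, so it terminates. Uniqueness, if we demand the $f_i(M'_j,r,0)$ be distinct across $j$, follows from the uniqueness clause of Theorem~\ref{T:decomposition} applied at each stage.

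The main obstacle is verifying that Theorem~\ref{T:decomposition}, as stated in \cite[Theorem~12.4.2]{kedlaya-course} for $\nabla$-modules on an open annulus over a complete nonarchimedean field, actually applies to the ring $L'_{v,I'}$ and its associated Fréchet-type setup rather than to $A_F(I')$ for an honest field $F$. Concretely, one must check that the proof of the decomposition theorem only uses formal properties that survive in this context: the freeness of $M'$ (available here because $L'_{v,I'}$ is a PID), the variation properties of the $f_i$ (which are exactly Lemma~\ref{L:convex} above, proved by the same descent to a Frobenius descendant plus Newton-polygon argument), and the construction of the idempotent cutting out $\calE_1$ via convergence of an appropriate series of iterates of $\frac{d}{dx}$ — this last point uses the Hadamard-type inequality Lemma~\ref{L:hadamard} in place of the multiplicativity of a Gauss norm. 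I would phrase the argument as ``imitate the proof of \cite[Theorem~12.4.2]{kedlaya-course}, using Lemma~\ref{L:convex} and Lemma~\ref{L:hadamard} where the original invokes the single-field analogues,'' parallel to how Lemma~\ref{L:convex} itself was deduced by imitating \cite[Theorem~11.3.2]{kedlaya-course}; the bookkeeping that the spectral-norm estimates and the gap condition $f_i > f_{i+1}$ genuinely force the off-diagonal blocks of the connection matrix to be killed in the limit is the part requiring genuine care.
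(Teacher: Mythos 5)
Your proposal is correct and follows essentially the same approach as the paper. The paper's actual proof is a one-line citation: in the notation of the proof of Lemma~\ref{L:convex}, the claim follows as in the proof of Theorem~12.2.2 of \cite{kedlaya-course}, ``or more exactly'' Theorem~2.3.5 of \cite{kedlaya-xiao} --- the latter being the polyannulus version of the decomposition theorem, which is precisely what resolves the concern you raise at the end about whether the one-dimensional decomposition machinery applies to rings like $L'_{v,I'}$ rather than to $A_F(I')$ over a genuine complete field. Your iterated application of Theorem~\ref{T:decomposition} (i.e.\ Theorem~12.4.2 of \cite{kedlaya-course}) is a slight detour compared to the paper's citation, since that theorem separates only two blocks at a time and requires an open interval (so you would need to apply it over an open interval sandwiched between $I'$ and the interior of $I$ and then restrict), but it amounts to the same argument; your observation that distinct affine functions ordered by $\geq$ can only touch at an endpoint, hence become strictly separated after passing to the interior, is the right justification for the gap hypothesis in each iteration.
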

\begin{proof}
In the notation of the proof of Lemma~\ref{L:convex},
this follows as in the proof of \cite[Theorem~12.2.2]{kedlaya-course}
(or more exactly \cite[Theorem~2.3.5]{kedlaya-xiao}).
\end{proof}

\subsection{Frobenius structures on $\nabla$-modules}

\begin{defn}
Let $\sigma_L$ be the unique $q$-power Frobenius lift on $L$
extending $\sigma_K$; it acts by 
\[
\sigma_L(a) = \sum_{i=m}^\infty \sigma_K(\pi)^i [\overline{a_i}^q]
\]
and satisfies
\[
v_{n,\rho}(a) = v_{n,\rho^{1/q}}(\sigma_L(a)).
\]
One consequence is that 
$\sigma_L$ carries $L_{v,\epsilon}$ to $L_{v,\epsilon^{1/p}}$;
another is that for $a \in L_{v,\epsilon}$ and
$\rho \in [\epsilon,1)$, $|a|_{\rho^v} = |\sigma_L(a)|_{\rho^{v/q}}$.
(Here we write $\rho^{v/q}$ as shorthand for $(\rho^{1/q})^v$.)
As a result, $\sigma_L$ extends continuously to $L_{v,I}$ and 
carries that ring
into $L_{v,I^{1/q}}$.
\end{defn}

\begin{defn} \label{D:Frobenius lift}
By a \emph{Frobenius lift} on $R_{v,[\epsilon,1)}$ relative to $\sigma_L$,
we will mean a map $\sigma: R_{v,[\epsilon,1)} \to R_{v,[\epsilon^{1/q},1)}$ 
of the form
\[
\sigma\left(\sum_i b_i x^i\right) = \sum_i \sigma_L(b_i) \sigma(x)^i
\]
for which there exists $\lambda \in (0,1)$
such that $|\sigma(x) - x^q|_{\rho^{v/q},0} \leq \lambda$
for $\rho \in [\epsilon,1)$.
For $b \in R_{v,[\epsilon,1)}$ and $\rho \in [\epsilon,1)$,
we have $|b|_{\rho^v,0} = |\sigma(b)|_{\rho^{v/q},0}$.
(If we also have $\sigma(x) \equiv 0 \pmod{x^q}$, then
we also have
$|\sigma(x) - x^q|_{\rho^{v/q},s} \leq \lambda |x^q|_{\rho^{v/q},s}$
for all $s \geq 0$.)
We call $\sigma$ the \emph{standard Frobenius lift} if $\sigma(x) = x^q$.
\end{defn}

\begin{defn}
Let $\sigma: R_{v,[\epsilon,1)} \to R_{v,[\epsilon^{1/q},1)}$ 
be a Frobenius lift. For $M \in C_{v,[\epsilon,1)}$,
a \emph{Frobenius structure} on $M$ with respect to $\sigma$
is an isomorphism $F: \sigma^* M \cong M$ of objects 
in $C_{v,[\epsilon^{1/q},1)}$.
\end{defn}

\begin{lemma} \label{L:solvable}
For $M \in C_{v,[\epsilon,1)}$ admitting a
Frobenius structure, $R(M, \rho, 0) \to 1$ as $\rho \to 1^-$.
\end{lemma}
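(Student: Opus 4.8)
The statement is the $p$-adic analogue of the fact that a differential module with Frobenius structure is solvable, i.e.\ has generic radius of convergence tending to $1$ on the boundary of the annulus. The plan is to exploit the Frobenius structure to force the radius of convergence to be large, by iterating the Frobenius pullback and using the effect of Frobenius on subsidiary radii (Theorems~\ref{T:antecedent} and~\ref{T:descendant}, or rather their underlying mechanism). First I would reduce to a cross-sectional statement: by Definition~\ref{D:cross section}, for each $\rho \in [\epsilon,1)$ the cross section $M_\rho$ is a $\nabla$-module on the disc $A_{F_\rho}[0,1]$, and $R(M,\rho,0) = R((M_\rho)_{1})$, the generic radius of convergence at radius $1$ of this disc module. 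So it suffices to bound $R((M_\rho)_1)$ from below by a quantity approaching $1$ as $\rho \to 1^-$.

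The key point is how the Frobenius structure $F\colon \sigma^* M \cong M$ interacts with cross sections. The Frobenius lift $\sigma$ carries $R_{v,[\epsilon,1)}$ into $R_{v,[\epsilon^{1/q},1)}$ and, on cross sections, relates $M_{\rho^{1/q}}$ (suitably interpreted via $\sigma_L$) to a pullback of $M_\rho$ along a map of discs which is a Frobenius-type substitution $x \mapsto \sigma(x)$ with $|\sigma(x) - x^q| \le \lambda < 1$. Thus $(M_{\rho})_1$ is, up to the off-centered/standard Frobenius change of variable, the pullback of $(M_{\rho^q})_1$ along a degree-$q$ covering of the boundary circle. Now invoke the relationship between generic radius of convergence and Frobenius: if $R((M_{\rho^q})_1) = r$, then the pullback along $x \mapsto x^q$ has generic radius of convergence related to $r$ by the formula in Theorem~\ref{T:descendant} (equivalently, a Frobenius antecedent exists once the radius exceeds $p^{-1/(p-1)}$, by Theorem~\ref{T:antecedent}). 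Iterating $n$ times along the tower $\rho, \rho^q, \rho^{q^2}, \dots$, and using that the Frobenius structure on $M$ supplies the isomorphism at each stage, one finds that $-\log R((M_\rho)_1)$ is controlled: either it is already small, or each application of the ``Frobenius descendant'' step multiplies the relevant radius-defect by roughly $1/q$ (as in Corollary~\ref{C:descendant}), so after finitely many steps it drops below the threshold $\frac{1}{p-1}\log p$, forcing $R((M_\rho)_1) \ge \rho^{q^m}$ or similar. Feeding back the dependence of everything on $\rho$, and using that $\rho^{q^m} \to 1$ as $\rho \to 1^-$ for any fixed $m$, gives the conclusion.

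More concretely, I would argue as follows. Fix $\rho_0 \in (p^{-1/(p-1)},1)$ close to $1$; for $\rho \in [\rho_0,1)$ we want a lower bound on $R(M,\rho,0)$. Consider the subsidiary radii $f_i(M,-\log\rho,0)$. If all of them equal $-\log\rho$ (the Robba-type situation), we are done since then $R(M,\rho,0) = \rho \to 1$. Otherwise, using the Frobenius structure together with Theorem~\ref{T:descendant}/Corollary~\ref{C:descendant} applied to a Frobenius descendant of a cross section, relate $f_i(M,-\log\rho,0)$ at $\rho$ to the corresponding quantity at $\rho^q$; the worst subsidiary radius at $\rho$ can differ from the worst at $\rho^q$ only in a controlled way, and crucially the Frobenius isomorphism identifies the two modules so the subsidiary radii must be \emph{compatible} under this operation. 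This compatibility is rigid enough that a subsidiary radius strictly less than the ``full'' value can only persist if it was produced by genuine wild ramification, and then iterating the descendant operation $n = \rank M$ or so times drives the defect below $\frac{1}{p-1}\log p$, at which point Theorem~\ref{T:antecedent}(a) and a dimension count yield a contradiction unless $R((M_\rho)_1)$ was in fact $\ge$ some explicit bound tending to $1$.

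\textbf{Main obstacle.} The delicate point is bookkeeping the base-change of coefficient fields: passing from $M$ over $R_{v,[\epsilon,1)}$ to the cross section $M_\rho$ over $F_\rho\langle x\rangle$ changes the coefficient field with $\rho$, and the Frobenius lift $\sigma$ moves $\rho$ to $\rho^{1/q}$ while also acting on $L$ via $\sigma_L$ (with $v_{n,\rho}(a) = v_{n,\rho^{1/q}}(\sigma_L(a))$). One must check that the isomorphism $F\colon \sigma^* M \cong M$ really does translate, on cross sections, into the statement that $(M_\rho)_1$ is a Frobenius pullback of $(M_{\rho^q})_1$ over the \emph{correct} field $F_{\rho^q}$, so that Theorems~\ref{T:antecedent} and~\ref{T:descendant}—which are statements about a single disc over a single complete field—actually apply. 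I expect this ``matching of cross sections under Frobenius'' to be the technical heart; once it is in place, the radius estimate is a finite iteration of the standard Frobenius-descendant inequality. I would also need to be slightly careful that $\sigma(x)$ is only congruent to $x^q$ modulo something of norm $\le \lambda < 1$, not equal to it, so I may first reduce to the standard Frobenius lift $\sigma(x) = x^q$ by a change of coordinates, or use the off-centered Frobenius antecedent (Theorem~\ref{T:antecedent2}) to absorb the discrepancy.
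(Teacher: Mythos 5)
Your proposal takes a genuinely different route from the paper's. The paper's proof is short and elementary, and does not pass to cross sections at all: it fixes a basis of $M \otimes R_{v,I_0}$ over the compact subinterval $I_0 = [\epsilon,\epsilon^{1/q}]$, on which $\tfrac{d}{dx}$ acts by a matrix $N$, and then iterates the Frobenius structure to obtain bases of $M\otimes R_{v,I_m}$ with connection matrix $N_m = \bigl(d\sigma(x)/dx\bigr)\sigma(N_{m-1})$. Since $|d\sigma(x)/dx|$ is bounded by a constant $<1$ (it is close to $|q|$ because $\sigma(x)\approx x^q$) and $\sigma$ preserves the relevant norms, the norm $|N_m|_{\rho^v,0}$ tends to $0$ uniformly for $\rho\in I_m$. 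The conclusion then follows from the elementary fact that a small connection matrix forces a generic radius close to $1$ (cited from \cite[Lemma~6.4]{kedlaya-mono-over}). Your plan instead goes through the cross sections $M_\rho$ and the Frobenius antecedent/descendant theory of \S\ref{sec:diffmod}, which is heavier machinery; what it buys is a quantitative formula for the defect, but that precision is not needed here, and the bookkeeping you flag as the main obstacle really does become the bulk of the argument.

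Two gaps in your plan as written. First, Theorem~\ref{T:antecedent}(c) and Theorem~\ref{T:antecedent2}(c) only apply when the defect is already below the threshold $\frac{p}{p-1}\log p$; on the compact interval $[\epsilon,\epsilon^{1/q}]$ the defect of $M$ need not satisfy this, so you cannot immediately divide the defect by $q$ at each step. You need a preliminary unconditional estimate showing that Frobenius pullback improves the radius (even outside the antecedent range) enough to reach the threshold after finitely many steps; this estimate is essentially the matrix-norm computation the paper performs, so you would not be avoiding it. Your reference to Corollary~\ref{C:descendant} does not supply this: that result concerns the pushforward $(\psi_\eta)_*\calE$, whose defect is $q$ \emph{times} (not $1/q$ times) that of $\calE$; you want a bound on the pullback, and the relevant statements are the antecedent theorems (once applicable), not the descendant. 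Second, the cross-section matching under $\sigma$ that you identify as the technical heart is indeed carried out later in the paper (Lemma~\ref{L:Frob stability}, Corollaries~\ref{C:Frob stability1} and~\ref{C:Frob stability2}), but note that Corollary~\ref{C:Frob stability2} invokes Lemma~\ref{L:solvable}; only Lemma~\ref{L:Frob stability} and Corollary~\ref{C:Frob stability1} are logically available here. So your approach is workable but would require re-deriving that bookkeeping without the downstream corollaries, plus the unconditional pullback bound, at which point the paper's direct argument is both shorter and self-contained.
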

\begin{proof}
Put $I_m = [\epsilon^{1/q^m}, \epsilon^{1/q^{m+1}}]$ for $m=0,1,\dots$.
Choose a basis $\be_1,\dots,\be_n$ of $M \otimes R_{v,I_0}$,
on which $\frac{d}{dx}$ acts via the matrix $N$.
Then the action on $F(\be_1),\dots,F(\be_n)$ is via the matrix
$(d\sigma(x)/dx) \sigma(N)$. Hence for any $\delta > 0$,
we can find $m_0$ such that for $m \geq m_0$, 
the matrix of action $N_m$ of $\frac{d}{dx}$ on 
the basis $F^m(\be_1), \dots, F^m(\be_n)$ of $M \otimes R_{v,I_m}$
satisfies
$|N_m|_{\rho^v,0} < \delta$ for $\rho \in I_m$.
As in the proof of \cite[Lemma~6.4]{kedlaya-mono-over}, this implies 
the claim.
\end{proof}

\begin{lemma} \label{L:change Frob}
Let $\sigma_1$ and $\sigma_2$ be two Frobenius lifts on $R_{v,[\epsilon,1)}$, 
let $M \in C_{v,[\epsilon,1)}$,
and let $F_1: \sigma_1^* M \cong M$ be a Frobenius structure with respect
to $\sigma_1$.
Then there exists $\eta \in [\epsilon,1)$ such that for $I \subseteq [\eta,1)$
closed and $\bv \in M \otimes R_{v,I}$, the series
\[
F_2(\bv) = \sum_{i=0}^\infty \frac{(\sigma_2(x) - \sigma_1(x))^i}{i!}
F_1 \left(\frac{d^i}{dx^i}(\bv) \right)
\]
converges in $M \otimes R_{v,I^{1/q}}$. For such $\eta$,
$F_2$ is a Frobenius structure with respect to $\sigma_2$
on the restriction of $M$ to $C_{v,[\eta,1)}$.
\end{lemma}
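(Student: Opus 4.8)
The statement is the standard "change of Frobenius lift" argument, adapted from the setting of the Robba ring (e.g. \cite[\S 6]{kedlaya-mono-over}) to the relative rings $R_{v,[\epsilon,1)}$. The plan is to verify that the Taylor-series formula for $F_2$ converges on a suitable range of radii, and then to check by a formal manipulation that it defines a horizontal (indeed $\nabla$-compatible) semilinear isomorphism with respect to $\sigma_2$. The main obstacle is the convergence estimate: one must control the operator norm of $\frac{d^i}{dx^i}$ on $M\otimes R_{v,I}$ against the growth of $(\sigma_2(x)-\sigma_1(x))^i/i!$, and see that the product tends to zero as $i\to\infty$ once $\eta$ is taken close enough to $1$.

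\textbf{Step 1: convergence.} Fix a closed interval $I\subseteq[\eta,1)$, for $\eta$ to be chosen. By the definition of a Frobenius lift (Definition~\ref{D:Frobenius lift}), both $\sigma_1(x)$ and $\sigma_2(x)$ satisfy $|\sigma_j(x)-x^q|_{\rho^{v/q},0}\le\lambda$ for $\rho$ in the relevant range, so $|\sigma_2(x)-\sigma_1(x)|_{\rho^{v/q},0}\le\lambda<1$ uniformly. Meanwhile, on a finite free $\nabla$-module over $R_{v,I}$ the spectral behaviour of $\frac{d}{dx}$ is governed by the generic radius of convergence: by Lemma~\ref{L:solvable} (or directly, since any $M$ here is built from an object admitting a Frobenius structure once we are in $C_{v,*}$, but in general we only need the crude bound), the spectral norm of $\frac{d}{dx}$ on $M_{\rho^s}$ is at most $\rho^{-1}$ up to the usual $p^{-1/(p-1)}$ factor; hence $\left|\frac{1}{i!}\frac{d^i}{dx^i}(\bv)\right|$ grows at most like $C\cdot(p^{1/(p-1)}/\rho)^i\,|p|^{-i/(p-1)}$ — more precisely, $|1/i!|\le p^{i/(p-1)}$ and the operator norm of $(d/dx)^i$ is at most (spectral norm)$^i$ times a fixed polynomial factor. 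Multiplying by $|\sigma_2(x)-\sigma_1(x)|^i\le\lambda^i$ and choosing $\eta$ (hence all relevant $\rho$) close enough to $1$ that $\lambda\cdot p^{1/(p-1)}/\eta<1$ — shrinking $\lambda$ first if necessary by the remark in Definition~\ref{D:Frobenius lift} that one may enlarge the admissible radii — makes the general term tend to $0$ in $M\otimes R_{v,I^{1/q}}$. Since $F_1$ is an isomorphism onto $M\otimes R_{v,I^{1/q}}$, the sum $F_2(\bv)$ lies there.

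\textbf{Step 2: $F_2$ is a Frobenius structure for $\sigma_2$.} This is the formal part. First, $F_2$ is $\sigma_2$-semilinear: for $a\in R_{v,I}$ one expands $\frac{d^i}{dx^i}(a\bv)$ by the Leibniz rule, uses that $F_1$ is $\sigma_1$-semilinear, and rearranges the double sum; the identity $\sigma_2(a)=\sum_{j}\frac{(\sigma_2(x)-\sigma_1(x))^j}{j!}\sigma_1\!\left(\frac{d^j a}{dx^j}\right)$ — itself the $\sigma_2$-Taylor expansion around $\sigma_1(x)$, valid because $\sigma_2(x)-\sigma_1(x)$ is small — collapses the cross terms, yielding $F_2(a\bv)=\sigma_2(a)F_2(\bv)$. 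Second, $F_2$ commutes with $\nabla$ (equivalently with $\frac{d}{dx}$ up to the chain-rule factor $d\sigma_2(x)/dx$): differentiate the defining series term by term and use that $F_1$ intertwines $\frac{d}{dx}$ on $\sigma_1^*M$ with $\frac{d}{dx}$ on $M$, together with $\frac{d}{dx}\circ\sigma_j = (d\sigma_j(x)/dx)\,\sigma_j\circ\frac{d}{dx}$; the two power series in $\sigma_2(x)-\sigma_1(x)$ match after collecting terms. Third, $F_2$ is an isomorphism: modulo the ideal generated by $\sigma_2(x)-\sigma_1(x)$ (or simply in the limit $\sigma_2\to\sigma_1$) the leading term of $F_2$ is $F_1$, which is invertible; more robustly, one exhibits the inverse by the analogous series with the roles of $\sigma_1,\sigma_2$ interchanged and $F_1$ replaced by $F_1^{-1}$, checks convergence by the same estimate, and verifies the two composites are the identity by the same Taylor-expansion bookkeeping. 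Compatibility with the gluing isomorphisms in $C_{v,[\eta,1)}$ is immediate since the construction is local in $\rho$ and functorial in the interval. This completes the proof.
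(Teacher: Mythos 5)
Your approach is essentially the paper's: establish convergence of the Taylor series via Lemma~\ref{L:solvable}, then verify the formal properties of $F_2$. However, the quantitative bookkeeping in Step~1 has real errors, and one of your suggested shortcuts does not work. The correct convergence condition is simply $\lambda < R(M,\rho,0)$, which is achievable for $\rho$ close to $1$ because Lemma~\ref{L:solvable} gives $R(M,\rho,0)\to 1$ while $\lambda<1$ is fixed; the factors $|1/i!|\le p^{i/(p-1)}$ and (spectral norm of $d/dx$)$^i = (p^{-1/(p-1)}/R)^i$ cancel the $p^{\pm 1/(p-1)}$ exactly, leaving $(\lambda/R)^i$. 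Your stated condition $\lambda p^{1/(p-1)}/\eta<1$ carries a spurious $p^{1/(p-1)}$ and may be unachievable with $\eta<1$ when $\lambda$ is close to $1$. Your parenthetical suggestion that a ``crude bound'' suffices is also not correct: the inequality ``spectral norm of $d/dx$ at most $p^{-1/(p-1)}\rho^{-1}$'' is equivalent to $R(M,\rho,0)\ge\rho$, which is \emph{not} automatic for a $\nabla$-module (a priori $R$ has no lower bound) and is precisely the nontrivial content that the Frobenius structure, via Lemma~\ref{L:solvable}, supplies. Finally, the proposal to ``shrink $\lambda$ first'' is unsubstantiated — $\lambda$ is an invariant of the Frobenius lift datum in Definition~\ref{D:Frobenius lift}, not something one can tune, and nothing in the paper suggests otherwise; the paper avoids needing this because for fixed $\lambda<1$, Lemma~\ref{L:solvable} already delivers $R(M,\rho,0)>\lambda$ on $[\eta,1)$. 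Step~2 is a fair account of the standard formal verification, which the paper leaves implicit.
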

\begin{proof}
By Lemma~\ref{L:solvable}, there exists $\eta \in (0,1)$ such that
$R(M, \rho, 0) > \lambda \geq |\sigma_2(x) - \sigma_1(x)|_{\rho^v,0}$ 
for $\rho \in 
[\eta,1)$. For such $\eta$, we obtain the desired convergence.
\end{proof}

\subsection{Behavior at the boundary}

\begin{lemma} \label{L:constant breaks1}
Suppose $M \in C_{v,[\epsilon,1)}$
is of rank $n$ and admits a Frobenius structure for the standard
Frobenius lift $\sigma$.
If $r_0 \in (0, -\log \epsilon]$ and $s \geq 0$
satisfy $f_1(M,r_0,s) < \frac{p}{p-1} \log p + r_0 s$, then
there exist constants $b_i(M,s)$ for $i=1,\dots,n$ such that
for $r \in (0, r_0]$, $f_i(M,r,s) = b_i(M,s) r$.
\end{lemma}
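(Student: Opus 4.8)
The plan is to reduce the statement to the case $s = 0$, then prove the linearity of each $f_i(M,r,0)$ for small $r$ by combining Frobenius descent (to force the relevant slope threshold above $\frac{1}{p-1}\log p$, where the eigenvalue picture of Lemma~\ref{L:convex} is available) with the convexity from Lemma~\ref{L:convex} and the boundary-behavior hypothesis $f_1(M,r_0,s) < \frac{p}{p-1}\log p + r_0 s$.

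First I would normalize: given $s \geq 0$, enlarge $\ell$ to contain an element $\overline{a}$ with $v(\overline{a}) = s$, and pull back $M$ along the substitution $x \mapsto x[\overline{a}]$, as in the proof of Lemma~\ref{L:convex}. This converts the weighted functions $f_i(M,r,s)$ into the unweighted ones $f_i(M',r,0)$ for the transformed module $M'$, and the hypothesis becomes $f_1(M',r_0,0) < \frac{p}{p-1}\log p$. Note the Frobenius structure (for the standard lift) is preserved up to a tame adjustment, which is harmless for the radii by Lemma~\ref{L:tame base change}; so I may as well assume $s=0$ from the start. By Lemma~\ref{L:solvable}, $R(M,\rho,0) \to 1$, i.e.\ $f_1(M,r,0) \to 0$, as $r \to 0^+$, which is the key input that controls the behavior at the inner boundary.

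Next, fix $r$ in the interior of $(0,r_0]$ and work near it. By rescaling I may assume $r \in (0,\frac{1}{p-1}\log p)$. Then I replace $M$ by an (off-centered) Frobenius descendant $(\psi_\eta)_* M$ and invoke Theorem~\ref{T:descendant} / Corollary~\ref{C:descendant}: as long as $f_i(M,r,0) < \frac{1}{p-1}\log p$, the partial sums $\sum_{j=1}^i f_j$ transform linearly (scaling $r \mapsto pr$ and adding an affine term $in\log p$). Iterating this finitely many times pushes all the relevant breaks above $\frac{1}{p-1}\log p$; the hypothesis $f_1(M,r_0,0) < \frac{p}{p-1}\log p$ is exactly what guarantees that after one descent step the top slope is still below the threshold that would obstruct the eigenvalue description, so the iteration stays valid on the whole interval. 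In the descended picture, following \cite[Lemma~11.5.1]{kedlaya-course} as in Lemma~\ref{L:convex}, there is a basis on which $\frac{d}{dx}$ acts by a matrix $N$ whose eigenvalue norms (those exceeding the threshold $e^c$) recover the $p^{-1/(p-1)} e^{f_j}$; the Frobenius structure forces these eigenvalue norms to satisfy a multiplicative relation under $\sigma$, which together with convexity (Lemma~\ref{L:convex}) and the fact that the functions approach $0$ at the inner endpoint pins each $\sum_{j=1}^i f_j(M,r,0)$ to be affine-linear through the origin, hence each $f_i(M,r,0) = b_i r$ for a constant $b_i$. Undoing the descents and the rescaling gives $f_i(M,r,s) = b_i(M,s)\, r$ on $(0,r_0]$.

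The main obstacle I anticipate is bookkeeping the interaction between the Frobenius descent and the threshold $\frac{1}{p-1}\log p$: Corollary~\ref{C:descendant} only controls $f_i$ where it lies below the threshold, so I must verify that at every stage of the iteration the slopes I care about remain in the valid range, using the quantitative bound $f_1(M,r_0,0) < \frac{p}{p-1}\log p$ rather than merely $f_1 < \infty$. A secondary technical point is justifying that the Frobenius structure (which lives over $R_{v,[\epsilon,1)}$, not over a single cross-section) really does impose the claimed multiplicative constraint on the eigenvalue norms of $N$ uniformly in $r$ near $r_0$; this is where one leans on the argument of \cite[Lemma~11.5.1]{kedlaya-course} and the compatibility of the Frobenius structure with passage to cross-sections, combined with Lemma~\ref{L:hadamard} to control the Newton polygon of the characteristic polynomial of $N$.
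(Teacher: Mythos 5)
Your overall intuition is on the right track: the linearity on $(0,r_0]$ should come from combining a Frobenius scaling relation $f_i(M,r/q,s) = f_i(M,r,s)/q$ with the convexity of the partial sums from Lemma~\ref{L:convex}. However, the route you propose does not actually establish the scaling relation, and that is where the real content of the proof lives. The paper gets it in one step from Theorem~\ref{T:antecedent}: the Frobenius structure realizes the cross-section $M_{\rho_m}$ (with $\rho_m = e^{-r_0/q^m}$) as the (iterated) Frobenius antecedent of $M_{\rho_{m+1}}$, so Theorem~\ref{T:antecedent}(c) gives $f_i(M,r_0/q^{m+1},s)=f_i(M,r_0/q^m,s)/q$; the hypothesis $f_1(M,r_0,s) < \frac{p}{p-1}\log p + r_0 s$ is precisely the radius condition required by the definition of a Frobenius antecedent at $m=0$, and it propagates for all $m$. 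Then each convex function $f_1+\cdots+f_i$ agrees with the affine function $(b_1+\cdots+b_i)r$ at all points of the orbit $\{r_0/q^m\}$, and on each $[r_0/q^{m+2},r_0/q^m]$ a convex function agreeing with an affine one at both endpoints and at an interior point must coincide with it. No boundary limit at $r\to 0^+$ is needed.

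In contrast, the machinery you invoke---the off-centered Frobenius descendant $(\psi_\eta)_*$, the matrix $N$ and its eigenvalue norms from \cite[Lemma~11.5.1]{kedlaya-course}---is exactly what the paper uses to prove Lemma~\ref{L:convex}, i.e.\ convexity in $r$ at a \emph{fixed} level. It operates in the $x$-direction on a single cross-section and does not, on its own, compare cross-sections at $\rho$ and $\rho^{1/q}$; that comparison is the whole point of the Frobenius structure here. Your phrase ``the Frobenius structure forces these eigenvalue norms to satisfy a multiplicative relation under $\sigma$'' is the crux, but you never derive it, and making it precise (tracking the factor $d\sigma(x)/dx$ and the base-Frobenius rescaling through the descendant, across a range of $\rho$) would amount to reproving Theorem~\ref{T:antecedent}. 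Moreover the claim that convexity together with the boundary limit $f_1\to 0$ (Lemma~\ref{L:solvable}) ``pins'' the partial sums to be affine-linear is false without the scaling relation---$f(r)=r^2$ is convex and vanishes at $0$---and in fact the paper does not use Lemma~\ref{L:solvable} in this lemma at all (it enters only in Corollary~\ref{C:constant breaks2}). Your use of the hypothesis $f_1(M,r_0,s) < \frac{p}{p-1}\log p + r_0 s$ to ``guarantee that after one descent step the top slope is still below the threshold'' is also not what the hypothesis does; its role is to verify the defining inequality $R(\calF_\rho) > p^{-p/(p-1)}\rho$ on $\calF = M_{\rho_0}$ so that Theorem~\ref{T:antecedent} applies. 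The normalization to $s=0$ is harmless but unnecessary.
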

\begin{proof}
Define $b_i(M,s) = f_i(M,r_0,s)/r_0$.
By Theorem~\ref{T:antecedent}, for each nonnegative integer $m$,
$f_i(M,r_0/q^{m+1},s) = f_i(M,r_0/q^m,s)/q$ for $i=1,\dots,n$.
Consequently, the function $f_1(M,r,s) + \cdots + f_i(M,r,s)$, which is 
convex by Lemma~\ref{L:convex}, agrees with the linear function
$(b_1(M,s) + \cdots + b_i(M,s))r$ for 
$r = r_0/q^m$ for each nonnegative integer $m$.
On each interval $[r_0/q^{m+2}, r_0/q^m]$, we now have a
convex function which agrees with an affine
function at both endpoints plus one interior point,
so the two functions must agree identically.
That is, $f_1(M,r,s) + \cdots + f_i(M,r,s) =
(b_1(M,s) + \cdots + b_i(M,s))r$ for
$r_0/q^{m+2} \leq r \leq r_0/q^m$ for each nonnegative integer $m$.
This yields the desired result.
\end{proof}

\begin{cor} \label{C:constant breaks2}
Suppose $M \in C_{v,[\epsilon,1)}$ is of rank $n$ and admits
 a Frobenius structure. Then  there exist
$r_0 \in (0, -\log \epsilon]$ and constants $b_i(M,s)$ for $i=1,\dots,n$
such that
for $r \in (0, r_0]$ and $s \geq 0$, $f_i(M,r,s) = b_i(M,s) r$.
\end{cor}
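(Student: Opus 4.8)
The plan is to deduce this from Lemma~\ref{L:constant breaks1} by exhibiting a single $r_0$ for which the hypothesis $f_1(M,r_0,s) < \frac{p}{p-1}\log p + r_0 s$ holds simultaneously for \emph{every} $s \geq 0$. First I would reduce to the case where the Frobenius structure is with respect to the standard Frobenius lift: by Lemma~\ref{L:change Frob}, the restriction of $M$ to $C_{v,[\eta,1)}$, for some $\eta \in [\epsilon,1)$, admits a Frobenius structure for the standard lift, and the functions $f_i(M,r,s)$ are unaffected by this restriction since they are computed from the cross sections $M_\rho$. So I may replace $\epsilon$ by $\eta$ and assume the Frobenius structure is standard; then Lemma~\ref{L:constant breaks1} becomes available for any admissible $r_0$.

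The key observation is that, for a fixed $\rho = e^{-r_0}$, the function $t \mapsto f_1(M_\rho, t) - t$ is non-increasing on $[0,\infty)$. Writing $g(t) = f_1(M_\rho, t)$: the function $g$ is convex by Theorem~\ref{T:variation}(c) applied with $i=1$; we have $g(t) \geq t$ for all $t$ because every subsidiary radius of $(M_\rho)_{e^{-t}}$ is at most $e^{-t}$ (Definition~\ref{D:generic radius}); and $g(t) = t$ for $t$ sufficiently large by Remark~\ref{R:transfer}. A convex function lying on or above the line $t \mapsto t$ and coinciding with it for large $t$ has all one-sided slopes $\leq 1$, so $g(t) - t$ is non-increasing. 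Consequently
\[
f_1(M,r_0,s) - r_0 s = g(r_0 s) - r_0 s \leq g(0) = f_1(M,r_0,0) \qquad (s \geq 0).
\]

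It then remains to choose $r_0$. Since $R(M,\rho,0)$ equals the smallest subsidiary radius of $(M_\rho)_1$, we have $R(M,e^{-r},0) = e^{-f_1(M,r,0)}$; hence Lemma~\ref{L:solvable} gives $f_1(M,r,0) \to 0$ as $r \to 0^+$, and I can pick $r_0 \in (0,-\log\epsilon]$ small enough that $f_1(M,r_0,0) < \frac{p}{p-1}\log p$. By the displayed inequality, $f_1(M,r_0,s) < \frac{p}{p-1}\log p + r_0 s$ for every $s \geq 0$, so Lemma~\ref{L:constant breaks1} applies with this $r_0$ and yields, for each $s \geq 0$, constants $b_i(M,s)$ with $f_i(M,r,s) = b_i(M,s) r$ for all $r \in (0,r_0]$, which is the assertion. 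The only genuinely delicate point is the uniformity of $r_0$ in $s$; this is exactly what the monotonicity of $g(t)-t$ (equivalently, the fact that the excess $f_1(M,r,s) - rs$ is largest at $s=0$) supplies, and everything else is a direct appeal to the cited results.
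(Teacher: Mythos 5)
Your proof is correct and follows the same route as the paper's: change to the standard Frobenius lift via Lemma~\ref{L:change Frob}, use Lemma~\ref{L:solvable} to make $f_1(M,r_0,0) < \frac{p}{p-1}\log p$ for small $r_0$, deduce $f_1(M,r_0,s) \leq f_1(M,r_0,0) + r_0 s$ from convexity (Theorem~\ref{T:variation}) together with the facts $f_1 \geq rs$ and $f_1 = rs$ for large $s$, and then apply Lemma~\ref{L:constant breaks1}. The only presentational difference is that you phrase the monotonicity of the excess in the rescaled variable $t = rs$, while the paper phrases the same bound as a statement about secant slopes in $s$; these are equivalent.
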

\begin{proof}
By Lemma~\ref{L:change Frob}, we may change to the standard Frobenius lift 
$\sigma$, at the expense of possibly changing $\epsilon$.
By Lemma~\ref{L:solvable}, we have $f_1(M,r,0) \to 0$ as
$r \to 0^+$; thus for $r$ sufficiently small, we have
\[
\frac{p}{p-1} \log p + rs >
 f_1(M,r,0) + rs \geq f_1(M, r,s).
\]
(The second inequality above follows from 
Theorem~\ref{T:variation}, which implies that
$f_1(M,r,s)$ is convex and identically equal to $rs$
for $s$ large; hence the slope of any secant line of the graph
of $f_1(M,r,s)$ is at most $rs$.)
We may then apply Lemma~\ref{L:constant breaks1}.
\end{proof}

\begin{defn} \label{D:maintain by translate}
For $I \subseteq [\epsilon,1)$
and $h \in L_{v,\epsilon}$ with $|h|_{\rho^v} \leq 1$ for 
$\rho \in I$,
let $T_h: R_{v,I} \to R_{v,I}$ be the translation
$\sum_{i=0}^\infty b_i x^i \mapsto \sum_{i=0}^\infty b_i (x+h)^i$.
For any $b \in R_{v,I}$, $\rho \in I$, $s \geq 0$
such that $\rho^s \geq |h|_{\rho^v}$,
we have $|b|_{\rho^v,s} = |T_h(b)|_{\rho^v,s}$; in particular,
this always holds for $s=0$.
\end{defn}

\begin{lemma} \label{L:maintain by translate}
For $M \in C_{v,[\epsilon,1)}$ of rank $n$,
for $h \in L_{v,\epsilon}$ with $|h|_L \leq 1$ and $v(\overline{h}) \geq 0$,
and $s \in [0, v(\overline{h})]$,
we have $b_i(M,s) = b_i(T_h^* M, s)$ for $i=1,\dots,n$.
\end{lemma}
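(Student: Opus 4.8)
The plan is to realize the translation $T_h$, on each relevant cross section, as an isometric automorphism commuting with $\tfrac{d}{dx}$; such automorphisms preserve all subsidiary radii, so $f_i(M,r,s)=f_i(T_h^*M,r,s)$ for $r$ small, whence the claim will follow from Corollary~\ref{C:constant breaks2}. I would begin with the formal properties of $T_h$. Since $h\in L_{v,\epsilon}$ is independent of $x$, we have $d(x+h)=dx$, so $T_h$ is a ring automorphism of $R_{v,I}$ commuting with $\tfrac{d}{dx}$, and $T_h^*$ sends a $\nabla$-module to a $\nabla$-module over the same ring. As $\rho\to 1^-$ we have $|h|_{\rho^v}\to |h|_L\le 1$, so after enlarging $\epsilon$ to some $\epsilon'\in[\epsilon,1)$ — which is harmless, since each $b_i(\cdot,s)$ depends only on the germ at $\rho=1$ of $r\mapsto f_i(\cdot,r,s)$ — we may assume $|h|_{\rho^v}\le 1$ on $[\epsilon',1)$, so that $T_h$ acts on $R_{v,[\epsilon',1)}$ and $T_h^*M\in C_{v,[\epsilon',1)}$ is again of rank $n$. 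If $F\colon\sigma^*M\cong M$ is a Frobenius structure for a Frobenius lift $\sigma$, then $T_h^*F$ is a Frobenius structure on $T_h^*M$ with respect to $\sigma':=T_h\circ\sigma\circ T_h^{-1}$, which one checks (using $|h|_{\rho^v}\le 1$) is again a Frobenius lift; thus $b_i(T_h^*M,s)$ is defined via Corollary~\ref{C:constant breaks2}.

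Next I would fix $s\in[0,v(\overline h)]$ and show that $|h|_{\rho^v}\le\rho^s$ for $\rho=e^{-r}$ with $r>0$ small. Writing $h=\sum_{i\ge 0}\pi^i[\overline{a_i}]$ with $\overline{a_0}=\overline h$, the $i=0$ term of $|h|_{\rho^v}=\sup_i|\pi|^i\rho^{v(\overline{a_i})}$ equals $\rho^{v(\overline h)}\le\rho^s$ (as $\rho<1$ and $s\le v(\overline h)$); the membership $h\in L_{v,\epsilon}$ forces $|\pi|^i\rho^{v(\overline{a_i})}\to 0$ as $i\to\infty$ uniformly in $\rho\in[\epsilon,1)$, so all but finitely many terms with $i\ge 1$ are already $\le\rho^s$ once $\rho$ is near $1$, while the finitely many remaining ones tend to $|\pi|^i<1$ and hence are $<\rho^{v(\overline h)}\le\rho^s$ for $\rho$ near $1$. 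So the inequality holds for $\rho$ in a one-sided neighborhood of $1$.

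For such $r$ I would pass to the cross section at $\rho=e^{-r}$ (Definition~\ref{D:cross section}). Because $T_h^*M$ arises from $M$ by the substitution $x\mapsto x+h$, its cross section is $(T_h^*M)_\rho\cong\psi_h^*(M_\rho)$, where $\psi_h$ is the $F_\rho$-linear substitution $x\mapsto x+h$ on $F_\rho\langle x\rangle$ and $|h|_{F_\rho}=|h|_{\rho^v}$. Since $|h|_{\rho^v}\le\rho^s$, the map $\psi_h$ preserves the $\rho^s$-Gauss norm, hence extends to an isometric automorphism of the corresponding completion, still commuting with $\tfrac{d}{dx}$. Such an automorphism preserves the spectral norm of $\tfrac{d}{dx}$, and being exact and compatible with tensor products and with Jordan-H\"older constituents it preserves the whole multiset of subsidiary radii; therefore $f_i(T_h^*M,r,s)=f_i(M,r,s)$ for $i=1,\dots,n$. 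By Corollary~\ref{C:constant breaks2}, for $r$ small enough the two sides equal $b_i(T_h^*M,s)\,r$ and $b_i(M,s)\,r$; dividing by $r$ gives $b_i(M,s)=b_i(T_h^*M,s)$.

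The one step with genuine content is the inequality $|h|_{\rho^v}\le\rho^s$ for $\rho$ near $1$: it holds precisely in the regime $s\le v(\overline h)$, and it is exactly what makes the translation $x\mapsto x+h$ an isometry for the $\rho^s$-Gauss norm, hence harmless for subsidiary radii. Everything else — the formal properties of $T_h$, the identification of cross sections, and the verification that $\sigma'$ is a Frobenius lift — is routine.
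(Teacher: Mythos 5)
Your proof is correct and takes essentially the same route as the paper's: both verify that $|h|_{\rho^v}\le \rho^s$ for $\rho$ near $1$ when $s\le v(\overline h)$, then observe that the translation is therefore an isometry for the $\rho^s$-Gauss norm commuting with $\frac{d}{dx}$, hence preserves all subsidiary radii, and conclude via Corollary~\ref{C:constant breaks2}. The paper packages the isometry assertion into Definition~\ref{D:maintain by translate} and splits the norm estimate into the two cases $\overline h\neq 0$ and $\overline h=0$, whereas you argue term-by-term on the Teichm\"uller expansion and explicitly check the Frobenius transfer to $T_h^*M$ (a point the paper leaves implicit here but states in the proof of Proposition~\ref{P:variation1}). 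One cosmetic slip: your intermediate bound $|\pi|^i\rho^{v(\overline{a_i})}<\rho^{v(\overline h)}$ is meaningless when $\overline h=0$ (there $\rho^{v(\overline h)}=0$); you should bound the finitely many exceptional terms directly by $\rho^s$, using that they tend to $|\pi|^i<1$ while $\rho^s\to 1$, exactly as the paper's second case does. With that fix the argument is complete.
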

\begin{proof}
Suppose first that $\overline{h} \neq 0$, or equivalently $|h|_L = 1$.
We then have
$|h|_{\rho^v} = \rho^{v(\overline{h})}$ for $\rho$ sufficiently close to 1.
For such $\rho$, if $s \in [0, v(\overline{h})]$,
then $|b|_{\rho^v,s} = |T_h(b)|_{\rho^v,s}$ for all $b \in R_{v,[\rho,\rho]}$.
The claim follows at once.

Suppose next that $\overline{h} = 0$, or equivalently $|h|_L < 1$.
In this case, $|h|_{\rho^v}$ tends to $|h|_L$ as $\rho$ tends to 1.
Consequently, for each $s \geq 0$, for $\rho$ sufficiently close to 1,
we have $\rho^s \geq |h|_{\rho^v}$; for such $\rho$,
again $|b|_{\rho^v,s} = |T_h(b)|_{\rho^v,s}$ for all $b \in R_{v,[\rho,\rho]}$,
and the claim follows.
\end{proof}

\begin{remark} \label{R:enlarge ell}
The value of $b_i(M,s)$ is impervious to enlargement of $\ell$.
\end{remark}

\subsection{Variation along a path}

\begin{hypothesis} \label{H:variation}
Through the end of Subsection~\ref{subsec:Dwork},
suppose that $\ell$ is complete under $v$ and algebraically closed.
Let $\ell \langle x \rangle$ denote the standard Tate algebra in $x$
over $\ell$.
Let $M \in C_{v,[\epsilon,1)}$ be an object of rank $n$ admitting
a Frobenius structure.
\end{hypothesis}

\begin{defn}
Let $\alpha \in \DD_{\ell}$ be a point of 
type (ii) or (iii). By Proposition~\ref{P:classify points},
we may represent $\alpha = \alpha_{\overline{h}, r}$ for some
$\overline{h} \in \gotho_{\ell}$ and $r = r(\alpha) \in (0,1]$. 
Choose $h \in L_{v,\epsilon}$ with $|h|_L \leq 1$ lifting $\overline{h}$,
and define $b_i(M, \alpha) = b_i(T_h^* M, -\log r(\alpha))$ for $i=1,\dots,n$.
By Lemma~\ref{L:maintain by translate},
this definition does not depend on the choice either of $h$ or of
$\overline{h}$. (In particular, one may choose $h = [\overline{h}]$.)
Moreover, as in Remark~\ref{R:enlarge ell}, we may replace $\ell$ by any 
complete extension without changing the $b_i(M, \alpha)$.
In particular, by enlarging $\ell$, we may convert points of
type (iv) into points of type (ii) or (iii), and so we may define
$b_i(M,\alpha)$ for such $\alpha$ also.
\end{defn}

\begin{defn} \label{D:breaks}
For $\alpha \in \DD_{\ell}$, for $s \in [0, -\log r(\alpha)]$
(or $s \in [0,+\infty)$ if $r(\alpha)=0$)
and $i=1,\dots,n$,
define $b_i(M, \alpha, s)$ to be the value $b_i(M, \beta)$, for $\beta$
the point of radius $e^{-s}$ on the generic path to $\alpha$.
\end{defn}

\begin{prop} \label{P:variation1}
For fixed $M, \alpha$, the functions $b_i(M,\alpha,s)$ for $i=1,\dots,n$
have the following properties.
\begin{enumerate}
\item[(a)]
Each $b_i(M, \alpha,s)$ is continuous and
piecewise affine-linear on $[0, -\log r(\alpha)]$
(or on $[0,+\infty)$ if $r(\alpha)=0$)
with slopes in $\frac{1}{n!} \ZZ$.
\item[(b)]
The function $b_1(M,\alpha,s) + \cdots + b_i(M,\alpha,s)$ is convex,
and is nonincreasing
in a neighborhood of any point where $b_i(M, \alpha,s) > s$.
(If $b_1(M, \alpha,s_0) = s_0$, then $b_1(M, \alpha,s) = s$ for $s \geq s_0$
by Remark~\ref{R:transfer}.)
\end{enumerate}
\end{prop}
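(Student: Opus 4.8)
The plan is to transport the whole statement to Theorem~\ref{T:variation} for a single $\nabla$-module on an honest disc, by first trivializing the dependence on $\alpha$ through a translation in $x$ and then freezing the Gauss parameter.

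First I would unwind the definitions. After enlarging $\ell$ (harmless, as noted before Definition~\ref{D:breaks}) we may assume $\alpha$ is of type (ii) or (iii), say $\alpha = \alpha_{\overline{h},r(\alpha)}$ with $\overline{h} \in \gotho_\ell$; fix the Teichm\"uller lift $h = [\overline{h}] \in L_{v,\epsilon}$. For each $s$ in the allowed range the point of radius $e^{-s}$ on the generic path to $\alpha$ is $\alpha_{\overline{h},e^{-s}}$, which has $\overline{h}$ as a valid center, so Definition~\ref{D:breaks} and the definition of $b_i(M,\beta)$ give $b_i(M,\alpha,s) = b_i(T_h^* M, s)$ for all $s$, with one fixed $h$. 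Write $N = T_h^* M$.

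Next I would check that $N$ admits a Frobenius structure, so that Corollary~\ref{C:constant breaks2} applies to it. Conjugating the Frobenius structure of $M$ by $T_h$ yields a Frobenius structure on $N$ with respect to the substitution $\sigma'(x) = (x-h)^q + \sigma_L(h)$; with $h = [\overline{h}]$ one has $|\sigma'(x) - x^q|_{\rho^{v/q},0} \le |p| < 1$ (the constant term $(-[\overline{h}])^q + [\overline{h}^q]$ vanishes for $p$ odd and equals $2[\overline{h}^q]$ for $p = 2$, and the other coefficients of $(x-h)^q - x^q$ are divisible by $p$), so $\sigma'$ is a legitimate Frobenius lift in the sense of Definition~\ref{D:Frobenius lift}; by Lemma~\ref{L:change Frob}, after shrinking $\epsilon$ we may assume $N$ has a Frobenius structure for the standard lift. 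Then Corollary~\ref{C:constant breaks2} provides $r_0 \in (0,-\log\epsilon]$ with $f_i(N,r,s) = b_i(N,s)\, r$ for $r \in (0,r_0]$ and all $s \ge 0$. Set $\rho_0 = e^{-r_0}$ and $\calE := N_{\rho_0}$, the cross section of $N$ at $\rho_0$ (Definition~\ref{D:cross section}), a $\nabla$-module of rank $n$ on the disc $A_{F_{\rho_0}}[0,1]$; taking $r = r_0$ and using $f_i(N,r_0,s) = f_i(\calE, r_0 s)$ I obtain the key identity $b_i(M,\alpha,s) = \tfrac{1}{r_0}\, f_i(\calE, r_0 s)$ for all $s$ in the range of interest.

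Finally I would read off all three assertions from Theorem~\ref{T:variation} applied to $\calE$ (note $0$ lies in its interval of definition): the affine substitution $s \mapsto r_0 s$ together with the scalar $1/r_0$ preserves the slope set, giving (a); convexity of $f_1(\calE,\cdot) + \cdots + f_i(\calE,\cdot)$ gives convexity of $b_1(M,\alpha,\cdot) + \cdots + b_i(M,\alpha,\cdot)$; if $b_i(M,\alpha,s_0) > s_0$ then $f_i(\calE,r_0 s_0) > r_0 s_0$, so Theorem~\ref{T:variation}(b) forces the slope of $f_1(\calE,\cdot)+\cdots+f_i(\calE,\cdot)$, hence of $b_1(M,\alpha,\cdot)+\cdots+b_i(M,\alpha,\cdot)$, to be nonpositive on each side of $s_0$; and the closing parenthetical about $b_1(M,\alpha,s_0) = s_0$ is Remark~\ref{R:transfer}. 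The part I expect to require the most care is the translation reduction: verifying that the conjugated lift $\sigma'$ is admissible so that Corollary~\ref{C:constant breaks2} really applies to $N = T_h^* M$, and that $b_i(M,\alpha,s) = b_i(T_h^* M,s)$ holds with one fixed $h$ over the entire range of $s$, not merely on the part of the generic path whose defining disc still contains the origin. Once the problem sits on an honest disc over $F_{\rho_0}$, the rest is a direct appeal to Theorem~\ref{T:variation}.
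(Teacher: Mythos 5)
Your proposal is correct and follows essentially the same route as the paper's own proof: reduce to $\alpha$ of type (ii) or (iii), translate by $T_h^*$ with $h = [\overline{h}]$, use Lemma~\ref{L:change Frob} to handle the Frobenius lift, invoke Corollary~\ref{C:constant breaks2} to freeze the $r$-variable, and read off everything from Theorem~\ref{T:variation} applied to a cross section. The only difference is that you translate first and then adjust the Frobenius lift (with an explicit check that the conjugated lift is admissible in the sense of Definition~\ref{D:Frobenius lift}, including the $p=2$ case), while the paper converts to the standard lift first and then translates; both are fine.
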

\begin{proof}
Again, we may assume that $\alpha$ is of type (ii) or (iii).
By Lemma~\ref{L:change Frob}, $M$ admits a Frobenius structure
for the standard Frobenius lift;
this induces a Frobenius structure on $T_{h}^* M$
for the Frobenius lift
\[
x \mapsto (x-h)^q + h^q.
\]
Thus by applying a suitable $T_h^*$, we may reduce to the case 
where $\alpha \geq \alpha_{0,0}$.
By Corollary~\ref{C:constant breaks2}, we 
can choose $r_0 > 0$ such that $f_i(M,r,s) = b_i(M,s)r$ for $r \in (0,r_0]$
and $s \geq 0$. We thus deduce the claims by applying
Theorem~\ref{T:variation} to the cross section $M_{e^{-{r_0}}}$.
\end{proof}

\begin{defn} \label{D:terminal slope}
If $r(\alpha) < 1$, then Proposition~\ref{P:variation1} implies
that for some $s_0 \in (0, -\log r(\alpha))$,
each function $b_i(M,\alpha,s)$ is affine-linear on $[s_0, -\log r(\alpha)]$.
We call the slope of $b_i(M,\alpha,s)$ in this range the \emph{terminal
slope} of $b_i(M,\alpha,s)$.
\end{defn}

\begin{remark} \label{R:radius 0}
For $\alpha$ of type (i) (i.e., $r(\alpha) = 0$), 
the function $b_1(M, \alpha,s) - s$ has
slope at most $-1$ as long as its value is positive, so it must at
some point $s_0$ achieve the value 0. By Remark~\ref{R:transfer}, 
we have $b_1(M,\alpha,s) = s$ for any $s > s_0$. Since
$b_1(M,\alpha,s) \geq b_i(M,\alpha,s) \geq s$ for all $s$, we also have
$b_i(M,\alpha,s) = s$ for $i=1,\dots,n$ and $s \geq s_0$.
That is, all of the terminal slopes are equal to 1.
\end{remark}

\subsection{More cross-sectional analysis}

Throughout this subsection, retain Hypothesis~\ref{H:variation}. 
We wish to use the cross section $M_\rho$ to make a closer analysis
of the $b_i(M,\alpha,s)$ that is peculiar to the case
where  $\alpha$ is of type (iv). 
To do so, we must verify that under certain explicit conditions, we can make
a uniform choice of $\rho$ such that $M_\rho$ can be used to 
recover the $b_i(M,\alpha,s)$ for all $s$. Lemma~\ref{L:constant 
breaks1} does this for $\alpha = \alpha_{0,0}$, but we cannot uniformly
translate into this case without converting $\alpha$ to a type other than
(iv).

We first show that in certain ranges, the norm 
$|T_{[\overline{h}]}(\cdot)|_{\rho^v,s}$ can be written in terms of
Teichm\"uller lifts.
\begin{lemma} \label{L:Frob stability}
Let $\sigma$ be the standard Frobenius lift.
Suppose $\overline{h} \in \gotho^*_\ell$, $\rho \in [\epsilon,1)$, and 
\[
0 \leq s \leq \frac{\log p}{(-\log \rho)}.
\]
For $b = \sum_{i=0}^\infty b_i x^i \in (\gotho_L \llbracket x \rrbracket 
\otimes_{\gotho_L} L) \cap F_\rho \langle x \rangle$,
write $b = \sum_{j=m}^\infty \pi^j [\overline{b_j}]$.
Then
\begin{equation} \label{eq:compare norms}
|T_{[\overline{h}]}(b)|_{\rho^v,s} = 
\sup_j \{|\pi|^j \rho^{v_s(\overline{b_j})}\},
\end{equation}
where $v_s$ is the $s$-Gauss valuation on 
$\ell \langle \overline{x}-\overline{h} \rangle$
(i.e., in the variable $\overline{x} - \overline{h}$).
\end{lemma}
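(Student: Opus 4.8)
The statement asks us to compare two norms on a Tate subalgebra: the norm $|T_{[\overline{h}]}(\cdot)|_{\rho^v,s}$ (translating by the Teichmüller lift of $\overline{h}$, then measuring with the $s$-Gauss norm in $x$ over $L_{v,[\rho,\rho]}$) and the norm obtained by first reducing $b$ to its Teichmüller expansion $b = \sum_j \pi^j[\overline{b_j}]$ with $\overline{b_j} \in \ell\langle \overline{x}\rangle$, then applying the $s$-Gauss valuation $v_s$ in the variable $\overline{x}-\overline{h}$ to each coefficient. The point is that translation by $[\overline{h}]$ is \emph{compatible} with reduction precisely in the stated range $0 \le s \le (\log p)/(-\log\rho)$, which guarantees $\rho^s \ge |\pi| = |p|$, so the term $\pi^0[\overline{b_0}]$ dominates the higher-$\pi$-adic terms after a translation that could, in principle, shift mass between $\pi$-adic levels.

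**First steps.** I would reduce to the case where $b$ is a single term $\pi^j[\overline{c}]$ for $\overline{c} \in \ell\langle\overline{x}\rangle$ — or rather, first establish the inequality ``$\le$'' termwise and then argue the reverse inequality is forced by a dominant-term analysis. Concretely: write $b = \sum_j \pi^j[\overline{b_j}]$ as in Definition~\ref{D:analytic}. Translation $T_{[\overline{h}]}$ acts $L_{v,[\rho,\rho]}$-linearly on series in $x$, but it does \emph{not} act term-by-term on the $\pi$-adic expansion, because $T_{[\overline{h}]}([\overline{b_j}])$ is not itself a Teichmüller lift — expanding $(x+[\overline{h}])^i$ and collecting produces a series in $L\langle x\rangle$ whose Teichmüller expansion has contributions at higher $\pi$-adic levels. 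The key computation is: the leading ($\pi$-adic level $j$) part of $T_{[\overline{h}]}(\pi^j[\overline{b_j}])$ is exactly $\pi^j[T_{\overline{h}}(\overline{b_j})]$ where $T_{\overline{h}}$ is the \emph{characteristic-$p$} translation $\overline{x}\mapsto \overline{x}+\overline{h}$, while the ``error'' lives in $\pi$-adic levels $> j$. I would verify this by noting $[\overline{h}]^i \equiv [\overline{h}^i] \pmod{\pi}$ and more carefully tracking the binomial coefficients: the nontrivial input is that $\binom{i}{a}$ contributes extra $\pi$-divisibility exactly when carries occur, and these never decrease the $\pi$-adic level of any monomial below $j$.

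**Assembling the estimate.** From the previous paragraph, for $s$ in the allowed range we get
\[
|T_{[\overline{h}]}(b)|_{\rho^v,s} \le \sup_j \max\{ |\pi|^j \rho^{v_s(\overline{b_j})},\ (\text{error terms at levels} > j)\},
\]
and the error terms at level $j' > j$ are bounded by $|\pi|^{j'} \rho^{(\text{something}) }$; the hypothesis $\rho^s \ge |\pi|$ is exactly what ensures the $s$-Gauss weighting cannot make a higher-level error term overtake the clean term $|\pi|^j\rho^{v_s(\overline{b_j})}$, because shifting one $\pi$-adic level down costs a factor $|\pi|^{-1} \le \rho^{-s}$ which is precisely compensated. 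Conversely, the clean term $\pi^j[T_{\overline{h}}(\overline{b_j})]$ at the $j$ minimizing $j + v_s(\overline{b_j})(-\log\rho)/(-\log|\pi|)\cdot(\ldots)$ — i.e., the dominant term of the right side of \eqref{eq:compare norms} — cannot be cancelled by error terms from other levels (those live at \emph{strictly higher} level, hence strictly smaller norm contribution) nor by the error from its own level's source (which is at higher level). Hence equality. I would also invoke $v_s(T_{\overline{h}}\overline{c}) = v_s(\overline{c})$ for the $s$-Gauss valuation in the $\overline{x}-\overline{h}$ variable, which is immediate since $T_{\overline{h}}$ is the coordinate change making $\overline{x}-\overline{h}$ the standard variable.

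**Main obstacle.** The delicate point is the carry analysis in the binomial expansion: I need that $T_{[\overline{h}]}(\pi^j[\overline{b_j}x^i]) = \pi^j[\overline{h}^i\text{-shifted terms}] + (\text{terms of }\pi\text{-adic level} \ge j+1)$, with the error bound on those higher-level terms being \emph{uniform} enough (in $i$ and in $\rho$) that, weighted by $\rho^{is}$ and summed, they stay below $|\pi|^{j}\rho^{v_s(\overline{b_j})}$ throughout the stated $s$-range. This is where the constraint $s \le (\log p)/(-\log\rho)$ is used sharply, and where one must be careful that $b$ lies in $F_\rho\langle x\rangle$ (so the $b_i$ decay and the sums converge) and has $\pi$-integral coefficients. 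I expect this to be a somewhat fiddly but ultimately routine estimate once the level-shift bookkeeping is set up; the conceptual content is entirely the observation that translation by a Teichmüller lift is ``upper-triangular'' with respect to the $\pi$-adic filtration, with the diagonal given by characteristic-$p$ translation.
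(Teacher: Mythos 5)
You correctly identify the key mechanism --- $T_{[\overline{h}]}$ is upper-triangular with respect to the $\pi$-adic filtration, with diagonal the characteristic-$p$ translation, and the constraint $\rho^s \ge p^{-1}$ keeps the off-diagonal error from dominating --- but the argument leaves a genuine gap in the quantitative error estimate, and the version you gesture at is too weak. Bounding the error $E_j = T_{[\overline{h}]}([\overline{b_j}]) - [T_{\overline{h}}(\overline{b_j})]$ merely by ``divisible by $p$, hence at $\pi$-adic level $\ge 1$'' does not suffice whenever $v_s(\overline{b_j})$ is large: then $\rho^{v_s(\overline{b_j})}$ can be far smaller than $p^{-1}$, and this crude bound allows the error to overtake the diagonal term $|\pi|^j\rho^{v_s(\overline{b_j})}$. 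What is actually needed is the refined estimate $|E_j|_{\rho^v,s} \le p^{-1}\rho^{-s}\cdot\rho^{v_s(\overline{b_j})}$, so that the cost of dropping one $\pi$-adic level is compensated by dropping one $(\overline{x}-\overline{h})$-adic degree; your phrase ``precisely compensated'' is the right heuristic, but the two sides of the trade-off live in different gradings, and proving the compensation forces you to expand $\overline{b_j}$ in powers of $\overline{x}-\overline{h}$ and track the discrepancy between $(x-[\overline{h}])^i$ and $[\overline{x}-\overline{h}]^i$ degree by degree.

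That degree-by-degree tracking is exactly how the paper organizes the whole proof, and it is cleaner to do so from the start. The paper writes $b = \sum_i b_i (x-[\overline{h}])^i$, derives $|x-[\overline{h}]-[\overline{x}-\overline{h}]|_2 \le p^{-1}$ from the Witt-vector subtraction identity (the higher Witt components contribute terms divisible by $p$ of norm at most $1$), extends this to $|(x-[\overline{h}])^i - [\overline{x}-\overline{h}]^i|_2 \le p^{-1}\rho^{s(i-1)}$ by factoring a difference of $i$-th powers, and concludes by a dominant-degree argument: for the $s$ at which a unique degree $i$ dominates, both sides of \eqref{eq:compare norms} equal $|b_i|_{\rho^v}\rho^{is}$, and the remaining countably many $s$ are handled by continuity. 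The exponent $s(i-1)$, rather than $si$, is precisely the compensation you want, and it falls out transparently here; in the Teichm\"uller-level organization it is obscured. Since you would need this binomial estimate regardless, the reorganization buys nothing.
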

Note that while $\sigma$ does not appear in the following proof,
the choice of $\sigma$ affects the expression of $b$ as a sum of
Teichm\"uller elements. In particular, making $\sigma$ standard
forces $x = [\overline{x}]$.
\begin{proof}
Let $|\cdot|_1$ and $|\cdot|_2$ be the norms defined by the left and right
sides of \eqref{eq:compare norms}, respectively. 
For any fixed $b$, both $|b|_1$ and $|b|_2$ vary continuously in $s$.
Consequently, we may assume $s < (\log p)/(-\log \rho)$ hereafter.

We first note that
\[
x-[\overline{h}] = [\overline{x}-\overline{h}] + \sum_{i=1}^\infty p^i 
P_i(x^{1/p^i}, [\overline{h}^{1/p^i}]),
\]
where $P_i$ is homogeneous of degree $p^i$ with coefficients in $\ZZ$. Since
$|x|_2, |[\overline{h}]|_2 \leq 1$ and 
$s < (\log p)/(-\log \rho)$, this implies
\[
|x - [\overline{h}] - [\overline{x}-\overline{h}]|_2 \leq p^{-1} < \rho^s 
= |[\overline{x}-\overline{h}]|_2 = |x- [\overline{h}]|_1.
\]
In particular, $|x-[\overline{h}]|_2 = |x - [\overline{h}]|_1 = \rho^s$.
For $i \geq 0$, we have
\begin{align*}
|(x - [\overline{h}])^i - [\overline{x} - \overline{h}]^i|_2
&= 
|x - [\overline{h}] - [\overline{x} - \overline{h}]|_2
\left|
\frac{(x - [\overline{h}])^i - [\overline{x} - \overline{h}]^i}{x - [\overline{h}] - [\overline{x} - \overline{h}]}
\right|_2
\\
&\leq p^{-1} \rho^{s(i-1)} < |(x - [\overline{h}])^i|_1.
\end{align*}

To deduce \eqref{eq:compare norms} for a given $b$, write
$b = \sum_{i=0}^\infty b_i (x - [\overline{h}])^i$ with $b_i \in L$.
For each $s \in [0, (\log p)/(-\log \rho))$ 
for which there is a unique value $i$ maximizing
$|b_i (x-[\overline{h}])^i|_1$, we have
\begin{align*}
|b|_1 &= |b_i (x-[\overline{h}])^i|_1 \\
&= |b_i [\overline{x} - \overline{h}]^i|_2 \\
|b - b_i [\overline{x} - \overline{h}]^i|_2 &< |b_i (x-[\overline{h}])^i|_1.
\end{align*}
Hence $|b|_1 = |b|_2$ for such $s$. 
Since this excludes only
countably many $s$, we may deduce the claim for all $s$
by continuity.
\end{proof}
\begin{cor} \label{C:Frob stability1}
For $\sigma, \overline{h}, \rho, s$ as in Lemma~\ref{L:Frob stability},
for any $b \in F_\rho \langle x \rangle$,
\[
|T_{[\overline{h}]}(b)|_{\rho^v,s} = 
|T_{[\overline{h}]}(\sigma(b))|_{\rho^{v/q},s}.
\]
\end{cor}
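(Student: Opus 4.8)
The plan is to deduce the identity from two applications of Lemma~\ref{L:Frob stability}, to $b$ and to $\sigma(b)$, after first reducing to the case where $b$ admits a Teichm\"uller expansion. Write $B$ for the set $(\gotho_L \llbracket x \rrbracket \otimes_{\gotho_L} L) \cap F_\rho\langle x \rangle$ appearing in Lemma~\ref{L:Frob stability}. Both sides of the claimed equality are continuous functions of $b$ on $F_\rho\langle x \rangle$ for the Gauss norm $|\cdot|_{\rho^v,0}$: since $\overline{h}$ is a unit and $\rho < 1$ one has $|x + [\overline{h}]|_{\rho^v,s} = 1$ for every $s \geq 0$, so $T_{[\overline{h}]}$ is bounded by $|\cdot|_{\rho^v,0}$; and the standard Frobenius lift is an isometry from $(F_\rho\langle x \rangle, |\cdot|_{\rho^v,0})$ to $(F_{\rho^{1/q}}\langle x \rangle, |\cdot|_{\rho^{v/q},0})$ by Definition~\ref{D:Frobenius lift}. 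Hence it suffices to verify the identity for $b \in B$, provided $B$ is dense in $(F_\rho\langle x \rangle, |\cdot|_{\rho^v,0})$; this density reduces, by termwise approximation and truncation in $x$, to the density of the Teichm\"uller-expandable elements of $L$ in $F_\rho$.

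For $b = \sum_{j \geq m} \pi^j [\overline{b_j}] \in B$ as in Lemma~\ref{L:Frob stability}, the next step is to compute $\sigma(b)$. Since $\sigma$ is the standard Frobenius lift and restricts to $\sigma_L$ on $L$, it sends $\pi$ to the uniformizer $\sigma_K(\pi)$, sends $[\overline{c}\,]$ to $[\overline{c}^{\,q}]$ for $\overline{c} \in \ell$, and sends $x$ to $x^q$; because $p \mid q$ and $\ell$ is perfect this gives $\sigma([\overline{b_j}]) = [\overline{b_j}^{\,q}]$, where $\overline{b_j}^{\,q}$ denotes the $q$th power in $\ell\langle \overline{x} \rangle$. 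Thus $\sigma(b) = \sum_{j \geq m} \sigma_K(\pi)^j [\overline{b_j}^{\,q}] \in B$, and this is the Teichm\"uller expansion of $\sigma(b)$ relative to the uniformizer $\sigma_K(\pi)$.

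Now apply Lemma~\ref{L:Frob stability} to $b$ (with uniformizer $\pi$ and parameter $\rho$) and to $\sigma(b)$ (with uniformizer $\sigma_K(\pi)$ and parameter $\rho^{1/q}$; the required hypothesis $0 \leq s \leq \frac{\log p}{-\log \rho^{1/q}} = \frac{q \log p}{-\log \rho}$ follows from $0 \leq s \leq \frac{\log p}{-\log \rho}$). Writing $v_s$ for the $s$-Gauss valuation on $\ell\langle \overline{x} - \overline{h} \rangle$, this yields
\[
|T_{[\overline{h}]}(b)|_{\rho^v,s} = \sup_j \bigl\{ |\pi|^j \rho^{\,v_s(\overline{b_j})} \bigr\}, \qquad |T_{[\overline{h}]}(\sigma(b))|_{\rho^{v/q},s} = \sup_j \bigl\{ |\sigma_K(\pi)|^j \rho^{\,v_s(\overline{b_j}^{\,q})/q} \bigr\}.
\]
Since $\sigma_K$ is an isometry, $|\sigma_K(\pi)| = |\pi|$, and since $p \mid q$ gives $v_s(\overline{b_j}^{\,q}) = q\, v_s(\overline{b_j})$, the two suprema coincide. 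This proves the corollary for $b \in B$, and hence for all $b \in F_\rho\langle x \rangle$.

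The step I expect to be the main obstacle is the density claim used in the first paragraph: one must check that every element of $F_\rho\langle x \rangle$ is a $|\cdot|_{\rho^v,0}$-limit of elements of $B$, even though, as warned after Definition~\ref{D:analytic}, elements of these rings need not themselves admit Teichm\"uller expansions; this requires a little analysis of the completion $F_\rho$ of $\Frac L_{v,[\rho,\rho]}$. Once that is granted, the remainder is a mechanical composition of Lemma~\ref{L:Frob stability} with the compatibility of Teichm\"uller lifts, isometric Frobenius, and $q$th-power maps.
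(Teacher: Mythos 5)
Your proposal is correct and takes the same route as the paper: both reduce to the dense subset $(\gotho_L \llbracket x \rrbracket \otimes_{\gotho_L} L) \cap F_\rho\langle x \rangle$ of Teichm\"uller-expandable series, apply Lemma~\ref{L:Frob stability} twice (to $b$ at $\rho$ and to $\sigma(b)$ at $\rho^{1/q}$), and match the two suprema using $v_s(\overline{b_j}^{\,q}) = q\,v_s(\overline{b_j})$ and $|\sigma_K(\pi)| = |\pi|$. The density step you flag as the potential obstacle does go through: $\Frac L_{v,\epsilon} \subseteq L$ is dense in $\Frac L_{v,[\rho,\rho]}$ and hence in $F_\rho$, so polynomial truncations with coefficients in $\Frac L_{v,\epsilon}$ lie in $B$ and are dense in $F_\rho\langle x\rangle$ for $|\cdot|_{\rho^v,0}$; you thereby reach $F_\rho\langle x\rangle$ directly rather than via $L_{v,[\rho,\rho]}\langle x\rangle$ plus a separate scaling step as the paper implicitly does.
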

\begin{proof}
For $b \in L_{v,[\rho,\rho]} \langle x \rangle$,
this follows from Lemma~\ref{L:Frob stability} and the fact that
$(\gotho_L \llbracket x \rrbracket 
\otimes_{\gotho_L} L)
\cap F_\rho \langle x \rangle$ is dense in $L_{v,[\rho,\rho]} 
\langle x \rangle$. The claim for
$b \in F_\rho \langle x \rangle$ follows immediately thereafter.
\end{proof}
\begin{cor} \label{C:Frob stability2}
For $\sigma, \overline{h}, \rho, s$ as in Lemma~\ref{L:Frob stability},
put $r = -\log \rho$.
If $f_1(M, r, 0) < \frac{p}{p-1} \log p$, then
\[
f_i(T_{[\overline{h}]}^* M,r/q, s)
= f_i(T_{[\overline{h}]}^* M, r, s)/q 
 \qquad (i=1,\dots,n).
\]
Hence if $\alpha_{\overline{h},e^{-s}} \geq \alpha$, then
\[
f_i(T_{[\overline{h}]}^* M, r, s) = b_i(M, \alpha,s)r \qquad (i=1,\dots,n).
\]
\end{cor}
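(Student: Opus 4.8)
The plan is to reduce to the case $q=p$ and then exhibit a cross section of $N:=T^*_{[\overline h]} M$ as an off-centered Frobenius antecedent of another cross section, invoking the scaling formula of Theorem~\ref{T:antecedent2}(c). For the reduction: if the asserted equality holds for $q=p$ whenever the two hypotheses hold, then applying it successively with $r$ replaced by $r/p, r/p^2, \dots$ yields it for $q=p^a$, since $f_1(M, r/p^j, 0) \le p^{-j} f_1(M,r,0) < \frac{p}{p-1}\log p$ — the function $r \mapsto f_1(M,r,0)$ is convex by Lemma~\ref{L:convex} and tends to $0$ as $r \to 0^+$ by Lemma~\ref{L:solvable}, so $f_1(M,r,0)/r$ is nondecreasing — and $s \le (\log p)/r \le (\log p)/(r/p^j)$. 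Throughout we work with the standard Frobenius lift $\sigma$ as in Lemma~\ref{L:Frob stability}.

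Now take $q=p$, set $\rho=e^{-r}$, and unwind Definition~\ref{D:cross section}. The Frobenius structure $F\colon \sigma^* M \cong M$ induces, on cross sections, an isomorphism $M_{\rho^{1/p}} \cong \phi^*\bigl(M_\rho \otimes_{F_\rho} F_{\rho^{1/p}}\bigr)$ of $\nabla$-modules on $A_{F_{\rho^{1/p}}}[0,1]$, where $\phi$ is the substitution $x \mapsto x^p$ and the tensor is taken along the isometric embedding $F_\rho \hookrightarrow F_{\rho^{1/p}}$ induced by $\sigma_L$. Conjugating by the translation $T_{[\overline h]}$ — which is an automorphism of the unit disc since $|[\overline h]| = 1$ — and using $\sigma_L([\overline h]) = [\overline h^p] = [\overline h]^p$, this becomes
\[
N_{\rho^{1/p}} \cong \psi_{[\overline h]}^*\bigl(N_\rho \otimes_{F_\rho} F_{\rho^{1/p}}\bigr),
\]
with $\psi_{[\overline h]}\colon x \mapsto (x+[\overline h])^p - [\overline h]^p$ the off-centered Frobenius substitution of Definition~\ref{D:off-centered}. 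It is Lemma~\ref{L:Frob stability} and Corollary~\ref{C:Frob stability1} that make this identification compatible with the norms $|\cdot|_{\rho^v, s}$, and this is precisely what forces the constraint $0 \le s \le (\log p)/(-\log\rho)$ under which the analysis below is valid. Since $\psi_{[\overline h]}$ carries $A_F(I)$ onto $A_F(I^p)$ for every $I \subseteq (p^{-1/(p-1)}, 1]$, the displayed isomorphism restricts to such annuli.

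Next, $N_\rho \otimes F_{\rho^{1/p}}$, restricted to $A_{F_{\rho^{1/p}}}(I^p)$, actually satisfies the defining radius condition $R(\cdot_{\rho'}) > p^{-p/(p-1)}$ of Definition~\ref{D:off-centered}, hence is \emph{the} off-centered Frobenius antecedent of $N_{\rho^{1/p}}|_{A(I)}$. Indeed $R\bigl((N_\rho \otimes F_{\rho^{1/p}})_{\rho'}\bigr) = R\bigl((N_\rho)_{\rho'}\bigr) = e^{-f_1(N_\rho, -\log\rho')}$; at $\rho'=1$ this equals $e^{-f_1(M_\rho,0)} = e^{-f_1(M,r,0)} > p^{-p/(p-1)}$ by hypothesis (translation by the unit $[\overline h]$ does not change the Gauss-point subsidiary radii), and since $t \mapsto f_1(N_\rho, t)$ is convex, lies above $y=t$, and is nonincreasing wherever $f_1 > t$ (Theorem~\ref{T:variation}(b),(c)), it follows that $f_1(N_\rho, t) < \frac{p}{p-1}\log p$ for all $t < \frac{p}{p-1}\log p$, i.e.\ $R((N_\rho)_{\rho'}) > p^{-p/(p-1)}$ for all $\rho' \in (p^{-p/(p-1)}, 1]$. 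Applying Theorem~\ref{T:antecedent2}(c) and then using that the isometric base change along $\sigma_L$ preserves subsidiary radii gives $f_i(N_\rho, pt) = p\, f_i(N_{\rho^{1/p}}, t)$ for $e^{-t}\in I$; taking $t=(r/p)s$, which is admissible because $\rho^{s/p} \ge p^{-1/p} > p^{-1/(p-1)}$ when $s \le (\log p)/r$, and unwinding Definition~\ref{D:cross section}, we get $f_i(N, r, s) = p\, f_i(N, r/p, s)$. For the ``hence'' clause: when $\alpha_{\overline h, e^{-s}} \ge \alpha$, Lemma~\ref{L:unique dominate} identifies $\alpha_{\overline h, e^{-s}}$ with the point of radius $e^{-s}$ on the generic path to $\alpha$, so $b_i(M,\alpha,s) = b_i(T^*_{[\overline h]} M, s)$ by Definition~\ref{D:breaks} together with the definition of $b_i(M,\alpha)$ and Lemma~\ref{L:maintain by translate}; meanwhile iterating the first assertion gives $f_i(N, r/q^m, s) = q^{-m} f_i(N, r, s)$ for all $m \ge 0$ (the hypotheses persist exactly as in the reduction step), and combining this with the convexity of $\sum_{j \le i} f_j(N, \cdot, s)$ from Lemma~\ref{L:convex} — as in the proof of Lemma~\ref{L:constant breaks1} — forces $f_i(N, \cdot, s)$ to be affine-linear on $(0,r]$ with slope $b_i(T^*_{[\overline h]} M, s)$, whence $f_i(N,r,s) = b_i(M,\alpha,s)\,r$.

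I expect the main obstacle to be the identification $N_{\rho^{1/p}} \cong \psi_{[\overline h]}^*(N_\rho \otimes F_{\rho^{1/p}})$ together with the check that the candidate antecedent is genuine. The Frobenius structure induced on $N$ corresponds to $x \mapsto (x+[\overline h])^p - [\overline h]$, which differs from $\psi_{[\overline h]}$ by the \emph{non-small} constant $[\overline h^p]-[\overline h]$, so it is not a Frobenius lift in the strict sense of Definition~\ref{D:Frobenius lift} and one cannot simply quote the off-centered antecedent theory; the norm identity of Lemma~\ref{L:Frob stability}/Corollary~\ref{C:Frob stability1} is exactly what rescues the situation, but only within the range $s \le (\log p)/(-\log\rho)$, and keeping track of which Gauss-norm radii the comparison controls — hence why the single hypothesis $f_1(M,r,0) < \frac{p}{p-1}\log p$ at $s=0$ suffices to run the argument for all admissible $s$ — is the delicate bookkeeping.
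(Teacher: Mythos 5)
Your proof follows essentially the same route as the paper: derive the first assertion from Corollary~\ref{C:Frob stability1} together with the off-centered Frobenius antecedent theorem (Theorem~\ref{T:antecedent2}), and obtain the second from convexity (Lemma~\ref{L:convex}) combined with the limiting behavior as $r_1 \to 0^+$ (Lemma~\ref{L:solvable}, Corollary~\ref{C:constant breaks2}). Your explicit reduction to $q=p$, the cross-section identification, and the iterated touching at $r/q^m$ (in the style of Lemma~\ref{L:constant breaks1}) spell out details that the paper compresses into a sentence, but the argument is the same one.

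Your closing paragraph, however, rests on a computation that contradicts your own (correct) derivation in paragraph two. The Frobenius lift induced on $T_{[\overline{h}]}^*M$ by the standard lift on $M$ is $x \mapsto (x - [\overline{h}])^q + [\overline{h}]^q$ (this is stated in Proposition~\ref{P:variation1} and at the start of the paper's proof of this corollary), not $x \mapsto (x+[\overline{h}])^p - [\overline{h}]$: since $\sigma_L([\overline{h}]) = [\overline{h}^q] = [\overline{h}]^q$, the constant terms cancel (up to a $p$-divisible remainder when $p=2$), and every coefficient of $\sigma'(x)-x^q$ is divisible by $p$. Thus $\sigma'$ is a genuine Frobenius lift in the sense of Definition~\ref{D:Frobenius lift}; after your reduction to $q=p$ its cross section coincides with a $\psi_\eta$ of Definition~\ref{D:off-centered}, exactly as you computed in paragraph two, and there is no non-small constant $[\overline{h}^p]-[\overline{h}]$ obstructing the off-centered antecedent theory. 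Lemma~\ref{L:Frob stability} and Corollary~\ref{C:Frob stability1} serve, as you correctly deploy them earlier, to relate the off-centered $|\cdot|_{\rho^v,s}$-norms across the Frobenius to the Teichm\"uller structure, not to rescue an ill-posed Frobenius lift.
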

\begin{proof}
Before proceeding, we observe 
(as in Proposition~\ref{P:variation1})
that
the Frobenius structure on $M$ for the standard Frobenius lift
induces a Frobenius structure on $T_{[\overline{h}]}^* M$
for the Frobenius lift
\[
x \mapsto (x-[\overline{h}])^q + [\overline{h}]^q.
\]
Consequently, each Jordan-H\"older factor of $T_{[\overline{h}]}^* M$
admits a Frobenius structure for some power of this Frobenius lift.

We then note that first assertion follows 
from Corollary~\ref{C:Frob stability1} plus the off-centered
Frobenius antecedent theorem (Theorem~\ref{T:antecedent2}).
Given this, we can argue that the function
$r_1 \mapsto f_1(T_{[\overline{h}]}^* M,r_1, s) + 
\cdots + f_i(T_{[\overline{h}]}^* M,r_1, s)$
is convex (Lemma~\ref{L:convex}), tends to 0 as $r_1 \to 0^+$
(Lemma~\ref{L:solvable}, applicable by our first observation), 
and agrees with a certain linear function 
at the values $r_1 = r$ and $r_1 = r/q$. It must then agree with that
linear function for all $r_1 \in (0,r]$, yielding the second assertion.
\end{proof}

\begin{lemma} \label{L:type iv stable}
Let $\sigma$ be the standard Frobenius lift.
Suppose $\alpha \in \DD_\ell$, and put
$s_0 = -\log r(\alpha)$. 
Suppose $\rho \in [\epsilon,1)$ satisfies
\[
0 \leq s_0 < \frac{\log p}{(-\log \rho)}.
\]
For $s \in [0,s_0)$, choose $\overline{h} \in \ell$ 
with $\alpha_{\overline{h}, e^{-s}} \geq \alpha$,
and define $|\cdot|_{\rho^\alpha,s} = |T_{[\overline{h}]}(\cdot)|_{\rho^v,s}$.
Then the following hold.
\begin{enumerate}
\item[(a)]
The quantity $|\cdot|_{\rho^\alpha,s}$ does not depend on the choice of
$\overline{h}$.
\item[(b)]
Suppose that $\alpha$ is of type (iv).
Then for fixed $b \in F_\rho\langle x \rangle$, for $s$ sufficiently large,
$|b|_{\rho^\alpha,s}$ is independent of $s$.
\end{enumerate}
\end{lemma}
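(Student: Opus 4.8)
The plan is to deduce both parts from Lemma~\ref{L:Frob stability}. That lemma, applied to $b$ in the dense subring $(\gotho_L\llbracket x\rrbracket\otimes_{\gotho_L}L)\cap F_\rho\langle x\rangle$ and to a \emph{unit} $\overline h$, identifies $|T_{[\overline h]}(b)|_{\rho^v,s}$ with $\sup_j|\pi|^j\rho^{v_s(\overline b_j)}$, where the $\overline b_j\in\ell\langle\overline x\rangle$ are the Teichm\"uller digits of $b$ and $v_s$ is the $s$-Gauss valuation in the variable $\overline x-\overline h$; the point to exploit is that $v_s$ is just $-\log$ of the Berkovich point $\alpha_{\overline h,e^{-s}}$ extended to $\ell\langle\overline x-\overline h\rangle$, so it depends only on the disc $D_{\overline h,e^{-s}}$. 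Since $|\cdot|_{\rho^v,s}$ is continuous on $F_\rho\langle x\rangle$ and that subring is dense, it will suffice to argue for such $b$. I would dispose of the possibility that a chosen $\overline h$ is not a unit by noting that if $0\in D_{\overline h,e^{-s}}$ one may take $\overline h=0$, where Definition~\ref{D:maintain by translate} applies directly, and that otherwise $v(\overline h)<s$, so writing $\overline h=[\overline\lambda]\overline u$ with $v(\overline\lambda)=v(\overline h)$ and $\overline u$ a unit, the substitution $x=[\overline\lambda]y$ converts $|T_{[\overline h]}(\cdot)|_{\rho^v,s}$ into $|T_{[\overline u]}(\cdot)|_{\rho^v,s-v(\overline h)}$, to which Lemma~\ref{L:Frob stability} applies (the inequality $s_0<\log p/(-\log\rho)$ being preserved).

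For part (a), given admissible centers $\overline h_1,\overline h_2$ at level $s$: since $e^{-s}\in[r(\alpha),1]$ and both $\alpha_{\overline h_i,e^{-s}}$ dominate $\alpha$, Lemma~\ref{L:unique dominate} forces $\alpha_{\overline h_1,e^{-s}}=\alpha_{\overline h_2,e^{-s}}$, hence $D_{\overline h_1,e^{-s}}=D_{\overline h_2,e^{-s}}$ and the Gauss valuations $v_s$ attached to the two centers coincide on $\ell\langle\overline x-\overline h_i\rangle$; in the non-unit case the rescaling above can be performed with the same $\overline\lambda$ for both. Lemma~\ref{L:Frob stability} then yields $|T_{[\overline h_1]}(b)|_{\rho^v,s}=\sup_j|\pi|^j\rho^{v_s(\overline b_j)}=|T_{[\overline h_2]}(b)|_{\rho^v,s}$, which is the assertion.

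For part (b), fix $b\neq0$ in the dense subring, choose a center $\overline h_s$ at each level $s$, and write $|b|_{\rho^\alpha,s}=\sup_j|\pi|^j\rho^{v_s(\overline b_j)}$ with $v_s(\overline b_j)=-\log|\overline b_j|_{\beta_s}$, where $\beta_s=\alpha_{\overline h_s,e^{-s}}$ is the point of radius $e^{-s}$ on the generic path to $\alpha$; the discs $D_{\overline h_s,e^{-s}}$ form a decreasing family with empty intersection because $\alpha$ is of type (iv). The crux — and the step I expect to be the main obstacle — is that each $|\overline b_j|_{\beta_s}$ is \emph{eventually} constant in $s$, not merely convergent, and this is exactly where type (iv) is used. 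Since $r(\alpha)>0$ and the nested discs have empty intersection, $|\cdot|_\alpha$ is a genuine norm on $\ell\langle\overline x\rangle$; approximating $\overline b_j$ by polynomials uniformly in $s$ (as $|\cdot|_{\beta_s}$ is bounded by the $1$-Gauss norm) reduces the claim to a polynomial $g$, each of whose finitely many roots $z$ escapes $D_{\overline h_s,e^{-s}}$ for $s$ near $s_0$ — otherwise $z$ would lie in $\bigcap_s D_{\overline h_s,e^{-s}}=\emptyset$ — and once $z\notin D_{\overline h_s,e^{-s}}$ the ultrametric law makes $|z-\overline h_s|$, hence $|g|_{\beta_s}$, independent of $s$. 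Because only finitely many digits $j$ contribute to the supremum on any truncation (as $|\pi|^j\rho^{v_0(\overline b_j)}\to0$) and the limit $\lim_{s\to s_0^-}|b|_{\rho^\alpha,s}$ is positive (some $\overline b_j\neq0$), the supremum is governed near $s_0$ by finitely many eventually-constant terms, hence is eventually constant. Finally I would pass from the dense subring to arbitrary $b\in F_\rho\langle x\rangle$ using the uniform bound $|\cdot|_{\rho^\alpha,s}\leq|\cdot|_{\rho^v,0}$: a sufficiently close approximant then computes $|b|_{\rho^\alpha,s}$ for all $s$ near $s_0$ when $b\neq0$, the case $b=0$ being trivial.
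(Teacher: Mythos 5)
Your overall approach matches the paper's: reduce to the dense Teichm\"uller subring via Lemma~\ref{L:Frob stability}, prove eventual constancy of the Gauss valuations $v_s$ on $\ell\langle x\rangle$ by approximating by a polynomial whose roots eventually escape the shrinking discs (the paper leaves this root-escape argument implicit, you spell it out), then pass to general $b$ by approximation.

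Two points where you diverge or leave something uncovered. First, you worry about the case $v(\overline h) > 0$, since Lemma~\ref{L:Frob stability} is stated for $\overline h \in \gotho_\ell^*$. The concern is reasonable (the paper's proof silently ignores it), but your proposed fix --- the substitution $x = [\overline\lambda]y$ --- is not clean: it does not preserve $F_\rho\langle x\rangle$ when $v(\overline\lambda)>0$, and it is not obvious that $|T_{[\overline h]}(\cdot)|_{\rho^v,s}$ becomes $|T_{[\overline u]}(\cdot)|_{\rho^v,s-v(\overline h)}$ under it. The simpler observation is that the proof of Lemma~\ref{L:Frob stability} only uses $|[\overline h]|_2 = \rho^{v(\overline h)}\le 1$, so it applies verbatim for any $\overline h\in\gotho_\ell$, not just units; no rescaling is needed. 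Second, and more substantively, your passage from the dense subring to general $b\neq 0$ in part (b) is incomplete as written. You want to choose an approximant $b'$ with $|b-b'|_{\rho^v,0}$ smaller than the eventual constant $C'$ of $|b'|_{\rho^\alpha,s}$, but $C'$ depends on $b'$ and a priori could shrink as $b'\to b$, so "a sufficiently close approximant" is not obviously available. The paper closes this by first establishing $\inf_{s\in[0,s_0)}|b|_{\rho^\alpha,s}>0$, via a temporary enlargement of $\ell$ to include a center $\overline h$ with $\alpha=\alpha_{\overline h,e^{-s_0}}$, so that $|b|_{\rho^\alpha,s}\ge|b|_{\rho^\alpha,s_0}>0$; once that lower bound is in hand, the approximation step you describe goes through. (Alternatively one can argue directly that the eventual constants $C'_n$ of a sequence $b'_n\to b$ stabilize by the ultrametric inequality, but you should say so explicitly.) With either fix, your proof is sound.
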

\begin{proof}
Suppose first that $b \in 
(\gotho_L \llbracket x \rrbracket \otimes_{\gotho_L} L)
\cap F_\rho \langle x \rangle$. Then both (a) and (b) will follow from
Lemma~\ref{L:Frob stability} once we check that 
for $\alpha$ of type (iv),
for any given $P \in \ell \langle x \rangle$, 
$v_s(P)$ is constant for $s$ sufficiently large.
To show this, find $Q \in \ell[x]$
such that $v_0(P-Q) > v_{s_0}(P)$; then $v_s(P) = v_s(Q)$ for all 
$s \in [0, s_0)$, and $v_s(Q)$ is constant for $s$ sufficiently large
because $\alpha$ is of type (iv).

We now allow general $b \in F_\rho \langle x \rangle$;
we may assume $b \neq 0$.
To check (a), we choose $\overline{h_1}, \overline{h_2}$
with $\alpha_{\overline{h_i}, e^{-s}} \geq \alpha$
for $i=1,2$.
We can then find $c \in L_{v,[\rho,\rho]}$ nonzero
and $b' \in 
(\gotho_L \llbracket x \rrbracket \otimes_{\gotho_L} L)
\cap F_\rho \langle x \rangle$ such that 
\[
|b'-cb|_{\rho^v,0} < |c|_{\rho^v} |T_{[\overline{h_i}]}(b)|_{\rho^v,s_0}
\qquad (i \in \{1,2\}).
\]
Recalling that $|\cdot|_{\rho^v,s}$ is a nonincreasing function of $s$,
we deduce
\begin{align*}
|T_{[\overline{h_i}]}(b'-cb)|_{\rho^v,s} 
&\leq |T_{[\overline{h_i}]}(b'-cb)|_{\rho^v,0}\\
&= |b'-cb|_{\rho^v,0} \\
&< |T_{[\overline{h_i}]}(cb)|_{\rho^v,s},
\end{align*}
so
$|T_{[\overline{h_i}]}(b')|_{\rho^v,s} =
|T_{[\overline{h_i}]}(cb)|_{\rho^v,s}$.
We may thus deduce (a) from the corresponding assertion for $b'$.

We may now unambiguously write $|b|_{\rho^\alpha,s}$.
We next verify that $|b|_{\rho^\alpha,s}$ is bounded away from $0$ for
$s \in [0,s_0)$. For this, we may temporarily enlarge $\ell$ to
include an element $\overline{h}$ such that 
$\alpha = \alpha_{[\overline{h}],e^{-s_0}}$;
we then have $|b|_{\rho^\alpha,s} \geq |b|_{\rho^\alpha,s_0} > 0$.

We now repeal our previous enlargement of $\ell$, in order to
check (b).
Suppose that $\alpha$ is of type (iv).
We may then find $c \in L_{v,[\rho,\rho]}$ nonzero
and $b' \in 
(\gotho_L \llbracket x \rrbracket \otimes_{\gotho_L} L)
\cap F_\rho \langle x \rangle$ such that 
\[
|b'-cb|_{\rho^v,0} < |c|_{\rho^v} \inf_{s \in [0,s_0)} \{ 
|b|_{\rho^\alpha,s}\}.
\]
Then 
$|b'-cb|_{\rho^\alpha,s} \leq 
|b'-cb|_{\rho^\alpha,0} = |b'-cb|_{\rho^v,0}
< |cb|_{\rho^\alpha,s}$, so
$|b'|_{\rho^\alpha,s} = |cb|_{\rho^\alpha,s}$
and we may deduce (b) from the corresponding assertion for $b'$.
\end{proof}

\begin{prop} \label{P:stable}
Suppose $\alpha \in \DD_\ell$ is of type (iv), and put $s_0 = -\log r(\alpha)$.
For $i=1,\dots,n$, if $b_i(M,\alpha) > s_0$, then $b_i(M,\alpha,s)$ is constant
for $s$ in some neighborhood of $s_0$.
\end{prop}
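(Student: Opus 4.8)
The plan is to express $b_i(M,\alpha,s)$, for $s$ just below $s_0 = -\log r(\alpha)$, through the Newton polygon of one fixed differential operator measured against the norms $|\cdot|_{\rho^\alpha,s}$, and then to exploit the fact that those norms stabilize as $s\to s_0^-$ precisely because $\alpha$ is of type (iv). That stabilization (Lemma~\ref{L:type iv stable}(b)) is the only place the hypothesis on the type of $\alpha$ enters; the rest is a combination of the cyclic vector theorem (Theorem~\ref{T:cyclic}) and the cross-sectional comparison of Corollary~\ref{C:Frob stability2}.

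First I would reduce to the standard Frobenius lift via Lemma~\ref{L:change Frob}, and use Corollary~\ref{C:constant breaks2} to fix $r_0\in(0,-\log\epsilon]$, as small as I like, so that $f_j(M,r,s)=b_j(M,s)r$ for all $r\in(0,r_0]$ and $s\geq0$; shrinking $r_0$ I may also assume $f_1(M,r_0,0)<\tfrac{p}{p-1}\log p$, $r_0 s_0<\log p$, and $s_0<\tfrac{\log p}{r_0}$. Put $\rho=e^{-r_0}$. Corollary~\ref{C:Frob stability2} then gives, for $s\in[0,s_0)$ and any $\overline h\in\ell$ with $\alpha_{\overline h,e^{-s}}\geq\alpha$, the identity $f_i(T_{[\overline h]}^*M,r_0,s)=b_i(M,\alpha,s)\,r_0$ for $i=1,\dots,n$. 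Since $b_i(M,\alpha,s_0)>s_0$ and, by Proposition~\ref{P:variation1}, $b_i(M,\alpha,\cdot)$ is continuous and affine near $s_0$, I may also fix $s_1<s_0$ with $b_j(M,\alpha,s)>s$ on $[s_1,s_0]$ for every $j$ with $b_j(M,\alpha,s_0)>s_0$; it then suffices to show each such $b_j(M,\alpha,\cdot)$ is constant on some $[s',s_0]$.

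Next, fix a cyclic vector $\bv$ of the cross section $M_\rho$ on $A_{F_\rho}[0,1]$ with an annihilating operator $P=\sum_{j=0}^n P_jT^j$, $P_j\in F_\rho\langle x\rangle$. For $s\in[0,s_0)$ the translate $T_{[\overline h(s)]}^*\bv$ is cyclic for $T_{[\overline h(s)]}^*M_\rho$ with operator $\sum_j T_{[\overline h(s)]}(P_j)T^j$, so by Theorem~\ref{T:cyclic} the values $f_i(T_{[\overline h(s)]}^*M,r_0,s)$ exceeding $\tfrac{1}{p-1}\log p+r_0s$ are obtained by subtracting from $\tfrac{1}{p-1}\log p$ the slopes $<\log\rho^s=-r_0s$ of the Newton polygon of $\sum_j T_{[\overline h(s)]}(P_j)T^j$ under $|\cdot|_{\rho^s}$. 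By Lemma~\ref{L:type iv stable}(a) that polygon is the lower convex hull of the points $(j,-\log|P_j|_{\rho^\alpha,s})$, $j=0,\dots,n$, independent of the choice of $\overline h(s)$. Because $\alpha$ is of type (iv), Lemma~\ref{L:type iv stable}(b) applies to each of the finitely many $P_j$: there is $s'<s_0$ with $|P_j|_{\rho^\alpha,s}$ constant on $[s',s_0)$ for every $j$, hence this Newton polygon and all its slopes are constant on $[s',s_0)$. A slope $<-r_0s_0$ is then visible for every $s\in[s',s_0]$, so every $b_i(M,\alpha,\cdot)$ with $b_i(M,\alpha,s_0)>\tfrac{\log p}{(p-1)r_0}+s_0$ is constant on $[s',s_0]$; a slope equal to $-r_0s_0$ gives constancy on $[s',s_0)$ and hence on $[s',s_0]$ by continuity.

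It remains to treat indices with $s_0<b_i(M,\alpha,s_0)<\tfrac{\log p}{(p-1)r_0}+s_0$, which Theorem~\ref{T:cyclic} does not resolve individually near $s_0$; this is where the argument really works, and it is the main obstacle. After splitting off, by Theorem~\ref{T:decomposition} applied to the cross section over a small one-sided annulus at $\alpha$ (legitimate since the $f_j$ are affine there), the part carrying the subsidiary radii $\geq\tfrac{\log p}{(p-1)r_0}+s_0$ — already handled — I may assume every subsidiary radius lies strictly below that threshold, so the module is everywhere ``mild'', $R(\cdot_\rho)>p^{-1/(p-1)}\rho$, and admits an off-centered Frobenius antecedent (Theorem~\ref{T:antecedent2}); passing to the antecedent multiplies every $f_j$ and every log-radius by $p$ (Theorem~\ref{T:antecedent2}(c)), hence multiplies the excess $f_j-r_0s=r_0(b_j(M,\alpha,s)-s)>0$ by $p$ while leaving the visibility threshold $\tfrac{1}{p-1}\log p$ (in these coordinates) unchanged, and a further antecedent exists exactly as long as the module is still invisible, so finitely many steps bring every relevant subsidiary radius into the visible range, whereupon the Newton-polygon argument above finishes the job and constancy transports back along Theorem~\ref{T:antecedent2}(c). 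What needs care here is that no single translation stays $\geq\alpha$ all the way up to $s=s_0$, so the ``decomposition'' is really a compatible germ of decompositions along the generic path, glued by the uniqueness in Theorem~\ref{T:decomposition}, and one must check that Lemma~\ref{L:type iv stable}(b) (proved for a single translation) survives composition with an iterated off-centered Frobenius antecedent; both are routine in spirit, parallel to the proofs of Lemma~\ref{L:convex} and Corollary~\ref{C:Frob stability2}, but demand bookkeeping.
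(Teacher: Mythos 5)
Your overall strategy matches the paper's: apply Corollary~\ref{C:Frob stability2} to reduce to a fixed cross-section, pick a cyclic vector, and invoke Lemma~\ref{L:type iv stable}(b) to show that the Newton polygon of the annihilating operator (in the norms $|\cdot|_{\rho^\alpha,s}$) stabilizes as $s\to s_0^-$. Where you diverge is in how you handle the indices $i$ for which $b_i(M,\alpha,s_0)$ is too close to $s_0$ for Theorem~\ref{T:cyclic} to see them directly, and that divergence is where your argument has real gaps. The paper handles this by passing to iterated \emph{Frobenius descendants} $\sigma^m_* M$ via Corollary~\ref{C:descendant}. Descendants always exist, raise the rank harmlessly, leave the $x$-variable (and hence the point $\alpha$ and Lemma~\ref{L:type iv stable}) intact, and the entire content of the proof is a careful choice of $r$ and $m$ (the chain of inequalities \eqref{eq:stable1}--\eqref{eq:stable3}) to force the invisible $f_i$ into the visible window for the descendant.

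Your route via \emph{Frobenius antecedents} is not a bookkeeping variant of this. Three concrete problems. First, ``not visible'' is not ``mild'': after splitting off the part with $b_i \geq \frac{\log p}{(p-1)r_0}+s_0$, the remaining $b_i$ only satisfy $b_i < \frac{\log p}{(p-1)r_0}+s_0$, while the off-centered antecedent of Theorem~\ref{T:antecedent2}(a) needs the strictly stronger $R(\calE_\rho)>p^{-1/(p-1)}$, i.e.\ $b_1 < \frac{\log p}{(p-1)r_0}$; you also quoted the centered condition $R>p^{-1/(p-1)}\rho$ in a place where the off-centered one is what you are using. The residual band of width $s_0$ is precisely where neither the antecedent nor the cyclic-vector argument applies. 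Second, the split itself is asserted cross-sectionally, but what you actually need is a decomposition that is compatible with the family of translations $T_{[\overline h(s)]}$ for $s$ ranging along the generic path to $\alpha$; because $\alpha$ is type~(iv) no single $\overline h$ works all the way to $s_0$, and Theorem~\ref{T:decomposition} only splits a $\nabla$-module on an annulus over one field, not an object of the relevant $C_{v^0,*,**,\alpha}$-type category. Third, and most seriously, an off-centered antecedent is a pullback along $x\mapsto(x+\eta)^p-\eta^p$, which changes the $x$-variable; Lemma~\ref{L:type iv stable} is a statement about fixed polynomials $P_j\in F_\rho\langle x\rangle$ and a fixed point $\alpha\in\DD_\ell$, and there is no a priori reason the type~(iv) structure, or the $\overline h$-independence of $|\cdot|_{\rho^\alpha,s}$, survives a $p$-power substitution in $x$; proving it would require genuinely new work, not a repetition of the proof of Corollary~\ref{C:Frob stability2} (whose antecedent is taken in the $r$-direction, via the Frobenius structure on $L$, and so leaves the $x$-disc and $\alpha$ untouched). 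You should replace the antecedent step with a Frobenius descendant step, which sidesteps all three issues.
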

\begin{proof}
We proceed on induction by $i$. Assume that
$B = b_i(M, \alpha) - s_0$ is positive, and that
for $j = 1,\dots, i$, $b_j(M, \alpha,s)$ is constant for $s$ in some
neighborhood of $s_0$.
Note that there is no harm in replacing $M$ by $T^*_h M$
(since the latter carries a Frobenius structure, as in the
proof of Corollary~\ref{C:Frob stability2}),
or in rescaling $x$
as long as we remain on the generic path to $\alpha$; this has the
effect of reducing  $s_0$ to some smaller value $s_1$, and
shifting the graph of the function $s \mapsto b_i(M,\alpha,s) - s$ to
the left by $s_0-s_1$. Using such a reparametrization,
we may put ourselves in the situation where
each $b_j(M,\alpha,s)$ is affine-linear (by 
Proposition~\ref{P:variation1}); in particular, 
we have $b_i(M,\alpha,s) = B+s_0 + c(s_0-s)$ for some $c$.
We may also ensure that
\begin{equation} \label{eq:stable1}
ps_0 < b_i(M,\alpha, 0) + |c|s_0 < p(B-2|c|s_0)
\end{equation}
since we can make $s_0$ arbitrarily close to 0 and
$b_i(M,\alpha,0)$ arbitrarily close to $B$.
We may then replace $M$ by $T^*_h M$ if needed to ensure that $\alpha$ does
not belong to the open unit disc. 
We may then choose  $\epsilon$ so that $f_1(M,r,0) < 
\frac{p}{p-1} \log p$ for all $r \in (0, -\log \epsilon]$.

Choose $r \in (0, -\log \epsilon)$ for 
which there exists a nonnegative integer $m$ with
\begin{equation} \label{eq:stable4}
\frac{q^{-m}}{p-1} \log p + 2|c|s_0r < B r, \qquad
b_i(M,\alpha,0)r + |c|s_0r  < \frac{q^{-m} p}{p-1} \log p
\end{equation}
(this is possible thanks to the second inequality in \eqref{eq:stable1}).
From the second inequality in \eqref{eq:stable4}
and the first inequality in \eqref{eq:stable1}, we deduce
\begin{equation} \label{eq:stable3}
q^m r s_0 < \frac{1}{p-1} \log p.
\end{equation}
For any $s \in (0,s_0)$, choose $\overline{h}(s) \in \gotho_\ell$
with $\alpha_{\overline{h}(s),e^{-s}} \geq \alpha$;
by construction, this forces $v(\overline{h}(s)) = 0$.
Then by Corollary~\ref{C:Frob stability2},
\begin{equation} \label{eq:compare f}
f_j(T_{[\overline{h}(s)]}^* M,r,s)
= b_j(M,\alpha,s) r \qquad (j=1,\dots,n).
\end{equation}

By Proposition~\ref{P:variation1},
$b_1(M, \alpha,s) + \cdots + b_i(M, \alpha,s)$ is nonincreasing 
for $s \in (0, s_0)$. However, we have by the induction hypothesis that
$b_1(M, \alpha,s), \dots, b_{i-1}(M, \alpha,s)$ are constant 
(only \emph{a priori} in a neighborhood of $s_0$, 
but we already ensured that each
$b_j(M, \alpha,s)$ is affine-linear). Hence $b_i(M, \alpha,s)$
is nonincreasing; in particular, we have
$b_i(M,\alpha,s) r \leq b_i(M,\alpha,0)r$.
Combining this with \eqref{eq:compare f} and \eqref{eq:stable4}, we deduce
$f_i(T_{[\overline{h}(s)]}^* M,r,s) < \frac{q^{-m} p}{p-1} \log p$.

Let $\sigma$ be the standard Frobenius lift.
By Corollary~\ref{C:descendant} applied repeatedly,
\begin{equation} \label{eq:frob desc}
f_{i+(q^m-1)n}(T_{[\overline{h}(s)]}^* \sigma^m_* M,q^m r,s)
= q^m f_i(T_{[\overline{h}(s)]}^* M,r,s)
\end{equation}
and
\begin{equation} \label{eq:frob desc2}
\sum_{j=1}^{i + (q^m-1)n}
f_{j}(T_{[\overline{h}(s)]}^* \sigma^m_* M,q^m r,s)
= \frac{p(q^m-1)}{p-1} n \log p +
q^m \sum_{j=1}^i  f_j(T_{[\overline{h}(s)]}^* M,r,s).
\end{equation}
Put $\rho = e^{-r q^m}$.
Pick a cyclic vector of $\sigma^m_* M$ and define
the polynomial $P$ as in Theorem~\ref{T:cyclic}.
By Lemma~\ref{L:type iv stable}
(whose hypotheses are satisfied thanks to \eqref{eq:stable3}), for each $j$,
$|P_j|_{\rho^\alpha,s}$ becomes constant for $s$ sufficiently close to
$s_0$. There is thus a terminal shape of the Newton polygon of $P$.

By both inequalities of \eqref{eq:stable4},
\begin{align*}
\frac{q^{-m}}{p-1} \log p + rs &< Br - 2|c|rs_0 + rs \\
&\leq (B + s_0 + c(s_0-s))r \\
&= b_i(M, \alpha,s)r   \\
&= b_i(M, \alpha,0)r - crs \\
&\leq b_i(M, \alpha, 0)r + |c|rs  \\
&< \frac{q^{-m}p}{p-1} \log p + rs.
\end{align*}
By \eqref{eq:compare f} and \eqref{eq:frob desc},
\[
b_i(M, \alpha,s)r = f_i(T_{[\overline{h}(s)]}^* M,r,s)
= q^{-m} f_{i+(q^m-1)n}(T_{[\overline{h}(s)]}^* \sigma^m_* M,q^m r,s).
\]
We may thus deduce
\begin{equation} \label{eq:trap push}
f_{i+(q^m-1)n}(T_{[\overline{h}(s)]}^* \sigma^m_* M,q^m r,s)
\in \left(\frac{1}{p-1} \log p + q^m rs, \frac{p}{p-1} \log p + q^m rs\right).
\end{equation}
Thanks to \eqref{eq:trap push},
Theorem~\ref{T:cyclic} allows us to read off the values
of $f_{j}(T_{[\overline{h}(s)]}^* \sigma^m_* M,q^m r,s)$ for
$j=1,\dots,i+(q^m-1)n$ from the Newton polygon of $P$ 
under $|\cdot|_{\rho^\alpha,s}$.
By \eqref{eq:frob desc2}, we may deduce that
$b_1(M,\alpha,s) + \cdots + b_i(M, \alpha,s)$ is constant
for $s$ sufficiently large; by the induction hypothesis, this implies
that $b_i(M, \alpha,s)$ is constant for $s$ sufficiently large, as desired.
\end{proof}
\begin{remark}
In trying to understand the proof of Proposition~\ref{P:stable}, it may
help to think of $B$ as playing the role of an absolute measure, which dictates
the choice of the other auxiliary parameters $r,m$
in order to put the situation into the range where
cyclic vectors and Theorem~\ref{T:cyclic} become useful. This crucial role
means that we do not have a similar argument in case $B=0$.
\end{remark}

\begin{cor} \label{C:at most 1}
Suppose $\alpha \in \DD_\ell$ is minimal under domination (i.e.,
 is of type (i) or (iv)).
For $i=1,\dots,n$, the terminal slope of $b_i(M,\alpha,s)$ is at most $1$.
\end{cor}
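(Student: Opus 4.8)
The plan is to split according to Berkovich's classification of minimal points (Proposition~\ref{P:classify points}): such an $\alpha$ is either of type (i), with $r(\alpha)=0$, or of type (iv), with $0<r(\alpha)<1$. The type-(i) case requires nothing new, since Remark~\ref{R:radius 0} already shows that for such $\alpha$ every function $b_i(M,\alpha,s)$ is eventually equal to $s$, hence is affine-linear of slope exactly $1$ near the end of its domain; in particular its terminal slope is $\le 1$.

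So assume $\alpha$ is of type (iv). First I would record the (purely combinatorial) point that $r(\alpha)<1$, so that the terminal slope is actually defined via Definition~\ref{D:terminal slope}: if $r(\alpha)=1$, then in a decreasing sequence of discs $D_{z_i,r_i}\subseteq\gotho_\ell$ with limiting radius $1$ each $r_i$ equals $1$ (being $\le 1$ and $\ge 1$), so each $D_{z_i,r_i}=\gotho_\ell$ and the intersection is nonempty, contradicting type (iv). Put $s_0=-\log r(\alpha)$ and, as in Definition~\ref{D:terminal slope}, pick $s_1<s_0$ on which each $b_i(M,\alpha,s)$ is affine-linear of slope $m_i$ (its terminal slope); recall $b_i(M,\alpha,s_0)=b_i(M,\alpha)$ by Definition~\ref{D:breaks}, and $b_i(M,\alpha,s)\ge s$ throughout the domain, since the subsidiary radii of a cross section never exceed the radius of its disc (cf.\ Definition~\ref{D:generic radius} and the remark in Remark~\ref{R:radius 0}). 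Now fix $i$ and distinguish two subcases. If $b_i(M,\alpha)>s_0$, then Proposition~\ref{P:stable} applies and says that $b_i(M,\alpha,s)$ is constant in a neighborhood of $s_0$, so $m_i=0\le 1$. If instead $b_i(M,\alpha)=s_0$, then on $[s_1,s_0]$ we have $s_0+m_i(s-s_0)=b_i(M,\alpha,s)\ge s$, i.e.\ $(1-m_i)(s_0-s)\ge 0$; choosing $s\in[s_1,s_0)$ forces $m_i\le 1$. Either way the terminal slope is $\le 1$, which finishes the proof.

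The argument is essentially a packaging exercise: the one substantive input is Proposition~\ref{P:stable}, which rules out a positive terminal slope precisely when the terminal value overshoots $s_0$, and the elementary inequality $b_i(M,\alpha,s)\ge s$ then covers the remaining boundary case. The only thing one must be slightly careful about is confirming that Definition~\ref{D:terminal slope} genuinely applies to type-(iv) points, i.e.\ that $r(\alpha)<1$, which is the small observation above; no real obstacle is anticipated.
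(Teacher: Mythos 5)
Your proof is correct and takes essentially the same route as the paper's: split on type (i) versus type (iv) via Remark~\ref{R:radius 0}, invoke Proposition~\ref{P:stable} when $b_i(M,\alpha)>s_0$, and use the elementary inequality $b_i(M,\alpha,s)\ge s$ together with affine-linearity in the boundary case $b_i(M,\alpha)=s_0$. The only (harmless) addition is your explicit check that $r(\alpha)<1$ for type (iv), which the paper leaves implicit.
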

\begin{proof}
Put $s_0 = -\log r(\alpha)$.
If $\alpha$ is of type (i), then all the terminal slopes are equal to 1 by
Remark~\ref{R:radius 0}, so we assume hereafter that $\alpha$ is of type (iv).
If $b_i(M,\alpha) > s_0$, then Proposition~\ref{P:stable} implies that
the terminal slope of $b_i(M,\alpha,s)$ is zero.
Otherwise, for  $s$ slightly less than $s_0$, the function $b_i(M,\alpha,s)-s$
is nonnegative and affine-linear with limit $0$ as $s \to s_0^-$, so its 
slope must be nonpositive. Hence the terminal slope of $b_i(M, \alpha,s)$ is at most
$1$.
\end{proof}

\begin{remark}
Note that we have not ruled out the possibility that the terminal slope is in
the range $(0,1)$, in the case where $\alpha$ is of type (iv) and
$b_i(M, \alpha) = s_0$.
However, in case the terminal slope is known to be an integer,
it is then forced to either equal 1 (in which case $b_i(M,\alpha,s) = s$ for
$s$ in a neighborhood of $s_0$) or to be nonpositive.
\end{remark}

\subsection{Dwork modules}
\label{subsec:Dwork}

Throughout this subsection, retain Hypothesis~\ref{H:variation}. 
We now make a more careful study of the limiting behavior of $b_1(M, \alpha,s)$
when $M$ is a Dwork module.

\begin{lemma} \label{L:union discs}
For any $P_1, \dots, P_m \in \ell[x]$ and any $r_1,\dots,r_m \geq 0$, the
subset
\[
\{\beta \in \DD_\ell: |P_i|_\beta \leq r_i \quad (i=1,\dots,m)\}
\]
is a (possibly empty) finite union of discs, each of which contains an
$\ell$-rational point. In particular, if this set is nonempty, then it contains
an $\ell$-rational point.
\end{lemma}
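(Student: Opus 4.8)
\textbf{Proof plan for Lemma~\ref{L:union discs}.}

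The plan is to reduce to the case of a single polynomial $P$ and then analyze the locus $\{\beta : |P|_\beta \le r\}$ directly via the factorization of $P$ over $\ell$ (using that $\ell$ is algebraically closed, per Hypothesis~\ref{H:variation}). First I would handle the intersection: a finite intersection of finite unions of discs is again a finite union of discs, since $\DD_\ell$ is a tree (by the Corollary to Lemma~\ref{L:base change2}), so the intersection of two discs is either empty, or one of the two, or a disc; hence it suffices to treat $m=1$. So fix $P = c\prod_{j}(x - z_j)$ with $z_j \in \gotho_\ell$ (roots lying in the disc, with $|z_j| \le 1$; if $\deg P = 0$ the locus is all of $\DD_\ell$ when $|c|\le r$ and empty otherwise, so assume $\deg P \ge 1$), and fix $r \ge 0$.

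Next I would describe the locus set-theoretically. For $\beta = \alpha_{z,\rho} \in \DD_\ell$ (every point is of this form after enlarging $\ell$, but since $\ell$ is algebraically closed and complete we may take $z \in \gotho_\ell$, $\rho \in [0,1]$, allowing type (iv) as an infimum), we have $|P|_\beta = |c|\prod_j \max\{\rho, |z - z_j|\}$. The condition $|P|_\beta \le r$ is then a condition on the pair $(z,\rho)$. The key observation is monotonicity: if $\beta \ge \beta'$ then $|P|_\beta \ge |P|_{\beta'}$, so the locus is \emph{downward closed} under domination; since $\DD_\ell$ is a tree rooted at the Gauss point, a downward-closed set that is also "closed" in the appropriate sense is a union of subtrees hanging below finitely many points — i.e., a union of discs. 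To pin this down I would argue: the locus is a union of the "branches below $\beta$" for each $\beta$ in it, and it suffices to show there are only finitely many maximal such $\beta$ and that each maximal $\beta$ has an $\ell$-rational point in its disc. For the latter: a maximal $\beta$ cannot be of type (iv) — a type (iv) point is minimal under domination, so if it lies in a downward-closed set the set contains nothing strictly above it, but one checks the locus always contains points of positive radius above any of its type-(iv) points unless it is a single point, and a single type-(iv) point is not a disc containing an $\ell$-rational point; I would instead show the locus, if nonempty, always contains a point $\alpha_{z,\rho}$ with $z \in \ell$ of maximal radius, by an explicit computation with the piecewise-monomial function $\rho \mapsto |P|_{z,\rho}$ for each root $z = z_j$. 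Concretely: evaluate $r(\beta)$ where $\beta$ ranges over the locus; the supremum radius $\rho_{\max}$ is attained (the function is continuous and the locus is closed in the weak topology, being cut out by $\le$ conditions on continuous evaluation maps), and at a point of radius $\rho_{\max}$ one may translate the center to a root $z_j$, which is $\ell$-rational.

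The finiteness of the number of maximal discs I would get from the following: two distinct maximal points $\beta_1, \beta_2$ in the locus cannot dominate a common point of the locus (else one dominates the other by the tree structure, contradicting maximality), so their discs $D_{z_1,\rho_1}, D_{z_2,\rho_2}$ are disjoint; disjoint discs have disjoint sets of roots of $P$ among $\{z_j\}$ whose minimal distance to the center governs $|P|$, and each such disc of positive radius in the locus must contain at least one root $z_j$ (otherwise shrinking would not decrease $|P|_\beta$ past that root's contribution — more precisely, if $D_{z,\rho}$ contains no $z_j$ then $|P|_{z,\rho} = |P(z)| = |P|_{z,0}$, so the locus would already contain $\alpha_{z,0}$ and the whole branch down to it, and $\beta = \alpha_{z,\rho}$ is not maximal since $\alpha_{z,\rho'} $ for $\rho' > \rho$ might still be in the locus — I need to check the function is locally constant in $\rho$ there and use that to push up to a genuine maximal point with a root inside). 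Since $P$ has at most $\deg P$ roots, there are at most $\deg P$ pairwise-disjoint maximal discs of positive radius, plus possibly some maximal points of type (i) which are themselves $\ell$-rational single-point discs — so the count is finite.

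\textbf{Main obstacle.} The delicate point is not the intersection step or the counting, but establishing cleanly that the locus for a single $P$ is exactly a finite union of \emph{honest discs} (not some more exotic downward-closed subtree), and in particular that maximal points always carry $\ell$-rational representatives — the worry is precisely the interaction with type (iv) points, where a naive argument might produce a maximal point of type (iv) with empty "disc." I expect to resolve this by the explicit piecewise-linear description of $s \mapsto \log|P|_{\alpha_{z,s}}$ (a sum of $\max\{s, \log|z-z_j|\}$ terms, convex and piecewise affine with integer slopes), from which one reads off directly that the sublevel set $\{|P| \le r\}$, restricted to any generic path, is a closed subinterval containing the bottom, and its top endpoint, when of positive radius, is forced to the radius of one of the roots — hence $\ell$-rational. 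Gluing these path-intervals over all $\ell$-rational centers, using disjointness of branches, yields the finite union of discs.
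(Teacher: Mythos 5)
The reduction to $m=1$ is the same as the paper's (the intersection of two discs in $\DD_\ell$ is one of them or empty, so a finite intersection of finite unions is a finite union). For the single-polynomial case, though, the paper takes a genuinely different and much shorter route: induction on $l = \deg P$. It sets $r = \max_{i\neq j}|z_i - z_j|$. If $r_1 \geq |c|r^l$, the locus is exactly the disc of radius $(r_1/|c|)^{1/l}$ centered at $z_1$. Otherwise the locus lies in $\bigcup_i \{\beta : |x-z_i|_\beta < r\}$; each of these open discs omits some root $z_j$ (else two roots at distance exactly $r$ would both lie in it), so $|x-z_j|_\beta$ is constant there, $P$ drops to degree $l-1$, and the induction closes. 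The paper never touches the Berkovich classification of points, the domination order, or any piecewise-linear analysis.

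Your tree-theoretic plan could in principle be completed, but as written it has a concrete error and leaves its self-identified main difficulty unresolved. The claim that the top endpoint of the sublevel interval along a generic path through $z$ is ``forced to the radius of one of the roots'' is false: with $P = x$, $z = 0$, $r_1 = 1/2$, the top endpoint has radius $1/2$ while the only root is at $0$. The fact you actually need is that any maximal disc of positive radius in the locus must contain a root of $P$: if $D_{z,\rho}$ contains no root, then $|P|$ is constant on a strictly larger disc around $z$, so $\alpha_{z,\rho}$ was not maximal. Your type (iv) concern is the right thing to raise, and it has a clean resolution you should supply rather than leave as a worry: if $\alpha$ is of type (iv) and $\alpha_{z_i,r_i}$ is a decreasing sequence with infimum $\alpha$ and $\bigcap_i D_{z_i,r_i}=\emptyset$, then since $P$ has only finitely many roots, some $D_{z_i,r_i}$ contains none of them, whence $|P|_{z_i,r_i}=|P|_\alpha$; so $\alpha$ is strictly dominated inside the locus by a disc-type point and can never be maximal. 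With these two repairs your plan goes through, but the paper's degree induction is considerably more economical.
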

\begin{proof}
Note that the intersection of 
two discs $D_1, D_2$ is either $D_1$, $D_2$, or the empty set.
It thus suffices to check the case $i=1$, which we treat as follows.

Factor $P_1 = c (x- z_1) \cdots(x-z_l)$. If $l = 0$, then the subset is
either empty or all of $\DD_\ell$. Otherwise, we proceed by induction on $l$.
Put $r = \max_{i \neq j} \{|z_i - z_j|\}$.
If $r_1 \geq |c| r^l$, then the desired region is precisely the disc
with center $z_1$ and radius $(r_1/|c|)^{1/l}$. (This includes the case
where the $z_i$ are all equal, as then $r=0$.) Otherwise,
the region is contained in the union of the open discs of radius $r$
with centers $z_1,\dots,z_l$. The disc containing $z_i$ fails to contain some
$z_j$, so on that disc $|x-z_j|$ is identically equal to $|z_i-z_j|$.
Hence we reduce to $l$ separate problems involving polynomials of degree
less than $l$, so the induction hypothesis finishes the job.
\end{proof}

\begin{prop} \label{P:identically zero}
Suppose that $\alpha$ is of type (iv).
Suppose that $M \in C_{v,[\epsilon,1)}$ is the 
free module of rank $1$ generated by
$\bv$ satisfying
\[
\nabla(\bv) = \sum_{i \not\equiv 0 \pmod{p}} 
\pi i \tilde{u}_i x^{i-1} \bv \otimes dx
\]
for some $\tilde{u}_i \in \gotho_L$, all but finitely many of which are zero,
and some $\pi \in K$ satisfying $\pi^{p-1} = -p$.
Assume also that $b_1(M, \alpha,s)$ is affine-linear on $[0, s_0]$.
Then the slope of $b_1(M, \alpha,s)$ equals $0$ unless 
$b_1(M, \alpha,s) - s$ is identically zero.
\end{prop}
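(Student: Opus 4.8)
The plan is to argue by contradiction, following the strategy already used for Corollary~\ref{C:at most 1}, and then to bring in the explicit Dwork radius formula. \emph{Reductions.} If $b_1(M,\alpha) > s_0$, then Proposition~\ref{P:stable} makes $b_1(M,\alpha,s)$ constant on a one-sided neighborhood of $s_0$; being affine-linear on $[0,s_0]$, it then has slope $0$ and we are done. So assume $b_1(M,\alpha) = b_1(M,\alpha,s_0) = s_0$. Since $\rank M = 1$, Proposition~\ref{P:variation1}(a) forces the slope $c$ of $b_1(M,\alpha,\cdot)$ to be an integer, and Corollary~\ref{C:at most 1} gives $c \le 1$. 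Write $b_1(M,\alpha,s) = cs + s_0(1-c)$. If $c = 1$ then $b_1(M,\alpha,s) = s$, the excepted case; if $c = 0$ we are done. The case $c \in (0,1)$ cannot occur (it is not an integer, but in any event Proposition~\ref{P:variation1}(b) would force $b_1(M,\alpha,\cdot)$ nonincreasing throughout $[0,s_0)$, where $b_1(M,\alpha,s) > s$). So it remains to rule out $c = -j_0$ with $j_0 \ge 1$; suppose for contradiction that this holds.

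\emph{Passing to cross sections.} Rescaling $x$ by an element of $L$ of norm at most $1$ replaces each $\tilde u_i$ by $\lambda^i \tilde u_i \in \gotho_L$, so it preserves the hypotheses on $\nabla$; using such a rescaling we may assume $s_0$ is as small as we please and that $\alpha$ does not lie in the open unit disc, so that every center $\overline{h}(s) \in \gotho_\ell$ of a disc of radius $e^{-s} \in (r(\alpha),1)$ containing $\alpha$ has $v(\overline{h}(s)) = 0$. By Lemma~\ref{L:solvable} we may shrink $\epsilon$ so that $f_1(M,r,0) < \tfrac{p}{p-1}\log p$ for all $r \in (0,-\log\epsilon]$; fix $\rho = e^{-r}$ with $r \in (0,-\log\epsilon)$ and $r s_0 < \log p$, and pass to the standard Frobenius lift by Lemma~\ref{L:change Frob}. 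By Corollary~\ref{C:Frob stability2}, for each $s \in (0,s_0)$,
\[
b_1(M,\alpha,s)\, r = f_1\!\left(T^*_{[\overline{h}(s)]}M,\ r,\ s\right).
\]

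\emph{The Dwork formula.} The cross section $(T^*_{[\overline{h}(s)]}M)_\rho$ is the rank-one $\nabla$-module over $F_\rho\langle x\rangle$ with $\nabla(\bv) = \pi\,\tfrac{d}{dx}g(x+[\overline{h}(s)])\,\bv\otimes dx$, where $g(x) = \sum_{i \not\equiv 0 \pmod{p}}\tilde u_i x^i$. Although $g(x+[\overline{h}(s)])$ may involve monomials $x^{pm}$, the coefficients of those monomials are divisible by $p$, so Lemma~\ref{L:dwork} still applies after discarding them: writing $C_m(T) = \sum_{i\ge m}\binom{i}{m}\tilde u_i T^{\,i-m} \in \gotho_L[T]$, we get
\[
f_1\!\left(T^*_{[\overline{h}(s)]}M, r, s\right) = \max\left\{\, rs,\ \max_{\substack{j\ge 0\\ p\nmid j+1}}\left(\log\big|C_{j+1}([\overline{h}(s)])\big|_{\rho^v} - j\,rs\right)\right\}.
\]
Now use the type (iv) hypothesis: fixing $\overline{h} = \overline{h}(s_1)$ for a small $s_1 \in (0,s_0)$ works for all $s \le s_1$, and near $s_0$ the value $\big|C_{j+1}([\overline{h}(s)])\big|_{\rho^v}$ equals $\big|C_{j+1}(x)\big|_{\rho^\alpha,s}$, which is eventually independent of $s$ by Lemma~\ref{L:type iv stable}(b) (via Lemma~\ref{L:union discs}). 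Thus, on a neighborhood of each endpoint of $(0,s_0)$, $b_1(M,\alpha,\cdot)$ is the maximum of $s$ and of finitely many affine functions of $s$ with integer slopes; since $b_1(M,\alpha,\cdot)$ is affine-linear on $[0,s_0]$, a single one of these must dominate throughout. If that is the term $rs$ or a term of slope $0$ (i.e. $j = 0$) we are done; otherwise $b_1(M,\alpha,s) = \tfrac1r\log\big|C_{j_0+1}([\overline{h}(s)])\big|_{\rho^v} - j_0 s$ with $j_0 \ge 1$, and matching values at $s_0$ forces $\big|C_{j_0+1}([\overline{h}(s)])\big|_{\rho^v} = \rho^{-(j_0+1)s_0} > 1$ throughout $(0,s_0)$, and this term to dominate $\tfrac1r\log\big|C_1([\overline{h}(s)])\big|_{\rho^v}$.

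\emph{The main obstacle.} It remains to contradict $j_0 \ge 1$, and this is where the special shape of $M$ is essential. The point is that $C_1 = g'$, and since $g$ omits the monomials $x^{pm}$, $g'$ "sees'' every monomial of $g$; comparing the forced equality $\big|C_{j_0+1}([\overline{h}(s)])\big|_{\rho^v} = \rho^{-(j_0+1)s_0}$ with $\big|C_1([\overline{h}(s)])\big|_{\rho^v} \le \rho^{-s_0}$ (which follows from the previous display, the $j=0$ term being dominated), one gets, for $s$ near $s_0$,
\[
\big|C_{j_0+1}([\overline{h}(s)])\big|_{\rho^v} \ \ge\ \rho^{-j_0 s_0}\,\big|C_1([\overline{h}(s)])\big|_{\rho^v},
\]
i.e. a higher derivative of $g$ would be larger (by a definite factor $\rho^{-j_0 s_0}>1$) than $g'$ at the points $[\overline{h}(s)]$ along the path to $\alpha$. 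A Cauchy-type estimate for $g$ on the Gauss circles around $[\overline{h}(s)]$, together with the fact (Lemma~\ref{L:union discs}) that for the type (iv) point $\alpha$ the valuation of any polynomial stabilizes along the path and is pinned down by the Newton data of $g$, makes this impossible: differentiation cannot manufacture such growth. Carrying out this comparison precisely — tracking the $\pi$-adic corrections in $C_{j_0+1}$ and $C_1$ and the off-centered Frobenius antecedent bounds (Theorem~\ref{T:antecedent2}) — is the technical heart of the argument, and is the step I expect to require the most care.
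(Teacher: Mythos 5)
There is a genuine gap, and you acknowledge it yourself: the ``main obstacle'' paragraph is a description of a step you \emph{expect} to require care, not a proof of it. As written, the proposal reduces correctly to the case of negative slope $-j_0$ and then stops with a heuristic (``differentiation cannot manufacture such growth'') that is not substantiated. This is exactly the crux of the statement, and it needs an actual argument.

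There is also a more localized error in the reduction to the Dwork formula after translation. You claim that the coefficients of $x^{pm}$ in $g(x+[\overline{h}(s)])$ are divisible by $p$ and hence can be discarded. Neither half of this is correct: the coefficient of $x^{pm}$ is $\sum_{i} \binom{i}{pm}\tilde{u}_i[\overline{h}(s)]^{i-pm}$, and these need not be $p$-divisible (e.g.\ for $m=0$ it is $g([\overline{h}(s)])$, a unit in general). Moreover, even a $p$-divisible coefficient cannot simply be dropped, since Lemma~\ref{L:dwork} requires the relevant $r_j$ to be \emph{zero}, not small. The paper's Dwork-formula manipulation passes through the Artin-Schreier cleanup operator $AS(\cdot)$ (taking $p$-th roots of coefficients at $p$-divisible exponents and folding them in), precisely to obtain a prepared parameter to which Lemma~\ref{L:dwork} applies; your argument lacks this step.

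Finally, the key idea of the paper's proof is different from what you sketch. Rather than an analytic (Cauchy-type) estimate comparing $|C_{j_0+1}|$ with $|C_1|$, the paper uses a nonempty-intersection argument (Lemma~\ref{L:union discs}) to produce a second point $\alpha' = \alpha_{\overline{h}',e^{-s_0}}$ with a rational center lying in the same disc as $\alpha$, for which $b_1(M,\alpha,s) = b_1(M,\alpha',s)$ on all of $[0,s_0]$ but the generic paths to $\alpha$ and $\alpha'$ branch at some $s_1 \in (0,s_0)$. The subharmonicity estimate Theorem~\ref{T:subharmonicity}, applied at the branch point $s_1$ after a rescaling, yields $1-i_0 \le 2(1-i_0)$ and hence $1-i_0 \ge 0$. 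Combined with $1-i_0 \le 0$ this forces $i_0 = 1$, i.e.\ slope $0$. This subharmonicity argument is the ingredient your proposal is missing; without it (or a worked-out substitute) the proposal does not establish the proposition.
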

Note that the existence of a Frobenius structure on $M$ is a consequence of
the fact that the series $\exp(\pi x - \pi x^q)$ converges in a disc of radius 
strictly greater than $1$. Also note that if $b_1(M, \alpha) > s_0$, then
the claim follows from Proposition~\ref{P:stable}, although we will not use
this in the proof.

\begin{proof}
We may assume that
$b_1(M, \alpha,s)-s$ is not identically zero, so that
by Proposition~\ref{P:variation1}, 
the slope of $b_1(M, \alpha,s)$ is some nonpositive integer.
By Lemma~\ref{L:dwork},
\[
b_1(M, s) = \max\{s, \max_{i \neq 0} \{ -v(u_i) + (1-i)s\}\}
\]
where $u_i \in \ell$ is the reduction of $\tilde{u}_i$.
Put $u = \sum_i u_i x^i$.

For $P(x) = \sum_{i=0}^{m} P_i x^i \in \ell[x]$, define
\[
AS(P)(x) = \sum_{j \geq 0, j \not\equiv 0 \pmod{p}} 
x^j \sum_{i=0}^\infty P_{j p^i}^{1/p^i}.
\]
For $s \in [0,s_0)$, choose $\overline{h} \in \ell$ with
$\alpha_{\overline{h}, e^{-s}} \geq \alpha$.
Let $T_{\overline{h}}: \ell[x] \to \ell[x]$ denote the substitution
$x \mapsto x + \overline{h}$.
Let $u'_i$ be the coefficient of $x^i$ in $AS(T_{\overline{h}}(u))$. Then
\[
b_1(M, \alpha, s) = \max\{s, \max_i \{ -v(u'_i) + (1-i)s\}\}.
\]
Let $1-i_0 \leq 0$ be the slope of $b_1(M, \alpha,s)$; then the term
$i=i_0$ must dominate everywhere.

Now choose $\overline{h}$ in a complete extension $\ell'$ of $\ell$ so 
that in fact
$\alpha = \alpha_{\overline{h}, e^{-s_0}}$. Then
\begin{equation} \label{eq:dominate}
-v(u'_{i}) + (1-i) s \leq b_1(M, \alpha, s) \qquad (s \in \{0, s_0\}).
\end{equation}

We can choose a nonnegative integer $l$ so that
each $u'_i$ is a polynomial in $\overline{h}^{1/p^l}$.
That is, we may write $u'_i = \sum_j u'_{ij} \overline{h}^{j/p^l}$
with $u'_{ij} \in \ell$. Define
\[
U'_i = \sum_j u'_{ij} x^j \in \ell[x].
\]
We then define a subset $S$ of $\DD_\ell$ by putting
\[
S = \{\beta \in \DD_\ell: \log |U'_i|_\beta + (1-i) s \leq 
b_1(M, \alpha, s) \quad (s \in \{0, s_0\}; i \geq 0)\}.
\]
The set $S$ is nonempty
because it contains $\alpha_0 = \alpha_{\overline{h}^{1/p^l},0}|_{\ell[x]}$.
Thus by
Lemma~\ref{L:union discs}, $S$ must
be a nonempty union of discs each containing an $\ell$-rational point.

Consequently, we can find $(\overline{h}')^{1/p^l} \in \ell$ in the intersection of 
$S$ with the open unit disc containing $\alpha_0$. 
Put $\alpha' = \alpha_{\overline{h}', e^{-s_0}}$.
Since $\overline{h}'$ lies in the open unit disc containing
$\overline{h}$, and since $\alpha$ is of type (iv),
there is a greatest value $s_1 \in (0, s_0)$ where
$\alpha_{\overline{h}', e^{-s_1}} \geq \alpha$.
We now have that 
$b_1(M, \alpha,s) = b_1(M, \alpha',s)$ 
for $s \in [0, s_1]$, but
$b_1(M, \alpha,s) \geq b_1(M, \alpha', s)$ for $s \in [0, s_0]$
by \eqref{eq:dominate}. Since
the function $s \mapsto b_1(M, \alpha, s)$ is affine by hypothesis
while the function $s \mapsto b_1(M, \alpha',s)$ is convex by 
Proposition~\ref{P:variation1}, we conclude that
$b_1(M, \alpha, s) = b_1(M, \alpha', s)$ for $s \in [0, s_0]$.

By Theorem~\ref{T:subharmonicity} applied after a rescaling, the left
slope of $b_1(M, \alpha,s)$ at $s=s_1$ must be less than or equal to
the sum of the right slopes of $b_1(M, \alpha,s)$ and $b_1(M, \alpha',s)$
at $s = s_1$. Consequently, $(1-i_0) \leq 2(1-i_0)$, whence
$1-i_0 \geq 0$. Since $1 - i_0 \leq 0$ from earlier,
the only possibility is $i_0 = 1$,
so the slope of $b_1(M, \alpha,s)$ is $0$, as desired.
\end{proof}

\subsection{Shrinking the domain of definition}
\label{subsec:shrinking}

We will sometimes need a variation on the construction of the 
category $C_{v,I}$,
corresponding to working on an annulus rather than a disc in 
the $x$-direction.
We no longer retain Hypothesis~\ref{H:variation}.

\begin{defn}
For $I \subseteq (0,1)$ an interval and $J \subseteq [0, +\infty]$ another 
interval, let $R_{v,I,J}$ be the Fr\'echet completion of $L_{v,I}[x]$
(if $+\infty \in J$) or $L_{v,I}[x,x^{-1}]$ (if $+\infty \notin J$)
for the norms $|\cdot|_{\rho^v,s}$ for $\rho \in I$, $s \in J$.
\end{defn}

\begin{lemma} \label{L:intersect}
Let $J \subseteq J' \subseteq [0, +\infty]$ be intervals.
Suppose $f \in R_{v,I,J}$ has the property that for each $\rho \in I$,
the image of $f$ in $\Gamma(A_{F_\rho}(\rho^J), \calO)$ lifts to
$\Gamma(A_{F_\rho}(\rho^{J'}), \calO)$. Then $f$ lifts to
$R_{v,I,J'}$.
\end{lemma}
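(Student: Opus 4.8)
The plan is to argue coefficientwise, transferring the hypothesis --- which is stated fibrewise over the points $\rho \in I$ --- through the cross-section maps of Definition~\ref{D:cross section}. First I would record the concrete description of the rings involved: just as for $R_{v,I}$, an element of $R_{v,I,J}$ is uniquely a (Laurent) series $\sum_i f_i x^i$ with $f_i \in L_{v,I}$ such that $|f_i|_{\rho^v}\rho^{is} \to 0$ as $|i| \to \infty$ for every $\rho \in I$ and every $s \in J$ (the sum running over $i \geq 0$ when $+\infty \in J$ and over $i \in \ZZ$ otherwise), and the norm is $|\sum_i f_i x^i|_{\rho^v,s} = \sup_i |f_i|_{\rho^v}\rho^{is}$; in particular the partial sums converge to the element and the coefficient-extraction maps $f \mapsto f_i$ are continuous. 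Moreover, since $J \subseteq J'$ endows $L_{v,I}[x]$ (resp.\ $L_{v,I}[x,x^{-1}]$) with a finer Fréchet topology when using the norms indexed by $J'$, there is a natural restriction map $R_{v,I,J'} \to R_{v,I,J}$, and it leaves coefficients unchanged. Thus to prove the lemma it suffices to show that the coefficients $f_i \in L_{v,I}$ of the given $f$ satisfy $|f_i|_{\rho^v}\rho^{is} \to 0$ for every $\rho \in I$ and $s \in J'$, and in addition $f_i = 0$ for $i < 0$ in the case $+\infty \in J' \setminus J$; for then $\sum_i f_i x^i$ defines an element $\hat f \in R_{v,I,J'}$ restricting to $f$.

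Next I would fix $\rho \in I$. By the constructions of Definition~\ref{D:analytic} and Definition~\ref{D:cross section}, the natural map $L_{v,I} \to F_\rho$ is isometric for $|\cdot|_{\rho^v}$, so it induces a coefficient-preserving continuous map $R_{v,I,J} \to \Gamma(A_{F_\rho}(\rho^J), \calO)$ carrying $f = \sum_i f_i x^i$ to the series $\sum_i \overline{f_i}\, x^i$, where $\overline{f_i}$ denotes the image of $f_i$ in $F_\rho$; this series is precisely the ``image of $f$'' appearing in the statement. By hypothesis it is the restriction of some $g \in \Gamma(A_{F_\rho}(\rho^{J'}), \calO)$. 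Since $\rho < 1$ we have $\rho^J \subseteq \rho^{J'}$, so the restriction map $\Gamma(A_{F_\rho}(\rho^{J'}), \calO) \to \Gamma(A_{F_\rho}(\rho^J), \calO)$ leaves Laurent coefficients unchanged, whence the expansion of $g$ is again $\sum_i \overline{f_i}\, x^i$; in particular $\overline{f_i} = 0$ for $i < 0$ whenever $\rho^{J'}$ contains $0$, i.e.\ whenever $+\infty \in J'$. Since $g$ converges on $A_{F_\rho}(\rho^{J'})$, we get $|\overline{f_i}|_{\rho^v}\rho^{is} \to 0$ as $|i| \to \infty$ for every $s \in J'$, and by the isometry $L_{v,I} \hookrightarrow F_\rho$ the same holds with $f_i$ in place of $\overline{f_i}$, and likewise $f_i = 0$ for $i < 0$ when $+\infty \in J'$.

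As $\rho \in I$ was arbitrary, the required decay and vanishing hold for all $\rho \in I$ and $s \in J'$, so by the reduction in the first paragraph the series $\sum_i f_i x^i$ defines an element $\hat f \in R_{v,I,J'}$; its image under $R_{v,I,J'} \to R_{v,I,J}$ has the same coefficients $f_i$, hence equals $f$. This completes the argument.

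I expect the only delicate point to be the bookkeeping in the middle step: verifying that the cross-section map $R_{v,I,J} \to \Gamma(A_{F_\rho}(\rho^J), \calO)$ really does send the convergent series $\sum_i f_i x^i$ to $\sum_i \overline{f_i}\, x^i$ (using continuity together with the isometry of $L_{v,I} \hookrightarrow F_\rho$), that restriction between function rings on nested half-open annuli or discs does not disturb Laurent coefficients, and the correct treatment of the interval endpoints --- especially the case $+\infty \in J' \setminus J$, where one must observe that the hypothesis forces the negative-index coefficients of $f$ to vanish before $f$ can possibly lift into a ring of honest power series. Everything else is a direct unwinding of the concrete descriptions of $R_{v,I,J}$ and $R_{v,I,J'}$ as Fréchet completions.
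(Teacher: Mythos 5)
Your proposal is correct and takes essentially the same coefficientwise approach as the paper's proof: write $f=\sum_i f_i x^i$, observe that membership in $R_{v,I,J}$ forces $f_i\in L_{v,I}$, that liftability to $\Gamma(A_{F_\rho}(\rho^{J'}),\calO)$ forces the decay $|f_i|_{\rho^v}\rho^{is}\to 0$ for $s\in J'$, and conclude. Your version spells out a few bookkeeping points (isometry of $L_{v,I}\hookrightarrow F_\rho$, invariance of Laurent coefficients under restriction, the vanishing of negative coefficients when $+\infty\in J'\setminus J$) that the paper leaves implicit, but the argument is the same.
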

\begin{proof}
In $\Gamma(A_{F_\rho}(\rho^J),\calO)$, we can write
$f = \sum_{i \in \ZZ} f_i x^i$ with $f_i \in F_\rho$.
In order for $f$ to lift to $R_{v,I,J}$, we must have
$f_i \in L_{v,I}$ for each $i$; in order for $f$ to lift to
$\Gamma(A_{F_\rho}(\rho^{J'}),\calO)$, for each $s \in J'$ we must have
$|f_i|_{\rho^v} \rho^{si} \to 0$ as $i \to \pm \infty$. These conditions
together imply $f \in R_{v,I,J'}$.
\end{proof}

\begin{defn}
Let $I_1 \subseteq I_2 \subseteq \cdots$ be an increasing sequence of
closed intervals with union $I$,
and let $J_1 \subseteq J_2 \subseteq \cdots$ be an increasing sequence
of closed intervals with union $J$. (If $+\infty \in J$, we require
$+\infty \in J_i$ for all $i$.)
For this definition only,
put $R_{i} = R_{v,I_i,J_i}$.
Let $C_{v,I,J}$ be the category whose
elements are sequences $M_1,M_2,\dots$ with $M_i$ a finite locally free
$\nabla$-module over $R_{i}$, 
equipped with isomorphisms $M_{i+1} \otimes_{R_{i+1}} R_i \cong M_i$.
Note that this category is canonically independent of the choice of the
$I_i$ and $J_i$, and that $C_{v,I,[0,+\infty]} = C_{v,I}$.
Let $C_{v,*,J}$ be the direct limit of $C_{v,[\epsilon,1),J}$ as
$\epsilon \to 1^-$.
\end{defn}

\begin{defn}
For $M \in C_{v,I,J}$ and $\rho \in I$, we may
base extend $M$ to obtain a $\nabla$-module $M_\rho$
on the annulus $A_{F_\rho}(\rho^J)$.
As in Definition~\ref{D:cross section},
we call $M_\rho$ the \emph{cross section} of $M$
at $\rho$. For $s \in I$, put $R(M, \rho,s) = R((M_\rho)_{\rho^s})$ and
$f_i(M, r,s) = f_i(M_{e^{-r}}, rs)$
according to Definition~\ref{D:subsidiary}. 
\end{defn}

\begin{lemma} \label{L:intersect2}
Let $J \subseteq J' \subseteq [0, +\infty]$ be intervals.
Suppose $M \in C_{v,I,J'}$ and that $\bv$ is a section of the restriction
of $M$ to $C_{v,I,J}$, such that for each $\rho \in I$, the image
of $\bv$ in $M_\rho$ lifts from $A_{F_\rho}(\rho^J)$ to
$A_{F_\rho}(\rho^{J'})$. Then $\bv$ is a section of $M$ itself.
\end{lemma}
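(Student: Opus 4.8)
The plan is to reduce this statement about sections of $\nabla$-modules to the scalar statement already established in Lemma~\ref{L:intersect}, by working in a local trivialization. First, since $M$ is finite locally free over each $R_{v,I_i,J'_i}$ and the question of whether $\bv$ extends is local on $\Spec$ of these rings (more precisely, it suffices to check after passing to the $R_i = R_{v,I_i,J'_i}$ for each $i$ in a defining sequence, and the extension problem for each $i$ is compatible with the transition maps), we may reduce to the case where $M$ is free over each $R_i$, say with basis $\be_1, \dots, \be_n$. Writing $\bv = \sum_{j=1}^n c_j \be_j$ with $c_j \in R_{v,I,J}$ the coordinates of $\bv$ in this basis over the smaller annulus, the claim becomes: each $c_j$ lies in $R_{v,I,J'}$.

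Next I would verify the hypothesis needed to invoke Lemma~\ref{L:intersect} for each $c_j$. For a fixed $\rho \in I$, the element $\be_1, \dots, \be_n$ remains a basis of $M_\rho$ over $\Gamma(A_{F_\rho}(\rho^{J'}), \calO)$, and hence also over $\Gamma(A_{F_\rho}(\rho^{J}), \calO)$ after restriction. By hypothesis the image of $\bv$ in $M_\rho$ lifts from $A_{F_\rho}(\rho^J)$ to $A_{F_\rho}(\rho^{J'})$; since the lift, if it exists, is unique (the restriction map on sections of a free module over the relevant rings of analytic functions is injective, because $\Gamma(A_{F_\rho}(\rho^{J'}),\calO) \hookrightarrow \Gamma(A_{F_\rho}(\rho^J),\calO)$ is injective), the lift must have the same coordinates $c_j$ with respect to $\be_1, \dots, \be_n$. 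Therefore the image of each $c_j$ in $\Gamma(A_{F_\rho}(\rho^J),\calO)$ lifts to $\Gamma(A_{F_\rho}(\rho^{J'}),\calO)$. This is exactly the hypothesis of Lemma~\ref{L:intersect}, so we conclude $c_j \in R_{v,I,J'}$ for each $j$, and hence $\bv$ is a section of $M$ itself.

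The only point requiring a little care — and the main obstacle — is the reduction to the free case: one must check that extendability of the section is detected at the level of the individual $R_i$'s in a compatible way, and that "locally free" can genuinely be upgraded to "free" for this purpose. This is handled by noting that $R_{v,I_i,J'_i}$, for $I_i, J_i$ closed, is (as in Remark~\ref{R:principal} and its analogue for the two-variable rings via the results of \cite{kedlaya-slope}) close enough to a principal ideal domain that finite locally free modules are free; alternatively, one localizes on a finite affine cover of $\Spec R_i$ trivializing $M$, checks the coordinate statement on each piece, and glues, using that the $c_j$ are unambiguously determined over each overlap. Since the rest of the argument is a formal manipulation with restriction maps and an appeal to Lemma~\ref{L:intersect}, this is where all the content lies.
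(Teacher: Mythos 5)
Your overall strategy --- express the section in coordinates and invoke the scalar Lemma~\ref{L:intersect} --- is the same as the paper's, which simply cites \cite[Lemma~2.3.1]{kedlaya-xiao} for the reduction from module-valued sections to scalars. However, the reduction to the free case, which you yourself identify as the point ``where all the content lies,'' remains a genuine gap, and neither of your two proposed remedies is sound as stated. First, the two-variable rings $R_{v,I_i,J_i}$ are not principal ideal domains: Remark~\ref{R:principal} concerns only the one-variable coefficient rings $L_{v,I_i}$, and adjoining $x$ raises the Krull dimension by one, so ``finite locally free implies free'' cannot be obtained this cheaply. Second, the alternative of localizing on a Zariski cover of $\Spec R_i$ that trivializes $M$ fails structurally: the cross-section map $R_i \to F_\rho\langle x\rangle$ underlying $M_\rho$ does not factor through any proper Zariski localization of $R_i$, so the hypothesis about lifting inside $M_\rho$ cannot be transported to the local pieces, and there is no ``coordinate statement on each piece'' to check.

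The clean way to carry out the reduction --- which I expect is the content of the cited \cite[Lemma~2.3.1]{kedlaya-xiao} --- is not to make $M$ free but to realize it as a direct summand of a free module, using that finite locally free means finitely generated projective. Fix $M_i \oplus N_i \cong R_i^n$ with idempotent $e$ projecting onto $M_i$. Then $(\bv,0)$ is an $n$-tuple in $R_{v,I_i,J_i}^n$; the lifting hypothesis passes to each coordinate by exactly the uniqueness-of-lift argument you give (the restriction $\Gamma(A_{F_\rho}(\rho^{J'}),\calO)\hookrightarrow\Gamma(A_{F_\rho}(\rho^J),\calO)$ is injective); Lemma~\ref{L:intersect} lifts each coordinate to $R_{v,I_i,J'_i}$, giving $\tilde{\bv} \in R_{v,I_i,J'_i}^n$; and finally $e(\tilde{\bv}) = \tilde{\bv}$ because this identity can be checked after the injective restriction to $R_{v,I_i,J_i}^n$, where it holds since $\bv$ already lies in $M_i \otimes_{R_i} R_{v,I_i,J_i}$. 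This closes the gap with no appeal to freeness or to Zariski covers; the rest of your argument is fine.
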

\begin{proof}
This follows from Lemma~\ref{L:intersect} using 
\cite[Lemma~2.3.1]{kedlaya-xiao}.
\end{proof}

As in Lemma~\ref{L:convex}, we obtain the following.
\begin{lemma}
Take $M \in C_{v,I,J}$ of rank $n$.
For $i=1,\dots,n$, for $s \in J \setminus \{+\infty\}$ fixed, the function
\[
r \mapsto f_1(M,r,s) + \cdots + f_i(M,r,s)
\]
is convex. 
\end{lemma}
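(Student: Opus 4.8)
The plan is to mimic directly the proof of Lemma~\ref{L:convex}, now in the presence of the extra interval $J$ in the $x$-direction. The key point is that convexity of $r \mapsto \sum_{j=1}^i f_j(M,r,s)$ is a statement about the cross sections $M_\rho$, which are $\nabla$-modules on annuli $A_{F_\rho}(\rho^J)$ rather than discs; the variational results we need (Theorem~\ref{T:variation}, Theorem~\ref{T:descendant}, and the matrix-of-action arguments from \cite{kedlaya-course}) are already stated for annuli, so the analytic input is unchanged. What must be checked is only that the ring-theoretic scaffolding of Lemma~\ref{L:convex} survives: that for $I$ closed the ring $R_{v,I,J}$ (or rather its base extension $L'_{v',I,J}$ obtained by passing to the generic fibre as in Section~\ref{sec:relative}) is still a principal ideal domain, so that a finite locally free $\nabla$-module becomes free and a cyclic-vector/matrix argument applies.

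First I would reduce to the case $I$ closed, and then, exactly as in Lemma~\ref{L:convex}, enlarge $\ell$ to contain an element $\overline{a}$ with $v(\overline{a}) = s$ and pull back along $x \mapsto x[\overline{a}]$ to reduce to $s = 0$; this is legitimate here because the substitution $x \mapsto x[\overline{a}]$ preserves the annulus structure (it merely rescales $J$). Next I would pass to the generic fibre: form the base extension $M'$ of $M$ to $L'_{v',I,J}$ along the lines of the construction preceding Lemma~\ref{L:convex}, using the Frobenius lift $x \mapsto x^q$, so that $M'$ is finite free. The analogue of \cite[Proposition~2.6.8]{kedlaya-slope} that we need — that $L'_{v',I,J}$ (respectively $L'_{v',I}$ adjoined with an invertible $x$-variable over a closed interval $J$) is a principal ideal domain — follows from the disc case together with the standard localization/completion arguments, since $R_{v,I,J}$ for $J$ closed not containing $+\infty$ is obtained from $R_{v,I}$-type rings by inverting $x$ and completing, and such operations preserve the PID property when the base is a PID; alternatively one may cite the Bézout property and the Noetherian-type finiteness already invoked in Remark~\ref{R:principal}.

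With $M'$ free, I would then follow the proof of \cite[Theorem~11.3.2]{kedlaya-course} verbatim: it suffices to prove, for each $c > 0$, that $r \mapsto \sum_{j=1}^i \max\{f_j(M_{e^{-r}},0), c\}$ is convex in a neighborhood of any $r_0$ in the interior of $-\log I$; by rescaling put $r_0 \in (0,\tfrac{1}{p-1}\log p)$, replace $M$ by a Frobenius descendant and use Theorem~\ref{T:descendant} to reduce the case of a given $c$ to that of $pc$, and iterate until $c > \tfrac{1}{p-1}\log p$. In that range, the argument of \cite[Lemma~11.5.1]{kedlaya-course} (compare \cite[Theorem~2.2.6(d)]{kedlaya-xiao}) produces a basis of $M'$ on which $\frac{d}{dx}$ acts via a matrix $N$ whose eigenvalue norms under $|\cdot|_{e^{-r}}$ recover the large subsidiary radii, and convexity is read off from the Newton polygon of the characteristic polynomial of $N$ using Lemma~\ref{L:hadamard} in place of \cite[Proposition~8.2.3(b)]{kedlaya-course}; the fact that we are on an annulus rather than a disc affects none of this, since the eigenvalue-norm argument is purely local at a fixed $\rho$. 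The main obstacle, such as it is, is the PID (or Bézout) statement for the annulus rings $L'_{v',I,J}$: this is the one place where the disc argument does not transcribe word-for-word, and I would want to confirm it reduces cleanly to Remark~\ref{R:principal} rather than requiring a genuinely new argument.
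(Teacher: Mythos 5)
Your approach is essentially the same as what the paper intends: the statement is asserted with no further argument than ``As in Lemma~\ref{L:convex}, we obtain the following,'' and the analytic machinery you invoke (the rescaling to $s=0$, Theorem~\ref{T:descendant} to push $c$ above $\frac{1}{p-1}\log p$, the eigenvalue-norm reading of subsidiary radii from \cite[Lemma~11.5.1]{kedlaya-course}, Newton polygons via Lemma~\ref{L:hadamard}) does carry over verbatim to annulus cross sections.

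The one obstacle you flag, however, is not actually there. You do not need a PID theorem for some new ring $L'_{v',I,J}$, because the generic-fibre passage does not produce a ring depending on $J$ at all. The field $L'$ built in the paragraph preceding Lemma~\ref{L:convex} is the closure of $L(x)$ inside $W((\ell')^{\perf}) \otimes_{W(k)} K$, where $\ell'$ is the completion of $\ell(x)$ at the $0$-Gauss valuation; the variable $x$ is absorbed into the ground field, where it is a unit of norm $1$. Hence $L'_{v',I}$ already \emph{is} the ring you land in, and any reasonable definition of $L'_{v',I,J}$ simply reproduces $L'_{v',I}$. Concretely: after you rescale to $s=0$, Remark~\ref{R:gen fib intersect} embeds $R_{v,I,J}$ into $L'_{v',I}$, and you base extend $M$ along this inclusion. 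The PID property of $L'_{v',I}$ for $I$ closed is already on record (\cite[Proposition~2.6.8]{kedlaya-slope}, via Remark~\ref{R:principal}), so no new ring-theoretic input is needed; the disc/annulus distinction in $x$ is literally invisible at the level of $L'$, because $f_j(M,r,s)$ is a local invariant at the single $x$-radius $e^{-rs}$ and the generic fibre sees precisely that radius. With that clarification, your proof coincides with the paper's.
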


\begin{defn}
Let $\sigma: R_{v,[\epsilon,1)} \to R_{v,[\epsilon^{1/q},1)}$ 
be a Frobenius lift. For $M \in C_{v,[\epsilon,1), J}$,
a \emph{Frobenius structure} on $M$ with respect to $\sigma$
is an isomorphism $F: \sigma^* M \cong M$ of objects 
in $C_{v,[\epsilon^{1/q},1), J}$.
\end{defn}

As in Corollary~\ref{C:constant breaks2}, we obtain the following.
\begin{lemma} \label{L:constant breaks3}
Suppose $J$ is closed, and $M \in C_{v,[\epsilon,1),J}$ is of rank $n$
and admits a Frobenius structure. Then there exist $r_0 \in (0, -\log 
\epsilon]$ and constants $b_i(M,s)$ for $i=1,\dots,n$ such that for
$r \in (0, r_0]$ and $s \in J \setminus \{+\infty\}$,
 $f_i(M,r,s) = b_i(M,s)r$ for $i=1,\dots,n$.
\end{lemma}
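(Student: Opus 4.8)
The plan is to reduce this statement, which concerns $M \in C_{v,[\epsilon,1),J}$ on an annulus, to Corollary~\ref{C:constant breaks2}, which concerns $M \in C_{v,[\epsilon,1)}$ on a disc, by exploiting the fact that the defining data of the subsidiary radii $f_i(M,r,s)$ for $s \in J \setminus \{+\infty\}$ involves only finite real values of $s$. First I would invoke Lemma~\ref{L:change Frob} to reduce, at the cost of shrinking $\epsilon$, to the case of the standard Frobenius lift $\sigma(x) = x^q$; this is harmless since the functions $f_i(M,r,s)$ are intrinsic. Since $J$ is closed, write $J = [s_-, s_+]$ (with $s_+ < +\infty$, as the claim only concerns $s \in J \setminus \{+\infty\}$; if $s_+ = +\infty$ replace $J$ by $[s_-, s'_+]$ for any finite $s'_+ > s_-$, noting that $b_i(M,s)$ is a property of arbitrarily small $r$ and Definition~\ref{D:cross section} only uses $f_i(M,r,s)$ for the given $s$).

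The key step is a change of coordinates: pick $\overline{a} \in \ell$ (after enlarging $\ell$, which by the analogue of Remark~\ref{R:enlarge ell} does not affect the $b_i$) with $v(\overline{a}) = s_-$, lift it to $a = [\overline{a}] \in L_{v,\epsilon}$, and perform the substitution $x \mapsto x a$. This rescales the annulus $A_{F_\rho}(\rho^J)$ to $A_{F_\rho}(\rho^{J - s_-})$, i.e., reduces to the case $s_- = 0$, so that $J = [0, s_+]$ and the annulus is now $A_{F_\rho}[\rho^{s_+}, 1]$, which is a sub-annulus of the disc $A_{F_\rho}[0,1]$. (The substitution carries the standard Frobenius lift to the Frobenius lift $x \mapsto x^q a^{q-1}$, but by Lemma~\ref{L:change Frob} again we may return to the standard lift after possibly shrinking $\epsilon$.)

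Now, because a $\nabla$-module on an open disc minus a closed subdisc extends: more precisely, here the point is that $M \in C_{v,[\epsilon,1),[0,s_+]}$ sits inside $C_{v,[\epsilon,1),[0,+\infty]} = C_{v,[\epsilon,1)}$ — but this is not automatic, since a $\nabla$-module on an annulus need not extend to the disc. Instead, I would argue directly: the quantities $f_i(M,r,s)$ for $s \in [0, s_+]$ depend only on the restriction of $M_\rho$ to $A_{F_\rho}[\rho^{s_+}, 1]$, and so the hypotheses of Theorem~\ref{T:variation} (continuity, piecewise linearity, convexity of partial sums) and of Theorem~\ref{T:antecedent} (the off-centered / centered Frobenius antecedent, which only requires a neighborhood of a circle) apply equally on the sub-annulus. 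Thus the entire proof of Lemma~\ref{L:constant breaks1} and Corollary~\ref{C:constant breaks2} goes through verbatim with $A_{F_\rho}[0,1]$ replaced by $A_{F_\rho}[\rho^{s_+},1]$: by Lemma~\ref{L:solvable} (which holds for $C_{v,[\epsilon,1),J}$ by the same proof, since it only uses the Frobenius structure and the matrix action near the outer boundary $\rho \to 1^-$) we get $f_1(M,r,0) \to 0$, hence for $r$ small enough and all $s \in [0,s_+]$ we have $f_1(M,r,s) < \frac{p}{p-1}\log p + rs$; then Theorem~\ref{T:antecedent}(c) forces $f_i(M, r_0/q^{m+1}, s) = f_i(M,r_0/q^m,s)/q$, and combined with convexity (from the preceding Lemma on $C_{v,I,J}$) this pins each partial sum $\sum_{j \le i} f_j(M,r,s)$ to a linear function on $(0,r_0]$, whence $f_i(M,r,s) = b_i(M,s) r$ with $b_i(M,s) = f_i(M,r_0,s)/r_0$. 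Undoing the coordinate rescaling $x \mapsto xa$ restores the general $J$, multiplying $b_i$ by a harmless constant and shifting back.

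The main obstacle I anticipate is verifying that Theorem~\ref{T:antecedent} (the Frobenius antecedent theorem) genuinely applies on a half-open or closed sub-annulus abutting the circle $|x| = 1$ from inside, rather than on a full disc — one must check that the cross-sectional module $M_\rho$ restricted to a one-sided neighborhood $A_{F_\rho}[\rho', 1]$ of radius-$1$ still admits a Frobenius antecedent when $R(M,\rho,0) > p^{-1/(p-1)}$, and that the centered version (Theorem~\ref{T:antecedent2}) is the right tool when working away from the origin. Since Theorem~\ref{T:antecedent2} is stated precisely for $I \subseteq (p^{-1/(p-1)}, 1]$ and involves the substitution $\psi_\eta$ near a point $\eta$ of norm $1$, it is tailored exactly to this situation, so the needed input is available; the only care required is bookkeeping the relation $f_i(\calF, ps) = p f_i(\calE, s)$ through the iterations, exactly as in the proof of Lemma~\ref{L:constant breaks1}. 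Everything else is routine transcription of the disc-case arguments, which is why the statement is presented with the phrase ``As in Corollary~\ref{C:constant breaks2}, we obtain the following.''
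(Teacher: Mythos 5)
Your proposal takes essentially the same approach as the paper: the one-line proof in the text (``As in Corollary~\ref{C:constant breaks2}'') delegates exactly the adaptation you spell out, and the key ingredients you cite---\ Lemma~\ref{L:change Frob}, the rescaling $x\mapsto xa$ to shift $J$ to start at $0$ (the same device used in the paper's own proof of Lemma~\ref{L:convex}), Lemma~\ref{L:solvable}, the annulus version of Lemma~\ref{L:convex} stated just before Lemma~\ref{L:constant breaks3}, and Theorem~\ref{T:antecedent}---\ are all the right ones. Two details are worth tightening.

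First, the reduction ``$f_1(M,r,0)\to 0$, \emph{hence} for $r$ small and all $s\in[0,s_+]$ we have $f_1(M,r,s)<\frac{p}{p-1}\log p+rs$'' is a little too fast: the parenthetical argument in the proof of Corollary~\ref{C:constant breaks2} deriving $f_1(M,r,s)\leq f_1(M,r,0)+rs$ from the secant-line/slope bound uses that $f_1$ is eventually linear in $s$ with slope $r$, which comes from Remark~\ref{R:transfer} and Theorem~\ref{T:variation}(b), both of which require $0\in I$ (the paper warns precisely about this in Lemma~\ref{L:convex2}: ``Beware that we have no nonincreasing assertion here''). The fix is already implicit in your parenthetical claim about Lemma~\ref{L:solvable}: running its proof with the $|\cdot|_{\rho^v,s}$-norm in place of $|\cdot|_{\rho^v,0}$ shows $|N_m|_{\rho^v,s}\leq |q|^m\sup_{\rho'\in I_0,s'\in J}|N|_{\rho'^v,s'}$ (the factor from $d\sigma(x)/dx=qx^{q-1}$ only improves for $s>0$ since $|x^{q^m-1}|_{\rho^v,s}\leq 1$), so the bound $f_1(M,r,s)-rs\to 0$ as $r\to 0^+$ holds uniformly over the compact interval $J$; state this directly rather than deducing the general-$s$ bound from the $s=0$ case.

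Second, the final paragraph's appeal to Theorem~\ref{T:antecedent2} is off-target: that theorem handles an \emph{off-centered} Frobenius lift $\psi_\eta:x\mapsto(x+\eta)^p-\eta^p$ translating the center to a norm-$1$ point (as used in Corollary~\ref{C:Frob stability2} for $T^*_{[\overline{h}]}M$). Here the lift is the standard one centered at $0$, and Theorem~\ref{T:antecedent} is stated for an arbitrary interval $I\subseteq[0,+\infty)$, $I\neq[0,0]$, so it already applies verbatim to the sub-annulus $A_{F_\rho}[\rho^{s_+},1]$. No appeal to the off-centered version is needed.
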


As in Proposition~\ref{P:variation1}, we obtain the following.
\begin{lemma} \label{L:convex2}
For $M \in C_{v,[\epsilon,1),J}$ of rank $n$, the functions
$b_i(M,s)$ for $i=1,\dots,n$ have the following properties.
\begin{enumerate}
\item[(a)]
Each $b_i(M,s)$ is continuous and piecewise affine-linear on $J 
\setminus \{+\infty\}$ with slopes
in $\frac{1}{n!}\ZZ$.
\item[(b)]
The function $b_1(M,s) + \cdots + b_i(M,s)$ is convex.
(Beware that we have no nonincreasing assertion here.)
\end{enumerate}
\end{lemma}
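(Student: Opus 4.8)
The plan is to transcribe the proof of Proposition~\ref{P:variation1}, substituting annulus cross sections for the disc cross sections used there and invoking Lemma~\ref{L:constant breaks3} in place of Corollary~\ref{C:constant breaks2}. The only genuinely new point is that $J$ is allowed to be non-closed, so the argument must be organized around closed subintervals $J' \subseteq J$ with $+\infty \notin J'$. First I would pin down the meaning of $b_i(M,s)$ for such $J$: given $s \in J \setminus \{+\infty\}$, choose a closed $J'$ with $s \in J'$, restrict $M$ to $C_{v,[\epsilon,1),J'}$, and apply Lemma~\ref{L:constant breaks3} to get $r_0 = r_0(J') \in (0,-\log\epsilon]$ and the identity $f_i(M,r,s) = b_i(M,s)r$ for all $r \in (0,r_0]$ and all $s \in J'$. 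Since $f_i(M,r,s)$ is intrinsic to $M$, the value $b_i(M,s) = \lim_{r \to 0^+} f_i(M,r,s)/r$ is independent of the choice of $J'$, so the $b_i(M,s)$ are well defined on all of $J \setminus \{+\infty\}$ and the various local descriptions glue.

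For part (a), continuity, piecewise affine-linearity, and the slope bound are local assertions, so it suffices to check them on a fixed closed $J'$ as above. There one has $b_i(M,s) = \frac{1}{r_0} f_i(M_{e^{-r_0}}, r_0 s)$ by Lemma~\ref{L:constant breaks3} together with the definition $f_i(M,r,s) = f_i(M_{e^{-r}}, rs)$, where $M_{e^{-r_0}}$ is the cross section, a $\nabla$-module on the annulus $A_{F_{e^{-r_0}}}(e^{-r_0 J'})$ whose interval in the $-\log$ variable is $r_0 J'$. Applying Theorem~\ref{T:variation}(a) to this cross section and unwinding the affine reparametrization $s \mapsto r_0 s$ followed by rescaling by $r_0^{-1}$ yields the claim; in particular the slope of $b_i(M,\cdot)$ at $s$ equals the slope of $f_i(M_{e^{-r_0}},\cdot)$ at $r_0 s$, hence lies in $\frac{1}{n!}\ZZ$.

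For part (b), given any two points of $J \setminus \{+\infty\}$ I would pick a closed $J' = [a,b] \subseteq J \setminus \{+\infty\}$ containing both; Lemma~\ref{L:constant breaks3} provides a \emph{single} $r_0 = r_0(J')$ for which $f_i(M,r,s) = b_i(M,s)r$ holds for all $s \in [a,b]$ and all small $r$, so Theorem~\ref{T:variation}(c) applied to $M_{e^{-r_0}}$ shows $b_1(M,s)+\cdots+b_i(M,s)$ is convex on $[a,b]$; ranging over all such $J'$ gives convexity on all of $J \setminus \{+\infty\}$. I would close with the remark explaining why the nonincreasing half of Theorem~\ref{T:variation}(b) is unavailable: since $J'$ omits $+\infty$, the annulus $A_{F_{e^{-r_0}}}(e^{-r_0 J'})$ does not contain $0$, so the hypothesis $0 \in I$ in Theorem~\ref{T:variation}(b) fails, which is exactly the warning in the statement. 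The main obstacle is purely bookkeeping—verifying that the $b_i(M,s)$ produced on overlapping closed subintervals agree and that the local statements assemble into global ones on a possibly non-closed $J$; everything else is a direct translation of the proof of Proposition~\ref{P:variation1}.
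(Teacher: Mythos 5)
Your proof is correct and follows essentially the same approach as the paper, which simply cites the proof of Proposition~\ref{P:variation1}: substitute Lemma~\ref{L:constant breaks3} for Corollary~\ref{C:constant breaks2}, pass to the annulus cross section $M_{e^{-r_0}}$, and read off parts (a) and (b) from Theorem~\ref{T:variation}(a),(c) under the reparametrization $\sigma = r_0 s$ (under which slopes are preserved, since the $r_0^{-1}$ scaling and the factor $r_0$ from the chain rule cancel). Your gluing over closed $J' \subseteq J \setminus \{+\infty\}$ to handle non-closed $J$, and your observation that the nonincreasing clause of Theorem~\ref{T:variation}(b) is inapplicable because the cross-sectional annulus omits $0$ when $+\infty \notin J$, are both the right accounting for the (minor) differences from Proposition~\ref{P:variation1}.
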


\begin{defn}
Assume Hypothesis~\ref{H:variation}.
Suppose $\alpha \in \DD_{\ell}$
and $J \subseteq [0, -\log r(\alpha)]$. If $\alpha$ is of type (iv),
assume further that $J \subseteq [0, s]$ for some $s< -\log r(\alpha)$.
We may then choose $\overline{h} \in \gotho_\ell$ such that
$\alpha_{\overline{h}, e^{-s}} \geq \alpha$ for each $s \in J$.
Choose $h \in L_{v,\epsilon}$ with $|h|_L \leq 1$
lifting $\overline{h}$.
Define the category
$C_{v,*,J,h}$ to be a copy of $C_{v,*,J}$.

Choose now $\overline{h}' \in \gotho_\ell$ such that
$\alpha_{\overline{h}', e^{-s}} \geq \alpha$ for each $s \in J$,
and $h' \in L_{v,\epsilon}$ with $|h'|_L \leq 1$
lifting $\overline{h}'$.
If $\overline{h} \neq \overline{h}'$,
for $\rho \in (0,1)$
sufficiently close to 1, we have 
$|h-h'|_{\rho^v} = \rho^{v(\overline{h}-\overline{h'})}$.
If $\overline{h} = \overline{h}'$, then
as $\rho \to 1^-$, $|h-h'|_{\rho^v}$ tends to a limit strictly less than 1.
For $s \in J$, we have $\alpha_{\overline{h},e^{-s}} \geq \alpha$
and $\alpha_{\overline{h}',e^{-s}} \geq \alpha$, so
$\alpha_{\overline{h},e^{-s}} = \alpha_{\overline{h}',e^{-s}}$
by Lemma~\ref{L:unique dominate}.
That is, $v(\overline{h} - \overline{h'}) \geq s$,
so for $\rho \in (0,1)$
sufficiently close to 1 (uniformly in $s$), 
we have $\rho^s \geq |h-h'|_{\rho^v}$.
For such $\rho$, the norm $|\cdot|_{\rho^v,s}$ is invariant
under $T_{h-h'}$ as in Definition~\ref{D:maintain by translate},
so $T_{h'-h}$ induces an endomorphism of $R_{v,[\epsilon,1),J}$
for $\epsilon \in (0,1)$ sufficiently close to 1.

We thus obtain an isomorphism
$T_{h'-h}^*: C_{v,*,J,h} \to C_{v,*,J,h'}$;
since these isomorphisms satisfy the cocycle condition,
we may use them to identify $C_{v,*,J,h}$ and $C_{v,*,J,h'}$
with a single category $C_{v,*,J,\alpha}$.
Note that the functions $b_i(\cdot,s)$ for $s \in J$
commute with these isomorphisms thanks to Lemma~\ref{L:maintain by translate};
thus they induce well-defined functions $b_i(\cdot,\alpha,s)$ on $C_{v,*,J,\alpha}$
(in a manner consistent with Definition~\ref{D:breaks}).

For $J' \subseteq J$, we have restriction functors
$C_{v,*,J,\alpha} \to C_{v,*,J',\alpha}$.
Using these, if $\alpha$ is of type (i) or (iv), we
define $C_{v,*,*,\alpha}$ to be the inverse limit of $C_{v,*,J,\alpha}$
over increasing intervals $J \subseteq [0, -\log r(\alpha))$.
Again, the function $b_i(M, \alpha,s)$ is
well-defined for $s \in [0, -\log r(\alpha))$.

For $s \in [0, -\log r(\alpha))$,
define $C_{v,*,s,\alpha}$ to be the inverse limit of $C_{v,*,J,\alpha}$
over increasing intervals $J \subseteq [s, -\log r(\alpha))$.
Then define $C_{v,*,**,\alpha}$ to be the direct limit of
$C_{v,*,s,\alpha}$ over $s \in [0, -\log r(\alpha))$.
\end{defn}

\begin{remark} \label{R:gen fib intersect}
We will use later the fact that if $\rho \in I$ and 
$0 \in J$, then within $L'_{v,[\rho,\rho]}$,
\[
L'_{v,I} \cap A_{F_\rho}(\rho^J) = R_{v,I,J}.
\]
This is most easily seen by representing
elements of $L'_{v,[\rho,\rho]}$ as Laurent series in $x$ with coefficients
in $L_{v,[\rho,\rho]}$.
\end{remark}

\section{Induction on transcendence defect}
\label{sec:corank}

In this section, we carry out an induction on transcendence defect 
in order to prove
local semistable reduction. Specifically, we prove this result, which
completes the proofs of local (Theorem~\ref{T:local semi})
and global (Theorem~\ref{T:global semi}) semistable reduction.
(See \S~\ref{subsec:completion} for the overall structure of the proof
of Theorem~\ref{T:induct local semi}.)

\setcounter{theorem}{0}
\begin{theorem} \label{T:induct local semi}
Assume that $k$ is algebraically closed.
Suppose that for some nonnegative integer  $n$,
local semistable reduction (Definition~\ref{D:local semi})
holds whenever the valuation $v$ is minimal of transcendence defect at most $n$.
Then local semistable reduction also holds when $v$ is minimal of transcendence
defect
$n+1$.
\end{theorem}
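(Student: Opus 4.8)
The plan is to implement the valuation-theoretic induction sketched in the introduction. So let $k$ be algebraically closed, let $(X,Z,\calE)$ be as in Definition~\ref{D:local semi}, and let $v$ be a minimal valuation on $k(X)$ of transcendence defect $n+1 \geq 1$. First I would reduce to a favorable geometric situation: using de Jong's theorem (Theorem~\ref{T:de jong}) together with Lemma~\ref{L:exposing alteration}, replace $X$ by a local alteration around $v$ so that $(X,Z)$ becomes an exposure of $v$; in particular $X$ is smooth, $Z$ is strict normal crossings, and at the center $z$ of $v$ there is a system of local parameters $a_1,\dots,a_m$ with $v(a_1),\dots,v(a_r)$ linearly independent over $\QQ$ (here $r = \ratrank(v)$), with the first $r$ of them cutting out the components of $Z$ through $z$. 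Since $\trdefect(v) = m - r - 0 = n+1 \geq 1$, we have $m > r$, so there is at least one "extra" parameter; after further alteration one arranges a dominant morphism $\pi : X \to X^0$ of relative dimension $1$ with a chosen fibral coordinate $x$, such that the restriction $v^0 = v|_{k(X^0)}$ has $\ratrank(v^0) = \ratrank(v) = r$ and hence $\trdefect(v^0) = n$. Thus $v^0$ falls under the inductive hypothesis.

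The heart of the argument is then the comparison between $v$ and $v^0$ via the Berkovich disc. One works over the complete field $F$ attached to $v^0$ (really its completion, with residue field $\ell$ — after enlarging, $\ell$ is algebraically closed and complete, as in Hypothesis~\ref{H:variation}), and regards $v$ as a point $\alpha \in \DD_\ell$ of type (i) or (iv) — nonmonomiality of $v$ forces $\trdefect$ to increase, which by Lemma~\ref{L:same corank} means $\alpha$ cannot be of type (ii) or (iii), i.e., $\alpha$ is minimal under domination. The generic path from the Gauss point down to $\alpha$ is the "path in valuation space terminating at $v$" from the introduction: its interior points are monomial-type perturbations that, combined with $v^0$, give valuations of transcendence defect $n$. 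After realizing a localizing neighborhood of $v$ as (the special fiber data of) an object $M \in C_{v^0,*,**,\alpha}$ with a Frobenius structure coming from $\calE$ — this is where the machinery of Section~\ref{sec:relative} is imported, via the constructions leading to Definition~\ref{D:mono rep} — one studies the functions $b_i(M,\alpha,s)$. By the inductive hypothesis, $\calE$ admits local semistable reduction along every interior point of the path, which translates into the statement that the subsidiary radii satisfy the Robba condition ($f_i = s$, equivalently $b_i(M,\alpha,s) = s$) for all $s$ in the open interval $(0, -\log r(\alpha))$, \emph{after} a suitable finite covering of $X$ — here one uses Corollary~\ref{C:count components} to propagate étale covers along the path and Proposition~\ref{P:seed unipotent}/Theorem~\ref{T:extend Robba cond} to spread unipotence.

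It then remains to pass to the terminal point $s \to (-\log r(\alpha))^-$ and conclude that the local monodromy at $v$ itself is (after a further cover) trivial, so that Proposition~\ref{P:refined log-extend} applies and yields log-extendability. This limiting step is exactly what Corollary~\ref{C:at most 1} and Proposition~\ref{P:identically zero} are designed for: the terminal slopes of the $b_i(M,\alpha,s)$ are at most $1$, and — via a cyclic-vector analysis and the Dwork-module computation — one shows the "break functions" cannot acquire a genuine jump at the endpoint, forcing $b_i(M,\alpha,s) \equiv s$ in a full neighborhood of $-\log r(\alpha)$. Combined with the Frobenius structure (to kill wild parts via Theorem~\ref{T:Robba condition Frob}, after tame base change) and Lemma~\ref{L:wild structure2} to handle the prime-to-$p$ and $p$-parts of the inertia image, one concludes that the semisimplified local monodromy representation of $\calE$ at $v$ becomes trivial after a finite separable alteration; then Proposition~\ref{P:refined log-extend} gives the desired log-extension on some member of an exposing sequence. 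I expect the \textbf{main obstacle} to be precisely the limiting/endpoint analysis of the $b_i(M,\alpha,s)$ when $\alpha$ is of type (iv): the constructions in Section~\ref{sec:relative} only control these functions on \emph{closed} subintervals avoiding the endpoint, so one must carefully choose a uniform cross-section radius $\rho$ (Lemma~\ref{L:type iv stable}) and combine subharmonicity (Theorem~\ref{T:subharmonicity}), convexity (Proposition~\ref{P:variation1}), and the cyclic-vector Newton-polygon description (Theorem~\ref{T:cyclic}) to rule out a terminal slope strictly between $0$ and $1$; getting this integrality/stability statement, rather than the geometric setup or the Tannakian bookkeeping, is where the real work lies.
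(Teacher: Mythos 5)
Your overall strategy matches the paper's (fibration $X \to X^0$ dropping the transcendence defect by one, point $\alpha$ of $\DD_\ell$ of type (i) or (iv), generic path, terminal-slope analysis), and you correctly identify Corollary~\ref{C:at most 1}, Lemma~\ref{L:type iv stable}, Theorem~\ref{T:cyclic}/Theorem~\ref{T:subharmonicity}, and Proposition~\ref{P:identically zero} as the technical core. But there are two genuine gaps in the way you propose to close the argument.

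First, your concluding step is circular. You want to show "the semisimplified local monodromy representation of $\calE$ at $v$ becomes trivial after a finite separable alteration; then Proposition~\ref{P:refined log-extend} gives the desired log-extension." But both the local monodromy representation at $v$ (Definition~\ref{D:mono rep}) and Proposition~\ref{P:refined log-extend} applied at $v$ presuppose that $\calE$ \emph{already} admits local semistable reduction at $v$ — exactly the thing you are trying to prove. The paper flags this explicitly just before Proposition~\ref{P:endgame}. The actual endgame (Proposition~\ref{P:endgame}) sidesteps the circularity: it applies Proposition~\ref{P:refined log-extend} only at the auxiliary valuation $v_1$ on the generic path, where $\trdefect(v_1)=n$ and the inductive hypothesis legitimately applies; it then transfers unipotence from $v_1$ to $v$ by the monotonicity of the toric invariant $h(U)$ in the direction of $u_0 = v_U(x)$ (borrowed from \cite{kedlaya-part3}), together with a toroidal blowup and a tame cover. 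That monotonicity step — not an application of the monodromy representation at $v$ itself — is what produces log-extendability at $v$.

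Second, "the inductive hypothesis \dots translates into the Robba condition for all $s$ in the open interval, after a suitable finite covering" overstates what the induction yields. Local semistable reduction at $v_1$ only licenses the construction of the semisimplified local monodromy representation $\tau$ at $v_1$, which may have nontrivial wild part. Forcing the Robba condition requires actively killing $\tau(W'_1)$ by a sequence of Artin-Schreier alterations, and — crucially — these must be carried out while \emph{preserving the fibration} $\pi: X \to X^0$ and the identification of the fibral coordinate $x$, else the entire $C_{v^0,*,*,\alpha}$ machinery and the $b_i(M,\alpha,s)$ functions cease to make sense. Simply pulling $\calE$ back along the alteration supplied by local semistable reduction at $v_1$ would destroy this structure. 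The content of Lemma~\ref{L:kill AS} (building on Proposition~\ref{P:rank 1 analysis}, which in turn uses Proposition~\ref{P:identically zero}) is precisely that the prepared Artin-Schreier parameter can be normalized to $u_1 x + a_2 x^2 + \cdots$ with $v^0(u_1)\leq 0$, $u_1$ a $p$-th power, and the higher $a_i$ in the maximal ideal, so that the cover stays smooth over $X^0$ of relative dimension one and Hypothesis~\ref{H:relative2} persists. You should also note that the paper works at a single carefully chosen $s_1$ (not in the divisible closure of $\Gamma_{v^0}$) rather than along the whole interior of the path; the propagation to the endpoint is then obtained from convexity and the terminal-slope bound, not from a family of inductive hypotheses applied at a continuum of path points.
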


\begin{remark}\label{R:Galois descent}
Note that in the course of showing Theorem~\ref{T:induct local semi},
it will be convenient to replace $K$ by a finite extension. This
is harmless, because log-extendability of an $F$-isocrystal can be checked 
after a finite extension of the constant field, e.g., by Galois descent. 
(In fact, log-extendability can even
be checked after an arbitrary complete extension of the constant field,
but this requires a more robust argument using linear compactness.)
\end{remark}

\begin{notation}
We write $\cEnd(*)$ as shorthand for $*^\dual \otimes *$.
\end{notation}

\subsection{Setup: relative geometry}

We first do a little geometry over $k$ in order to set up a framework, using
fibrations in curves, in which we will be able to analyze a valuation of
positive transcendence defect.

\begin{hypothesis} \label{H:relative}
Throughout this section, assume that $k$ is algebraically closed.
Fix a nonnegative integer $n$, and assume that
local semistable reduction
holds whenever the
valuation $v$ is minimal of transcendence defect at most $n$.
Let $X$ be a smooth irreducible $k$-variety, let $Z$ be a closed
subvariety of $X$, let $v$ be a minimal valuation on $k(X)$ of transcendence
defect
$n+1$ centered on $X$, and let $\calE$ be an $F$-isocrystal on $X \setminus Z$
overconvergent along $Z$. (Note that by Abhyankar's inequality,
we must have $\dim(X) \geq \trdefect(v)+1 = n+2 \geq 2$.)
\end{hypothesis}

To prove Theorem~\ref{T:induct local semi}, it is enough to show that
under Hypothesis~\ref{H:relative}, $\calE$ admits local semistable reduction
at $v$. To do this, we first make some additional geometric arrangements.
\begin{lemma} \label{L:get relative}
Under Hypothesis~\ref{H:relative}, there exists a local alteration
$f: \tilde{X} \to X$ around $v$, a closed subscheme $\tilde{Z}$ of
$\tilde{X}$ containing $f^{-1}(Z)$, and an extension $\tilde{v}$ of
$v$ to $k(\tilde{X})$, such that if we replace $X,Z,v$ by
$\tilde{X}, \tilde{Z}, \tilde{v}$, then
Hypothesis~\ref{H:relative2} below can be satisfied.
\end{lemma}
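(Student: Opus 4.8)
The plan is to build the fibration in curves required by Hypothesis~\ref{H:relative2} in three steps: choose the base at the level of function fields, realize it by a morphism of varieties after a local alteration, and lift the picture to characteristic $0$.

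First I would select the base field. Since $v$ is minimal of transcendence defect $n+1 \geq 1$, Abhyankar's inequality gives $\ratrank(v) = \trdeg(k(X)/k) - (n+1) \leq \dim X - 1$. Choose $t_1, \dots, t_m \in k(X)^*$ with $m = \ratrank(v)$ such that $v(t_1), \dots, v(t_m)$ are linearly independent over $\QQ$, extend $t_1, \dots, t_m$ to a transcendence basis $t_1, \dots, t_{\dim X}$ of $k(X)$ over $k$, and set $F^0 = k(t_1, \dots, t_{\dim X - 1})$, so that $\trdeg(F^0/k) = \dim X - 1$ and $k(X)$ has transcendence degree $1$ over $F^0$. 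Put $v^0 = v|_{F^0}$. Then $\Gamma_{v^0}$ contains the rank-$m$ subgroup generated by $v(t_1), \dots, v(t_m)$, so $\ratrank(v^0) \geq m$; since $\Gamma_{v^0} \subseteq \Gamma_v$ we also have $\ratrank(v^0) \leq m$ and $\height(v^0) = \height(v) = 1$, and $\trdeg(\kappa_{v^0}/k) \leq \trdeg(\kappa_v/k) = 0$. Hence $v^0$ is minimal with $\trdefect(v^0) = (\dim X - 1) - m = n$, so our inductive hypothesis applies to $v^0$. This computation is unaffected by replacing $v$ by an extension to an algebraic extension of $k(X)$ (Remark~\ref{R:extension}) and by any local alteration along which $t_1, \dots, t_m$ remain pulled back from the chosen model of $F^0$.

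Next I would realize $F^0$ geometrically. Take an irreducible $k$-variety $X^0$ with $k(X^0) = F^0$ and the dominant rational map $X \dashrightarrow X^0$ induced by $F^0 \hookrightarrow k(X)$; blowing up $X$ to resolve this rational map is a local alteration around $v$, after which we may assume it is a morphism $\pi \colon X \to X^0$ of relative dimension $1$. Apply Theorem~\ref{T:de jong} (separably, as $k$ is algebraically closed) to $X^0$, replace $X$ by an irreducible component of the resulting fibre product that dominates $X$ and on which $v$ extends, then apply Theorem~\ref{T:de jong} again to $X$; throughout we enlarge $Z$ to absorb the exceptional divisors and the discriminant locus of $\pi$, so that in the end $X$ and $X^0$ are smooth and $Z$ is a strict normal crossings divisor containing the preimage of the original $Z$. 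By generic smoothness, $\pi$ is smooth of relative dimension $1$ over a dense open of $X^0$ meeting the center of $v^0$; passing to affine opens around the center of $v^0$ in $X^0$ and around the center of $v$ in $X$ yields a smooth affine fibration in curves. If Hypothesis~\ref{H:relative2} further asks that $v$ and $v^0$ be exposed, or that $Z$ be split into parts horizontal and vertical for $\pi$, one arranges this by further applications of Lemma~\ref{L:exposing alteration} (and its evident analogue for $v^0$) together with another enlargement of $Z$; these are again local alterations and so preserve the conclusions of the first paragraph.

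Finally I would lift to characteristic $0$: choose a smooth affine formal scheme $P^0$ over $\Spf \gotho_K$ with $P^0_k \cong X^0$, and then, using the smoothness of $\pi$ and the infinitesimal lifting criterion, a smooth affine formal scheme $P$ over $P^0$ with $P_k \cong X$, equipped with a relative coordinate $x$ and with relative normal crossings divisors on $P$ and $P^0$ lifting the vertical and base parts of $Z$; after shrinking the affine opens as needed this supplies all the data of Hypothesis~\ref{H:relative2}. The main obstacle is the bookkeeping in the middle step: one must iterate de Jong's theorem on $X^0$ and on $X$ in the correct order, each time pulling back and selecting the component on which $v$ still has a center, and enlarge $Z$ by just enough to make $\pi$ smooth and $Z$ strict normal crossings near the center of $v$, all while keeping $t_1, \dots, t_m$ regular functions pulled back from $X^0$ so that the rational-rank computation is preserved. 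Once the geometry is in place, the formal lift is routine.
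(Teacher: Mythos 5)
Your plan starts from a genuinely different direction than the paper's proof, and this creates a gap that the handwaving in your final paragraph does not close. You first choose $t_1,\dots,t_m$ abstractly with $\ratrank(v)$-many independent valuations, build the field $F^0 = k(t_1,\dots,t_{\dim X - 1})$, and only afterwards try to realize this as a fibration and arrange exposure and smoothness by iterated de Jong alterations and shrinking. The trouble is that Hypothesis~\ref{H:relative2} demands $Z = \pi^{-1}(Z^0)$, i.e.\ $Z$ must be entirely vertical for $\pi$, \emph{and} that $\pi$ be smooth (not merely generically smooth) with the center of $v$ in its domain. Neither of these is achieved by your construction, and neither can be repaired by enlarging $Z$ or shrinking opens: if after exposure a component of $Z$ through the center of $v$ is cut out by a local parameter that is not a unit times a function pulled back from $X^0$, then $Z$ has a horizontal piece and no choice of $Z^0$ makes $Z = \pi^{-1}(Z^0)$; and generic smoothness of $\pi$ gives a dense open of $X^0$ where things are good, but there is no reason the center of $v^0$ lies in it.

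The paper resolves both issues at once by reversing the order of operations. It first applies de Jong and Lemma~\ref{L:exposing alteration} to make $(X,Z)$ a smooth pair with $v$ exposed; at that point the components of $Z$ through the center $z$ are cut out by local parameters $t_1,\dots,t_r$ with $r = \ratrank(v)$. Extending these to a full regular system of parameters $t_1,\dots,t_r,x_1,\dots,x_n,x$ and shrinking $X$ gives an \emph{\'etale} map $X \to \AAA^{r+n+1}_k$; dropping the last coordinate yields $\pi: X \to X^0 = \AAA^{r+n}_k$, which is then smooth of relative dimension $1$ near $z$ by construction (not merely generically), and one takes $Z^0$ to be the zero locus of $t_1\cdots t_r$ on $X^0$, so that $Z = \pi^{-1}(Z^0)$ and $v^0$ is exposed by $(X^0,Z^0)$ essentially for free. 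The remaining two conditions (geometrically integral generic fibre, $\pi^{-1}(z^0)$ an affine line) are then arranged by an exposing alteration of $X^0$ for $v^0$ and the base change of Remark~\ref{R:fixing base change}. Your transcendence-defect computation in the first paragraph is correct and matches the paper's, and your third paragraph about lifting to characteristic $0$ is not needed for this lemma (Hypothesis~\ref{H:relative2} is a purely characteristic-$p$ statement; the lift appears only in Definition~\ref{D:restrict}). But the middle step, which you yourself flag as the ``main obstacle,'' is exactly where the argument needs the \'etale-map idea rather than iterated de Jong; without it, the constraint $Z = \pi^{-1}(Z^0)$ and the smoothness of $\pi$ at the center are not established.
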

Before proving this, we introduce the geometric hypothesis we are trying
to fulfill, and make some remarks about it.

\begin{hypothesis} \label{H:relative2}
For the remainder of this section (except the proof of 
Lemma~\ref{L:get relative}, and Remark~\ref{R:fixing base change}),
suppose that there exist $X^0, Z^0, \pi, v^0, z^0, z, x$ as follows.
\begin{itemize}
\item
The $k$-variety $X^0$ is smooth irreducible affine, and $Z^0$ is a closed
subvariety of $X^0$.
\item
The morphism $\pi: X \to X^0$ is smooth of relative dimension 1 with 
geometrically integral generic fibre, and satisfies $Z = \pi^{-1}(Z^0)$.
\item
The restriction $v^0$ of $v$ to $k(X^0)$ has transcendence defect $n$ and is 
exposed by the pair $(X^0, Z^0)$.
Note that this implies that $\ratrank(v) = \ratrank(v^0)$, and that
$v$ is exposed by $(X, Z)$: we obtain a good system of parameters
by pulling back a good system of parameters at the center of $v^0$, then 
adding one more parameter to complete a system of local coordinates.
\item
The point $z^0 \in X^0$ is the center of $v^0$, and the point $z \in X$
is the center of $v$.
\item
The fibre $\pi^{-1}(z^0)$ is an affine line,
and $x \in \Gamma(X, \calO)$ restricts to a coordinate of that line.
\end{itemize}
\end{hypothesis}

\begin{remark} \label{R:same completion}
Under Hypothesis~\ref{H:relative2}, 
$k(X)$ is a finite extension of $k(X^0)(x)$.
Moreover, the local ring $\calO_{X,z}$ is contained in the completion
of $k(X^0)[x]$ for the 0-Gauss valuation with respect to $v^0$.
Consequently, $k(X)$ is contained in the
completion of $k(X^0)(x)$ under any extension
of $v^0$ which on $k(X^0)[x]$ is greater than or equal to the Gauss valuation.
In particular, $v$ is uniquely determined by its restriction to $k(X^0)(x)$.
\end{remark}

\begin{defn} \label{D:restrict}
 Let $P^0$ be a smooth irreducible affine formal scheme over $\Spf \gotho_K$
with $P^0_k \cong X^0$. 
Let $\sigma^0$ be a $q$-power Frobenius lift on $P^0$.
We may restrict $\calE$ to an $(F, \nabla)$-module on a space of the form
$V \times A_{K,x}[0,1]$, where $V$ is the intersection of $]z^0[_{P^0_K}$
with a strict neighborhood of $]X^0 \setminus Z^0[_{P^0}$ in $P^0_K$.
\end{defn}

Once Hypothesis~\ref{H:relative2} has been enforced, we will proceed
to modify the geometric situation while preserving Hypothesis~\ref{H:relative2}.
Here are the operations we will use to accomplish this.

\begin{defn} \label{D:relative base change}
By a \emph{base change on Hypothesis~\ref{H:relative2}}, we will mean
an operation consisting of the following steps.
\begin{itemize}
\item
Replace $X^0$ by $\tilde{X}^0$ for some local alteration 
$f: \tilde{X}^0 \to X^0$ around $v^0$,
and replace $v^0$ by an extension $\tilde{v}^0$ to $k(\tilde{X}^0)$,
such that if we put $\tilde{Z}^0 = f^{-1}(Z^0)$,
then $\tilde{v}^0$ is exposed by $(\tilde{X}^0, \tilde{Z}^0)$.
\item
Replace $X$ by the Zariski closure $\tilde{X}$ of the graph of the 
rational map
$X \times_{X^0} \tilde{X}^0 \dashrightarrow \AAA^1_k$
given by $(x-h)/g$ for some 
$g \in \Gamma(\tilde{X}^0, \calO) 
\cap \Gamma(\tilde{X}^0 \setminus \tilde{Z}^0,
\calO^*)$ and $h \in \Gamma(\tilde{X}^0, \calO)$
such that for some extension $\tilde{v}$ of
$v$ to $k(\tilde{X})$ extending $v$ on $k(X)$ and $\tilde{v}^0$
on $k(\tilde{X}^0)$, we have $\tilde{v}(x-h) \geq \tilde{v}(g)$.
Let $\tilde{x} = (x-h)/g
\in \Gamma(\tilde{X}, \calO)$.
\end{itemize}
In case $g=1$, we call the operation
a \emph{scale-preserving base change}.
In case $h=0$, we call the operation a
\emph{center-preserving base change}.
If both of these hold (so the only effect is to change
$X^0$), we call the operation a \emph{simple base change}.
(See Remark~\ref{R:compatibility} for details on how we choose
the extended valuations.)
\end{defn}

\begin{remark} \label{R:fixing base change}
Note that it also makes sense to carry out the base change described
in Definition~\ref{D:relative base change} even if the fibre 
$\pi^{-1}(z_0)$ is not an affine line, 
as long as $x$ restricts to a local parameter for this fibre
at $z$. As long as $0 < \tilde{v}^0(g) 
\leq \tilde{v}(x-h)$, the result will then satisfy
Hypothesis~\ref{H:relative2}. (Such $g$ exists because
Hypothesis~\ref{H:relative} forces $\dim(X) \geq 2$ and hence
$\dim(X^0) \geq 1$.)
\end{remark}

\begin{defn}
By a \emph{tame alteration on Hypothesis~\ref{H:relative2}},
we will mean an operation consisting of the following steps.
\begin{itemize}
\item
Perform a simple base change after which the zero locus of $x$ on $X$
is smooth.
\item
Replace $X$ by the fibre product $\tilde{X}$
of the map $x: X \to \AAA^1_k$ with the 
$m$-th power map on $\AAA^1_k$, for some positive integer $m$ coprime to $p$.
Let $\tilde{x}$ be the coordinate on the source of the $m$-th power map,
so that we have the equality $x = \tilde{x}^m$ of maps $\tilde{X} \to \AAA^1$.
Then shrink if necessary to get something smooth on which an extension of $v$
is centered.
\end{itemize}
\end{defn}

\begin{defn}
By an \emph{Artin-Schreier alteration} on Hypothesis~\ref{H:relative2}, we will
mean an operation consisting of the following steps.
\begin{itemize}
\item
Perform a simple base change after which the zero locus of $x$ on $X$
is smooth.
\item
Replace $X$ by the finite flat cover $\tilde{X}$ with structure sheaf 
\[
\calO[\tilde{x}]/(\tilde{x}^p - g^{p-1} \tilde{x} - x - a_2 x^2 - \cdots - a_k x^k)
\]
for some $g, a_2, \dots, a_k \in \Gamma(X^0, \calO)$
with $a_2, \dots, a_k$ in the maximal ideal of $\calO_{X^0,z^0}$.
Then shrink if necessary to get something smooth on which an extension of $v$
is centered.
\end{itemize}
\end{defn}

To conclude the geometric discussion, 
we show that Hypothesis~\ref{H:relative2} can be realized,
by proving Lemma~\ref{L:get relative}.
\begin{proof}[Proof of Lemma~\ref{L:get relative}]
By Theorem~\ref{T:de jong}, we may reduce to the case where $(X,Z)$ is a smooth
pair. By Lemma~\ref{L:exposing alteration}
(at the possible expense of enlarging $Z$),
we may assume that $v$ is exposed by $(X,Z)$. Choose local parameters
$t_1,\dots,t_r$ for the components of $Z$ passing through the center $z$ of
$v$. Then extend this to a sequence of local parameters $t_1,\dots,t_r,
x_1,\dots,x_n,x$ for $X$ at $z$. Use these to construct 
a map $X \to \AAA^{r+n+1}_k$; by shrinking $X$, we can ensure
that this map is \'etale. 

By dropping the last coordinate, we get a smooth
morphism $\pi: X \to X^0 = \AAA^{r+n}_k$ of relative dimension 1. 
Let $Z^0$ be the zero locus of $t_1 \cdots t_r$ on $X^0$.
Let $v^0$ be the restriction of $v$ to $k(X^0)$, whose center $z^0$ 
lies on $Z^0$.

The morphism $\pi$ may not have geometrically
integral generic fibre. We fix this by replacing $X^0$ with
an exposing alteration $X^1$ for $v^0$, 
and $X$ with a connected component of $X \times_{X^0} X^1$.

We have now achieved Hypothesis~\ref{H:relative2} except that the fibre
of $X \to X^0$ over $z^0$ is not yet an affine line. However, 
as noted in Remark~\ref{R:fixing base change}, we may
perform a base change in the sense of Definition~\ref{D:relative base change}
to rectify this.
\end{proof}

\subsection{Setup: differential modules}

We now set up the differential modules we will use to carry out
the induction on transcendence defect, and sketch the proof of Theorem~\ref{T:induct local semi}.

\begin{defn} \label{D:regime}
We now enter the notational r\'egime of \S~\ref{sec:relative}.
Let $L_0$ be the completion of the direct system
\[
\Frac \Gamma(P^0_K, \calO) \stackrel{\sigma^0}{\to} 
\Frac \Gamma(P^0_K, \calO) \stackrel{\sigma^0}{\to} \cdots,
\]
and equip the residue field $\ell_0 = k(X^0)^{\perf}$ of $L_0$ with the unique
valuation extending $v^0$. 
Choose an extension of $v^0$ to $k(X^0)^{\alg}$ (again denoted $v^0$);
as in Remark~\ref{R:extension}, this extension preserves
height,
rational rank, residual transcendence degree, and transcendence defect.
Let $\ell$ be the completion of $k(X^0)^{\alg}$ under $v^0$
(which is again algebraically closed).
Put $L = L_0 \otimes_{W(\ell_0)} W(\ell)$.

We then obtain from $\calE$ an element
$M \in C_{v^0,[\epsilon,1)}$ for some $\epsilon \in (0,1)$,
admitting a Frobenius structure for the standard Frobenius lift $\sigma$.
More precisely, with notation as in 
Definition~\ref{D:restrict},
we have a continuous homomorphism $\Gamma(V \times A_{K,x}[0,1], \calO)
\to R_{v^0,[\epsilon,1)}$ for some $\epsilon \in (0,1)$.
The module $M$ is the base change of $\calE$ along this homomorphism.

By Lemma~\ref{L:base change1}, we can find a multiplicative seminorm
$\alpha$ on $\ell[x]$
which agrees with $e^{-v^0(\cdot)}$ on $\ell$ and with $e^{-v(\cdot)}$ 
on $k(X^0)[x]$. 
Then $\alpha$ corresponds to a point of $\DD_\ell$,
so in particular we 
may extend $\alpha$ to the completion $\ell \langle x \rangle$ of
$\ell[x]$ for the 0-Gauss valuation with respect to $v^0$.
Let $h(\alpha)$ denote the completion of
$\Frac (\ell \langle x \rangle/\ker(\alpha))$ for the valuation
induced by $\alpha$; note that $h(\alpha)$ contains $\ell(x)$
unless $\alpha$ is of type (i).
As in Remark~\ref{R:same completion}, the local ring
$\calO_{X,z}$ is contained in $\ell \langle x \rangle$;
therefore, at least when $\alpha$ is not of type (i),
$k(X)$ is a subfield of $h(\alpha)$, and the induced
valuation is precisely $v$. The same turns out to be true when
$\alpha$ is of type (i); see Remark~\ref{R:compatibility}.
\end{defn}

\begin{lemma}
In Definition~\ref{D:regime}, the point $\alpha \in \DD_\ell$ is of type (i)
or (iv).
\end{lemma}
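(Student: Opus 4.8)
The plan is to show that $\alpha$ is minimal under domination in $\DD_\ell$, which by Proposition~\ref{P:classify points} means exactly that $\alpha$ is of type (i) or (iv). The key point is that the transcendence defect of $v$ is exactly one more than that of $v^0$, so the ``extra'' direction contributed by passing from $k(X^0)$ to $k(X)$ (equivalently, from $\ell$ to $h(\alpha)$ via the point $\alpha$) must itself carry transcendence defect $1$: it cannot add either residual transcendence degree or rational rank.

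First I would record the numerical identity. Let $w$ be the valuation on $\ell(x)$ induced by $\alpha$ (when $\alpha$ is not of type (i)), extending the valuation $v_F := v^0$ on $\ell$. Since $\ell$ is the completion of $k(X^0)^{\alg}$ under $v^0$ and $h(\alpha)$ is a completion containing $k(X)$ inducing $v$, passing to completions does not change the relevant invariants: $\trdeg(\kappa_w/\kappa_{v_F})$ and $\dim_\QQ((\Gamma_w/\Gamma_{v_F})\otimes\QQ)$ agree with the corresponding relative invariants of $v$ over $v^0$. But $k(X)/k(X^0)$ has transcendence degree $1$ (as $\pi$ has relative dimension $1$), and by Hypothesis~\ref{H:relative2} we have $\ratrank(v) = \ratrank(v^0)$ and both valuations are minimal, so $\trdeg(\kappa_v/k) = \trdeg(\kappa_{v^0}/k) = 0$. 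Hence the relative invariants both vanish:
\[
\trdeg(\kappa_w/\kappa_{v_F}) = 0, \qquad \dim_\QQ((\Gamma_w/\Gamma_{v_F})\otimes\QQ) = 0.
\]

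Next I would invoke Lemma~\ref{L:same corank}: a point of type (ii) gives $\trdeg(\kappa_w/\kappa_{v_F}) = 1$, and a point of type (iii) gives $\dim_\QQ((\Gamma_w/\Gamma_{v_F})\otimes\QQ) = 1$. Either conclusion contradicts the displayed vanishing. Therefore $\alpha$ is not of type (ii) or (iii), and the only remaining possibilities in Berkovich's classification (Proposition~\ref{P:classify points}) are type (i) and type (iv), as claimed. I would handle the edge case where $\alpha$ is of type (i) trivially — that is one of the two allowed outcomes, so there is nothing to prove there; the argument via Lemma~\ref{L:same corank} is only needed to exclude types (ii) and (iii), and for those $\alpha$ is automatically not of type (i), so $w$ is genuinely a valuation on $\ell(x)$ and Lemma~\ref{L:same corank} applies.

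\textbf{Main obstacle.} The only delicate point is justifying that passage to completions ($k(X^0)^{\alg} \rightsquigarrow \ell$ and $k(X) \rightsquigarrow h(\alpha)$) preserves the residue-field transcendence degree and the rational rank of the relevant extensions, so that the invariants computed in Lemma~\ref{L:same corank} for the abstract type-(ii)/(iii) points $\alpha_{z,r}$ over the complete field $\ell$ actually reflect the geometric situation. This is handled by Remark~\ref{R:extension} (the algebraic-extension case, applied to $k(X^0)^{\alg}/k(X^0)$) together with the standard fact that completing a valued field changes neither its value group nor its residue field; combined with Hypothesis~\ref{H:relative2}'s assertion that $\ratrank(v) = \ratrank(v^0)$ and the minimality of both $v$ and $v^0$, the bookkeeping closes up.
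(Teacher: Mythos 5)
Your proposal is correct and takes essentially the same route as the paper: argue by contradiction, apply Lemma~\ref{L:same corank} (plus Remark~\ref{R:extension} and invariance under completion) to the point $\alpha$ viewed over $\ell$, and conclude that a type-(ii) or type-(iii) point would force $\trdefect(v) = \trdefect(v^0)$, contradicting Hypothesis~\ref{H:relative2}. The only cosmetic difference is that you unpack the transcendence-defect contradiction into the two separate vanishings $\trdeg(\kappa_w/\kappa_{v_F}) = 0$ and $\dim_\QQ((\Gamma_w/\Gamma_{v_F})\otimes\QQ) = 0$ (using $\ratrank(v)=\ratrank(v^0)$ and minimality of both valuations), whereas the paper derives the contradiction at the level of the summed invariants; these are logically interchangeable.
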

\begin{proof}
Suppose to the contrary that $\alpha$ is of type (ii) or (iii).
Then $h(\alpha)$ is the completion
of an algebraic extension of $k(X)$ (namely the compositum with $\ell$
over $k$). 
By Remark~\ref{R:extension}, the valuation on $h(\alpha)$
has the same rational rank and residual transcendence degree as $v$.
However, by Lemma~\ref{L:same corank}, this implies that 
the sum of the rational rank and residual transcendence degree
on $\ell$ is one less than on $h(\alpha)$, implying
$\trdefect(v^0) = \trdefect(v)$. This contradicts
Hypothesis~\ref{H:relative2}.
\end{proof}

\begin{defn}
For $s_0 = -\log r(\alpha)$,
define $b_i(M, \alpha,s)$ for $s \in [0, s_0]$.
As observed in Definition~\ref{D:terminal slope}, the function $b_i(M, \alpha,s)$
is affine-linear on some neighborhood of $s_0$. We call the slope of
$b_i(M,\alpha,s)$ in this range the \emph{terminal slope} of $b_i(M, \alpha,s)$.
\end{defn}

\begin{remark} \label{R:compatibility}
From now on, when we perform a base change on Hypothesis~\ref{H:relative2},
we will need to maintain compatibility with Definition~\ref{D:regime}.
(We also need compatibility with tame and Artin-Schreier
alterations, but that is straightforward.)

To arrange the compatibility, in Definition~\ref{D:relative base change},
we will always identify $k(\tilde{X}^0)$ within $k(X^0)^{\alg}$ so that
$\tilde{v}^0$ agrees with the restriction of our chosen extension
of $v^0$. 
We then note that the map $\ell \langle x \rangle \to h(\alpha)$
cannot fail to be injective on $k(\tilde{X}^0)[x]$:
if $P(x)$ were a nonzero element of the kernel, then we could
find a polynomial $Q(x) \in k(X^0)[x]$ which becomes
divisible by $P(x)$ in $k(\tilde{X}^0)[x]$, and $Q(x)$ would also
be in the kernel. However, the map must be injective on
$k(X^0)[x]$ since $\alpha$ restricts to $e^{-v(\cdot)}$ there.
(Note that this argument implies that $k(X)$ is a subfield of $h(\alpha)$
even when $\alpha$ is of type (i), as noted in Definition~\ref{D:regime}.)

Since $\pi$ has geometrically integral generic fibre, we may identify
\[
k(\tilde{X}) =
k(X \times_{X^0} \tilde{X}^0) = k(X) \otimes_{k(X^0)} k(\tilde{X}^0)
=
k(X) \otimes_{k(X^0)(x)} k(\tilde{X^0})(x)
\]
with a subfield of $h(\alpha)$,
and then define the extension
$\tilde{v}$ of $v$ by restriction from $h(\alpha)$.

If the base change is not scale-preserving,
then it translates the domains of the functions $b_i(M, \alpha,s)-s$ to 
the left by the amount $\tilde{v}(g)$.
\end{remark}

\subsection{Artin-Schreier characters and Dwork modules}

In order to make further progress, we need to collect some information
about Artin-Schreier characters and their associated differential modules.

\begin{defn} \label{D:artin-schreier}
Let $R$ be a connected $\FF_p$-algebra,
and fix a geometric point $\overline{x}$ of $\Spec R$.
An \emph{Artin-Schreier character} of $R$ is a homomorphism
\[
\tau: \pi_1(\Spec R, \overline{x}) \to \ZZ/p\ZZ
\]
which is discrete (i.e., has open kernel).
By Artin-Schreier theory (see, e.g., \cite[1.4.5]{katz-local}),
the group of Artin-Schreier characters is isomorphic to the 
cokernel of the map $\phi-1: R \to R$, where $\phi: R \to R$
is the $p$-power Frobenius.
Explicitly, given $u \in R$,
the extension $R[z]/(z^p - z - u)$ carries the structure of a
$\ZZ/p\ZZ$-torsor by the formula
\[
g(z) = z+g \qquad (g \in \ZZ/p\ZZ).
\]
\end{defn}

\begin{defn} \label{D:prepared}
For $J \subseteq [0, +\infty]$ a closed subinterval
not containing $+\infty$ (resp.\ containing $+\infty$), 
let $R_J$ be the Fr\'echet completion of 
$\ell[x,x^{-1}]$ (resp.\ $\ell[x]$) for
the $s$-Gauss valuations $v_s$ for $s \in J$.
In particular, if $J = [0, +\infty]$, we have $R_J = \ell \langle x \rangle$.

We say that $u = \sum_{i \in \ZZ} u_i x^i \in R_J$ is \emph{prepared}
if $u\in k(X^0)[x,x^{-1}]$ and $u_i = 0$ whenever $i$ is divisible by $p$.
Note that prepared elements form a subgroup of the additive group of $R$,
and that a prepared element $u$ defines the trivial Artin-Schreier character
of $R$ if and only if $v_s(u) > 0$ for all $s \in J$.
(Beware however that 
preparedness is stable only under \emph{center-preserving} base changes on
Hypothesis~\ref{H:relative2}.)
\end{defn}

\begin{lemma} \label{L:prepare parameter}
Given $u \in R_J$, after a simple base change on 
Hypothesis~\ref{H:relative2}, we can find a prepared element $u' \in
R_J$ defining the same Artin-Schreier character as $u$.
\end{lemma}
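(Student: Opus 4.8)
The plan is to clear denominators in $u$, then to remove the "bad" monomials (those with exponent divisible by $p$) using the Frobenius $\phi$ on the relevant $\FF_p$-algebra, exactly as in the classical Artin--Schreier normal form computation, while taking care that each operation respects $R_J$ and only requires a simple base change on Hypothesis~\ref{H:relative2}. First I would observe that $u \in R_J$ can be approximated arbitrarily well (in all the $v_s$, $s \in J$) by an element of $\ell[x,x^{-1}]$ (or $\ell[x]$ if $+\infty \in J$); since $R_J$ is the Fréchet completion of that ring, and since Artin--Schreier characters only depend on $u$ modulo elements of positive valuation at every $s \in J$ (which is where Definition~\ref{D:prepared} comes in), replacing $u$ by such an approximation changes neither the character nor the problem. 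Next, the coefficients of this polynomial lie in $\ell = \widehat{k(X^0)^{\alg}}$; by a simple base change on Hypothesis~\ref{H:relative2}, i.e.\ replacing $X^0$ by a suitable local alteration around $v^0$, we may arrange that all the (finitely many) coefficients actually lie in $k(X^0)$, so that $u \in k(X^0)[x,x^{-1}]$.

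The core step is then the elimination of monomials $c x^{jp}$ with $j \in \ZZ$ (and of the constant term, if $J$ does not meet $+\infty$). For such a term, write $c \in k(X^0)$; after a further simple base change we may assume $c$ has a $p$-th root $c' \in k(X^0)$ (adjoin it, which is another local alteration around $v^0$), so that $c x^{jp} = (c' x^j)^p$. Then $c x^{jp} \equiv \phi(c' x^j) \pmod{(\phi-1)(R)}$ if $j \neq 0$, whereas $c' x^j - (c' x^j)$ shows $c x^{jp} - c' x^j = (\phi - 1)(c' x^j)$; so subtracting $(\phi-1)(c' x^j)$ from $u$ removes the monomial in degree $jp$ and introduces instead a monomial in degree $j$. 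Iterating downward through the (finitely many) degrees divisible by $p$ — there are only finitely many because $u$ is a Laurent polynomial — terminates after finitely many steps, since each step strictly decreases the number of surviving bad monomials of highest absolute degree (the newly created degree-$j$ monomial has $|j| < |jp|$). The constant term, when present, is handled the same way using $j = 0$: $c = (c')^p$ and $c - c' = (\phi-1)(c')$, except here $(\phi-1)(c')$ is again a constant, so instead one simply absorbs the constant into the torsor coordinate — equivalently, a constant $u_0$ defines the same character as $u_0 - (\phi-1)(w)$ for any $w \in \ell$ with $w^p - w = u_0$, and such $w$ exists in a purely inseparable-or-separable extension of $k(X^0)$, giving one more simple base change. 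The result $u'$ is then a Laurent polynomial over $k(X^0)$ with no monomials in degrees divisible by $p$, hence prepared, and it defines the same Artin--Schreier character as $u$ by construction since we only modified $u$ by elements of $(\phi-1)(R_J)$.

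The main obstacle I expect is bookkeeping rather than anything deep: one must check that every auxiliary field extension of $k(X^0)$ used (adjoining $p$-th roots of finitely many coefficients, adjoining a root $w$ of $w^p - w = u_0$) can be realized as a \emph{local alteration around $v^0$} so that it qualifies as a simple base change in the sense of Definition~\ref{D:relative base change} — i.e.\ that $\tilde v^0$ extends and is still exposed by the pulled-back pair. This follows because such extensions are finite over $k(X^0)$ and a minimal valuation always extends to a finite extension with the exposure property preserved after a further de Jong alteration (Theorem~\ref{T:de jong}) and an application of Lemma~\ref{L:exposing alteration}; but one should also confirm that passing from $\ell[x,x^{-1}]$-approximation back to $R_J$ is legitimate, which is exactly the density statement in Definition~\ref{D:prepared} together with the Fréchet completion description of $R_J$. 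A secondary point is that preparedness is only stable under center-preserving base changes (noted in Definition~\ref{D:prepared}), so the whole argument must use only \emph{simple} base changes — which it does, since we never alter $x$, only $X^0$.
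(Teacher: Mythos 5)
Your proof is correct and follows essentially the same route as the paper's: first use the convergence of $y + y^p + y^{p^2} + \cdots$ to show that modifying $u$ by any element of positive valuation at every $s \in J$ does not change the character, thereby reducing to a Laurent polynomial with coefficients in a finite extension of $k(X^0)$ after a simple base change; then use $(\phi-1)(c'x^j) = (c')^px^{jp}-c'x^j$ to kill monomials in degree divisible by $p$ at the cost of adjoining $p$-th (and $p^m$-th) roots; and finally remove the constant term by solving an Artin--Schreier equation over a further finite extension. The paper's proof is a compressed version of exactly this, and your added bookkeeping — checking that each auxiliary finite extension of $k(X^0)$ yields a legitimate simple base change via a local alteration preserving exposure, and that only simple (hence center-preserving) base changes are used so preparedness stays meaningful — is the right thing to verify.
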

\begin{proof}
Given $u = \sum_{i \in \ZZ} u_i x^i \in R_J$, 
we can make certain changes to $u$
without changing its image in $R_J/((\phi-1)R_J)$.
\begin{itemize}
\item
We may replace $u$ by $u + y$ for any $y \in R_J$ with $v_s(y) > 0$ for all
$s \in J$, because the series $z = y + y^p + y^{p^2} + \cdots$ converges
and satisfies $z - z^p = y$. 
Hence 
we can replace $u$ by an element of $k(X^0)^{\alg}[x, x^{-1}]$;
after a simple base change on Hypothesis~\ref{H:relative2},
we can force $u$ into $k(X^0)[x, x^{-1}]$.
\item
We may eliminate all terms with $i$ nonzero and divisible by $p$.
\item
After a simple base change
on Hypothesis~\ref{H:relative2}, we may also eliminate the term $u_0 x^0$.
\end{itemize}
The resulting Artin-Schreier parameter is prepared.
\end{proof}

\begin{defn} \label{D:dwork2}
Let $K^\sigma$ be the fixed field of
$K$ under $\sigma_K$. 
Suppose that $\FF_q \subseteq k$ and that $K^\sigma$ contains
a primitive $p$-th root of unity $\zeta_p$. Let $\pi$
be the unique solution of $\pi^{p-1} = -p$ satisfying
\[
\pi \equiv \zeta_p-1 \pmod{(\zeta_p-1)^2}.
\]
Let $J \subseteq [0, +\infty]$ be a closed subinterval,
and suppose $u = \sum_{i \in \ZZ} u_i x^i \in R_J$ is prepared.
Choose a lift $\tilde{u}_i \in \gotho_L$ of $u_i$ for each
$i$ not divisible by $p$, such that all but finitely many of the
$\tilde{u_i}$ are zero. 
Let $M_u \in C_{v^0,*,J}$ be the free (Dwork)
module generated
by a single generator $\bv$, on which $\nabla$ acts via
\[
\nabla(\bv) = -\sum_{i \in \ZZ} \pi i \tilde{u}_i x^{i-1} \bv \otimes dx.
\]
If $u$ corresponds to an Artin-Schreier character $\tau$
of $R_J$, we also write $M_\tau$ for $M_u$.
\end{defn}

\begin{lemma} \label{L:dwork2}
Set notation as in Definition~\ref{D:dwork2}.
\begin{enumerate}
\item[(a)]
The isomorphism class of $M_u$ depends only on $\tau$, not on the choices of
$u$ or $\tilde{u} = \sum_{i \in \ZZ} \tilde{u}_i x^i$.
\item[(b)]
For some $\epsilon \in (0,1)$,
if we extend $d$ to
\[
S = R_{v^0,[\epsilon,1),J}[z]/((1 + \pi z)^p - (1 - p \pi \tilde{u})),
\]
then regard $S$ as an object in $C_{v^0,*,J}$ carrying an action
of $\ZZ/p\ZZ$ for which $g \in \ZZ/p\ZZ$ carries
$1 + \pi z$ to $\zeta_p^g (1 + \pi z)$,
the eigenspace of $S$ for the character $g \mapsto \zeta_p^g$
is isomorphic to $M_u$.
\item[(c)]
There exists a Frobenius structure on $M_u$.
\item[(d)]
We have
\[
b_1(M_u, s) = \max\{s, \max_{i \neq 0} \{ -v^0(u_i)  + (1-i)s\}\}
\qquad (s \in J).
\]
\end{enumerate}
\end{lemma}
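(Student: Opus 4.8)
The plan is to make part~(b) the structural core of the proof and to deduce the other assertions from it together with standard Dwork-style exponential estimates. For (b): set $w = 1 + \pi z$, so that the defining relation of $S$ becomes $w^p = 1 - p\pi\tilde u$ (using $\pi^{p-1} = -p$). The $z$-derivative of the defining polynomial is $p\pi(1+\pi z)^{p-1}$, a unit in $S$, so for $\epsilon$ close enough to $1$ the map $R_{v^0,[\epsilon,1),J}\to S$ is finite \'etale and $d$ extends to $S$. Reducing modulo $\gothm_K$ identifies $S\otimes\ell$ with the Artin--Schreier $\ZZ/p\ZZ$-torsor of $R_J$ attached to $u$, hence to $\tau$; this fixes the $\ZZ/p\ZZ$-action. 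Decomposing $S = \bigoplus_{j=0}^{p-1} R_{v^0,[\epsilon,1),J}\cdot w^j$ into eigenspaces, the eigenspace for $g\mapsto\zeta_p^g$ is $R_{v^0,[\epsilon,1),J}\cdot w$. Differentiating $w^p = 1 - p\pi\tilde u$ yields $\nabla(w) = -\pi w(1-p\pi\tilde u)^{-1}\,d\tilde u$; this differs from the connection defining $M_u$ on a single generator by the logarithmic derivative of $\exp(g(\tilde u))$, where $g$ is a power series all of whose coefficients are divisible by $p\pi^2$. Since $|p\pi^2| < |\pi| = p^{-1/(p-1)}$, this series converges on the relevant annuli (for $\epsilon$ close to $1$, uniformly in $s\in J$), so twisting $w$ by it identifies $R_{v^0,[\epsilon,1),J}\cdot w$ with $M_u$, proving (b).

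Part~(a) then follows from two change-of-basis computations. Lift-independence is the observation that if $\tilde u,\tilde u'$ are two lifts of the same prepared $u$, then $\exp(-\pi(\tilde u - \tilde u'))$ converges (its argument lies in $|\pi|\cdot\gothm_K$) and intertwines the two presentations of $M_u$. For $\tau$-dependence, if prepared $u,u'$ define the same character then $u - u' = w^p - w$ for some $w$; using the evident isomorphism $M_{u_1+u_2}\cong M_{u_1}\otimes M_{u_2}$, it suffices to see that $M_{w^p-w}$ is trivial, and for a lift $\tilde w$ of $w$ the series $\exp(\pi(\tilde w^p - \tilde w))$ is a horizontal section --- this is the classical Dwork exponential, convergent on a disc of radius $>1$ exactly as in the remark following Proposition~\ref{P:identically zero} (with a further harmless exponential twist to pass from the official lift of $w^p - w$ to $\tilde w^p - \tilde w$). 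Alternatively, (a) can be read off directly from the model in~(b). For (c), the standard Dwork splitting argument applies: for the standard $q$-power Frobenius lift $\sigma$, the series $\exp(\pi(\tilde u - \sigma^*\tilde u))$ converges (again by Dwork; compare $\exp(\pi x - \pi x^q)$) and is $\equiv 1$ modulo $\gothm_K$, hence a unit, so $F(\sigma^*\bv) = \exp(\pi(\tilde u - \sigma^*\tilde u))\bv$ defines an isomorphism $\sigma^* M_u \cong M_u$ in $C_{v^0,[\epsilon^{1/q},1),J}$ after shrinking $\epsilon$.

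For (d): by (c) and Lemma~\ref{L:constant breaks3} the constants $b_i(M_u,s)$ are defined, and for $\rho = e^{-r}$ with $r$ small the cross section $(M_u)_\rho$ is a Dwork module of the form treated in Lemma~\ref{L:dwork}, with $r_j = -(j+1)\tilde u_{j+1}$. Preparedness forces $u_i = 0$ whenever $p\mid i$, which is precisely the hypothesis ``$r_j = 0$ whenever $p\mid j+1$'' of Lemma~\ref{L:dwork} (the exponent $j=-1$ contributing $0$ automatically), and $|\tilde u_i|_{\rho^{v^0}} = \rho^{v^0(u_i)}$ for $\rho$ near $1$. Taking $-\log$ of the radius formula of Lemma~\ref{L:dwork}, recalling that the Gauss parameter is $\rho^s$, and dividing by $r$ gives $b_1(M_u,s) = \max\{s, \max_{i\ne 0}\{-v^0(u_i)+(1-i)s\}\}$, as claimed.

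I expect the main obstacle to be (b): verifying that $R_{v^0,[\epsilon,1),J}\to S$ is finite \'etale for a suitable $\epsilon$, pinning down the exact exponential twist relating $R_{v^0,[\epsilon,1),J}\cdot w$ to $M_u$, and keeping all convergence estimates uniform over $\rho$ near $1$ and over $s\in J$. A secondary point requiring care throughout is the interplay between the $p$-power Frobenius of Artin--Schreier theory and the $q$-power Frobenius used for the Frobenius structure in (c).
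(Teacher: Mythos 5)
Your proof follows essentially the same route as the paper's: you make (b) the core by differentiating $w^p = 1 - p\pi\tilde u$ and applying an exponential twist, reduce (a) to the trivial-character case, and read (d) off Lemma~\ref{L:dwork} with $r_j = -(j+1)\tilde u_{j+1}$, using preparedness to clear the $p\mid(j+1)$ hypothesis. One inaccuracy in (b): the twisting series is $g = \sum_{i\ge 0}\frac{p^{i+1}\pi^{i+2}}{i+2}\tilde u^{i+2}$, and its coefficients are \emph{not} all divisible by $p\pi^2$ because the denominator $i+2$ can carry $p$-adic valuation (already $i=0$, $p=2$ gives $p\pi^2/2$); the correct uniform bound, which the paper verifies, is $\bigl|p^{i+1}\pi^{i+2}/(i+2)\bigr|\le|\pi^2|<|\pi|$, and that is what your convergence conclusion actually rests on. Two smaller remarks: your detour in (a) through $u-u'=w^p-w$ and the classical Dwork exponential is valid but longer than the paper's direct verification that $M_u$ is trivial from the prepared-parameter criterion $v_s(u)>0$ for all $s\in J$; and for (c), your $\exp(\pi(\tilde u-\sigma^*\tilde u))$ argument works but tacitly requires taking Teichm\"uller lifts $\tilde u_i=[u_i]$ (available by (a)) so that $\sigma_L(\tilde u_i)\sigma(x)^i=(\tilde u_i x^i)^q$, whereas the paper's alternative via (b) — extend the Frobenius lift along the finite \'etale cover $S$ — avoids all estimates.
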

\begin{proof}
Before proceeding, we observe
(as in the proof of Lemma~\ref{L:maintain by translate})
that if $u_i \neq 0$, then $-\log |\tilde{u}_i|_\rho = v^0(u_i)(-\log \rho)$
for $\rho$ sufficiently close to 1. If on the other hand $u_i = 0$,
then $-\log |\tilde{u}_i|_\rho$ is bounded below by a positive constant
for $\rho$ sufficiently close to 1. 

To prove (a), 
note that if $u, u' \in R_J$ are prepared and define the same
Artin-Schreier character $\tau$, then $u-u'$ defines the trivial
Artin-Schreier character. It thus suffices to check that
if $\tau$ is trivial,
then $M_u$ is trivial for any choices of the $\tilde{u}_i$.
 
If $\tau$ is trivial, then $v^0(u_i) + is > 0$ for all $i$ and all
$s \in J$.
For each $i$ for which $u_i \neq 0$, for $\rho$ sufficiently
close to 1, $|\tilde{u}_i x^i|_{\rho^{v^0},s} = 
\rho^{v^0(u_i) + is}$ is bounded above by a constant less than 1 uniformly
for all $s \in J$.
For each $i$ for which $u_i = 0$, 
$|x^i|_{\rho^{v^0},s} = \rho^{is}$ tends to 1 uniformly in $s$ as $\rho$ tends to 1.
Consequently, 
for $\rho$ sufficiently close to 1, $|\tilde{u}_i x^i|_{\rho^{v^0},s}$
is also bounded above by a constant less than 1 uniformly for all $s \in J$.
(In both cases, the conclusion is still valid even if $+\infty \in J$,
because in that case we only consider $i>0$.)
In all cases, we may deduce
(as in the proof of the last assertion of Lemma~\ref{L:dwork})
that the exponential $\exp(\pi \sum_i \tilde{u}_i x^i)$ converges in
$R_{v^0, [\epsilon,1), J}$ for $\epsilon$ sufficiently close to 1,
and hence $M_u$ is trivial as an element of $C_{v^0,*,J}$.

To prove (b), note that the action of $d$ on $S$ satisfies
\[
\frac{d(1 + \pi z)}{1 + \pi z}
= \frac{1}{p} \frac{d(1 - p \pi \tilde{u})}{1 - p \pi \tilde{u}}
= -\frac{\pi d\tilde{u}}{1 - p \pi \tilde{u}}.
\]
If we can find $y \in R_{v^0,[\epsilon,1),J}$
such that
\[
\frac{dy}{y} = 
\frac{p \pi^2 \tilde{u} d\tilde{u}}{1 - p \pi \tilde{u}} =
-\pi d\tilde{u} + \frac{\pi d\tilde{u}}{1 - p \pi \tilde{u}},
\]
we may then conclude that $y(1 + \pi z)$
is a generator for
the eigenspace of $S$ for the character $g \mapsto \zeta_p^g$
which maps to $\bv$ under an isomorphism with $M_u$.
We would like to take $y = \exp(w)$
for $w$ formally defined as
\[
w = \int \frac{p \pi^2 \tilde{u} d\tilde{u}}{1 - p \pi \tilde{u}}
= \int \sum_{i=0}^\infty p^{i+1} \pi^{i+2} \tilde{u}^{i+1}
d\tilde{u}
= \sum_{i=0}^\infty \frac{p^{i+1} \pi^{i+2}}{i+2} \tilde{u}^{i+2}.
\]
This becomes valid once we observe that 
$|p^{i+1} \pi^{i+2}| \leq |\pi^2(i+2)|$ for each $i \geq 0$,
as then we may deduce that for $\rho$ sufficiently close to 1,
$|w|_{\rho^{v^0},s}$ converges to a limit less than $|\pi|$
uniformly for $s \in J$.

Assertion (c) holds by the discussion following the statement
of Proposition~\ref{P:identically zero}. It also follows from (b),
since the ring $S$ is finite \'etale over $R_{v^0,[\epsilon,1),J}$
and so admits an extension of any Frobenius lift (in a manner compatible
with $d$).

To prove (d), we again use the initial observation and then apply
Lemma~\ref{L:dwork}.
\end{proof}
\begin{cor} \label{C:AS nontrivial}
With notation as in Definition~\ref{D:dwork2}, 
suppose that $b_1(M_u,s)=s$ for $s \in J$.
Let $\tau$ be the Artin-Schreier character defined by $u$.
Then for any closed subinterval $J'$ of the interior of $J$, 
the restriction of $\tau$ to $\pi_1(\Spec R_{J'}, \overline{x})$
is trivial.
\end{cor}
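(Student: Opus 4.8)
The plan is to translate the hypothesis $b_1(M_u,s) = s$ for all $s \in J$ into the statement that the Dwork module $M_u$ satisfies the Robba condition on the annulus determined by any subinterval of $J$, and then to exploit the relationship between the generic radius of convergence of $M_u$ and the wild ramification of the Artin--Schreier character $\tau$. Concretely: $b_1(M_u,s)$ is the normalized largest break of the cross-section $M_{u,\rho}$ at radius $\rho^s$; the equality $b_1(M_u,s)=s$ says precisely that $R(M_u,\rho,s) = \rho^s$, i.e. the generic radius of convergence is maximal, for all $s$ in the relevant range. By Lemma~\ref{L:dwork2}(d) this equality is genuinely what is assumed.

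First I would fix a closed subinterval $J' \subseteq \inte(J)$ and pass to the object of $C_{v^0,*,J'}$ obtained by restriction, which by Lemma~\ref{L:dwork2}(c) still carries a Frobenius structure. The key algebraic input is the presentation in Lemma~\ref{L:dwork2}(b): the finite \'etale $\ZZ/p\ZZ$-cover $S = R_{v^0,[\epsilon,1),J'}[z]/((1+\pi z)^p - (1-p\pi\tilde u))$ of $R_{v^0,[\epsilon,1),J'}$ has $M_u$ as a $\zeta_p$-eigenspace of $d$. The cover $S$ is, after reduction and after the standard identification of Artin--Schreier covers, exactly the cover of $R_{J'}$ classified by $\tau$. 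Now the statement that $\tau|_{\pi_1(\Spec R_{J'})}$ is trivial is equivalent to saying that this finite \'etale cover splits, i.e. that $S$ reduces to a split cover of $\ell[x,x^{-1}]$ completed along the $s$-Gauss valuations for $s \in J'$.

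The mechanism linking the two is: the Robba condition on $M_u$ forces, via the theory of $p$-adic exponents (or, as Remark~\ref{R:no exponents} notes, via the transfer-theorem substitute), that $M_u$ is unipotent — hence, being rank one, trivial — after a tame base change; but $M_u$ has rank one, so a tame base change cannot be needed, and $M_u$ itself is trivial on the annulus. More carefully, I would apply Theorem~\ref{T:Robba condition Frob}: since $M_u$ satisfies the Robba condition and admits a Frobenius structure (Lemma~\ref{L:dwork2}(c)), there is a tame-degree $m$ with $f_m^* M_u$ unipotent; being rank one it is then trivial, and since a rank-one module that is trivial after tame base change has break zero along every divisorial valuation, $M_u$ is trivial to begin with on the interior. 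Triviality of $M_u$ as a $\nabla$-module means the differential equation $dy/y = -\pi\, d\tilde u$ has a solution, which forces $\tilde u$, hence $u$, to be "exact" modulo the image of Frobenius in the appropriate completed ring; running Lemma~\ref{L:dwork2}(a)'s argument in reverse (the $\exp(\pi\sum \tilde u_i x^i)$ computation) shows this is equivalent to $v_s(u) > 0$ for all $s \in J'$, which by Definition~\ref{D:prepared} is precisely the triviality of the Artin--Schreier character on $\pi_1(\Spec R_{J'})$.

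The main obstacle I anticipate is the careful bookkeeping of the comparison between the analytic cover $S$ over $R_{v^0,[\epsilon,1),J'}$ and the characteristic-$p$ Artin--Schreier cover over $R_{J'}$: one must check that "$M_u$ trivial as a $\nabla$-module" descends to "the mod-$p$ cover splits", and this requires knowing that the reduction map on $\pi_0$ of finite \'etale covers is well-behaved — this is exactly the content one would extract from Corollary~\ref{C:count components}, applied after reducing to a type-(iii) situation, or more directly from the explicit exponential computation in the proof of Lemma~\ref{L:dwork2}(a), which already shows triviality of $M_u$ is equivalent to the inequality $v^0(u_i) + is > 0$ for all $i$ and all $s$ in the interval. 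In fact the cleanest route is probably to avoid the Tannakian/étale language entirely: combine Lemma~\ref{L:dwork2}(d) (which gives $b_1(M_u,s) = \max\{s, \max_{i\neq 0}\{-v^0(u_i)+(1-i)s\}\}$) with the hypothesis $b_1(M_u,s)=s$ to deduce directly that $-v^0(u_i) + (1-i)s \le s$, i.e. $v^0(u_i) + is \ge 0$, for all $i \neq 0$ and $s \in J$; strict inequality on the interior then gives exactly the preparedness-plus-positivity condition of Definition~\ref{D:prepared} that characterizes triviality of $\tau$ on $\pi_1(\Spec R_{J'})$.
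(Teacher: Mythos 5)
Your closing paragraph is exactly the paper's proof: from Lemma~\ref{L:dwork2}(d) and the hypothesis $b_1(M_u,s)=s$ one gets $v^0(u_i)+is\geq 0$ for all $i\neq 0$ and $s\in J$, and since the function $s\mapsto v^0(u_i)+is$ is affine with nonzero slope $i$ (here preparedness ensures $i\not\equiv 0\pmod p$, so $i\neq 0$), the inequality is strict on the interior, hence on $J'$, which by the remark in Definition~\ref{D:prepared} is precisely the triviality criterion. The earlier route via the Robba condition, Theorem~\ref{T:Robba condition Frob}, and a tame pullback is a detour that invokes far heavier machinery ($p$-adic exponents) and would also need care transferring the Frobenius structure on the object of $C_{v^0,*,J}$ to a cross-section in order to apply that theorem; the direct computation you give at the end is the intended argument.
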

\begin{proof}
Lemma~\ref{L:dwork2} implies that for each $i \neq 0$,
$-v^0(u_i) + (1-i)s \leq s$ for all $s \in J$.
Since the difference between the two sides is affine-linear in $s$
with nonzero slope, the equality must be strict for all $s \in J'$.
Hence as a parameter for an Artin-Schreier extension of $R_{J'}$,
we may replace $u$ by $0$ without changing the extension; this
proves the claim.
\end{proof}

We can use this computation to classify certain Dwork subobjects
of the module $M$ defined in Definition~\ref{D:regime}.
(For the next statement,
recall that $\alpha$ is a multiplicative seminorm on $\ell[x]$
which agrees with $e^{-v^0(\cdot)}$ on $\ell$ and with $e^{-v(\cdot)}$
on $k(X^0)[x]$, viewed as a point of $\DD_{\ell}$,
and that $s_0 = -\log r(\alpha)$.)
\begin{prop} \label{P:rank 1 analysis}
Suppose $N$ is a subquotient of $M$ in $C_{v^0,*,*, \alpha}$ of rank $1$.
Let $J \subseteq [0, s_0)$ be a closed subinterval
with nonempty interior, and suppose that $\alpha_{0,e^{-s}} \geq \alpha$
for all $s \in J$.
Let $u \in R_J$ be prepared, and suppose that
$N \cong M_u$ in $C_{v^0,*,J,\alpha} = C_{v^0,*,J}$.
Let $J'$ be any closed subinterval of the interior of $J$.
Then the following conditions hold.
\begin{enumerate}
\item[(a)]
The function $b_1(N, \alpha,s)$ is continuous, piecewise affine-linear, and 
convex on $[0, s_0]$.
\item[(b)]
There exists a prepared $u_+ \in \ell \langle x \rangle$
such that
such that $M_u \cong M_{u_+}$ in $C_{v^0,*,J',\alpha}$. 
\item[(c)]
Suppose that $0 \in J$ and 
$M_u \cong M_{u_+}$ in $C_{v^0,*,J,\alpha}$.
Then $N \cong M_{u_+}$ in $C_{v^0,*,*,\alpha}$.
\item[(d)]
The terminal slope of $b_1(N, \alpha, s)$ is equal to $0$ unless
$b_1(N, \alpha,s)=s$ identically in a neighborhood of $s_0$.
\end{enumerate}
\end{prop}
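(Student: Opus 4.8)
The plan is to reduce all four parts to the behaviour of rank-one Dwork modules, with (d) ultimately deduced from Proposition~\ref{P:identically zero}.

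For (a), the key preliminary is to equip $N$ with a Frobenius structure. Since $N$ has rank one and is a subquotient of $M$, it is isomorphic to a Jordan-H\"older constituent of $M$; the Frobenius structure on $M$ makes $\sigma^*$ permute the finitely many isomorphism classes of such constituents, so $\sigma^{j*}N\cong N$ for some $j\ge 1$ and $N$ admits a Frobenius structure for $\sigma^{j}$. (Lemma~\ref{L:dwork2}(c) supplies one over $C_{v^0,*,J}$, but the subquotient argument is what controls $N$ near $s_0$.) Now Proposition~\ref{P:variation1}, whose proof first translates $\alpha$ so that $\alpha\ge\alpha_{0,0}$ and then invokes Theorem~\ref{T:variation}, and which goes through unchanged for powers of $\sigma$, shows that $b_1(N,\alpha,s)$ is continuous, piecewise affine-linear with slopes in $\tfrac1{n!}\ZZ=\ZZ$ (rank one), and convex on $[0,s_0)$; convexity together with the bound $b_1(N,\alpha,s)\le b_1(M,\alpha,s)$ gives a finite limit at $s_0$, hence the extension to $[0,s_0]$.

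For (b) and (c), note first that since the generic path to $\alpha$ is totally ordered, the hypothesis $\alpha_{0,e^{-s}}\ge\alpha$ for $s\in J$ also holds for all $s\in[0,\sup J]$ (Lemma~\ref{L:unique dominate}), so we may assume $0\in J$ and $C_{v^0,*,J,\alpha}=C_{v^0,*,J}$. The prepared parameter $u$ is a Laurent polynomial over $k(X^0)$; by Lemma~\ref{L:dwork2}(d) a monomial $u_ix^i$ with $i<0$ contributes to $b_1(M_u,s)$ a line of slope $1-i\ge 2$, and by (a) ($b_1$ convex, nonincreasing where $b_1>s$, and $=s$ for $s$ large) no such line realizes the maximum. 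The plan for (b) is to use this subdominance to modify $u$ modulo a coboundary $(\phi-1)w$ with $w\in R_{J'}$, over the compact interval $J'$, so as to remove all negative-degree monomials; the telescoping series $w=-\sum_{m\ge0}\phi^m(\,\cdot\,)$ realizing this converges in $R_{J'}$ precisely because of subdominance, and produces a prepared $u_+\in\ell\langle x\rangle$ with $M_u\cong M_{u_+}$ over $C_{v^0,*,J'}$ by Lemma~\ref{L:dwork2}(a). For (c), assuming $M_u\cong M_{u_+}$ over all of $C_{v^0,*,J,\alpha}$ with $0\in J$, the rank-one module $N^\dual\otimes M_{u_+}$ carries a Frobenius structure and is trivial over $C_{v^0,*,J,\alpha}$; the trivializing horizontal section then propagates to $C_{v^0,*,*,\alpha}$ by rigidity, using the transfer theorem (Remark~\ref{R:transfer}) and Theorem~\ref{T:extend Robba cond} on cross-sections together with Lemma~\ref{L:intersect2} to return to the relative setting.

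For (d), the case $\alpha$ of type (i) follows from Remark~\ref{R:radius 0}, so assume $\alpha$ is of type (iv). Shrinking $J$ to a subinterval $J'\ni 0$ of its interior and applying (b) then (c) (with $J'$ in the role of $J$) gives $N\cong M_{u_+}$ over $C_{v^0,*,*,\alpha}$ with $u_+\in\ell\langle x\rangle$ prepared; discarding the (Artin-Schreier trivial, as $s_0<\infty$) tail of $u_+$ replaces it by a prepared polynomial whose coefficients lie in $\gotho_\ell$ and lift to $\gotho_L$. If the terminal slope is $1$, then near $s_0$ one has $b_1(N,\alpha,s)=s+c$ with $c\ge 0$, and $c>0$ is excluded by the nonincreasing clause of Proposition~\ref{P:variation1}, so $b_1=s$ near $s_0$. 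If the terminal slope is $\le 0$, apply Proposition~\ref{P:identically zero}, after the reparametrization along the generic path (as in the proof of Proposition~\ref{P:stable}) that makes $b_1$ affine on all of $[0,s_0]$, to conclude the slope is $0$ unless $b_1-s$ vanishes identically near $s_0$; integrality of the slope from (a) rules out the remaining values. I expect part (b) — the bookkeeping that clears the negative-degree monomials of $u$ modulo a coboundary and, above all, the convergence of the telescoping modification over $J'$ — to be the main obstacle, the other parts being assembled from Proposition~\ref{P:variation1}, the Robba-condition stability results, and Proposition~\ref{P:identically zero}.
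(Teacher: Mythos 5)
Your treatment of (c) and of the reduction in (d) to Proposition~\ref{P:identically zero} is essentially the paper's argument, and the observation in (a) that $b_1(N,\alpha,s)$ matches a $b_i(M,\alpha,s)$ (giving boundedness and hence finitely many integer-valued slopes) reaches the same conclusion as the paper. But there is a genuine gap in (b), which you yourself flag as the main obstacle. You invoke, for $N$ and for $M_u$, the properties ``nonincreasing where $b_1>s$'' and ``$=s$ for $s$ large,'' attributing them to (a). Neither property is part of (a), and neither is available here: $N$ lives in $C_{v^0,*,*,\alpha}$ and $M_u$ in $C_{v^0,*,J}$ for a bounded closed $J\subsetneq[0,s_0)$, i.e.\ in the \emph{annular} setting, where the relevant statement is Lemma~\ref{L:convex2}, which gives convexity and piecewise-affineness but --- as the paper explicitly warns --- \emph{not} the nonincreasing clause, and there is no ``large $s$'' in $J$. (Likewise the appeal to the ``nonincreasing clause of Proposition~\ref{P:variation1}'' in your terminal-slope-$1$ discussion of (d), and the suggestion that Proposition~\ref{P:variation1} ``goes through unchanged'' for $N$, founder on the same category mismatch; fortunately, for (d) this is harmless, since Proposition~\ref{P:identically zero} already excludes terminal slope $1$ with $b_1-s\ne0$.) Without a bound on the slopes of $b_1(M_u,s)$ coming from somewhere, the claimed subdominance of the negative-degree monomials, and hence the convergence of your telescoping coboundary, is unjustified.

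The missing idea is the twist by $M_{u_+}^\dual$: set $u_+=\sum_{i>0}u_ix^i$, note that $M\otimes M_{u_+}^\dual$ is defined on the full disc (it lies in $C_{v^0,*}$, since both $M$ and $M_{u_+}$ do) and admits a Frobenius structure, and apply Corollary~\ref{C:at most 1} to it. Since $P=N\otimes M_{u_+}^\dual$ is a subquotient of $M\otimes M_{u_+}^\dual$, the matching argument of (a) then shows that the terminal slope of $b_1(P,\alpha,s)$ is at most $1$; but on $J$ we have $P\cong M_{u-u_+}$ with $b_1(P,s)=\max\{s,\max_{i<0}\{-v^0(u_i)+(1-i)s\}\}$ by Lemma~\ref{L:dwork2}(d), whose slopes are $\ge 2$ whenever a term with $i<0$ dominates. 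This forces $v_s(u_ix^i)\ge 0$ for all $i<0$ and $s\in J$, hence $>0$ on the interior, and then Corollary~\ref{C:AS nontrivial} together with Lemma~\ref{L:dwork2}(a) kills the negative tail over $J'$, which is what your telescoping construction was meant to do. So your strategy for (b) is salvageable once you supply the terminal-slope input via the twist; as written it does not close.
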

\begin{proof}
We first note that $b_1(N, \alpha,s)$ is continuous, convex, and piecewise
affine-linear on $[0, s_0)$ by Lemma~\ref{L:convex2}. 
By Proposition~\ref{P:variation1} applied to $M$,
for $s$ in a neighborhood of $s_0$, there is an index $i$ such that
$b_1(N, \alpha,s) = b_i(M,\alpha,s)$. In particular, $b_1(N, \alpha,s)$
is affine-linear in a neighborhood of $s_0$; this proves (a).

Put $u = \sum_{i \in \ZZ} u_i x^i$ and put $u_+ = \sum_{i > 0} u_i x^i$.
Since $M_{u_+}$ belongs to $C_{v^0,*}$, we may apply the argument of the first
paragraph to $M \otimes M_{u_+}^\dual$ to deduce that
for $P = N \otimes M_{u_+}^\dual$,
$b_1(P, \alpha,s)$ is continuous, convex, and
piecewise affine-linear on $[0, s_0]$.
By Corollary~\ref{C:at most 1} applied to $M \otimes M_{u_+}^\dual$,
the terminal slope of $b_1(P, \alpha,s)$
is at most 1.
Hence for $s \in J$,
the slopes of $b_1(P, \alpha,s) = 
b_1(P, s)$ 
are all less than or equal to 1. But by Lemma~\ref{L:dwork2}, 
we have
\[
b_1(P, s) = \max\{s, \max_{i <0}
\{-v^0(u_i) + (1-i)s\}\} \qquad (s \in J),
\]
which can only have slopes less than or equal to 1 if
$v_s(u_i x^i) \geq 0$ for all $s \in J$. 
As in Corollary~\ref{C:AS nontrivial},
this implies that $u - u_+$ defines the trivial Artin-Schreier character
of $\pi_1(\Spec R_{J'}, \overline{x})$; then (b) follows
as in Lemma~\ref{L:dwork2}(a).

Suppose that $0 \in J$ and 
$M_u \cong M_{u_+}$ in $C_{v^0,*,J,\alpha}$.
Then $P$ is trivial in $C_{v^0,*,J,\alpha}$,
so $b_1(P,\alpha,s)$ is identically equal to 
$s$ for $s \in J$.
However, as in the previous paragraph, 
this function is convex, and in a neighborhood of $s_0$
is affine-linear with slope at most 1. Hence we must have
$b_1(P,\alpha,s) = s$ identically for $s \in [0, s_0)$.
To check that $P$ is trivial in $C_{v^0,*,*,\alpha}$,
it suffices to check the triviality in $C_{v^0,*,[0,s_1], \alpha}$ for each
$s_1 \in [0, s_0)$. We may check this, at the expense of 
losing the preparedness of $u$ and $u_+$ (for the remainder of
this paragraph), by applying a scale-preserving  
base change on Hypothesis~\ref{H:relative2} to reduce to the case
where $\alpha_{0, e^{-s_1}} \geq \alpha$, so 
$C_{v^0,*,[0,s_1]} = C_{v^0,*,[0,s_1],\alpha}$.
We then have a basis $\bv_1,\dots,\bv_d$ of horizontal sections of
$P$ in
$C_{v^0,*,J}$, such that for each $\rho$ sufficiently close to 1,
the restriction of each $\bv_i$ to $P_\rho$ extends
from $A_{F_\rho}(\rho^J)$ to $A_{F_\rho}[\rho^{s_1}, 1]$
(by Theorem~\ref{T:extend Robba cond}).
By Lemma~\ref{L:intersect2}, each $\bv_i$ is a horizontal section of
$P$ in $C_{v^0,*,[0,s_1],\alpha}$.
As noted
above, this suffices to prove (c).

To prove (d), 
note that by (b),
there exists a prepared $u_+ \in \ell \langle x \rangle$
such that $M_u \cong M_{u_+}$ in $C_{v^0,*,J',\alpha}$. 
Since it suffices to check (d)
after a base change on Hypothesis~\ref{H:relative2},
we can force the same conclusion to hold, but with $J'$ 
replaced by an interval containing 0.
By (c), we have $N \cong M_{u_+}$ in $C_{v^0,*,*,\alpha}$,
so to check (d) it now suffices to check its analogue with $N$ replaced 
by $M_{u+}$.

If $\alpha$ is of type (i), then
$b_1(M_{u_+}, \alpha,s)=s$ identically in a neighborhood of $s_0$.
If $\alpha$ is of type (iv), 
Proposition~\ref{P:identically zero} 
(applied after a base change on Hypothesis~\ref{H:relative2})
implies that either
$b_1(M_{u_+}, \alpha,s)$ has terminal slope $0$, or
$b_1(M_{u_+}, \alpha,s)=s$ identically in a neighborhood of $s_0$.
This proves (d).
\end{proof}

\subsection{Monodromy representations}

Having given a functor from certain representations to differential modules,
we now go in the reverse direction. This step is where we first take input
from the induction hypothesis, by invoking
local semistable reduction for valuations of transcendence defect $n$.

\begin{hypothesis} \label{H:local mono}
Throughout this subsection,
let $s_1 > 0$ be a real number not in the divisible closure of the
value group $\Gamma_{v^0}$ of $v^0$.
Note that $\Gamma_{v^0} \neq \RR$ because $\Gamma_{v^0}$ has rational rank
at most $\dim(X)$ by Abhyankar's inequality, so it is possible to choose
such an $s_1$.
Write $v_1$ as shorthand for $v_{s_1}$, as a valuation on $\ell(x)$.
By Remark~\ref{R:same completion},
we may view $k(X)$ as a subfield of the completion $\ell(x)_{v_1}$,
and thus restrict $v_1$ to $k(X)$.
\end{hypothesis}

\begin{defn} \label{D:project local mono}
Let $I_1 = \pi_1(\Spec k(X)_{v_1}, \overline{x})$ 
denote the inertia group of $\pi_1(X, \overline{x})$
for the valuation $v_1$, for $\overline{x}$ some geometric point of 
$\Spec \ell(x)_{v_1}$.
By Lemma~\ref{L:same corank}, $v_1$ has transcendence defect $n$, 
so by Hypothesis~\ref{H:relative}, we are granted 
local semistable reduction at $v_1$.
We may thus let $\tau: I_1 \to \GL(V)$ denote the semisimplified local 
monodromy representation of $\calE$ at $v_1$, as in
Definition~\ref{D:mono rep}.
\end{defn}

\begin{remark} \label{R:shape of mono}
Retain notation as in Definition~\ref{D:project local mono}.
We can gain some information about $\tau$ by using the subgroup
structure on $I_1$ coming from Definition~\ref{D:inertia group}.
First, denote $I'_1 = \pi_1(\Spec \ell(x)_{v_1}, \overline{x})$
viewed as a subgroup of $I_1$. 
The subgroup $\tau^{-1}(\tau(I'_1)) \subseteq I_1$ is open, corresponding
to the \'etale fundamental group of a finite separable extension of $k(X)_{v_1}$
contained in $\ell(x)_{v_1}$.
Apply the primitive element theorem to write this extension as
$k(X)_{v_1}(\gamma)$ with $\gamma \in \ell(x)_{v_1}$, and let $P(T)$
be the minimal polynomial of $\gamma$. Since $k(X^0)^{\alg}(x)$ is dense in
$\ell(x)_{v_1}$, we can choose $\delta \in k(X^0)^{\alg}(x)$
such that $v_1(\gamma - \delta) > v_1(\gamma' - \gamma)$ for each
root  $\gamma' \neq \gamma$ of $P(T)$. After a suitable simple base change
on Hypothesis~\ref{H:relative2}, we can force $\delta \in k(X^0)(x)$.
Then Hensel's lemma implies that $\gamma \in k(X)_{v_1}$,
in which case $\tau(I_1) = \tau(I'_1)$.

Second, assuming $\tau(I_1) = \tau(I'_1)$,
let $W'_1$ denote the wild inertia subgroup of $I'_1$.
The subgroup $\tau^{-1}(\tau(W'_1)) \subseteq I_1$ 
is open, corresponding to the \'etale fundamental group of a finite
\emph{tamely ramified} extension of $k(X)_{v_1}$.
Thus after a suitable simple base change and tame alteration on 
Hypothesis~\ref{H:relative2},
we have $\tau(I_1) = \tau(W'_1)$. In this case, $\tau(I_1)$
becomes a $p$-group since $W'_1$ is a pro-$p$-group.
\end{remark}

\begin{lemma} \label{L:localizing sub}
After performing a suitable simple base change on
Hypothesis~\ref{H:relative2}, there exist:
\begin{itemize}
\item
some $t_1,\dots,t_r \in \Gamma(P^0, \calO)$ whose images
in $\Gamma(P^0_k, \calO)$ cut out the components of $Z^0$
passing through $z^0$;
\item
for $i=1,\dots,r$, some $a_i, b_i \in \ZZ_{(p)}$ 
with $a_i < v^0(t_i)/s_1 < b_i$;
\item
some $\epsilon \in (0,1)$;
\end{itemize}
such that when applying Definition~\ref{D:mono rep}
to $v_1$, we can take the localizing subspace
$A$ to have the form
\[
\{y \in ]z^0[_{P^0} \times A_{K,x}[0,1]:
|x(y)| \in (\epsilon,1), \quad
|t_i(y)| \in (|x(y)|^{b_i}, |x(y)|^{a_i})
 \quad (i=1,\dots,r)\}.
\]
\end{lemma}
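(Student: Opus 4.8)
The plan is to unwind the construction of the localizing subspace in Definition~\ref{D:mono rep} as applied to the valuation $v_1$ on $k(X)$, and to show that the open conditions produced by the machinery of Lemma~\ref{L:localizing} can be arranged to have the stated shape after a simple base change. First I would recall that by Hypothesis~\ref{H:relative2} and Definition~\ref{D:regime}, the variety $X^0$ is exposed by $(X^0,Z^0)$ at $z^0$, so there is a system of parameters $t_1,\dots,t_r$ for $X^0$ at $z^0$ whose zero loci are exactly the components of $Z^0$ through $z^0$, and such that $v^0(t_1),\dots,v^0(t_r)$ are $\QQ$-linearly independent; after shrinking $P^0$ I may lift the $t_i$ to elements of $\Gamma(P^0,\calO)$. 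Since $s_1 \notin$ the divisible closure of $\Gamma_{v^0}$ (Hypothesis~\ref{H:local mono}), and $\Gamma_{v^0}\otimes\QQ$ is finite-dimensional, for each $i$ the ratio $v^0(t_i)/s_1$ is a well-defined real number, and I can pick $a_i,b_i\in\ZZ_{(p)}$ with $a_i < v^0(t_i)/s_1 < b_i$ (using density of $\ZZ_{(p)}$ in $\RR$; the requirement that $a_i,b_i$ be $p$-integral costs nothing since we only need them near, not equal to, the real number in question).

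Next I would invoke Lemma~\ref{L:localizing} (via Definition~\ref{D:mono rep}) to produce \emph{some} localizing subspace $A\subseteq {]z^0[_{P^0}}\times A_{K,x}[0,1]$ for $v_1$ over which the relevant lifted alteration becomes finite \'etale Galois and $\calE$ becomes unipotent on the appropriate tube. By Definition~\ref{D:localizing subspace}, $A$ is cut out by finitely many conditions of the form $\epsilon\le|g|<|h|$ with $g,h\in\Gamma(P^0,\calO)[x]$ (or, after enlarging $P^0$, functions on the product), with $\overline g/\overline h\in\gothm_{v_1}$. The key point is then to show each such condition is implied by — or can be replaced after a simple base change by — a condition of the stated form in $|x|$ and the $|t_i|$. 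Concretely: because $v_1$ is, by Remark~\ref{R:same completion}, the restriction to $k(X)$ of the $s_1$-Gauss valuation built on $v^0$, a function $g$ on $X$ lies in $\gothm_{v_1}$ iff its image under $X\to X^0$ combined with the $x$-coordinate has positive $v_1$-value; since $\calO_{X^0,z^0}$ has maximal ideal generated by $t_1,\dots,t_r$ (and higher parameters that do not meet $Z^0$, which can be absorbed), and since $v_1$ restricted to $k(X^0)$ is $v^0$, monomials in the $t_i$ and in $x$ with the appropriate sign of exponents dominate. I would make this precise by pulling the ratios $\overline g/\overline h$ back to $\calO_{X^0,z^0}[x,x^{-1}]$ after a simple base change (as in the proof of Lemma~\ref{L:prepare parameter}, one uses simple base changes on $X^0$ to force the relevant rational functions to be regular), expanding in the $t_i$ and $x$, and checking that the leading term already forces the desired containment of tubes. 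This reduces the finitely many conditions defining $A$ to a single condition $|x|\in(\epsilon,1)$ together with $|t_i|\in(|x|^{b_i},|x|^{a_i})$ for $i=1,\dots,r$: the lower bound $|t_i|>|x|^{b_i}$ encodes $v^0(t_i)<b_i s_1$ and the upper bound $|t_i|<|x|^{a_i}$ encodes $v^0(t_i)>a_i s_1$, so the subspace so defined is a localizing subspace containing the center of $v_1$, and one may shrink the original $A$ into it (using the freedom in Lemma~\ref{L:localizing} and Remark~\ref{R:localizing1} to pass to a smaller localizing subspace, which does not disturb the \'etale-Galois and unipotence conclusions).

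Finally I would verify that the subspace $A$ in the displayed form is genuinely a localizing subspace in the sense of Definition~\ref{D:localizing subspace}: each defining condition $|t_i|<|x|^{a_i}$ with $a_i>0$ and $|x|<1$ gives $|t_i/x^{a_i}|<1$ with $\overline{t_i}/\overline{x}^{a_i}\in\gothm_{v_1}$ (here one clears denominators by raising to a power to make $a_i$ an integer, legitimate since $a_i\in\ZZ_{(p)}$ and reductions behave well under the $p$-power Frobenius twist implicit in the Teichm\"uller formalism), and similarly the condition $|t_i|>|x|^{b_i}$ rewrites as $|x^{b_i}/t_i|<1$ with the corresponding reduction in $\gothm_{v_1}$, while $|x|>\epsilon$ and $|x|<1$ are of the required type directly. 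I expect the main obstacle to be precisely the reduction step in the middle paragraph: controlling the finitely many auxiliary conditions coming out of Lemma~\ref{L:localizing} and showing they are \emph{all} subsumed by monomial inequalities in $x$ and the $t_i$ after a \emph{simple} base change (rather than a general base change, which would spoil compatibility with the coordinate $x$). This requires care that the simple base changes used to regularize the relevant rational functions on $X^0$ do not alter the identification of $v_1$, which is guaranteed by Remark~\ref{R:compatibility}, and that the exposure data $(t_1,\dots,t_r)$ survives such base changes, which follows from the definition of a simple base change only affecting $X^0$ and from Lemma~\ref{L:exposing alteration} if a further exposing alteration is needed.
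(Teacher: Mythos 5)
Your overall strategy is pointed in the right direction, and you correctly identify the main technical obstacle — but you then explicitly leave that obstacle unresolved, and it is precisely there that all the work in the paper's proof lies. What you have written is an accurate outline of the problem, not a proof.

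Concretely, the paper's argument does three things your sketch does not. First, it proves \emph{stability of the desired shape under a simple base change}: if $A$ has the monomial form and one performs a simple base change on Hypothesis~\ref{H:relative2}, the inverse image of $A$ contains another subspace of the same form. This is shown via the linear map relating $\tilde{v}^0(\tilde{t}_j)$ to the $v^0(t_i)$, combined with the fact that $s_1\notin\QQ\cdot\Gamma_{v^0}$ forces $v^0(t_i)/s_1$ into the \emph{open} interval $(a_i,b_i)$ so there is room to shrink. Your proposal never checks that performing the simple base changes needed to regularize the $g/h$ does not destroy the shape obtained so far. Second, and most importantly, the paper shows that any additional condition of the form $\delta\le|f|<1$ with $v_1(\overline{f})>0$ can be accommodated. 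The crux is that, expanding $\overline{f}=\sum_{i\ge0}\overline{f}_ix^i$ in $\calO_{X^0,z^0}\llbracket x\rrbracket$, the hypothesis $s_1>0$, $s_1\notin\Gamma_{v^0}$ guarantees a \emph{unique} index $h$ minimizing $v^0(\overline{f}_i)+is_1$ and an index $N>h$ with $(N-h)s_1>v^0(\overline{f}_h)$; one then does a simple base change making $\overline{f}_0,\dots,\overline{f}_{N-1}$ units times monomials in the $t_j$, and uses domination plus an explicit lift and a truncation-at-$N$ estimate on the tube. Your phrase ``checking that the leading term already forces the desired containment'' gestures at this but does not produce the dominating index, the truncation, or the analytic bound on the lifted $f$, and these are genuinely nontrivial — your own closing paragraph concedes this. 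Third, conditions $|f_1|<|f_2|$ are handled by a further reduction to the monomial case, which again your sketch does not carry out.

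Two smaller points. The direction of your argument (``shrink the original $A$ into it'') is the reverse of what is needed: one wants to start from a subspace of the desired form and show that the extra conditions required by Lemma~\ref{L:localizing} can be appended while preserving that shape, which is why the paper's proof is organized as a stability-under-shrinking argument rather than a subsumption argument. And your remark about clearing denominators via ``the $p$-power Frobenius twist implicit in the Teichm\"uller formalism'' is off base: the reason for $a_i,b_i\in\ZZ_{(p)}$ is to permit an $m$-th power with $m$ coprime to $p$, which matters later for tame covers and toroidal resolution (Construction~\ref{C:localizing sub2}, Proposition~\ref{P:endgame}), not for anything involving Teichm\"uller lifts at this stage.
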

\begin{proof}
We first verify that if we start with a subspace of the desired form,
then apply a simple base change on Hypothesis~\ref{H:relative2}, 
the inverse image of the original subspace
contains another subspace of the desired form.
Namely, if $\tilde{P}_0$ is a smooth irreducible affine formal scheme
over $\Spf \gotho_K$ with special fibre $\tilde{X}^0$,
and $\tilde{t}_1, \dots, \tilde{t}_r \in \Gamma(\tilde{P}^0, \calO)$
have images in $\Gamma(\tilde{P}^0_k, \calO)$ cutting 
out the components of $\tilde{Z}^0$
passing through $\tilde{z}^0$,
then there is a unique invertible $r \times r$ matrix $A$ over $\QQ$ 
such that
\[
\tilde{v}^0(\tilde{t}_j) = \sum_{i=1}^r A_{ij} v^0(t_i).
\]
Since $s_1$ is not in the $\QQ$-span of $\Gamma_{v^0}$,
the quantity $v^0(t_i)/s_1$ must belong not just to the closed
interval $[a_i, b_i]$ but also to the open interval $(a_i, b_i)$.
Applying the linear transformation on column vectors defined by $A$
to the product $\prod_{i=1}^r (a_i, b_i) \subseteq \RR^r$
gives an open subset of $\RR^r$.
Inside that open subset, we can find a product 
$\prod_{i=1}^r (a'_i, b'_i)$ with
$a'_i, b'_i \in \ZZ_{(p)}$ and 
$a'_i < \tilde{v}_0(t'_i)/s_1 < b'_i$.
Then the subset
\[
\{y \in \tilde{P}^0_k \times A_{K,x}[0,1]:
|x(y)| \in (\epsilon,1), \quad
|\tilde{t}_i(y)| \in (|x(y)|^{b'_i}, |x(y)|^{a'_i})
\quad (i=1,\dots,r)\}
\]
is contained in the inverse image of the original subspace.

Before continuing, we note that given any $e_1,\dots,e_r \in \QQ$
and $j \in \ZZ$ such that $v_1(t_1^{e_1} \cdots t_r^{e_r} x^j) > 0$,
we can adjust the choice of the $a_i$ and $b_i$ (without any base change
on Hypothesis~\ref{H:relative2}) to ensure that
$|t_1^{e_1} \cdots t_r^{e_r} x^j| < 1$ everywhere on the localizing subspace.
This follows by the argument using linear transformations from
the previous paragraph.

It remains to verify that we can shrink the localizing subspace so as to
satisfy an additional condition of the form in the definition of a localizing
subspace. We split this verification into two steps. We first consider
a condition of the form $\delta \leq |f| < 1$
for $f$ (defined on some smooth formal lift of $X$)
having reduction $\overline{f} \in \Gamma(X, \calO)$
with $v_1(\overline{f}) > 0$.
Expand $\overline{f} = \sum_{i=0}^\infty \overline{f}_i x^i$
in $\calO_{X^0,z^0}\llbracket x \rrbracket$.
Since $s_1$ is positive and not in $\Gamma_{v^0}$,
there must exist an index $h$ such that
$v^0(\overline{f}_i) + i s_1 > v^0(\overline{f}_h) + h s_1$ for all
$i \neq h$; there must also exist an index $N > h$ such that
$(N-h) s_1 > v^0(\overline{f}_h)$.

By performing a suitable simple base change on Hypothesis~\ref{H:relative2},
then adjusting $a_i$ and $b_i$ as above,
we can ensure that the following conditions hold.
\begin{enumerate}
\item[(a)]
For $i=0,\dots,N-1$, $\overline{f}_i$ can be written as the product of 
a unit $\overline{u}_i \in \Gamma(X^0, \calO)^\times$ with the monomial
$t_1^{e_{i1}} \cdots t_r^{e_{ir}}$ for some nonnegative integers
$e_{i1},\dots,e_{ir}$.
\item[(b)]
For $i = 0,\dots,N-1$ with $i \neq h$,
we have $|t_1^{e_{i1}-e_{h1}} \cdots t_r^{e_{ir}-e_{hr}} x^{i-h}| < 1$ 
everywhere on the localizing subspace.
\item[(c)]
We have $|t_1^{-e_{h1}} \cdots t_r^{-e_{hr}} x^{N-h}| < 1$ 
everywhere on the localizing subspace.
\end{enumerate}
Recall that $f$ is defined on some smooth formal lift $P$ of $X$,
for which we identify $]z[_P$ with $]z^0[_{P^0} \times A_{K,x}[0,1)$.
Lift $\overline{u}_0,\dots,\overline{u}_{N-1}$ and
$\overline{g} = x^{-N}(\overline{f} - \sum_{i=0}^{N-1} \overline{f}_i x^i)$
to $u_0, \dots, u_{N-1},g \in \Gamma(P, \calO)$.
For $i=0,\dots,N-1$, put $f_i = u_i t_1^{e_{i1}} \cdots t_r^{e_{ir}}$.
For 
some $c < 1$, we have $|f - g x^N - \sum_{i=0}^{N-1} f_i x^i| \leq c$
everywhere on $]z[_P$.
Thanks to (b) and (c), we have $|g x^N + \sum_{i=0}^{N-1} f_i x^i|
= |t_1^{e_{h1}} \cdots t_r^{e_{hr}} x^h|$
everywhere on the localizing subspace. By 
taking $\epsilon > \max\{c,\delta\}^{1/L}$ for 
$L = e_{h1} b_1 + \cdots + e_{hr} b_r + h$,
we may ensure that $\delta \leq 
|t_1^{e_{h1}} \cdots t_r^{e_{hr}} x^h| = |f| < 1$ everywhere on the
localizing subspace.

We next consider a condition of the form $|f_1| < |f_2|$
for $f_1, f_2$ (again defined on some smooth formal lift of $X$)
having reductions $\overline{f}_1, \overline{f}_2 \in \Gamma(X, \calO)$ 
with $\overline{f}_2 
\neq 0$ and 
$v_1(\overline{f}_1/\overline{f}_2) > 0$.
By approximating $f_1, f_2$ as in the previous
paragraph, 
after performing a suitable simple base change on Hypothesis~\ref{H:relative2}
and adjusting $a_i$ and $b_i$,
we can find 
$e_{i1},\dots,e_{ir},j_i \in \ZZ$
for $i \in \{1,2\}$ such that for some $\delta \in (0,1)$, 
$\delta < |t_1^{e_{i1}} \cdots t_r^{e_{ir}} x^{j_i}| = |f_i| \leq 1$
everywhere on the localizing subspace.
(We can only ensure strict inequality on the right in case
$v_1(\overline{f}_i) > 0$.)
Since $v_1(\overline{f}_1/\overline{f}_2) > 0$,
we have $v_1(t_1^{e_{11} - e_{21}} \cdots t_r^{e_{1r} - e_{2r}}
x^{j_1 - j_2}) > 0$,
and so $|f_1/f_2| = |t_1^{e_{11} - e_{21}} \cdots t_r^{e_{1r} - e_{2r}}
x^{j_1 - j_2}| < 1$ everywhere on the localizing subspace.

We have now shown that we can add conditions as in the definition of a 
localizing subspace, while maintaining the desired shape of our subspace.
This proves the claim.
\end{proof}

Note that Lemma~\ref{L:localizing sub} can equally well be stated using
closed intervals $[a_j,b_j]$ instead of open intervals $(a_j,b_j)$, 
provided that we modify the definition
of a localizing subspace as in Remark~\ref{R:localizing1}.
The proof of that modified statement includes the proof of a
purely characteristic $p$ assertion. For later convenience, we extract
this assertion explicitly.
\begin{construction} \label{C:localizing sub2}
Let $f_i: X_i^0 \to X^0$ be an exposing sequence for $v^0$.
We then obtain an exposing sequence for $v$ as follows.
For each index $i$, 
pick $t_1,\dots,t_r \in k(X_i^0)$
which, in some neighborhood $U$ of 
the center of an exposed extension of $v^0$ to $k(X_i^0)$,
are regular and cut out the components of $f_i^{-1}(Z^0)$ passing through
the center.
For $a_j, b_j \in \ZZ_{(p)}$ for $j=1,\dots,r$ such that
$a_j \leq v^0(t_j)/s_1 \leq b_j$,
form the minimal normal local modification 
of $U \times_{X^0} X$
on which the functions $x^{b_j m}/t^m$ and $t^m/x^{a_j m}$
become regular for all $m \in \ZZ$ with $a_j m, b_j m \in \ZZ$,
and let $\tilde{X}_i^0$ be a toroidal resolution of singularities of this
(as in \cite{kkms}).
Then take the exposing sequence to be the
maps $\tilde{X}_i^0 \to X$ 
indexed by integers $i$ and tuples $(a_1,b_1,\dots,a_r,b_r) \in \ZZ_{(p)}^{2r}$.
\end{construction}

\begin{defn} \label{D:Tannakian}
Recall that we obtained $M$ from $\calE$ by base change along
a continuous homomorphism 
$\Gamma(V \times A_{K,x}[0,1], \calO)
\to R_{v^0,[\epsilon,1)}$ for some $\epsilon \in (0,1)$,
in which $V$ is the intersection of $]z^0[_{P^0}$
with a strict neighborhood of $]X^0 \setminus Z^0[_{P^0}$.
For a suitable choice of the localizing subspace $A$ in
Lemma~\ref{L:localizing sub},
we have $A \subseteq V \times A_{K,x}[0,1]$;
for some closed interval $J$ containing $s_1$ in its interior
and some $\epsilon \in (0,1)$, we obtain
a commuting square
\[
\xymatrix{
\Gamma(V \times A_{K,x}[0,1], \calO) \ar[r] \ar[d] & R_{v^0,[\epsilon,1)} 
\ar[d]\\
\Gamma(A, \calO) \ar[r] & R_{v^0,[\epsilon,1),J}
}
\]
We thus obtain a base change from the Tannakian category
$[\calE_A]$ generated by $\calE_A$ to
the Tannakian subcategory $[M_J]$ of $C_{v^0,*, J}$
generated by the restriction $M_J$ of $M$ from
$C_{v^0,*}$ to $C_{v^0,*,J}$.
Note that this functor is faithful because
the map $\Gamma(A, \calO) \to R_{v^0,[\epsilon,1),J}$ is injective.
Similarly, we obtain a base change of semisimplified categories
from $[\calE^{\semis}_A]$  to $[M_J^{\semis}]$.
(Note that the notation $M_J^{\semis}$ is unambiguous:
the image of $\calE_A^{\semis}$ is already semisimple
since it generates an algebraic Tannakian category 
by Lemma~\ref{L:Tannakian}.
This image thus coincides with the semisimplification of $M_J$.)

Note that 
every object $N$ of $[M_J^{\semis}]$ is a subobject of an object 
generated from $M_J^{\semis}$ by some sequence of
direct sums, tensor products, and duals.
The latter can be obtained by base change from an object of
$[\calE_A^{\semis}]$, by performing the same sequence of operations
on $\calE_A^{\semis}$. Hence the base change functor
satisfies the hypothesis of Lemma~\ref{L:Tannaka2}(c),
so the automorphism group of $[M_J^{\semis}]$ (for any fibre functor)
may be viewed
as a subgroup of the automorphism group of $[\calE_A^{\semis}]$.
By similar reasoning,
shrinking $J$ does not 
increase the automorphism group of $[M_J^{\semis}]$; we may thus
assume hereafter that $J$ is chosen so that the automorphism group
$[M_J^{\semis}]$ does not decrease upon shrinking $J$ further.

With this assumption, we may assert that the automorphism group of 
$[M_J^{\semis}]$ is insensitive to a simple base change on 
Hypothesis~\ref{H:relative2}. 
By this observation plus Lemma~\ref{L:Tannakian},
we may identify the automorphism group of $[M_J^{\semis}]$ with a subgroup of
the image of $\pi_1(\Spec k(X)_{v_1} \times_{\Spec k(X^0)} \Spec k(X^0)^{\alg},
\overline{x})$ under the semisimplified local monodromy representation $\tau$
of $\calE$ at $v_1$. The ring $k(X)_{v_1} \otimes_{k(X^0)} k(X^0)^{\alg}$
is a union of complete fields, and is thus henselian; we thus do not
change the fundamental group by restricting to the component whose spectrum
contains $\overline{x}$ and then completing, to obtain
$R_{[s_1,s_1]}$.
Moreover, by Corollary~\ref{C:count components}, for $J$ sufficiently
small, the images of $\pi_1(\Spec R_J, \overline{x})$ and
$\pi_1(\Spec R_{[s_1,s_1]}, \overline{x})$ under $\tau$ coincide.

Let $\tilde{\tau}$ denote the restriction of $\tau$ to
$\pi_1(\Spec R_J,\overline{x})$. We now have an identification of
the automorphism group of $[M_J^{\semis}]$ with a subgroup of the image of
$\tilde{\tau}$. We will show momentarily that the $p$-parts of these
two groups
coincide for $J$ sufficiently small; see Lemma~\ref{L:Tannakian2}.
\end{defn}

We need the following compatibility between the previous construction
and the construction relating Artin-Schreier characters to Dwork modules.
\begin{lemma} \label{L:match Dwork}
With notation as in Definition~\ref{D:Tannakian},
suppose $\psi$ is a subrepresentation of $\tilde{\tau}$
isomorphic to the Artin-Schreier character defined
by a prepared parameter $u \in R_J$. Then the corresponding subquotient
of $M_J$ in $C_{v^0,*,J}$ is isomorphic to $M_u$.
\end{lemma}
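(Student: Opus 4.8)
The plan is to identify both $\psi$ and $M_u$ as subobjects of a single Artin-Schreier torsor object, exploiting the explicit description of that torsor on each side of the base change functor of Definition~\ref{D:Tannakian}. First I would recall from Definition~\ref{D:artin-schreier} and Definition~\ref{D:dwork2} that the prepared parameter $u$ determines a $\ZZ/p\ZZ$-torsor $R_J[z]/(z^p - z - u)$ over $R_J$ (in the special fibre), and on the mixed-characteristic side the ring $S = R_{v^0,[\epsilon,1),J}[z]/((1+\pi z)^p - (1 - p\pi \tilde u))$ of Lemma~\ref{L:dwork2}(b), which is finite \'etale over $R_{v^0,[\epsilon,1),J}$, carries a $\ZZ/p\ZZ$-action, and whose $\zeta_p$-eigenspace is exactly $M_u$. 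The reduction of $S$ modulo $\gothm_K$ recovers the Artin-Schreier torsor associated to the reduction of $\tilde u$, i.e.\ to $u$; this is the hinge relating the two constructions.

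The key steps, in order: (1) On the representation side, the hypothesis that $\psi$ is isomorphic (as a subrepresentation of $\tilde\tau$) to the Artin-Schreier character attached to $u$ means precisely that, after restricting $\calE_A$ along the base change functor, the rank-$1$ piece of $M_J^{\semis}$ cut out by $\psi$ is the one obtained by descent from the $\zeta_p$-eigenspace of the trivial ($\nabla$-module) pullback to the torsor $\Spec S$. (2) Identify the local monodromy construction of Definition~\ref{D:mono rep} at $v_1$ with the torsor picture: the finite \'etale cover $B \to A$ used there is, on the relevant component, exactly the spectrum of $S$ (or a quotient thereof), because by Lemma~\ref{L:lift etale} and Lemma~\ref{L:localizing} the Galois cover lifting a given finite separable extension of $k(X)_{v_1}$ is obtained by lifting the defining polynomial, and for an Artin-Schreier extension the canonical such lift is the one appearing in Lemma~\ref{L:dwork2}(b). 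Hence the subquotient of $M_J$ corresponding to $\psi$ is, by construction, the $\nabla$-submodule of $\calE_A$ generated by a horizontal section over $\Spec S$ transforming by $g \mapsto \zeta_p^g$. (3) By Lemma~\ref{L:dwork2}(b), that eigenspace, as an object of $C_{v^0,*,J}$, is isomorphic to $M_u$; and by Lemma~\ref{L:dwork2}(a) this isomorphism class is independent of all the choices (of $u$ within its Artin-Schreier class, and of the lift $\tilde u$). Combining these three identifications gives the claim.

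The main obstacle will be step (2): making precise that the Frobenius-equivariant lift of the Artin-Schreier cover implicit in the monodromy machinery of Definition~\ref{D:mono rep} coincides, over the localizing subspace $A$, with the \emph{specific} ring $S$ of Lemma~\ref{L:dwork2}(b) rather than merely with some finite \'etale cover abstractly isomorphic to it after reduction. The point is that the $\nabla$-module structure — not just the \'etale cover — must match, and this is exactly what pins down the lift $(1+\pi z)^p = 1 - p\pi\tilde u$ (equivalently $z^p - z \equiv u$ with the Dwork normalization): a direct computation, already done in the proof of Lemma~\ref{L:dwork2}(b), shows that in $S$ the element $y(1+\pi z)$ with $y = \exp(w)$ as in that proof is horizontal for the connection with $\nabla(\bv) = -\sum \pi i \tilde u_i x^{i-1}\bv \otimes dx$. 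So the strategy is to quote that computation: once we know the monodromy construction produces, on the $\zeta_p$-eigenspace, a rank-$1$ $\nabla$-module over $C_{v^0,*,J}$ whose pullback to $\Spec S$ is trivial and whose descent datum is the $\zeta_p$-character, Lemma~\ref{L:dwork2}(b) identifies it with $M_u$ on the nose. The residual bookkeeping — choosing $J$ small enough that all the relevant objects are defined, and invoking Corollary~\ref{C:count components} so that the components of $\Spec S$ are correctly matched to those over $R_{[s_1,s_1]}$ — is routine and already set up in Definition~\ref{D:Tannakian}.
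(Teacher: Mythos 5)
Your plan has a real gap in step (2), which you correctly flag as ``the main obstacle'' but do not overcome. The cover $B \to A$ in Definition~\ref{D:mono rep} is \emph{not} the Artin-Schreier torsor $\Spec S$: it is a lift of (the normalization of) the Galois closure of whatever separable alteration $f: X_1 \to X$ was produced by invoking local semistable reduction at $v_1$, and its Galois group $G$ is in general much larger than $\ZZ/p\ZZ$ and has no canonical relationship to the specific Artin-Schreier extension cut out by $u$. Lemmas~\ref{L:lift etale} and~\ref{L:localizing} only say the lift of $B$ exists as an abstract finite \'etale cover; they do not supply a canonical quotient map $B \to \Spec S$ together with a matching of $\nabla$-module structures that would let you read off the $\zeta_p$-eigenspace inside $\calE_B$. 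Your step (1) is also slightly circular: the statement that ``the rank-1 piece of $M_J^{\semis}$ cut out by $\psi$ is the one obtained by descent from the $\zeta_p$-eigenspace on $\Spec S$'' is precisely the conclusion, not a reformulation of the hypothesis that $\psi$ is isomorphic to a given Artin-Schreier character.

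The paper sidesteps the need to match $B$ with $\Spec S$ by first disposing of the trivial case ($\psi$ trivial, where the subquotient is forced to be trivial because the automorphism group of $[M_J^{\semis}]$ embeds in the image of $\tilde\tau$), and then reducing the general case to it via a tensor trick: one constructs, independently of $\calE$, a rank-$1$ overconvergent $F$-isocrystal $\calE_u$ on $X \setminus Z'$ with local monodromy equal to the Artin-Schreier character of $u$ and with $M_J = M_u$, and applies the trivial case to $\calE_u^\dual \otimes \calE$. The verification for $\calE_u$ proceeds exactly along the lines you envisage — pushing forward the trivial isocrystal from the Artin-Schreier cover $X'$, splitting into $\ZZ/p\ZZ$-isotypic pieces, and matching the Dwork torsor $S$ with the Artin-Schreier torsor via the congruences $\pi^{-p}((1+\pi z)^p - (1-p\pi\tilde u)) \equiv z^p - z - u$ and $\pi^{-1}(\zeta_p(1+\pi z)-(1+\pi z)) \equiv 1$ modulo $\gothm_K$ — together with Crew's theorem to identify the local monodromy representation with the regular representation of the Galois group. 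So your explicit Dwork computation is the right ingredient; what is missing is the reduction to the trivial case that lets you apply it to the constructed $\calE_u$ rather than directly to the unknown cover $B$.
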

\begin{proof}
Suppose first that $\psi$ is the trivial representation, in which case we must
check that the subquotient of $M_J$ in $C_{v^0,*,J}$
under consideration is trivial. This holds because the automorphism
group of $[M_J^{\semis}]$ is a subgroup of the image of $\tilde{\tau}$,
so any trivial subrepresentation of $\tilde{\tau}$ gives rise
to a trivial object in $[M_J^{\semis}]$.

Let $Z'$ be the union of $Z$ with the zero locus of $x$.
Suppose next that we can produce,
for each prepared Artin-Schreier parameter $u \in R_J$,
an $F$-isocrystal $\calE_u$ of rank 1 
on $X \setminus Z'$ overconvergent along $Z'$,
for which
$\tilde{\tau}$ is the Artin-Schreier character defined by $u$,
and $M_J \cong M_u$. 
Then we may apply the previous paragraph
with $\calE$ replaced by $\calE_u^{\dual} \otimes \calE$ to conclude.
(Note that although $\calE_u$ is not defined on the zero
locus of $x$, the construction of $M_J$ from Definition~\ref{D:Tannakian}
still makes sense for $\calE_u$.)

It thus remains to produce $\calE_u$. Since $u \in k(X^0)[x,x^{-1}]$,
we may perform a simple base change on Hypothesis~\ref{H:relative2}
to ensure that $u \in \Gamma(X^0 \setminus Z^0, \calO)[x,x^{-1}]$.
We may then form a finite normal cover $X'$ of $X$ 
with function field $k(X)[z]/(z^p - z - u)$. The pushforward
of the trivial isocrystal on $X'$ gives an 
overconvergent $F$-isocrystal $\calF$ on $X \setminus Z'$ 
carrying an action of $\ZZ/p\ZZ \cong \Gal(k(X')/k(X))$. 
Since $\calF$ becomes globally constant on $X'$,
the local monodromy representation of $\calF$ produced by
Definition~\ref{D:mono rep} coincides with the restriction of
the global monodromy representation produced by Crew
\cite[Theorem~2.1]{crew} (see also
\cite[Theorem~2.3.4]{kedlaya-swan2}).
The latter is the regular representation of $\Gal(k(X')/k(X))$,
which splits into one-dimensional characters; correspondingly,
$\calF$ splits into rank 1 isocrystals, one of which has
monodromy representation equal to the Artin-Schreier character
defined by the parameter $u$. We call this module $\calE_u$.

It remains to verify that for this choice of $\calE_u$, we have
$M_J = M_u$. This follows from the description of $M_u$ given in
Lemma~\ref{L:dwork2}(b),
together with the observations that
\[
\pi^{-p} ((1 + \pi z)^p - (1 - p \pi \tilde{u}))
\equiv z^p - z - u \pmod{\gothm_K}
\]
(this matches up the Artin-Schreier extensions)
and
\[
\pi^{-1}(\zeta_p(1 + \pi z) - (1 + \pi z)) \equiv 1 \pmod{\gothm_K}
\]
(this matches up the $\ZZ/p\ZZ$-torsor structures).
\end{proof}

\begin{lemma} \label{L:Tannakian2}
With notation as in Definition~\ref{D:Tannakian},
for $J$ sufficiently small, the
automorphism group of $[M_J^{\semis}]$ is a subgroup of $\tilde{\tau}$
having the same $p$-Sylow subgroup.
\end{lemma}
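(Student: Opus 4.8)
Here is how I would prove Lemma~\ref{L:Tannakian2}. Write $G = \Aut([M_J^{\semis}])$ and $H = \tilde{\tau}(\pi_1(\Spec R_J, \overline{x}))$, so that $G \subseteq H$ by Definition~\ref{D:Tannakian}. The plan is first to reduce to the case where $H$ is a $p$-group, in which the assertion ``$G$ and $H$ have the same $p$-Sylow subgroup'' becomes simply ``$G = H$''. By Remark~\ref{R:shape of mono}, after a suitable simple base change and tame alteration on Hypothesis~\ref{H:relative2} we may assume $\tau(I_1)$ is a $p$-group; since $H \subseteq \tau(I_1)$, this forces $H$ to be a $p$-group. (One must check that these operations do not spoil the statement being proved; a tame alteration changes both $G$ and $H$ by subgroups of prime-to-$p$ index, hence preserves both $p$-parts, and a simple base change leaves $G$ unchanged, so it suffices to prove the lemma after these reductions.) Having done this, shrink $J$ so that $G$ has stabilized (it is nonincreasing in $J$) and, by Corollary~\ref{C:count components} together with the discussion in Definition~\ref{D:Tannakian}, so that the image of $\pi_1(\Spec R_{J'}, \overline{x})$ in the target of $\tau$ equals $H$ for every closed subinterval $J'$ of $J$ containing $s_1$ in its interior. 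We must show $G = H$.

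Suppose not, so $G$ is a proper subgroup of the $p$-group $H$. Then $G$ is contained in a normal subgroup $N$ of $H$ of index $p$; let $\chi \colon H \twoheadrightarrow \ZZ/p\ZZ$ be the corresponding nontrivial character, so $\chi|_G$ is trivial. Pulling back along $\tilde{\tau}$ gives a nontrivial Artin--Schreier character $\psi$ of $R_J$, which by Lemma~\ref{L:prepare parameter} (after one more simple base change, which does not change $G$, and after re-shrinking $J$ to restore the conditions above) we may assume is defined by a prepared parameter $u \in R_J$. Since $\tilde{\tau}$ is a faithful representation of the finite group $H$, the one-dimensional representation $\chi$ occurs as a direct summand of $\tilde{\tau}^{\otimes a} \otimes (\tilde{\tau}^\dual)^{\otimes b}$ for suitable integers $a, b \geq 0$ (by the $\otimes$-generation argument in the proof of Lemma~\ref{L:Tannakian}). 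This tensor construction is the semisimplified local monodromy representation at $v_1$ of the $F$-isocrystal $\calE' = \calE^{\otimes a} \otimes (\calE^\dual)^{\otimes b}$, whose associated differential module is $(M_J)^{\otimes a} \otimes (M_J^\dual)^{\otimes b}$; applying Lemma~\ref{L:match Dwork} to $\calE'$, the subquotient of this module matching $\chi$ is isomorphic to $M_u$. Hence $M_u$ lies in $[M_J]$, and having rank $1$ it lies in $[M_J^{\semis}] = \mathrm{Rep}(G)$, where it corresponds to the representation $\chi|_G$, which is trivial. Therefore $M_u$ is the trivial object of $C_{v^0,*,J}$, so $b_1(M_u, s) = s$ for all $s \in J$. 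By Corollary~\ref{C:AS nontrivial}, $\psi$ restricts trivially to $\pi_1(\Spec R_{J'}, \overline{x})$ for any closed subinterval $J'$ of the interior of $J$. But by our choice of $J$ the composite $\pi_1(\Spec R_{J'}, \overline{x}) \to H$ is surjective, so $\chi$ itself is trivial --- a contradiction. Hence $G = H$, which proves the lemma.

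I expect the main obstacle to be not the main line of argument above, which is a formal consequence of the Dwork-module dictionary (Lemma~\ref{L:dwork2}, Corollary~\ref{C:AS nontrivial}) and Lemma~\ref{L:match Dwork}, but rather the bookkeeping surrounding the reductions: one must verify carefully that the simple base changes and tame alteration invoked at the outset (and the re-shrinking of $J$ after each one) leave intact both the stabilization properties of $G$ and $H$ and the equality being proved, and one must make sure that the local monodromy construction of Definition~\ref{D:mono rep} and the statement of Lemma~\ref{L:match Dwork} are genuinely compatible with the tensor and dual operations used to locate $\chi$ inside the tensor algebra of $\tilde{\tau}$. These compatibilities are implicit in the Tannakian setup of \S~\ref{subsec:local mono} and Definition~\ref{D:Tannakian}, but spelling them out is where the real work lies.
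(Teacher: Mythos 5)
Your proposal is correct and follows the same overall strategy as the paper: pass to the case where the image of $\tilde{\tau}$ is a $p$-group via a tame alteration, then get a contradiction by constructing an index-$p$ character $\chi$ trivial on the Tannakian automorphism group, matching it with a prepared Dwork module $M_u$, and showing triviality of $M_u$ is incompatible with nontriviality of the Artin--Schreier character. You differ from the paper in two minor, equally valid respects. In the endgame, the paper observes that $M_u$ trivial together with nontriviality of the Artin--Schreier character forces $v_{s_1}(u) \leq 0$ and hence $v_{s_1}(u) < 0$ (since $s_1$ was chosen outside the divisible closure of $\Gamma_{v^0}$), which contradicts Lemma~\ref{L:dwork2}(d); you reach the same contradiction via Corollary~\ref{C:AS nontrivial}, which just repackages the same slope inequality. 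You also explicitly pass to $\calE' = \calE^{\otimes a} \otimes (\calE^\dual)^{\otimes b}$ before applying Lemma~\ref{L:match Dwork}, since $\chi$ need not be a subrepresentation of $\tilde{\tau}$ itself; the paper leaves this tacit, so your version is if anything a bit more careful. One small caution: you assert that a tame alteration changes the automorphism group of $[M_J^{\semis}]$ by a subgroup of prime-to-$p$ index. The paper does not claim this and it is not obvious; it only uses that the automorphism group can shrink while the $p$-Sylow subgroup $P$ of the image of $\tilde{\tau}$ is preserved, and that already suffices for the reduction (since $P' = P$ and the new automorphism group sits inside the old one, containment $P' \subseteq H'$ after the alteration implies $P \subseteq H$ before). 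Your stronger claim is not needed, and if you wish to keep it you should justify it.
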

It is possible to show that the groups actually match, but we will not need
that stronger result.
\begin{proof}
As in Definition~\ref{D:Tannakian},
we take $J$ small enough so that 
neither the 
automorphism group of $[M_J^{\semis}]$ nor the image of
$\pi_1(\Spec R_J, \overline{x})$ under $\tilde{\tau}$ decreases upon shrinking $J$.
Let $G$ be the image of $\tilde{\tau}$, and let $H$ be the automorphism
group of $[M_J^{\semis}]$, identified with a subgroup of $G$.
By Definition~\ref{D:inertia group}, $G$ has a unique $p$-Sylow subgroup $P$,
and $G/P$ is abelian of order prime to $p$.

Note that it is harmless to make a tame alteration on
Hypothesis~\ref{H:relative2} before checking the claim: this tame
alteration can reduce $G$ and $H$ but not $P$,
so the difference between $P$ and $H \cap P$ cannot decrease.
We may thus reduce to the case where $G = P$ is a $p$-group.

Suppose now that $H$ is a proper subgroup of $G$. Then $H$ is contained
in a maximal proper subgroup of $G$; since $G$ is a $p$-group, any
maximal proper subgroup is normal of index $p$. Thus there is
an order $p$ abelian character of $G$ whose restriction to $H$ is trivial;
however, by Lemma~\ref{L:match Dwork}, this would produce a
prepared parameter $u$ which 
corresponds to a nontrivial Artin-Schreier
character of $R_J$, but for which $M_u$ is trivial.
As noted in Definition~\ref{D:prepared}, we must have
$v_s(u) \leq 0$ for some $s \in J$, and likewise after shrinking
$J$; we must thus have $v_{s_1}(u) \leq 0$. Since $s_1$ is not in the divisible
closure of $\Gamma_{v^0}$, we must in fact have $v_{s_1}(u) < 0$;
however, this contradicts Lemma~\ref{L:dwork2}(d). We thus have $H = G$,
as desired.
\end{proof}

\subsection{Terminal decompositions}

\begin{defn}
We say that $M$ is \emph{terminally presented} if the following
conditions hold.
\begin{enumerate}
\item[(a)]
For $i = 1, \dots, \rank(M)$,
$b_i(M,\alpha,s)$ is affine-linear on $[0,s_0]$ with slope
at most $1$. 
\item[(b)]
For $i=1,\dots, \rank(M)-1$, either
$b_i(M, \alpha,s) = b_{i+1}(M, \alpha,s)$ identically for $s \in [0, s_0)$,
or
$b_i(M, \alpha,s) > b_{i+1}(M, \alpha,s)$ identically for $s \in [0, s_0)$.
\end{enumerate}
This property is stable under 
performing a base change on Hypothesis~\ref{H:relative2}, because the
function $b_i(M, \alpha,s)-s$ is preserved up to a left shift of the domain.
\end{defn}

\begin{lemma} \label{L:make affine}
After performing a suitable base change on Hypothesis~\ref{H:relative2}
(depending on $M$), we can force $M$ to become terminally presented.
\end{lemma}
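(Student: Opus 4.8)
The plan is to produce the terminal presentation in two stages, first securing condition (a) and then condition (b), using the structural results already assembled. Recall that $M \in C_{v^0,[\epsilon,1)}$ admits a Frobenius structure, that $\alpha \in \DD_\ell$ is a point of type (i) or (iv), and that the functions $b_i(M,\alpha,s)$ are defined on $[0,s_0]$ for $s_0 = -\log r(\alpha)$, are continuous and piecewise affine-linear (Proposition~\ref{P:variation1}), and that the partial sums $b_1(M,\alpha,s) + \cdots + b_i(M,\alpha,s)$ are convex. The key leverage for condition (a) is that a base change on Hypothesis~\ref{H:relative2} translates the graph of each function $s \mapsto b_i(M,\alpha,s) - s$ to the left by an amount $\tilde{v}^0(g)$ that we get to choose (Remark~\ref{R:compatibility}), while scale-preserving base changes can instead slide us along the generic path toward $\alpha$. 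Since each $b_i(M,\alpha,s)$ is piecewise affine-linear with only finitely many breakpoints in the compact interval $[0,s_0]$, and since (by Corollary~\ref{C:at most 1}) the terminal slopes are already at most $1$, the plan for (a) is to perform a base change that pushes the last breakpoint of every $b_i$ past a point $s_1 < s_0$, i.e.\ to enlarge the region on which all the $b_i$ are simultaneously affine-linear until it covers all of $[0,s_0)$; one then argues that convexity of the partial sums plus the terminal-slope bound forces affine-linearity on the closed interval $[0,s_0]$ as well. Concretely, a non-scale-preserving base change with $\tilde v^0(g)$ chosen larger than the largest breakpoint location will do it; one must check this can be done uniformly over all $i$ simultaneously, which is fine since there are finitely many functions each with finitely many breakpoints.

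For condition (b), after (a) is in force each $b_i(M,\alpha,s)$ is an affine-linear function on $[0,s_0]$, so for each pair $i < j$ the difference $b_i(M,\alpha,s) - b_j(M,\alpha,s)$ is affine-linear and nonnegative on $[0,s_0]$; it is therefore either identically zero or strictly positive on $[0,s_0)$ — unless it vanishes at exactly one endpoint. The only genuine obstruction is thus the possibility that $b_i(M,\alpha,s) - b_j(M,\alpha,s)$ is a nonzero affine function vanishing at $s = s_0$ (it cannot vanish only at $s = 0$ and be nonnegative on $[0,s_0)$ unless it has positive slope, which would then make it strictly positive on $(0,s_0)$; so the bad case is vanishing at $s_0$ with negative slope). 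To kill this, I would again use a base change on Hypothesis~\ref{H:relative2}: translating the graphs to the left by a small positive amount moves $s_0$ to a smaller value $s_1$ but keeps the functions $b_i(M,\alpha,s)-s$ unchanged up to this shift, so a difference that vanished at the old $s_0$ now vanishes at a point strictly inside the new domain's closure — wait, more carefully, the cleanest move is a \emph{scale-preserving} base change that realigns $\alpha$ so that $\alpha_{0,e^{-s}} \ge \alpha$ for a longer range, effectively replacing $s_0$ by a value where we have better control, combined with the observation that since there are only finitely many pairs $(i,j)$, we can iterate.

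Here is the sharper version of the (b) argument I would actually write: after (a), consider the finitely many distinct affine functions among the $b_i(M,\alpha,\cdot)$; say the distinct values at $s = s_0^-$ are realized. If two of them agree at some point of $[0,s_0)$ they agree on all of $[0,s_0)$ since both are affine and their difference is nonnegative there, so the only failure of (b) is two functions that are distinct on $[0,s_0)$ but whose difference $\to 0$ as $s \to s_0^-$. Perform a base change making $s_0$ strictly smaller (any non-scale-preserving base change with $0 < \tilde v^0(g)$ does this, shifting the common domain left by $\tilde v^0(g)$); the shifted functions $b_i(M,\alpha,s) - s$ are unchanged, so after the shift the two offending functions, evaluated at the \emph{new} $s_0' = s_0 - \tilde v^0(g)$, now differ by a strictly positive amount, restoring (b) for that pair without disturbing any pair that was already good (strict positivity on $[0,s_0')$ is preserved, and identical equality is preserved). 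Since (a) is stable under base change and there are finitely many pairs, finitely many such base changes produce a terminally presented $M$. The main obstacle I anticipate is bookkeeping: verifying that the base changes used to fix one pair do not reintroduce a failure of (a) or of (b) for another pair, and that the quantities $\tilde v^0(g)$ can indeed be chosen in the value group $\Gamma_{v^0}$ of the required size — but since $\tilde v^0(g)$ can be made an arbitrarily small positive element and also arbitrarily large (by taking powers/products of parameters), and since all the relevant data is finite, this is routine rather than deep.
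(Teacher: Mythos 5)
Your handling of condition~(a) is correct and amounts to the same argument the paper intends when it cites Proposition~\ref{P:variation1} and Corollary~\ref{C:at most 1}: piecewise affine-linearity on the compact interval $[0,s_0]$ gives finitely many breakpoints, a non-scale-preserving base change shifting the domain left past the last of them makes every $b_i$ affine-linear on the new domain, and Corollary~\ref{C:at most 1} caps each terminal slope at~$1$.

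For condition~(b) you have the bad case inverted. The definition requires constant sign of $b_i - b_{i+1}$ on $[0,s_0)$, which is \emph{closed at the left endpoint and open at the right}. Thus a nonzero affine difference that is positive on $[0,s_0)$ and vanishes only at $s=s_0$ already satisfies~(b), and needs no repair. The configuration that genuinely violates~(b) is precisely the one you dismissed: $f(0)=0$ with $f>0$ on $(0,s_0]$, an affine nonnegative function with positive slope, which is neither identically zero nor strictly positive on all of $[0,s_0)$ because of its zero at $s=0$. In particular your assertion ``if two of them agree at some point of $[0,s_0)$ they agree on all of $[0,s_0)$'' is false at the left endpoint: $f(s)=cs$ with $c>0$ is affine, nonnegative, vanishes at $0\in[0,s_0)$, yet is positive for $s>0$. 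Worse, the base-change remedy you propose does not address the case you named: shifting the domain left by $\delta>0$ sends the new right endpoint $s_0-\delta$ to the old $s_0$ (the new function evaluated at $s_0-\delta$ equals the old function at $s_0$), so a zero at $s_0$ persists at the new right endpoint and is not ``differ by a strictly positive amount.'' It is exactly the \emph{actual} bad case---a zero at $s=0$---that a left shift removes, since after the shift the new $s=0$ reads off the old value at $s=\delta>0$, where the difference was already positive. So the move you suggest does finish the proof, but the case analysis and the justification attached to it are both backwards and would not hold up.
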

\begin{proof}
This holds by Proposition~\ref{P:variation1} and
Corollary~\ref{C:at most 1}.
\end{proof}

One might hope that having $\cEnd(M)$ being terminally presented would allow
us to decompose $M$ according to $b_i(M, \alpha, s)$ in the manner
of Theorem~\ref{T:decomposition}. It is the source of some complication in the
argument that only a weaker decomposition result can be established.

\begin{lemma} \label{L:lift trivial}
With notation as in Definition~\ref{D:Tannakian},
suppose that $\alpha_{0,e^{-s}} \geq \alpha$ for $s \in J$,
and that $M$ is terminally presented.
\begin{enumerate}
\item[(a)]
Each trivial subobject of $M_J$ induces
a (unique) trivial subobject of $M$ in $C_{v^0, *, **, \alpha}$.
\item[(b)]
For $J$ sufficiently small,
there is a direct summand of $M_J$ whose Jordan-H\"older
constituents are exactly the trivial Jordan-H\"older constituents
of $M_J$.
\end{enumerate}
\end{lemma}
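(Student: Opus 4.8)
The statement to prove is Lemma~\ref{L:lift trivial}: under the hypothesis that $M$ is terminally presented and $\alpha_{0,e^{-s}} \geq \alpha$ for $s \in J$, (a) every trivial subobject of $M_J$ extends (uniquely) to a trivial subobject of $M$ in $C_{v^0,*,**,\alpha}$, and (b) for $J$ small enough there is a direct summand of $M_J$ whose Jordan-Hölder constituents are exactly the trivial ones of $M_J$. The strategy is to exploit the fact that ``terminally presented'' forces the relevant $b_i(M,\alpha,s)$ to be genuinely affine-linear on all of $[0,s_0]$, not just near $s_0$, so a horizontal section living on the cross-sectional annulus $A_{F_\rho}(\rho^J)$ has controlled growth and extends to a larger annulus, after which the gluing formalism of \S\ref{subsec:shrinking} (Lemma~\ref{L:intersect2}, Remark~\ref{R:gen fib intersect}) lets us descend the extension back to $C_{v^0,*,**,\alpha}$.

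\textbf{Proof of (a).} First I would observe that a trivial subobject of $M_J$ is the same as a collection of horizontal sections $\bv_1,\dots,\bv_d$ of $M_J$ in $C_{v^0,*,J,\alpha} = C_{v^0,*,J}$ spanning a $\nabla$-stable submodule. Fix such a $\bv = \bv_j$. Because $M$ is terminally presented, for each $i$ the function $b_i(M,\alpha,s)$ is affine-linear with slope $\leq 1$ on all of $[0,s_0]$; in particular the subsidiary radii of the cross section $M_\rho$ at the relevant scales are governed by these affine functions, and the existence of a horizontal section on $A_{F_\rho}(\rho^J)$ for each $\rho$ near $1$ forces the corresponding Jordan-Hölder constituent to satisfy the Robba condition. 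Then I would invoke Theorem~\ref{T:extend Robba cond}: a $\nabla$-module satisfying the Robba condition on an open annulus whose restriction to a subinterval is constant is itself constant, so the restriction of each $\bv_j$ to $M_\rho$ extends from $A_{F_\rho}(\rho^J)$ to $A_{F_\rho}(\rho^{J''})$ for any larger closed interval $J'' \subseteq [0,s_0)$ — this is exactly the argument used in the proof of Proposition~\ref{P:rank 1 analysis}(c). Having this extension of each image $\bv_j$ over every cross section, Lemma~\ref{L:intersect2} promotes $\bv_j$ to a section of $M$ over $C_{v^0,*,J'',\alpha}$; taking the limit over increasing $J''$ inside $[0,s_0)$ and using the definition of $C_{v^0,*,**,\alpha}$ as the appropriate direct/inverse limit of the $C_{v^0,*,s,\alpha}$ gives the trivial subobject of $M$. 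Uniqueness is automatic since the extension is pinned down cross section by cross section and the restriction functors $C_{v^0,*,**,\alpha} \to C_{v^0,*,J,\alpha}$ are faithful.

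\textbf{Proof of (b).} For this part I would pass to $\cEnd(M_J) = M_J^\dual \otimes M_J$, which again lies in the category generated by a terminally presented module. The trivial Jordan-Hölder constituents of $M_J$ correspond (via Maschke-type semisimplicity of $[M_J^{\semis}]$, Lemma~\ref{L:Tannakian}) to the trivial isotypic component of the semisimplification $M_J^{\semis}$. By part (a) applied to $\cEnd(M_J^{\semis})$, the space of horizontal sections of $\cEnd(M_J)$ — equivalently, the endomorphism algebra — is insensitive to shrinking $J$, and in particular for $J$ small enough the idempotent in this algebra projecting onto the trivial isotypic component of $M_J^{\semis}$ lifts to an idempotent endomorphism $e$ of $M_J$ itself (using that an idempotent modulo the radical lifts when the relevant module is a finite locally free $\nabla$-module over the Bézout/PID ring $R_{v^0,I,J}$, cf.\ Remark~\ref{R:principal}). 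The image $eM_J$ is then a direct summand of $M_J$ whose Jordan-Hölder constituents are precisely the trivial ones. The one point requiring care is matching ``trivial Jordan-Hölder constituent of $M_J$'' with ``trivial isotypic component of the semisimplification'': a unipotent but nonconstant subquotient has trivial Jordan-Hölder constituents but is not itself a trivial direct summand, so the summand produced is the maximal unipotent direct summand, and I should phrase (b) accordingly, noting that its Jordan-Hölder list is exactly the list of trivial constituents of $M_J$.

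\textbf{Main obstacle.} The delicate step is \emph{uniformity in $J$}: I need one interval $J$ (shrunk once) that simultaneously works for lifting the idempotent and for the coincidence of Tannakian automorphism groups/monodromy images from Lemma~\ref{L:Tannakian2}. This is handled by first shrinking $J$ so that neither the automorphism group of $[M_J^{\semis}]$ nor the relevant fundamental-group image decreases further (as arranged in Definition~\ref{D:Tannakian}), and then observing that the endomorphism algebra stabilizes as well because it is computed by horizontal sections, which part (a) shows are insensitive to further shrinking. The subsidiary technical burden is verifying that terminal presentation really does yield the Robba condition on each Jordan-Hölder constituent over the whole range $J$ rather than only near $s_0$ — this is where the ``affine-linear with slope $\le 1$ on $[0,s_0]$'' clause, as opposed to merely ``near $s_0$,'' is used, together with Theorem~\ref{T:variation}(b) forcing $f_i(\calE,s) = s$ once the function touches the diagonal.
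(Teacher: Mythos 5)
There are genuine gaps in both parts.

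\textbf{Part (a).} You invoke Theorem~\ref{T:extend Robba cond} to extend each horizontal section $\bv_j$ from $A_{F_\rho}(\rho^J)$ to a larger annulus. But that theorem applies to a $\nabla$-module \emph{already defined} on the larger annulus, satisfying the Robba condition there, whose restriction to a subinterval is constant; it does not say that an individual horizontal section on a subannulus of a given module extends. Two ingredients the paper uses are missing from your argument. First, one needs a direct-sum decomposition $M_\rho \cong M_{\rho,0} \oplus M_{\rho,1}$ on the larger annulus via Theorem~\ref{T:decomposition}, isolating the piece $M_{\rho,0}$ with all subsidiary radii equal to $\rho^s$ for $s \in J$; your $\bv_j$ is then forced to land in $M_{\rho,0}$ by degree considerations, not by anything about ``the corresponding Jordan-H\"older constituent''. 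Second, and more seriously, one needs $M_{\rho,0}$ to be unipotent (after a tame pullback) rather than merely Robba. The Robba condition alone does not give this because of $p$-adic exponents, and without unipotence a horizontal section of $M_{\rho,0}$ on a subannulus need not extend. The paper gets the unipotence from the finite \'etale cover $B$ of $A$ built into the local monodromy representation (Definition~\ref{D:mono rep}) together with Proposition~\ref{P:seed unipotent}; nothing in your proposal plays that role. (Using Theorem~\ref{T:Robba condition Frob} does not work directly either, since the annulus $A_{F_\rho}(\rho^{s_2},1)$ has $1$ only as a boundary point.)

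\textbf{Part (b).} The idempotent-lifting strategy has a structural flaw. The trivial isotypic projector is an idempotent in $\mathrm{End}^\nabla(M_J^{\semis})$, but that ring is not the quotient of $\mathrm{End}^\nabla(M_J)$ by its radical, and there is no a priori reason the projector lies in the image of $\mathrm{End}^\nabla(M_J) \to \mathrm{End}^\nabla(M_J^{\semis})$. If a trivial constituent is bound up in a non-split extension with a nontrivial one (e.g.\ $0 \to T \to E \to N \to 0$ with $T$ trivial and $N$ nontrivial), no endomorphism of $M_J$ projects onto that copy of $T$ --- which is exactly the obstruction that (b) is trying to overcome, so the proposed argument assumes what it must prove. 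Part (a) applied to $\cEnd(M_J)$ does correctly extend \emph{existing} horizontal endomorphisms across annuli, but it cannot manufacture the projector. The paper instead builds the direct summand directly: it translates $J$ to start at $0$, combines the cross-sectional decomposition from Theorem~\ref{T:decomposition} with the relative decomposition result Lemma~\ref{L:convex decomp} (then descends to $R_{v,I,J}$ via Remark~\ref{R:gen fib intersect} and \cite[Lemma~2.3.1]{kedlaya-xiao}), and then shows the Robba summand $N$ is unipotent after a tame alteration by a Tannakian/Artin-Schreier argument (Remark~\ref{R:shape of mono}, Lemma~\ref{L:Tannakian2}, Lemma~\ref{L:wild structure2}, Lemma~\ref{L:match Dwork}, Corollary~\ref{C:AS nontrivial}). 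Your write-up identifies the right uniformity issue via Lemma~\ref{L:Tannakian2}, but the core construction of the summand is absent.
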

\begin{proof}
Pick some $s_2 \in (0, s_0)$ such that $(0,s_2) \cap J \neq \emptyset$,
and make a scale-preserving base change on Hypothesis~\ref{H:relative2}
to ensure that $\alpha_{0, e^{-s_2}} \geq \alpha$.
By Theorem~\ref{T:decomposition}, for each $\rho$ sufficiently close to 1,
we may decompose $M_\rho$ over $A_{F_\rho}(\rho^{s_2},1)$ as
$M_{\rho,0} \oplus M_{\rho,1}$, such that for $r = -\log \rho$
and $s \in J$, we have $f_i(M_{\rho,0},rs)=rs$ and 
$f_i(M_{\rho,1},rs)>rs$ for all $i$.
From Definition~\ref{D:mono rep},
there exists a finite \'etale extension $B$ of $A$
over which $\calE_A$ becomes unipotent;
tensoring with $R_{v^0,[\epsilon,1),J}$,
we obtain a finite
\'etale cover of $A_{F_\rho}(\rho^J)$ on which $M_{\rho,0}$
becomes unipotent. By Proposition~\ref{P:seed unipotent}, 
$M_{\rho,0}$ becomes unipotent after pullback along the cover
$h: A_{F_\rho}(\rho^{s_2/m},1) \to A_{F_\rho}(\rho^{s_2},1)$ 
defined by $x \mapsto x^m$ for some positive integer $m$ coprime to $p$.

This implies (e.g., by \cite[Lemma~4.29]{kedlaya-mono-over})
that if we formally adjoin $\log(x)$ to the structure sheaf of
$A_{F_\rho}(\rho^{s_2/m},1)$, then $h^* M_{\rho,0}$ acquires a full basis
of horizontal sections. Let $V$ be the $F_\rho$-vector space spanned by 
these horizontal sections;
then $V$ carries an action of the Galois group $G$ of the cover $h$.
Splitting $V$ into isotypical spaces for the $G$-action induces
a decomposition of $M_{\rho,0}$; the isotypical space for the trivial 
representation induces a decomposition $M_{\rho,0} \cong
M'_{\rho,0} \oplus M''_{\rho,0}$ in which the Jordan-H\"older
constituents of $M'_{\rho,0}$ 
consist precisely of the trivial Jordan-H\"older constituents
of $M_{\rho,0}$. 
(Compare \cite[Proposition~4.37]{kedlaya-mono-over}.)

Let $\bv$ be a horizontal section of $M$ in $C_{v^0, *, J}$. 
The image of $\bv$ in $M_\rho$ over $A_{F_\rho}(\rho^J)$
must belong to $M'_{\rho,0}$; it must then extend all the way over
$A_{F_\rho}(\rho^{s_2},1)$.
By Lemma~\ref{L:intersect2}, $\bv$ lifts to a horizontal section of $M$
in $C_{v^0,*,(0,s_2) \cup J}$. We may deduce (a) from this by glueing
over all choices of $s_2$.

We proceed now to (b). By shrinking $J$, we may assume that its left endpoint
is in $v^0(\ell)$; let $J'$ be the translate of $J$ with left endpoint $0$.
By performing a center-preserving base change on Hypothesis~\ref{H:relative2}
that shifts $J$ to $J'$, we obtain an equivalence of categories
between $C_{v^0,*,J}$ and $C_{v^0, *, J'}$.
(More precisely, if we choose 
$\overline{h} \in \ell$ with $v^0(\ell)$ being the left endpoint of $J$,
then the substitution $x \mapsto x [\overline{h}]$
induces an isomorphism $R_{v^0,I,J} \to R_{v^0,I,J'}$ for any interval
$I \subseteq (0,1)$.)
Let $\tilde{M} \in C_{v^0,*,J'}$ be the image of $M_J$ under this equivalence,
so that we obtain a decomposition $\tilde{M}_{\rho}
\cong \tilde{M}_{\rho,0} \oplus \tilde{M}_{\rho,1}$.
We may then combine this decomposition with the
decomposition of $\tilde{M}$ given by Lemma~\ref{L:convex decomp}
(by invoking
\cite[Lemma~2.3.1]{kedlaya-xiao} and Remark~\ref{R:gen fib intersect});
the resulting decomposition of $\tilde{M}$ reflects back to
a decomposition of $M_J$ lifting the decomposition $M_\rho \cong
M_{\rho,0} \oplus M_{\rho,1}$ for each $\rho$ sufficiently close to 1.

Let $N$ be the direct summand of $M_J$ lifting $M_{\rho,0}$.
Suppose $P$ is a Jordan-H\"older constituent of $M_J$
which becomes trivial after pullback
along $x \mapsto x^m$ for some positive integer $m$ coprime to $p$.
By Lemma~\ref{L:tame base change}, 
$b_1(P, s) = s$ for all $s \in J$; consequently, $P$ must be
a constituent of $N$.

On the other hand, we claim that after pullback along 
$x \mapsto x^m$ for some positive integer $m$ coprime to $p$, 
$N$ becomes unipotent.
In other words, after a suitable tame alteration on Hypothesis~\ref{H:relative2},
$N$ becomes unipotent.
By invoking Remark~\ref{R:shape of mono},
we can choose the tame alteration so that 
$\tau(I_1) = \tau(W'_1)$ is a $p$-group.
By Lemma~\ref{L:Tannakian2},
by making $J$ sufficiently small, we can ensure that the images of $\tau$
and $\tilde{\tau}$ coincide, so the latter is also a $p$-group.
Suppose by way of contradiction that $N$ has still not become unipotent.
By Lemma~\ref{L:wild structure2},
after possibly replacing $K$ by a finite extension, we would find
a nontrivial one-dimensional subquotient $P$ of (the pullback along the 
alteration of) 
$N \oplus \cEnd(N)$
such that for some nonnegative integer $h$, $P^{\otimes p^h}$
corresponds to a nontrivial Artin-Schreier character of $I_1$ via
Lemma~\ref{L:match Dwork}.
But $b_1(N, \alpha,s) = s$ for all $s \in J$, so by 
Corollary~\ref{C:AS nontrivial},
after shrinking $J$ this character must become trivial,
a contradiction. (Compare the proof of Lemma~\ref{L:find nontrivial}.)

By splitting $N$ into isotypical components for the Galois group of
the cover $x \mapsto x^m$ (as above), we obtain a direct summand of $M_J$
whose Jordan-H\"older
constituents are exactly the trivial Jordan-H\"older constituents
of $M_J$. This proves (b).
\end{proof}
\begin{cor} \label{C:lift any}
With notation as in Definition~\ref{D:Tannakian},
suppose that $\alpha_{0,e^{-s}} \geq \alpha$ for $s \in J$,
and that $\cEnd(M)$ is terminally presented.
Then for $J$ sufficiently small, we have the following.
\begin{enumerate}
\item[(a)]
Any irreducible subquotient of $M$ in $C_{v^0, *, J}$
is isomorphic to an irreducible subobject.
\item[(b)]
Each irreducible subobject of $M$ in $C_{v^0, *, J}$
induces a (unique) subobject of $M$ in $C_{v^0, *, **, \alpha}$.
\end{enumerate}
\end{cor}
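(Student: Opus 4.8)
The plan is to deduce Corollary~\ref{C:lift any} from Lemma~\ref{L:lift trivial} by the standard Tannakian trick of replacing an object with its internal endomorphism object, so that "subobject" becomes "trivial subobject". Concretely, to prove (a), suppose $N_1$ is an irreducible subquotient of $M$ in $C_{v^0,*,J}$, say $N_1 = N'/N''$ with $N'' \subseteq N' \subseteq M$. The pair $(N',N'')$ corresponds to an idempotent-like datum in $\cEnd(M)$; more precisely, an irreducible subquotient of $M$ gives rise (after passing to a splitting of the associated graded, which exists over the generic-fibre ring $L'_{v^0,I}$ since it is a principal ideal domain) to a nonzero horizontal section of $\cEnd(M) = M^\dual \otimes M$, namely a projector onto a copy of $N_1$. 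Since $\cEnd(M)$ is terminally presented, Lemma~\ref{L:lift trivial}(a) applies: each trivial subobject of $\cEnd(M)_J$ (such as the line spanned by this projector) lifts to a trivial subobject of $\cEnd(M)$ in $C_{v^0,*,**,\alpha}$, hence gives a horizontal endomorphism of $M$ over that larger category. The image of that endomorphism, restricted back to $C_{v^0,*,J}$, is a subobject of $M$ isomorphic to $N_1$; this proves (a), and simultaneously gives (b) for the subobject so produced.

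For part (b) in general, I would argue as follows. Let $N \subseteq M$ be an irreducible subobject in $C_{v^0,*,J}$. By the argument above (applied with $N_1 = N$), there is a horizontal idempotent $e$ of $\cEnd(M)$ in $C_{v^0,*,**,\alpha}$ whose restriction to $C_{v^0,*,J}$ is the projector onto $N$ followed by the inclusion; more carefully, one should take the \emph{isotypic} projector onto all copies of $N$ in $M$, so that $e$ is genuinely idempotent, then cut down within that isotypic piece. Actually, the cleanest route: the section of $\cEnd(M)$ lifting the inclusion-then-projection $M \twoheadrightarrow N \hookrightarrow M$ is a horizontal endomorphism $\tilde e$ of $M$ over $C_{v^0,*,**,\alpha}$; its restriction to $C_{v^0,*,J}$ agrees with a projector onto $N$. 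Since being idempotent and having a given rank can be checked after the faithful base change to any single cross-section $M_\rho$ (for $\rho$ close to $1$), $\tilde e$ is itself idempotent of the right rank, and its image is the desired subobject of $M$ in $C_{v^0,*,**,\alpha}$ restricting to $N$. Uniqueness is immediate: two such lifts differ by a horizontal endomorphism of $M$ in $C_{v^0,*,**,\alpha}$ vanishing on the sub-object over $C_{v^0,*,J}$, which must vanish by faithfulness of the cross-section functor.

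A key point to address is the hypothesis $\alpha_{0,e^{-s}} \geq \alpha$ for $s \in J$ together with the requirement that $\cEnd(M)$ (not just $M$) be terminally presented: the latter ensures the $b_i(\cEnd(M),\alpha,s)$ are affine-linear with slope at most $1$, which is exactly the input needed to invoke Lemma~\ref{L:lift trivial} with $M_J$ there replaced by $\cEnd(M)_J$. I should also record that shrinking $J$ is harmless and is already used in Lemma~\ref{L:lift trivial}(b), so "for $J$ sufficiently small" in the statement is inherited directly. One subtlety is that Lemma~\ref{L:lift trivial}(a) is stated for trivial \emph{subobjects} of $M_J$, whereas a general irreducible subquotient of $M$ might only give a trivial \emph{subquotient} of $\cEnd(M)_J$; but by Lemma~\ref{L:lift trivial}(b), for $J$ small the trivial constituents of $\cEnd(M)_J$ split off as a direct summand, so every trivial subquotient is in fact realized inside a trivial subobject, and (a) of that lemma then applies.

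The main obstacle I anticipate is bookkeeping around semisimplicity: $M$ itself need not be semisimple over $C_{v^0,*,J}$, so "the projector onto $N$" must be constructed over the generic-fibre ring $L'_{v^0,I}$ where Lemma~\ref{L:convex decomp} and the PID property let one split the relevant graded piece, and one must then check the resulting section descends to $C_{v^0,*,J}$ and lifts via Lemma~\ref{L:lift trivial} to $C_{v^0,*,**,\alpha}$. This descent/lift compatibility — matching the generic-fibre construction with the integral-structure statement of Lemma~\ref{L:lift trivial} — is the step that will require the most care, though it is routine given Remark~\ref{R:gen fib intersect} and \cite[Lemma~2.3.1]{kedlaya-xiao}, which are already the tools used in the proof of Lemma~\ref{L:lift trivial}(b).
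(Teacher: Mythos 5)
Your proposal correctly identifies the essential tool (Lemma~\ref{L:lift trivial} applied to $\cEnd(M)$, exploiting that $\cEnd(M)$ is terminally presented) and correctly observes that the hypothesis $\alpha_{0,e^{-s}} \geq \alpha$ for $s \in J$ is what licenses that application. The surrounding remarks about shrinking $J$ and about uniqueness via faithfulness are sound. However, the mechanism you propose for producing a subobject out of a subquotient has a genuine gap.

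Your key step for (a) asserts that an irreducible subquotient $N_1 = N'/N''$ of $M$ gives rise to a \emph{horizontal} section of $\cEnd(M)$ (``a projector onto a copy of $N_1$''), with the splitting supplied by the PID property of $L'_{v^0,I}$. This does not work: the PID property gives an $L'_{v^0,I}$-module splitting of the filtration $N'' \subseteq N' \subseteq M$, and the resulting projector $e$ is an element of $\cEnd(M)$, but there is no reason for $\nabla(e) = 0$. If $M$ is a nonsplit extension of $N_1$ by $N''$ as a $\nabla$-module, no horizontal projector onto $N_1$ exists at all. Consequently the ``line spanned by this projector'' is not a trivial subobject of $\cEnd(M_J)$, and Lemma~\ref{L:lift trivial}(a) cannot be invoked directly on it. Your fallback at the end --- ``by Lemma~\ref{L:lift trivial}(b), \ldots\ every trivial subquotient is in fact realized inside a trivial subobject'' --- overclaims as well: Lemma~\ref{L:lift trivial}(b) gives a direct summand $N_0$ of $\cEnd(M_J)$ carrying all the trivial constituents, but $N_0$ is only \emph{unipotent}, and a trivial subquotient of a unipotent object is not in general a subobject. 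For (b), the ``cleanest route'' of lifting $M \twoheadrightarrow N \hookrightarrow M$ silently presupposes that $N$ occurs as a quotient of $M_J$, which itself needs to be established.

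The paper's actual argument sidesteps the need for any projector. For (a), it uses the splitting $\cEnd(M_J) \cong N_0 \oplus N_1$ from Lemma~\ref{L:lift trivial}(b) (which, unlike the PID splitting, is genuinely a decomposition of $\nabla$-modules), observes that this decomposition passes to the subquotient $P^\dual \otimes M_J \cong Q_0 \oplus Q_1$ (since there are no morphisms between objects with only trivial constituents and objects with only nontrivial constituents), and then applies the trace map $P \otimes P^\dual \otimes M_J \to M_J$ restricted to $P \otimes Q_0$. The image is a nonzero subobject of $M_J$ whose Jordan--H\"older constituents are all $\cong P$; since $P$ is irreducible, such a subobject contains a copy of $P$. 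Applying this to $M^\dual$ shows $P$ is also a quotient; composing a surjection with an injection produces an honest horizontal endomorphism whose span is a trivial subobject of $\cEnd(M_J)$, and only \emph{then} does Lemma~\ref{L:lift trivial}(a) enter, lifting that endomorphism to $C_{v^0,*,**,\alpha}$ and giving (b). In short: the trace-map manipulation is the missing ingredient that converts subquotients into subobjects without ever requiring a horizontal projector.
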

\begin{proof}
By Lemma~\ref{L:lift trivial}(b), for $J$ sufficiently small, 
$\cEnd(M_J)$ splits 
as a direct sum
$N_0 \oplus N_1$ in which the Jordan-H\"older
constituents of $N_0$ are exactly the trivial Jordan-H\"older constituents
of $\cEnd(M_J)$.
Let $P$ be any irreducible subquotient of $M_J$.
Then $P^\dual \otimes M_J$ occurs as a subquotient of $\cEnd(M_J)$,
so it also splits as a direct sum $Q_0 \oplus Q_1$ in which
the Jordan-H\"older
constituents of $Q_0$ are exactly the trivial Jordan-H\"older constituents
of $P^\dual \otimes M_J$. The image of $P \otimes Q_0$ under the trace map
$P \otimes P^\dual \otimes M_J \to M_J$
is a nonzero subobject of $M_J$ whose Jordan-H\"older constituents
are all isomorphic to $P$; this proves (a).

By a similar argument, $P$ also occurs as a nonzero quotient of $M_J$.
Composing a surjection $M_J \to P$ with an injection $P \to M_J$
gives an endomorphism of $M_J$ with image isomorphic to $P$.
This suffices to prove (b) because
Lemma~\ref{L:lift trivial} applied to $\cEnd(M)$ transfers this endomorphism
to $C_{v^0,*,**,\alpha}$.
\end{proof}

\subsection{Terminal unipotence} 
\label{subsec:terminal unip}

We are now equipped to attack Theorem~\ref{T:induct local semi} by reducing
to consideration of Artin-Schreier characters.

\begin{defn}
Let $M_{**}$ be the restriction of $M$ to $C_{v^0,*,**,\alpha}$.
We say $M$ is \emph{terminally unipotent} if 
$M_{**}$ is a successive extension of trivial objects in $C_{v^0,*,**,\alpha}$.
\end{defn}

\begin{lemma} \label{L:find nontrivial}
After a suitable sequence of base changes and tame alterations
on Hypothesis~\ref{H:relative2}, 
and after replacing $K$ by a finite extension,
one of the following conditions
becomes true.
\begin{enumerate}
\item[(a)]
$M$ is terminally unipotent.
\item[(b)]
We can find a nontrivial 
subobject $N$ of $M_{**} \oplus
\cEnd(M_{**})$ of rank $1$,
such that $N^{\otimes p^h}$ is trivial for some positive integer $h$.
\end{enumerate}
\end{lemma}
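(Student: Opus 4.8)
\textbf{Proof proposal for Lemma~\ref{L:find nontrivial}.}

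The plan is to argue by contradiction: assume (a) fails, and produce the rank-$1$ subobject $N$ required by (b). First I would reduce to a clean geometric situation. By Lemma~\ref{L:make affine}, after a base change on Hypothesis~\ref{H:relative2} I may assume that $\cEnd(M)$ (equivalently $M$) is terminally presented, and by a scale-preserving base change I may assume $\alpha_{0,e^{-s}} \geq \alpha$ for $s$ in a suitable closed interval $J \subseteq [0,s_0)$ with nonempty interior and containing $s_1$ in its interior. I may also invoke Remark~\ref{R:shape of mono}, after a simple base change and a tame alteration, to arrange that $\tau(I_1) = \tau(W'_1)$ is a $p$-group; by Lemma~\ref{L:Tannakian2}, shrinking $J$ then forces the automorphism group of $[M_J^{\semis}]$ to have the same $p$-Sylow subgroup as the image of $\tilde\tau$, hence (since that image is now itself a $p$-group) to equal the image of $\tilde\tau$.

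Now suppose $M$ is \emph{not} terminally unipotent, i.e.\ $M_{**}$ is not a successive extension of trivial objects in $C_{v^0,*,**,\alpha}$. I want to conclude that $M_J$ (or $M_J \oplus \cEnd(M_J)$) fails to be unipotent in $C_{v^0,*,J}$. If $M_J$ were unipotent, then by Corollary~\ref{C:lift any}(b) each of its (necessarily trivial) irreducible subquotients would lift to a subobject of $M$ in $C_{v^0,*,**,\alpha}$; combined with Lemma~\ref{L:lift trivial}(a) applied iteratively (peeling off trivial subobjects, which is legitimate because terminal presentation is stable under the base changes involved), this would force $M_{**}$ to be a successive extension of trivial objects, contradicting our assumption. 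So $M_J$ is not unipotent, hence the semisimplification $M_J^{\semis}$ is nontrivial (not all Jordan--H\"older constituents are trivial), hence the image of $\tilde\tau$ is a nontrivial $p$-group. Apply Lemma~\ref{L:wild structure2}(c) to this representation: after possibly replacing $K$ by a finite extension (harmless by Remark~\ref{R:Galois descent}), either $\tilde\tau$ or $\tilde\tau^\dual \otimes \tilde\tau$ contains a nontrivial one-dimensional subrepresentation $\psi$. By Lemma~\ref{L:wild structure2}(b), $\psi^{\otimes p^h}$ is trivial for some $h \geq 0$ (and since $\psi$ is nontrivial we may take $h \geq 1$). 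Via Lemma~\ref{L:prepare parameter} write $\psi$ as the Artin--Schreier character attached to a prepared parameter $u \in R_J$ (after a simple base change), and via Lemma~\ref{L:match Dwork} identify the corresponding rank-$1$ subquotient of $M_J$ (resp.\ of $\cEnd(M_J)$) with $M_u$. By Corollary~\ref{C:lift any}(a), this subquotient is isomorphic to an actual subobject of $M_J$ (resp.\ $\cEnd(M_J)$), and by Corollary~\ref{C:lift any}(b) that subobject lifts to a subobject $N$ of $M_{**} \oplus \cEnd(M_{**})$ in $C_{v^0,*,**,\alpha}$ of rank $1$. Since $\psi^{\otimes p^h}$ is trivial, Lemma~\ref{L:match Dwork} (applied to $N^{\otimes p^h}$, whose associated character is $\psi^{\otimes p^h}$) shows $N^{\otimes p^h}$ is trivial in $C_{v^0,*,J}$, and then Proposition~\ref{P:rank 1 analysis}(c) promotes this triviality to $C_{v^0,*,**,\alpha}$, so $N^{\otimes p^h}$ is trivial as required. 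This establishes (b).

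The main obstacle I anticipate is keeping the bookkeeping of base changes coherent: each invocation of Remark~\ref{R:shape of mono}, Lemma~\ref{L:prepare parameter}, and Lemma~\ref{L:Tannakian2} changes $X^0$, $x$, and possibly shrinks $J$, and one must check that terminal presentation of $\cEnd(M)$ and the condition $\alpha_{0,e^{-s}} \geq \alpha$ are preserved (the former is noted to be stable under base change, the latter requires only scale-preserving moves, which commute appropriately with the tame alterations used). A secondary subtlety is the dichotomy in Lemma~\ref{L:wild structure2}(c): one must treat the case where the nontrivial one-dimensional piece lives in $\tilde\tau^\dual \otimes \tilde\tau$ rather than $\tilde\tau$ itself, which is exactly why the statement of (b) allows $N$ inside $M_{**} \oplus \cEnd(M_{**})$ rather than $M_{**}$ alone. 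Finally, one must make sure the passage from "$M_J$ unipotent" to "$M_{**}$ a successive extension of trivials" via the Corollaries is applied to a genuinely terminally presented module at each stage; the cleanest route is to fix the base change achieving terminal presentation of $\cEnd(M)$ once and for all at the start, before any of the $J$-shrinking, since shrinking $J$ does not disturb terminal presentation.
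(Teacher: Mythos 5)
Your proposal is correct and follows essentially the same path as the paper: terminal presentation via Lemma~\ref{L:make affine}, a scale-preserving base change to put the generic path in position, Remark~\ref{R:shape of mono} and Lemma~\ref{L:Tannakian2} to reduce to a $p$-group, then Lemma~\ref{L:wild structure2} plus Corollary~\ref{C:lift any} to split into cases (a) and (b). The only differences are cosmetic: you organize the argument as a contradiction (assume not (a), deduce (b)) where the paper states a direct dichotomy on whether $\tilde\tau$ is trivial, and you unpack the triviality of $N^{\otimes p^h}$ more explicitly through Lemma~\ref{L:match Dwork} and Proposition~\ref{P:rank 1 analysis}(c) where the paper folds this into a single citation of Corollary~\ref{C:lift any}.
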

\begin{proof}
By Lemma~\ref{L:make affine}, after a 
base change on Hypothesis~\ref{H:relative2}, we may 
assume that
$\cEnd(M)$ is terminally presented.
Note that
this condition is preserved by further base changes and tame alterations
on Hypothesis~\ref{H:relative2}.

Pick $s_1 > 0$ not in the divisible closure of the value
group $\Gamma_{v^0}$ of $v^0$. After a scale-preserving base change on
Hypothesis~\ref{H:relative2}, we may assume that $\alpha_{0,e^{-s_2}} \geq
\alpha$ for some $s_2 \in (s_1, s_0)$.
We may then set notation as in Definition~\ref{D:Tannakian},
shrinking $J$ if necessary to force $J \subseteq [0, s_2]$.
Note that all of these conditions are preserved by further
\emph{simple} base changes and tame alterations.

By Remark~\ref{R:shape of mono}, after a simple base change
and tame alteration on Hypothesis~\ref{H:relative2},
we have $\tau(I_1) = \tau(W'_1)$, which is a $p$-group.
By shrinking $J$ again and invoking Lemma~\ref{L:Tannakian2}, 
we may ensure that $\tilde{\tau}(\pi_1(\Spec
R_J, \overline{x})) = \tau(I_1)$.
By Lemma~\ref{L:wild structure2}, 
after possibly replacing $K$ by a finite extension,
either $\tilde{\tau}$
is trivial, or $\tilde{\tau} \oplus \cEnd(\tilde{\tau})$ has
a nontrivial one-dimensional subrepresentation $\psi$
such that $\psi^{\otimes p^h}$ is trivial for some positive integer $h$.
In the former case, Corollary~\ref{C:lift any} ensures that
$M_{**}$ is unipotent. In the latter case, Corollary~\ref{C:lift any}
ensures firstly that $\psi$ corresponds to a subobject (not just a
subquotient) $N$ of $M_J$, secondly that $N$ lifts to a subobject
of $M_{**}$, and thirdly that $N^{\otimes p^h}$ is trivial as a subobject
of $M_{**}^{\otimes p^h}$.
\end{proof}

We now use what we have learned from the machinery of differential modules 
to show that we can always reduce consideration of an Artin-Schreier
extension to the very special shape permitted by an Artin-Schreier
alteration on Hypothesis~\ref{H:relative2}. Without this analysis,
we would be unable to simplify $\calE$ by passing up Artin-Schreier extensions
while maintaining smoothness of the map $X \to X^0$ at the center of $v$.

\begin{lemma} \label{L:kill AS}
Suppose that $N$ is a rank $1$ object of the Tannakian subcategory of
$C_{v^0,*,**,\alpha}$ generated by $M_{**}^{\semis}$,
such that $N^{\otimes p^h}$ is trivial for some nonnegative integer $h$.
Then after a suitable
sequence of base changes and Artin-Schreier
alterations on Hypothesis~\ref{H:relative2},
$N$ becomes trivial.
\end{lemma}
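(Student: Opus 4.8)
The plan is to reduce $N$ to an Artin-Schreier character whose parameter is of the form $x + a_2 x^2 + \cdots + a_k x^k$ with higher coefficients living in the maximal ideal of $\calO_{X^0,z^0}$, since killing such a character is exactly what an Artin-Schreier alteration on Hypothesis~\ref{H:relative2} achieves. First I would observe that because $N$ lies in the Tannakian subcategory generated by $M_{**}^{\semis}$, it corresponds (via Lemma~\ref{L:match Dwork} and the Tannakian dictionary of Definition~\ref{D:Tannakian}) to an Artin-Schreier character $\psi$ of $\pi_1(\Spec R_J,\overline{x})$ for $J$ a small enough closed subinterval of $[0,s_0)$ with $s_1$ in its interior; moreover, since $N^{\otimes p^h}$ is trivial, $\psi$ factors through $\ZZ/p\ZZ$ and so is defined by an actual element $u$ of $R_J$. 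By Lemma~\ref{L:prepare parameter}, after a simple base change on Hypothesis~\ref{H:relative2} we may take $u$ to be prepared, i.e.\ $u = \sum_{i \in \ZZ} u_i x^i \in k(X^0)[x,x^{-1}]$ with $u_i = 0$ for $p \mid i$; then $N \cong M_u$ in $C_{v^0,*,J,\alpha}$ by Lemma~\ref{L:match Dwork}, and $b_1(N,\alpha,s)=s$ for $s \in J$ (since $N$ is rank one with finite-order monodromy, hence satisfies the Robba condition and is in fact unipotent after a tame pullback).

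Next I would apply Proposition~\ref{P:rank 1 analysis}: parts (b) and (c) give, after possibly shrinking $J$ and performing a further base change to arrange $0 \in J$, a prepared $u_+ \in \ell\langle x\rangle$ (so $u_+ = \sum_{i>0} u_i x^i$, only positive powers) with $M_u \cong M_{u_+}$ in $C_{v^0,*,*,\alpha}$. Thus without loss of generality $u = u_+ = \sum_{i=1}^k u_i x^i$ is a polynomial in $x$ with no constant term and $u_i = 0$ for $p \mid i$. Now the Artin-Schreier extension defined by $u$ over $k(X)$ is $k(X)[z]/(z^p - z - u_1 x - u_2 x^2 - \cdots - u_k x^k)$; after a simple base change on $X^0$ we may assume each $u_i \in \Gamma(X^0,\calO)$, and by a further simple base change (adjoining a $(p-1)$-st root, or rescaling $x$ appropriately — this is where I would use that $v(u_1 x) > 0$ and that $u_1$ need not be a unit) we can bring $u_1$ into the form $g^{p-1}$ for some $g \in \Gamma(X^0,\calO)$, and shift the $u_i$ for $i \geq 2$ into the maximal ideal of $\calO_{X^0,z^0}$ by subtracting off an exact Artin-Schreier difference $(\phi-1)(\text{unit}\cdot x^{\geq 2})$ as in Lemma~\ref{L:prepare parameter}. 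This puts the defining polynomial of the extension into exactly the shape
\[
\tilde{x}^p - g^{p-1}\tilde{x} - x - a_2 x^2 - \cdots - a_k x^k
\]
permitted by an Artin-Schreier alteration, with $a_2,\dots,a_k$ in the maximal ideal of $\calO_{X^0,z^0}$. Performing that alteration replaces $X$ by a cover on which this equation has a root, hence trivializes the Artin-Schreier extension, hence trivializes $N$.

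The main obstacle I anticipate is the bookkeeping in the last paragraph: matching the abstract Dwork module $M_u$ back to an honest $F$-isocrystal $\calE_u$ (as in the proof of Lemma~\ref{L:match Dwork}) and then to an honest Artin-Schreier cover of $X$, and verifying that the cover is compatible with the fibration $\pi: X \to X^0$ — i.e.\ that after the alteration the map to the new $X^0$ is still smooth of relative dimension one at the center of (the extension of) $v$, and that $x$ still restricts to a local parameter on the fiber so that Remark~\ref{R:fixing base change} applies. The coefficient-reduction steps — clearing denominators into $\Gamma(X^0,\calO)$, absorbing exact differences, forcing the leading coefficient to be a $(p-1)$-st power — all require simple base changes on Hypothesis~\ref{H:relative2}, and one must check that the compatibility conventions of Remark~\ref{R:compatibility} are respected at each stage so that $b_i(M,\alpha,s)$ and the identification of $N$ inside $M_{**}^{\semis}$ are preserved. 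Once the Artin-Schreier cover is exhibited with the right shape, the triviality of $N$ on the cover is immediate, and since $N$ has rank one this triviality persists in $C_{v^0,*,**,\alpha}$ after the alteration.
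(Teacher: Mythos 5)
Your proposal captures the broad shape of the paper's argument—translate $N$ into an Artin-Schreier character via Lemma~\ref{L:match Dwork}, replace the parameter by one in $\ell\langle x\rangle$ via Proposition~\ref{P:rank 1 analysis}(b,c), and then kill it with an Artin-Schreier alteration—but there are several gaps, two of them fatal.

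First, you assert that since $N^{\otimes p^h}$ is trivial, $\psi$ factors through $\ZZ/p\ZZ$. This is false for $h>1$: triviality of $N^{\otimes p^h}$ only bounds the order of the corresponding character by $p^h$, not by $p$. The paper handles this at the outset by reducing to $h=1$ via induction on $h$ (applying the base case to $N^{\otimes p^{h-1}}$); your proof needs this reduction.

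Second, and more seriously, you claim ``$b_1(N,\alpha,s)=s$ for $s\in J$ (since $N$ is rank one with finite-order monodromy, hence satisfies the Robba condition and is in fact unipotent after a tame pullback).'' This is simply wrong. A nontrivial Artin-Schreier character has wild ($p$-power) ramification, which is precisely the obstruction to the Robba condition; such a module does not become unipotent under tame pullback. Indeed, Lemma~\ref{L:dwork2}(d) shows $b_1(M_u,s)$ can well exceed $s$. And if $b_1(N,\alpha,s)=s$ held identically, Corollary~\ref{C:AS nontrivial} would already show $\psi$ trivial, and there would be nothing to prove. The entire content of the paper's argument lives in the opposite case, where the terminal slope of $b_1(M_u,\alpha,s)$ is $0$ but $b_1>s$, i.e., $N$ is genuinely wildly ramified.

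Third, the crucial input for the Artin-Schreier alteration is that $a_i = u_i/u_1$ lies in the maximal ideal of $\calO_{X^0,z^0}$ for $i>1$, i.e., $v^0(u_1)<v^0(u_i)$ strictly. You propose to arrange this by subtracting ``an exact Artin-Schreier difference $(\phi-1)(\text{unit}\cdot x^{\geq 2})$,'' but adding an element of $(\phi-1)R_J$ does not change the Artin-Schreier character, and cannot change the relative $v^0$-valuations of terms that are already prepared and dominant. The paper obtains $v^0(u_i/u_1)>0$ by a nontrivial differential-module argument: the hypothesis that $M$ is terminally presented forces the terminal slope of $b_1(M_u,\alpha,s)$ to be $0$ (rather than $1$), whence by Lemma~\ref{L:dwork2}(d) the term $i=1$ dominates; one then compares $b_1(M_u,s)$ against $b_1(M_u\otimes M_{u_1 x}^\dual,s)$, applies a center-preserving base change to extend the relevant inequality to $s=0$, and takes $s=0$ to obtain the strict inequality. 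Without this step your proposed alteration does not have the required shape, and the argument breaks. (For the same reason, you need $v^0(u_1)\leq 0$—so that $u_1^{-1/p}\in\Gamma(X^0,\calO)$—which again comes out of the $b_1$ analysis rather than being assumable.) Finally, your last sentence waves away the transfer from triviality in $C_{v^0,*,J,\alpha}$ to triviality in $C_{v^0,*,**,\alpha}$; the paper does this by a further center-preserving base change to force $0\in J$ followed by Proposition~\ref{P:rank 1 analysis}(c), which is not automatic.
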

\begin{proof}
It suffices to treat the case $h=1$, as the general case follows by
induction on $h$ by applying the base case to $N^{\otimes p^{h-1}}$.
Reduce to the case where $M$ is terminally presented,
then set notation as in the proof of Lemma~\ref{L:find nontrivial}.
Then $N$ corresponds to an Artin-Schreier representation $\psi$
of $\pi_1(\Spec R_J, \overline{x})$ for some $J$,
which after a simple base change on Hypothesis~\ref{H:relative2}
we may assume is generated by a prepared parameter $u \in R_J$.
By Lemma~\ref{L:match Dwork}, $N$ is isomorphic as an element of
$C_{v^0,*,J}$ to the Dwork module $M_u$.

By Proposition~\ref{P:rank 1 analysis}, we may choose $u \in
\ell \langle x \rangle$ prepared so that $N \cong M_u$ in
$C_{v^0,*,**,\alpha}$. 
(This requires combining parts (b) and (c) of 
Proposition~\ref{P:rank 1 analysis}, as in the proof of
Proposition~\ref{P:rank 1 analysis}(d).)
Moreover, the terminal slope of $b_1(M_u, \alpha,s)$
is equal to 0 unless $b_1(M_u, \alpha,s)=s$ identically (only
\emph{a priori} in a neighborhood
of $s_0$, but we assumed that $M$ is terminally presented). 
In the latter case, by Corollary~\ref{C:AS nontrivial},
$\psi$ is trivial. In the former case,
write $u = \sum_{i=1}^h u_i x^i$; by Lemma~\ref{L:dwork2}(d),
we have $v^0(u_1) + s \leq v^0(u_i) + is$ for $i>1$ and $s \in J$.
We also have $v^0(u_1) + s \leq s$ for $s \in J$, so $v^0(u_1) \leq 0$.
By a center-preserving base change on Hypothesis~\ref{H:relative2},
we may force the previous inequality to hold for all $s \geq 0$,
while preserving the property that $\alpha_{0,e^{-s}} \geq \alpha$
for some $s > 0$.

Put $u' = u_1 x$; then $b_1(M_u,s) \geq b_1(M_u \otimes M_{u'}^\dual,
s)$ for $s \geq 0$. 
In this inequality, the left side
is constant and greater than or equal to $s$; 
by contrast, the right side has negative
slope until it equals $s$ and then is identically equal to $s$ thereafter.
It follows that for $s > 0$, we have 
$b_1(M_u,s) > b_1(M_u \otimes M_{u'}^\dual,s)$ unless both sides
are equal to $s$.

By a center-preserving base change on Hypothesis~\ref{H:relative2},
we can force $b_1(M_u,s) > b_1(M_u \otimes M_{u'}^\dual,s)$ for $s \geq 0$,
while preserving the property that $\alpha_{0,e^{-s}} \geq \alpha$
for some $s > 0$. Taking $s=0$, we find
$v^0(u_1) < v^0(u_i)$, or $v^0(u_i/u_1) > 0$.
By a simple base change on Hypothesis~\ref{H:relative2}, 
we can force $u_1$ to be a $p$-th power in $k(X^0)$, 
and we may force $u_i/u_1$ to belong to the maximal ideal of $\calO_{X^0,z^0}$
for $i > 1$. 
Since $v^0(u_1) \leq 0$, we may also force $u_1^{-1/p}$ to belong to 
$\Gamma(X^0, \calO)$.
Perform an Artin-Schreier alteration on 
Hypothesis~\ref{H:relative2} with $g = u_1^{-1/p}$ and $a_i = u_i/u_1$
for $i > 1$. 

At this point, the representation $\psi$ is now trivial.
By Lemma~\ref{L:match Dwork} again,
$N$ is trivial as an object in $C_{v^0, *, J} = C_{v^0,*,J,\alpha}$.
Perform a center-preserving base change on Hypothesis~\ref{H:relative2}
to force $0$ into $J$;
then by Proposition~\ref{P:rank 1 analysis}(c),
$N$ is also trivial in $C_{v^0,*,**,\alpha}$.
\end{proof}

Putting together the two previous arguments yields the following.

\begin{prop} \label{P:terminally unip}
After a sequence of base changes, tame alterations, and Artin-Schreier
alterations on Hypothesis~\ref{H:relative2},
and after possibly replacing $K$ by a finite extension,
$M$ becomes terminally unipotent.
\end{prop}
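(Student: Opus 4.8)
The plan is to run a decreasing induction on $\rank(M)$, using Lemma~\ref{L:find nontrivial} and Lemma~\ref{L:kill AS} as the two halves of each inductive step. First I would invoke Lemma~\ref{L:make affine} to arrange that $\cEnd(M)$ is terminally presented, a condition preserved by all subsequent base changes and tame/Artin-Schreier alterations. Then I would apply Lemma~\ref{L:find nontrivial}: after a suitable sequence of base changes and tame alterations (and possibly a finite extension of $K$), either (a) $M$ is already terminally unipotent, in which case we are done, or (b) there is a nontrivial rank-$1$ subobject $N$ of $M_{**} \oplus \cEnd(M_{**})$ with $N^{\otimes p^h}$ trivial for some positive integer $h$.

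In case (b), the key move is to apply Lemma~\ref{L:kill AS} to $N$: after further base changes and Artin-Schreier alterations, $N$ becomes trivial. The point of this is that $N$ was a \emph{nontrivial} subobject of $M_{**} \oplus \cEnd(M_{**})$ before the alteration, and triviality of $N$ afterward forces the semisimplification of $M_{**}$ (equivalently, the image of the monodromy representation $\tilde{\tau}$, via the identifications of Definition~\ref{D:Tannakian} and Lemma~\ref{L:Tannakian2}) to shrink in a controlled way. Concretely: running through Lemma~\ref{L:find nontrivial} once more, if we are not yet in case (a), we get a new nontrivial rank-$1$ subobject; but each such step, after Lemma~\ref{L:kill AS}, strictly decreases the order of the $p$-group $\tilde{\tau}(\pi_1(\Spec R_J,\overline{x}))$ (which, by Remark~\ref{R:shape of mono} and Lemma~\ref{L:Tannakian2}, we may assume is a $p$-group controlling the automorphism group of $[M_J^{\semis}]$). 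Since this group is finite, after finitely many iterations we must land in case (a), i.e., $M$ is terminally unipotent. (Alternatively, if one prefers to induct on $\rank(M)$ directly: killing $N$ splits off a trivial rank-$1$ summand of $M^{\semis}_{**}$ in the sense that $M_{**}$ becomes a successive extension of a trivial object and a module of smaller rank on which the induction hypothesis applies; either bookkeeping works, and the group-order argument is cleaner.)

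The main obstacle I anticipate is ensuring that the sequence of alterations actually \emph{terminates} rather than merely making progress at each step — one must pin down a genuine monovariant. The natural candidate is the order of the $p$-group $G = \tilde{\tau}(\pi_1(\Spec R_J,\overline{x}))$: Lemma~\ref{L:kill AS} kills an Artin-Schreier character of $R_J$ corresponding to a maximal proper (hence index-$p$) subgroup, so $|G|$ drops by a factor of $p$. The subtlety is that each application of Lemma~\ref{L:find nontrivial} and Lemma~\ref{L:kill AS} involves shrinking $J$, passing to finite extensions of $K$, and performing tame alterations, and one must check that none of these operations can \emph{increase} the relevant $p$-group: shrinking $J$ does not increase the image (Definition~\ref{D:Tannakian}, Lemma~\ref{L:Tannakian2}), a finite extension of $K$ is harmless by Remark~\ref{R:Galois descent}, and a tame alteration can only shrink $G$ once we have arranged $G$ to be a $p$-group (as noted in the proof of Lemma~\ref{L:Tannakian2}). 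Once monotonicity of $|G|$ along the whole process is established, finiteness of $|G|$ at the start forces termination, and the terminal state is exactly case (a) of Lemma~\ref{L:find nontrivial}.

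In summary:

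\begin{proof}
Apply Lemma~\ref{L:make affine} to arrange that $\cEnd(M)$ is terminally
presented; this property persists under all base changes, tame alterations,
and Artin-Schreier alterations performed below, as well as under finite
extension of $K$. As in the proof of Lemma~\ref{L:find nontrivial}, choose
$s_1 > 0$ outside the divisible closure of $\Gamma_{v^0}$, perform a
scale-preserving base change so that $\alpha_{0,e^{-s_2}} \geq \alpha$ for some
$s_2 \in (s_1,s_0)$, and set up the notation of Definition~\ref{D:Tannakian}
with $J \subseteq [0,s_2]$. By Remark~\ref{R:shape of mono} and
Lemma~\ref{L:Tannakian2}, after a simple base change and tame alteration and
after shrinking $J$, the group
$G = \tilde{\tau}(\pi_1(\Spec R_J, \overline{x}))$ is a finite $p$-group
which may be identified with the automorphism group of $[M_J^{\semis}]$, and
hence controls the semisimplification $M_{**}^{\semis}$.

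We argue by induction on $|G|$. If $G$ is trivial, then $M_{**}$ is unipotent
by Corollary~\ref{C:lift any}, so $M$ is terminally unipotent and we are done.
Otherwise, apply Lemma~\ref{L:find nontrivial}: after a sequence of base
changes and tame alterations (and possibly a finite extension of $K$), either
$M$ is already terminally unipotent, in which case we are done, or we obtain a
nontrivial rank $1$ subobject $N$ of $M_{**} \oplus \cEnd(M_{**})$ with
$N^{\otimes p^h}$ trivial for some positive integer $h$. In the latter case,
apply Lemma~\ref{L:kill AS} to $N$: after a further sequence of base changes
and Artin-Schreier alterations, $N$ becomes trivial. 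Since $N$ corresponds
(via Definition~\ref{D:Tannakian}, Lemma~\ref{L:Tannakian2}, and
Lemma~\ref{L:match Dwork}) to a nontrivial character of $G$ factoring through
a maximal proper subgroup of index $p$, its triviality after the alteration
forces the new group $G'$ to be a proper quotient-by-no: more precisely,
reapplying Remark~\ref{R:shape of mono} and Lemma~\ref{L:Tannakian2} with a
suitably small $J$, the new $p$-group $G'$ controlling $M_{**}^{\semis}$ has
$|G'| \leq |G|/p < |G|$, because the operations used (shrinking $J$,
tame alterations once $G$ is a $p$-group, finite extensions of $K$) cannot
increase this group, while the character killed by Lemma~\ref{L:kill AS} was
nontrivial on $G$. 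By the induction hypothesis applied to the new situation,
$M$ becomes terminally unipotent after finitely many further steps. Since
$|G|$ is finite, the process terminates, proving the proposition.
\end{proof}
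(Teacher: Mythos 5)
Your proof is correct and follows the paper's own iteration: set up terminal presentation via Lemma~\ref{L:make affine}, apply Lemma~\ref{L:find nontrivial}, then Lemma~\ref{L:kill AS}, and repeat. The only genuine difference is the monovariant controlling termination. The paper inducts upward on the number of trivial Jordan--H\"older constituents of $M_{**} \oplus \cEnd(M_{**})$, a quantity bounded above by the rank and strictly increased each time $N$ is trivialized; this is easy to monitor because pullback along any of the permitted alterations preserves trivial constituents and can only trivialize nontrivial ones, so no tracking of fundamental groups across alterations is needed. You instead induct downward on the order of the finite $p$-group $G = \tilde\tau(\pi_1(\Spec R_J, \overline{x}))$, arguing that killing a nontrivial character $\psi$ of $G$ forces the new group into $\ker(\psi) \subsetneq G$. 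This is also sound, but requires the extra check---which you correctly flag and dispose of via Lemma~\ref{L:Tannakian2} and Remark~\ref{R:shape of mono}---that shrinking $J$, tame alterations once $G$ is a $p$-group, finite extensions of $K$, and the further base changes occurring inside Lemma~\ref{L:kill AS} cannot enlarge $G$. Both monovariants are valid; the paper's constituent-count invariant is the cleaner choice since its monotonicity is purely Tannakian and needs no functoriality of $\pi_1$. (Minor: the phrase ``proper quotient-by-no'' is a garbled draft fragment, but the precise statement you supply immediately afterward is what carries the argument.)
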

\begin{proof}
We proceed by descending induction on the number of
trivial Jordan-H\"older constituents of $M_{**} \oplus \cEnd(M_{**})$.
The maximum value of this number occurs when $M_{**} \oplus \cEnd(M_{**})$
is itself terminally unipotent, in which case $M$ is as well.

By Lemma~\ref{L:find nontrivial} (applied after possibly replacing
$K$ by a finite extension),
we can perform a sequence of base changes and tame alterations
on Hypothesis~\ref{H:relative2}, after which 
either $M$ is terminally unipotent, or
there exists a nontrivial subobject $N$ of $M_{**} \oplus
\cEnd(M_{**})$ of rank $1$,
such that $N^{\otimes p^h}$ is trivial for some positive integer $h$.
In the former case, we are done. In the latter case,
by Lemma~\ref{L:kill AS},
we can perform a sequence of
base changes and Artin-Schreier
alterations on Hypothesis~\ref{H:relative2} to make $N$ trivial.
Then we may apply the induction hypothesis to conclude.
\end{proof}

\subsection{Completion of the proof}
\label{subsec:completion}

To finish the proof of Theorem~\ref{T:induct local semi}, 
we now show that terminal unipotence of $M$
implies local semistable reduction at $v$.
Beware that we cannot apply Proposition~\ref{P:refined log-extend}
directly, since that result assumes local semistable reduction at $v$.

\begin{prop} \label{P:endgame}
Suppose that $M$ is terminally unipotent.
Then $\calE$ admits local semistable reduction at $v$.
\end{prop}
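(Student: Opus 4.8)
The plan is to deduce from terminal unipotence of $M$ that, for a suitable exposing alteration of a pair $(X,Z')$ for $v$, the pullback of $\calE$ has unipotent local monodromy along every component of the boundary divisor passing through the center of $v$. By the implication of \cite[Theorem~6.4.5]{kedlaya-part1} from unipotent local monodromy to log-extendability, that pullback then extends to a convergent log-isocrystal near the center of $v$, and its residues are nilpotent because $\calE$ carries a Frobenius structure (compare the proof of Proposition~\ref{P:refined log-extend}, which cannot be invoked directly since it presupposes local semistable reduction at $v$). This is exactly local semistable reduction at $v$.

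Concretely, I would first use Construction~\ref{C:localizing sub2} to produce an exposing sequence $\tilde f_i \colon \tilde X_i \to X$ for $v$ out of an exposing sequence for $v^0$ together with rational parameters $a_j \leq v^0(t_j)/s_1 \leq b_j$; the boundary components of $\tilde X_i$ through the center of (an extension of) $v$ determine finitely many divisorial valuations of $k(X)$ which, as $i$ grows, come to lie in localizing subspaces of the shape produced in Lemma~\ref{L:localizing sub}. Over such a subspace the differential module $M$, or rather its image in $C_{v^0,*,**,\alpha}$, governs the local monodromy of $\calE$. Since $M$ is terminally unipotent, this image is a successive extension of trivial objects: for $\epsilon$ close to $1$ and $J$ a short interval near $s_0 = -\log r(\alpha)$ (taken after the recentering at $\alpha$ from \S\ref{subsec:shrinking}), $M$ is spanned over $R_{v^0,[\epsilon,1),J}$ by horizontal sections. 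Restricting these sections to the cross sections $F_\rho\langle x \rangle$ for $\rho$ near $1$ and specializing at each of the divisorial valuations above shows that $\calE$ is unipotent along each of them; a tame alteration on Hypothesis~\ref{H:relative2} removes the prime-to-$p$ part of the monodromy not seen by the Tannakian dictionary of Definition~\ref{D:Tannakian} and Lemma~\ref{L:Tannakian2}, and the induction hypothesis (local semistable reduction at $v^0$, of transcendence defect $n$) handles the components pulled back from the exposing alteration of $v^0$. Feeding this into \cite[Theorem~6.4.5]{kedlaya-part1} gives log-extendability of $\tilde f_i^* \calE$ near the center of $v$ for $i$ large, as desired.

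The main obstacle is this translation step: making the passage from ``$M_{**}$ unipotent'' to ``unipotent local monodromy of $\calE$ along every boundary divisor near $v$'' precise and uniform across the whole localizing subspace — not merely along the generic path to $\alpha$ — and across both possibilities for $\alpha$. When $\alpha$ is of type (i) one has $s_0 = +\infty$ and all terminal slopes are automatically $1$, whereas when $\alpha$ is of type (iv) one must invoke the recentering translations $T_{[\overline h]}$, the well-definedness of $C_{v^0,*,**,\alpha}$ from \S\ref{subsec:shrinking}, and stability statements such as Proposition~\ref{P:stable} both to interpret ``$M_{**}$ unipotent'' and to transport horizontal sections across the reparametrizations. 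The remaining ingredients — the reduction to unipotent monodromy along divisors, the construction of the exposing sequence, and the appeal to induction — follow routinely from the machinery already assembled in this section.
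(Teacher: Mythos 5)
There is a genuine gap, and it sits precisely where you flag it yourself. Terminal unipotence of $M$ controls only the differential behavior of $\calE$ in the $x$-direction, along the generic path toward $\alpha$. It does not directly give unipotent local monodromy of $\calE$ along divisorial valuations near $v$: those are \emph{full} local monodromy statements involving all derivations, not just $\partial/\partial x$, and your plan of "restricting horizontal sections to cross sections $F_\rho\langle x\rangle$ and specializing at each divisorial valuation" has no mechanism for controlling the transverse directions. The paper does not attempt this direct translation.

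What the paper actually does is route the argument through the valuation $v_1$ — the restriction to $k(X)$ of the $s_1$-Gauss valuation on $\ell(x)$ (not through $v^0$ on $k(X^0)$ as you suggest). The valuation $v_1$ has transcendence defect $n$, so the induction hypothesis provides local semistable reduction at $v_1$, and hence a semisimplified local monodromy representation there in the sense of Definition~\ref{D:mono rep}. Terminal unipotence of $M$, combined with Lemma~\ref{L:Tannakian2}, is used to show that the image of this representation (after making its image a $p$-group via tame alterations) is trivial. Proposition~\ref{P:refined log-extend} is then legitimately applicable \emph{at $v_1$} — this is why the paper can use it even though it presupposes local semistable reduction — and yields log-extendability of $f_i^*\calE$ after a suitable alteration from the exposing sequence of Construction~\ref{C:localizing sub2}. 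The final, and crucial, step you are missing entirely is the bridge from $v_1$ to $v$: the paper invokes the differential Swan conductor machinery of \cite{kedlaya-part3}, defining $h(U)$ from $T(\calE,R)$ and exploiting its monotonicity in $u_0$ together with the unipotence criterion \cite[Proposition~5.3.4]{kedlaya-part3}. Log-extendability near $v_1$ forces $h(U)=0$ on the box $b_i^{-1}u_i \leq u_0 \leq a_i^{-1}u_i$, and monotonicity then propagates $h(U)=0$ to all $u_0 \geq b_i^{-1}u_i$, which (because $v(x)\geq s_1$) covers a neighborhood of $(v(x),v(t_1),\dots,v(t_r))$. Only then does a toroidal blowup plus a tame cover plus \cite[Theorem~6.4.5]{kedlaya-part1} deliver local semistable reduction at $v$. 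Without some substitute for this monotonicity argument, your proposal cannot cross the gap you identify; and Proposition~\ref{P:stable}, which you float as a tool for transporting horizontal sections, plays no role at this stage of the paper.
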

\begin{proof}
Since $M$ is terminally unipotent,
we may make a base change on Hypothesis~\ref{H:relative2}
to reduce to the case where the restriction of $M$ to
$C_{v^0,*,*,\alpha}$ (rather than $C_{v^0,*,**,\alpha}$)
is a successive extension of trivial objects.
Note that this condition is preserved by further
base changes on Hypothesis~\ref{H:relative2}.

We then set notation as in the proof of Lemma~\ref{L:find nontrivial}.
Namely, pick $s_1 > 0$ not in the divisible closure of the value
group $\Gamma_{v^0}$ of $v^0$. After a scale-preserving base change on
Hypothesis~\ref{H:relative2}, we may assume that $\alpha_{0,e^{-s_2}} \geq
\alpha$ for some $s_2 \in (s_1, s_0)$.
We may then set notation as in Definition~\ref{D:Tannakian},
shrinking $J$ if necessary to force $J \subseteq [0, s_2]$.
By a suitable sequence of base changes and tame alterations on 
Hypothesis~\ref{H:relative2}, we can force $\tau(I_1) = \tau(W'_1)$.
The latter group must be trivial, or else as in the
proof of Lemma~\ref{L:find nontrivial}, we could find a nontrivial subobject $N$
of $M_{**}$, contradicting our assumption that $M$ is terminally unipotent.

In other words, we are now in the case where the semisimplified local monodromy
representation of $\calE$ at $v_1$ is trivial.
We now apply Proposition~\ref{P:refined log-extend} to $v_1$,
using an exposing sequence generated by Construction~\ref{C:localizing sub2}.
This implies that after a simple base change on Hypothesis~\ref{H:relative2},
we can choose:
\begin{itemize}
\item
$t_1,\dots,t_r \in \Gamma(X^0, \calO)$
which cut out the components of $Z^0$ passing through
$z^0$;
\item
$a_i, b_i \in \ZZ_{(p)}$ for $i=1,\dots,r$ such that
$v^0(t_i)/b_i < s_1 < v^0(t_i)/a_i$;
\item
a positive integer $m$ with $a_i m, b_i m \in \ZZ$ for $i=1,\dots,r$;
\item
a desingularization $\tilde{Y}$ 
of the closure in $X \times_k \AAA^{2r}_k$
of the graph of the rational map
$X \dashrightarrow \AAA^{2r}_k$ defined by
$x^{b_i m}/t_i^m$ and $t_i^m/x^{a_i m}$ for $i=1,\dots,r$
(which exists by toroidal resolution of singularities
\cite{kkms});
\end{itemize}
such that
$\calE$ is log-extendable to $(\tilde{Y}, \tilde{W})$,
for $\tilde{W}$ the inverse image of $Z$ in $\tilde{Y}$.
By one direction of \cite[Theorem~6.4.5]{kedlaya-part1},
$\calE$ has unipotent local monodromy along every divisorial valuation
of $k(X)$ centered on $\tilde{Y}$.

To extract the desired conclusion, we argue as in \cite{kedlaya-part3}.
Fix some local coordinates of $X$ at $z$ including $x, t_1,\dots,t_r$,
and use them to define the derivations $\del_0 = \frac{\del}{\del x}$ 
and $\del_i = \frac{\del}{\del t_i}$ for $i=1,\dots,r$.
For $R = (\rho_0, \dots, \rho_r) \in (0,1)^{r+1}$, 
define $T(\calE, R)$ using the derivations $\del_0,\dots,\del_r$,
as in \cite[Definition~4.2.1]{kedlaya-part3}.
Put
\[
D = \{(u_0,\dots,u_r) \in \RR^{r+1}: u_0,\dots,u_r > 0, \quad u_1 + \cdots + u_r = 1\}.
\]
Define the function $h: D \cap \QQ^{r+1} \to [0, +\infty) \cap \QQ$
by requiring that for $U = (u_0,\dots,u_r) \in D \cap \QQ^{r+1}$,
for $\rho \in (0,1)$ sufficiently close to 1,
we have
$T(\calE, (\rho^{u_0},\dots,\rho^{u_r})) = \rho^{h(U)}$.
(The quantity $h(U)$ exists by \cite[Theorem~4.4.7]{kedlaya-part3}.)
The function $h$ then has the following properties.
\begin{itemize}
\item
(Monotonicity) The function $h(U)$ is nonincreasing in $u_0$
(as in \cite[\S 11.4]{kedlaya-course}).
\item
(Relation to unipotent monodromy) 
Let $v_U$ be the toric valuation
with $v_U(x) = u_0$ and $v_U(t_i) = u_i$ for $i=1,\dots,r$. Then $h(U) = 0$
if and only if $\calE$ acquires unipotent local monodromy along $v_U$
after pulling back along some tamely ramified cover branched over the zero loci of
$x, t_1,\dots,t_r$ \cite[Proposition~5.3.4]{kedlaya-part3}.
\end{itemize}
By the relation to unipotent monodromy,
we have $h(U) = 0$ for all $U \in D \cap \QQ^{r+1}$ such that
$b_i^{-1} u_i \leq u_0 \leq a_i^{-1} u_i$ for $i=1,\dots,r$.
By monotonicity, we also have $h(U) = 0$ for all $U \in D \cap \QQ^{r+1}$
such that $b_i^{-1} u_i \leq u_0$ for $i=1,\dots,r$. In particular, since
$s_1 \leq v(x)$, this includes a neighborhood of the point $(v(x), v(t_1),\dots,v(t_r))$.

By a suitable toroidal blowup in $x,t_1,\dots,t_r$, 
we obtain a local alteration $f_1: X_1 \to X$
around $v$, such that 
$(X_1, Z_1)$ is a smooth pair for 
$Z_1 = f_1^{-1}(Z)$, and each of the components of $Z_1$ corresponds to a
divisorial valuation of the form $v_U$ for some $U \in D$ with $h(U) = 0$.
For $m$ a positive integer prime to $p$, let
$X_{2,m}$ be the finite cover of $X_1$ which is tamely ramified of degree $m$ 
along each component of $Z_1$;
then $f_{2,m}: X_{2,m} \to X$ is again a local alteration around $v$ such that $(X_{2,m}, Z_{2,m})$ is a smooth pair for
$Z_{2,m} = f_{2,m}^{-1}(Z)$. By the relation to unipotent monodromy, for $m$
sufficiently divisible, $f_{2,m}^* \calE$ 
admits unipotent local monodromy along each component of $Z_{2,m}$.
By the other direction of \cite[Theorem~6.4.5]{kedlaya-part1}, $f_{2,m}^* \calE$ is log-extendable
to $(X_{2,m}, Z_{2,m})$ as desired.
\end{proof}

To conclude, we summarize how all of the ingredients in this section
come together to give a proof of Theorem~\ref{T:induct local semi}.
\begin{proof}[Proof of Theorem~\ref{T:induct local semi}]
It suffices to show that under Hypothesis~\ref{H:relative},
$\calE$ admits local semistable reduction at $v$.
During the course of the proof, we may at any time replace
$X$ by a (not necessarily separable)
local alteration of $X$ around $v$, pulling $\calE$
back accordingly. This mean on one hand that by Lemma~\ref{L:get relative},
we may reduce to the case where Hypothesis~\ref{H:relative2} holds,
and on the other hand, we may perform base changes, tame alterations,
and Artin-Schreier alterations on Hypothesis~\ref{H:relative2}.
Using such operations, by Proposition~\ref{P:terminally unip},
we may arrive at the situation where $M$ is terminally unipotent.
By Proposition~\ref{P:endgame}, $\calE$ admits local semistable reduction at $v$
as desired.
\end{proof}

\appendix

\section{Appendix}

In this appendix, we include two technical corrections,
one to \cite{kedlaya-part1} and one to \cite{kedlaya-part2}.
We also discuss the prospect of removing the hypothesis that the coefficient field
$K$ is discretely valued.

\subsection{A technical correction}

This subsection reports and corrects an error in \cite{kedlaya-part1},
kindly pointed out to us by Atsushi Shiho. To lighten notation, in
this subsection we make all
citations directly to \cite{kedlaya-part1} without saying so each time.

The error occurs in two places. In 
Proposition 3.5.3, the 
erroneous statements are the last two sentences of the proof: since
the strict neighborhood $V'$ may not have integral reduction, one cannot
necessarily embed it isometrically into a field.
A similar error occurs in Lemma~3.6.2,
because $X$ need not have integral reduction. 

In the case of Lemma~3.6.2,
this is most easily corrected by simply
requiring $X$ to have integral reduction,
and making the same requirement in Proposition~3.6.9,
This suffices because
the only invocation of Lemma~3.6.2 is in 
Proposition~3.6.9,
and all invocations of Proposition~3.6.9 occur
in cases where $X$ has integral reduction.

In the case of Proposition~3.5.3, the only invocation occurs in
Lemma~5.1.1, in which we may assume $b > 0$. Under this extra
hypothesis, we may correct the proof of Proposition~3.5.3 as follows.
(Thanks to Shiho for feedback on this correction.)

Retain notation from the proof of Proposition~3.5.3.
We have from the correct part of the proof that 
$H^0_{V_\lambda}(V_\lambda \times A^n_K[b,c], \mathcal{E}) \neq 0$.
Take the $\mathcal{O}$-span of this set; it is a submodule of 
$\mathcal{E}$ stable under $\nabla$. Restrict to 
$V_\lambda^0 \times A^n_K[b,c]$, for
$V_\lambda^0$ the nonlogarithmic locus of $V_\lambda$;
by Proposition~3.3.8, this restriction extends to a 
log-$\nabla$-submodule $\mathcal{F}$ of $\mathcal{E}$.
On $(V_\lambda^0 \cap ]X[) \times A^n_K[b,c]$, $\mathcal{F}$ is
constant relative to $V_\lambda^0 \cap ]X[$; 
we may infer the same conclusion on $]X[ \times A^n_K[b',c']$ 
for any closed subinterval $[b',c']$ of $(b,c)$, via
Corollary 3.4.5.

In other words, the whole proof reduces to the case when
$\mathcal{E}$ is constant, not just unipotent, on $]X[ \times A^n_K[b,c]$.
In this case, we must show that $\mathcal{E}$ is also constant
on $V_\lambda \times A^n_K[b,c]$.
We first prove that 
$g_\lambda: \pi_1^* H^0_{V_\lambda} (V_\lambda \times A^n_K[b,c],
\mathcal{E}) \to \mathcal{E}$ is surjective for some $\lambda$.
For $\mathbf{v} \in \Gamma(V \times A^n_K[d,e], \mathcal{E})$, 
the correct part of the proof of Proposition~3.5.3
shows that the sequence $D_l(\mathbf{v})$ converges on
$V_\lambda \times A^n_K[b,c]$ for some $\lambda$.
Moreover, as in Proposition~3.4.3, 
on $]X[ \times A^n_K[b,c]$ we have the identity
\[
\mathbf{v} = \sum_{J \in \mathbb{Z}^n} t_1^{j_1} \cdots t_n^{j_n}
f(t_1^{-j_1} \cdots t_n^{-j_n})
\]
(using the hypothesis that $b \neq 0$),
so the cokernel of $g_\lambda$ has no support on $]X[ \times A^n_K[b,c]$.
On $V_\lambda \times A^n_K[b,c]$, the support of this cokernel is
a closed analytic subspace not meeting $]X[ \times A^n_K[b,c]$;
by the maximum modulus principle, it also fails to meet
$V_{\lambda'} \times A^n_K[b,c]$ for some $\lambda' \in (1,\lambda]$.
Hence for suitable $\lambda$, $g_\lambda$ is surjective.

Over $V_\lambda^0 \times A^n_K[b,c]$,
the map $g_\lambda$ is automatically a morphism in 
$\mathrm{LNM}_{V_\lambda^0 \times A^n_K[b,c]}$ because there are no
logarithmic singularities; hence Proposition~3.2.20 implies that
the restriction of $\mathcal{E}$ to $V_{\lambda}^0 \times A^n_K[b,c]$
is constant over $V_\lambda^0$.

To finish the proof that $\mathcal{E}$ is constant, it suffices to do so
after extending scalars from $K$ to a finite extension. By doing so,
we may ensure that there exists a $K$-rational point $x \in A^n_K[b,c]$.
Let $\mathcal{E}_0$ be the restriction of $\mathcal{E}$ to
$V_\lambda \times \{x\}$, identified with $V_\lambda$. We then
have an isomorphism of $\pi_1^* \mathcal{E}_0$ with $\mathcal{E}$
over $V_\lambda^0 \times A^n_K[b,c]$ (because $\mathcal{E}$ is
constant over $V_\lambda^0$); by Proposition~3.3.8
(applied to the graph of the isomorphism inside of
$\pi_1^* \mathcal{E}_0 \oplus \mathcal{E}$), this extends to
an isomorphism $\pi_1^* \mathcal{E}_0 \cong \mathcal{E}$
over $V_\lambda \times A^n_K[b,c]$.

We also note that Shiho has generalized the results of 
\S 3 in \cite{shiho-log}, using somewhat different
arguments. Hence one can also avoid the aforementioned errors (and gain
some simplification in the proofs) simply by
invoking \cite{shiho-log} instead.

\subsection{Another technical correction}

The referee of the present paper, and independently Atsushi Shiho,
have pointed out some
problems with the handling of imperfect base fields in \cite{kedlaya-part2}.
We remedy these here.

We first note that \cite[Theorem~3.1.3]{kedlaya-part2} is only
a correct statement of de Jong's alterations theorem in case $k$
is perfect; otherwise, we only may conclude that $\overline{X}_1$
is regular, not necessarily smooth over $k$.

As a result, the statements of \cite[Conjecture~3.2.5]{kedlaya-part2}
and its variants are not tenable. The simplest way to correct the statement,
as we have done in \S~\ref{subsec:semistable} of this paper,
is to weaken the definition of semistable reduction 
\cite[Definitions~3.2.4 and~3.4.1]{kedlaya-part2} to 
require the quasiresolution and logarithmic extension only to be defined
over $k^{-q^n}$ for some nonnegative integer $n$
(with $K$ then replaced with $K^{\sigma_K^{-n}}$).

With this change, the fact that semistable reduction for a given
isocrystal over $k$ is equivalent to semistable reduction over $k^{\perf}$
becomes an immediate consequence of the fact that a pair $(X,Z)$
of $k$-varieties becomes 
smooth over $k^{\perf}$ if and only if it becomes smooth over
$k^{p^{-n}}$ for some $n$ (and hence for all sufficiently large $n$).
As a result, \cite[Proposition~3.2.6]{kedlaya-part2} remains correct
(using the new definition), with the given proof applied only for $k$
perfect.

We similarly modify the definition of local semistable reduction
\cite[Definitions~3.3.1 and~3.4.3]{kedlaya-part2} to allow replacing $k$ with
$k^{-q^n}$ for some positive integer $n$ before constructing the
quasiresolution. The proof of \cite[Proposition~3.3.4]{kedlaya-part2},
and by extension \cite[Proposition~3.4.5]{kedlaya-part2},
carry over with minor modifications; alternatively, one may simply
invoke the amended \cite[Proposition~3.2.6]{kedlaya-part2} to reduce to
the case of $k$ perfect.

The one other argument that is affected is the proof of
\cite[Lemma~4.3.1]{kedlaya-part2}. 
By the amended \cite[Proposition~3.2.6]{kedlaya-part2}, we may assume
$k$ is algebraically closed.
With this assumption, 
even though the quasiresolution $(f_1, j_1)$ is now only defined over 
$\ell^{q^{-n}}$ for some $n$, the recipe for construction the quasiresolution
$(f_2, j_2)$ carries over without change.

\subsection{Removing the discreteness hypothesis}

Throughout this series of papers, we have been forced in various places
to assume that $K$ is not an arbitrary complete nonarchimedean field,
but rather a complete discretely valued field. This may ultimately not
be necessary, as the approach of Andr\'e \cite{andre} and Mebkhout
\cite{mebkhout} to the $p$-adic local monodromy theorem can be extended
to the nondiscrete case. (It is unclear whether our approach using Frobenius
slope filtrations \cite{kedlaya-local} can be so extended.)

It is thus worth cataloging exactly where the discreteness hypothesis
is being used in our work, and pointing out what may potentially be done
to relax it.
\begin{itemize}
\item
In \cite{kedlaya-part1}, the only use of the discreteness hypothesis is to
exploit Shiho's theory of convergent log-isocrystals. One could make an
\emph{ad hoc} construction for the case of log-schemes associated to smooth
pairs, without relying on the discreteness hypothesis.
\item
In \cite{kedlaya-part2}, the discreteness hypothesis is used essentially
in Theorem~4.2.1, a result on the full faithfulness of restriction
from a category of overconvergent $F$-isocrystals to the corresponding
category of convergent $F$-isocrystals. The restriction ultimately
occurs in a local problem, whose solution depends on the
$p$-adic local monodromy theorem (see for instance
\cite[Theorem~20.3.5]{kedlaya-course}); it may be possible to use
Christol-Mebkhout decomposition theory instead.
\item
In \cite{kedlaya-part3}, discreteness is needed because the proof of local
semistable reduction at monomial valuations ultimately relies on our 
extension of the $p$-adic local monodromy theorem to fake annuli
\cite{kedlaya-fake}, which uses Frobenius slope filtrations.
However, it should be possible to argue using Mebkhout's approach
to the local monodromy theorem, by using Christol-Mebkhout decomposition
theory to reduce to the case of a connection of rank 1. 
(The reader may detect echoes of that strategy 
in this paper, especially in \S~\ref{subsec:terminal unip}.)
The role of decomposition theory in this case would be played by
the higher-dimensional decomposition theory we gave with
Xiao in \cite{kedlaya-xiao}.
\item
In this paper, the discreteness hypothesis is used
starting in Definition~\ref{D:analytic}, in order
to use the formalism of analytic rings from
\cite{kedlaya-slope}. It should be possible to circumvent
this hypothesis with an improved formalism.
\end{itemize}

\end{document}